\newcommand{\nmdash}{\mathord{{\left|\!\sim\!\right.}}}
\newcommand{\atom}{\mathbb{at}}
\newcommand{\AmA}{A\mid A}
\newtheorem{theorem}{Theorem}[section]
\newtheorem{lemma}[theorem]{Lemma}
\newtheorem{corollary}[theorem]{Corollary}
\newtheorem{proposition}[theorem]{Proposition}
\newtheorem{claim}{Claim}
\theoremstyle{definition}
\newtheorem{definition}[theorem]{Definition}
\newtheorem{remark}[theorem]{Remark}
\newtheorem{example}[theorem]{Example}
\newcommand{\msim}{\mathord\sim}
\newcommand{\llb}{\llbracket}
\newcommand{\rrb}{\rrbracket}
\begin{document}

\title{Boolean algebras of conditionals, probability and logic}
\author{Tommaso Flaminio$^1$, Lluis Godo$^1$, Hykel Hosni$^2$\\ [2mm] %
{\small $^{1}$ Artificial Intelligence Research Institute (IIIA) - CSIC, Barcelona, Spain}\\
{\small \tt email: {\{tommaso,godo\}}@iiia.csic.es}\\[1mm]
{\small $^2$Department of Philosophy, University of Milan, Milano, Italy}\\
{\small \tt email: hykel.hosni@unimi.it}
}
%\address{$^1$Artificial
%Intelligence Research Institute (IIIA - CSIC) \\ Campus UAB,
%Bellaterra 08193, Spain \\ {\tt email: tommaso@iiia.csic.es}}
%\author{Lluis Godo$^1$}
%\address{$^1$Artificial
%Intelligence Research Institute (IIIA - CSIC) \\ Campus UAB,
%Bellaterra 08193, Spain \\ {\tt email: godo@iiia.csic.es}}
%\author{Hykel Hosni$^3$}
%\address{$^3$Department of Philosophy, University of Milan, \\ Via Festa del Perdono 7 - 20122 Milano, Italy \\
%{\tt email: hykel.hosni@unimi.it}}

\date{}
\maketitle

\begin{abstract}
This paper presents an investigation on the structure of conditional
events 
and on the probability measures
which arise naturally in this context.  In particular we introduce a construction which defines a (finite) {\em Boolean algebra of conditionals} from any (finite) Boolean algebra of events. 
By doing so we 
distinguish the properties of conditional events which depend on probability and
those which are intrinsic to the logico-algebraic structure of conditionals. Our
main result provides a way to regard standard two-place conditional 
probabilities as one-place probability functions
on conditional events. We also consider a logical counterpart of our
Boolean algebras of conditionals with links to preferential consequence
relations for non-monotonic reasoning. 
The overall framework of this paper 
provides a novel perspective on  the rich
interplay between logic and probability in the representation of conditional knowledge.\vspace{.2cm}

\noindent \textbf{KEYWORDS} \emph{Conditional probability; conditional
  events; Boolean algebras; preferential consequence relations}
\end{abstract}

\section{Introduction and motivation}

Conditional expressions are pivotal in representing knowledge and
reasoning abilities of intelligent agents. Conditional reasoning
features in a wide range of areas spanning non-monotonic reasoning,
causal inference, learning, and more generally reasoning under uncertainty.

This paper 
proposes an algebraic structure for conditional events
which serves as a logical basis to analyse the concept of conditional
probability -- a fundamental tool in Artificial Intelligence. 

At least since the seminal work of Gaifman \cite{Gaifman1964}, who in turn develops the
initial ideas of his supervisor Alfred Tarski \cite{Tarski1948}, it has
been considered natural to investigate the conditions under which
Boolean algebras -- i.e. classical logic -- played the role of the
logic of events for probability. The
point is clearly made in \cite{Gaifman1982}:

\begin{quote}
Since events are always described in some language they can be
identified with the sentences that describe them and the probability
function can be regarded as an assignment of values to sentences. The
extensive accumulated knowledge concerning formal languages makes such a project feasible. 
\end{quote}

We are interested in pursuing the same idea, but taking
\emph{conditional probability} as a primitive notion and obtain
unconditional probability by specialisation.  Taking conditional probability as primitive has a long tradition which
dates back at least to \cite{DeFinetti1935} and
includes \cite{Jeffreys1961,Popper,Renyi,vanFraassen76}. The key
justification for doing this lies in the methodological view that no assessment of
probability takes place in a vacuum. On the contrary, each
probabilistic evaluation must be done in the light of all and only the
available evidence. In this sense, any probabilistic assessment of uncertainty is always conditional.

The first step in achieving our goal is to clarify how
\emph{conditional} knowledge and information should be
represented. To do this we put forward a structure for representing
conditional events, taken as the primitive objects of uncertainty quantification. In other words we aim to capture the logic/algebra which plays the role 
of classical logic when the focus of probability theory is shifted on
conditional probability. In our preliminary investigations \cite{FGH15,Flaminio2017} on the
subject we suggested taking the   methodological approach of
asking the following questions:

\begin{itemize}
\item[(i)] which properties of conditional probabilities depend on
  properties of the {\em measure} and do not depend on the logical
  properties of conditional events?
\item[(ii)] which properties do
  instead depend on the {\em logic} -- whatever it is -- of
  conditional events?
\end{itemize}

Bruno de Finetti was the first not to take the notion of conditional
events for granted and argued that they cannot be described by
truth-functional classical logic. He expressed this by referring to conditional  events as  \emph{trievents}
\cite{DeFinetti1935,DeFinetti2008}, with the following
motivation. Since, intuitively, conditional events of the form ``$a$
given $b$'' express some form
of hypothetical assertion -- the assertion of the consequent $a$  based
\emph{on the supposition} that the antecedent $b$ is satisfied --  the
logical evaluation of a conditional amounts to a two-step procedure. We first  check the antecedent. If this is not satisfied, the conditional ceases
to mean anything at all. Otherwise we move on to evaluating the
consequent and the conditional event takes the same value as the
consequent. 

This interpretation allowed de Finetti to use the classical notion
of uncertainty resolution for conditional events implicitly assumed
by Hausdorff and Kolmogorov, except for the fact that de Finetti
allowed the evaluation of conditional events to be a \emph{partial}
function. This is illustrated clearly by referring to the \emph{betting
  interpretation} of subjective probability, which indeed can be
extended to a number of coherence-based measures of uncertainty \cite{Flaminio2014,
  Flaminio2015}. To illustrate this, fix an
{\em uncertainty resolving valuation} $v$, or in other words a two-valued classical logic
valuation. Then de Finetti interprets
conditional events ``$\theta$ given $\phi$'' as follows:
\begin{equation*}
\text{ a bet on  ``}\theta \text{ given } \phi  \text {'' is }
\begin{cases}
\text{ won  } & \text{ if }  v(\phi)=v(\theta)=1;\\
\text{ lost } & \text{ if  }  v(\phi)=1 \text { and } v(\theta)=0;\\
\text{ called-off }  & \text{ if  }  v(\phi)=0.
\end{cases}
\end{equation*}

This idea has been developed in uncertain reasoning, with links with
non monotonic reasoning, in \cite{DP94, Kern, Kern2014,Kern2018}.  In the context of probability logic, this approach has been pursued in detail
in \cite{CS02}. Note that this latter approach is
measure-theoretically oriented, and yet the semantics of conditional
events is three-valued. The algebra of conditional
  events developed in the present paper, on the contrary, will be
  a Boolean algebra. Hence, as we will point out in due time, the
three-valued semantics of conditional events is not incompatible with
requiring that conditional events form a Boolean algebra. What makes
this possible is that uncertainty-resolving valuations no longer
correspond  to classical  logic valuations,  as in de Finetti's work.
Rather, as it will be clear from our algebraic analysis,  they will
correspond to \emph{finite total orders} of valuations of
classical logics. This crucially allows for the formal representation of the ``gaps'' in uncertainty resolution which arise when the antecedent of a
conditional is evaluated to 0, forcing the bet to be called off.

Some readers may be familiar with the copious and multifarious
literature spanning philosophical logic, linguistics and psychology
which seeks to identify, sometimes probabilistically, how
``conditionals'' depart from Boolean (aka material)
implication. Within this literature emerged a view according to which
conditional probability can be viewed as the probability of a suitably
defined conditional. A detailed comparison with a proposal, due
to Van Fraassen, in this spirit will be done in Subsection \ref{Sub:Comparison2}. However it may be
pointed out immediately that a key contribution of this literature has been the
very useful argument, due to David Lewis \cite{Lewis1976}, according to which the
conditioning operator ``$|$'' cannot be taken, on pain of trivialising
probability functions, to be a Boolean connective, and in particular
material implication. This clearly
reinforces the view, held since de Finetti's early contributions,
that conditional events have their own algebra and logic. A key
contribution of this paper is to argue that this role can be played by
what we term Boolean Algebras of Conditionals (BAC).  Armed with these
algebraic structures, we can proceed to investigate the relation between
conditional probabilities and (plain) 
probability measures on Boolean Algebras of Conditionals. 
In particular we construct, for each
positive probability measure on a finite Boolean algebra, its
canonical extension to a Boolean Algebra of Conditionals which
coincides with the conditional probability on the starting algebra. Hence we
provide a formal setting in which the probability
of conditional events can be regarded as conditional probability. This contributes to a long-standing question which
has been put forward, re-elaborated and discussed by many authors
along the years, and whose general form can be roughly stated as
follows: {\em conditional probability is the probability of
  conditionals} \cite{Adamas1965, Lewis1976,Stalnaker1970,vanFraassen76,GN94,Kau}.\footnote{
To some extent it can be regarded as a simplified version of Adams's  thesis \cite{Adamas1965,Adams1975}, claiming that the {\em assertibility} of a conditional $ a \Rightarrow b$ {\em correlates} 
%or moderately 
%64
%correlates 
with the conditional probability $P(b \mid a)$ of the consequent $b$
given the antecedent $a$. A more concrete statement of this thesis was
put forward by Stalnaker by equating Adam's notion of assertability
with that of probability:  $P(a\Rightarrow b) = P(a\wedge b)/{P(a)}$, whenever $P(a)>0$, known in
the literature as \emph{Stalnaker's thesis} \cite{Stalnaker1970}.
}

In the late 1960 logic-based Artificial Intelligence started to encompass
qualitative uncertainty. First through the notion of negation-as-failure in logic programming,
then with the rise of non-monotonic logics, a field which owes
substantially to the 1980 double special issue of \emph{Artificial
Intelligence} edited by D.G. Bobrow. Much of the
following decade was devoted to identifying \emph{general patterns} in
non monotonic reasoning, in the felicitous turn of phrase due to David
Makinson \cite{Makinson1994}. One prominent such pattern emerged from
the semantical approach put forward by Shoham
\cite{Shoham1987}. According to it, a sentence $\theta$ is a
non-monotonic consequence of a sentence $\phi$ if $\theta$ is
(classically) satisfied by all \emph{preferred or most normal models} of
$\phi$. This equipped the syntactic notion of defaults --
i.e. conditionals which are taken to be defeasibly true -- with a
natural semantics: defaults are conditionals which are ``normally"
true, where normality is captured by suitably ordering classical models. 
Ordered models have then been the key to providing remarkable unity \cite{LM92}  to non-monotonic reasoning, which by the early
2000s encompassed not only a variety of default logics
\cite{Makinson2005}, but also AGM-style theory revision
\cite{Makinson1993} and social choice theory \cite{Rott2001}.

In light of all this, it is noteworthy that the Boolean Algebras of
Conditionals lend themselves to an axiomatisation which turns
out to be sound and complete with respect to a class of preferential
structures. The details are deferred to  Section \ref{sec:logic},
where in addition we show that the logic of boolean conditionals
therein defined satisfies
 the properties of preferential non-monotonic consequence relations, in the sense pioneered by the seminal paper
\cite{KLM} and refined by \cite{LM92}.  In spite of its technical
simplicity, we think this result is methodologically very
significant for it provides strong reasons in support of the very
definition of the Boolean Algebra of Conditionals. In other words, the
fact that its logical counterpart leads to the most widely
investigated framework for nonmonotonic reasoning, justifies our
interpretation of the algebra investigated in Section \ref{sec:BAC} as
the algebra of \emph{conditionals}.

\subsection*{Structure and summary of contributions of the paper}
\label{sec:summary}

The paper is structured as follows. After this introduction and recalling some basic facts about Boolean algebras in Section \ref{sec:algpre}, we present 
in Section \ref{sec:BAC}  the main construction which allows us to
define, starting from any Boolean algebra ${\bf A}$ of events, a
corresponding {\em Boolean algebra of conditional events}
$\mathcal{C}({\bf A})$. These algebras $\mathcal{C}({\bf A})$, whose
elements are  objects of the form $(a \mid b)$ for $a,b\in A$ and their boolean
combinations,  are finite if the original algebras $\bf A$ are so.

Section \ref{sec:atoms} is dedicated to 
the atomic structure of Boolean algebras of conditionals. The main
result is a full  characterization of the atoms of each finite
$\mathcal{C}({\bf A})$ in terms of the atoms of ${\bf A}$. This
characterization is a fundamental step for the
rest of 
the paper. Further elaborating on the atomic
structure, Section \ref{sec:tree-like} presents two {\em tree-like}
representations for the set of atoms of an algebra of conditionals
which  will be decisive in establishing the main result of the  paper in Section \ref{sec:measures0}.

In fact, Section \ref{sec:measures0} introduces  
probability measures on Boolean algebras of conditionals and presents 
our  main result to the effect  
that every positive probability $P$ on a finite Boolean algebra ${\bf
  A}$ can be canonically extended to a positive probability $\mu_P$ on
$\mathcal{C}({\bf A})$ which agrees with the ``conditionalised''
version of the former.  
That is, we prove that, for every $a, b \in  A$ with $b\neq \bot$, 
$$
\mu_P(`(a \mid b)\textrm{\rq})  = P(a \land b)/P(b).
$$

As a welcome consequence of our investigation we provide an
alternative, finitary, solution 
to the problem known in the literature as {\em the strong conditional
  event problem}, introduced and solved in the infinite setting of the Goodman and Nguyen's
Conditional Event Algebras of   \cite{GN94}.

Although our 
Boolean algebras of conditionals do not allow for an equational description, the characterizing properties of these algebras are expressible in an expansion of the language of classical propositional logic and hence they give rise naturally to a simple logic of (non-nested) conditionals, that we name LBC (for {\em Logic of Boolean Conditionals}). 
This is investigated in  Section \ref{sec:logic}, where we axiomatize the logic and prove  soundness and completeness with respect to a class of preferential structures. Moreover, we show that LBC satisfies
 the properties of preferential non-monotonic consequence relations, in the sense pioneered by the seminal paper
\cite{KLM} and refined by \cite{LM92}.  
Finally, Section \ref{sec:related} draws some detailed comparisons
between our contributions and the research on Measure-free
conditionals (Subsection \ref{sec:measure-free}) and with Conditional
Event Algebras  (Subsection \ref{Sub:Comparison2}).
Section \ref{sec:conclusion} outlines a set of key issues for future work. 

To facilitate the reading of the paper, most proofs
are relegated to an appendix.

\section{Preliminaries: Boolean algebras in a nutshell}\label{sec:algpre}
The algebraic framework of this paper is that of Boolean algebras and hence its logical setting is that of classical propositional logic (CPL). Here, we will briefly recap on some needed notions and basic results about Boolean algebras and CPL, for a more exhaustive introduction about this subject we invite the reader to consult \cite[\S IV]{BS}, and \cite{CoriLascar,GH09,Halmos}.

Given a countable (finite or infinite) set $V$ of {\em propositional variables}, the CPL language $\mathsf{L}(V)$ (or simply $\mathsf{L}$ when $V$ will be clear by the context)  is the 
smallest set containing $V$ and closed under the usual connectives $\wedge, \vee, \neg, \bot,$ and $\top$ of type  $(2,2,1,0,0)$. Along this paper we will use the notation  $\varphi, \psi$, etc (with possible subscript) for formulas. Further, we shall adopt the following abbreviations:
\begin{center}
$\varphi\to \psi=\neg \varphi\vee \psi$, $\varphi\leftrightarrow \psi=(\varphi\to \psi)\wedge(\psi\to \varphi)$. 
\end{center}
We shall denote by $\vdash_{CPL}$ the provability relation of CPL, in particular we will write $\vdash_{CPL}\varphi$ to denote that $\varphi$ is a theorem.

A {\em logical valuation} (or simply a {\em valuation}) of $\mathsf{L}$ is a map from $v:V \to \{0,1\}$, which uniquely extends to a function, that we denote by the same symbol $v$, from $\mathsf{L}$ to $\{0,1\}$ 
in accordance with the usual Boolean truth functions, i.e.
 $v(\varphi\wedge\psi)=\min\{v(\varphi),v(\psi)\}$, $v(\bot)=0$, $v(\neg \varphi)=1-v(\varphi)$, etc. We shall denote by $\Omega$ the set of all valuations of $\mathsf{L}$. {For a given formula $\varphi$ and a given valuation $v\in \Omega$, we will write $v\models \varphi$  whenever $v(\varphi)= 1$.}

We will broadly adopt, analogously to the above recalled logical frame, the signature $(\wedge, \vee, \neg, \bot, \top)$ of type $(2,2,1,0,0)$ for the algebraic language upon which Boolean algebras are defined. Thus, the same conventions and abbreviations of $\mathsf{L}$ can be adopted also in the algebraic setting. Further, in every Boolean algebra ${\bf A} = (A, \wedge, \vee, \neg, \bot, \top)$
we shall write $a\leq b$, whenever $a\to b=\top$. The relation $\leq$ is indeed the lattice-order in ${\bf A}$. Thus, $a\leq b$ iff $a\wedge b=a$ iff $a\vee b=b$.

 Along this paper, in order to distinguish an algebra from its universe, we will denote the former by ${\bf A}$, ${\bf B}$ etc, and the latter by $A$, $B$ etc, respectively. 
 
Recall that a map $h:{\bf A}\to{\bf B}$ between Boolean algebras is a {\em homomorphism} if $h$ commutes with the operations of their language, that is, $h(\top_{\bf A})=\top_{\bf B}$, $h(\neg_{\bf A}a)=\neg_{\bf B} h(a)$, $h(a\wedge_{\bf A}b)=h(a)\wedge_{\bf B}h(b)$ etc, (notice that we adopt subscripts to distinguish the operations of ${\bf A}$ from those of ${\bf B}$). Bijective (or 1-1)  homomorphisms are called {\em isomorphisms} and if there is a isomorphism between ${\bf A}$ and ${\bf B}$, they are said to be {\em isomorphic} (and we write ${\bf A}\cong{\bf B}$).

A {\em congruence} of a Boolean algebra ${\bf A}$ is an equivalence relation $\equiv$ on $A$ which is compatible with its operations (see \cite[\S17]{GH09}), that is, 
for every $a, a', b, b' \in A$, if $a \equiv a'$ and $ b\equiv b'$ 
then $\neg a\equiv \neg a', a\wedge b \equiv  a'\wedge b'$ and  $a\vee b \equiv a'\vee b'$. The compatibility property allows us to equip the set $A/_\equiv = \{ [a] \mid a \in A\}$ of equivalence classes with operations inherited from ${\bf A}$, endowing ${A}/_\equiv$ with a structure of Boolean algebra, written ${\bf A}/_\equiv$, and which is called the {\em quotient} of ${\bf A}$ {\em modulo} $\equiv$. For a later use, we further recall that for all $a, a'\in A$ such that $a\equiv a'$, the equality $[a]=[a']$ holds in ${\bf A}/_\equiv$.
Recall that for any subset $X \subseteq A \times A$, the {\em congruence generated by $X$} is the smallest congruence $\equiv_X$ which contains $X$. The congruence $\equiv_X$ always exists \cite[\S5]{BS}.

Boolean algebras form a variety, i.e.\ an equational class,  in which, for any (countable) set $V$, the {\em free $V$-generated} algebra ${\bf Free}(V)$ 
(see \cite[\S II]{BS}) is isomorphic to the Lindenbaum algebra of CPL over a language whose propositional variables belong to $V$ (see for instance \cite{BP}). Since these structures will play a quite important role in the main construction we will introduce in Section \ref{sec:BAC}, let us briefly recap on them. Given any set $V$ of propositional variables, we denote by $\mathsf{L}(V)/_{\equiv}$ the set of equivalence classes of formulas of the language $\mathsf{L}(V)$ %generated by  $V$ 
modulo the congruence relation $\equiv$ of {\em equi-provability}, i.e.,  two formulas $\varphi$ and $\psi$ are equi-provable iff $\vdash_{CPL}\varphi\leftrightarrow \psi$.  The algebra ${\bf L}(V)=(\mathsf{L}(V)/_{\equiv}, \wedge, \vee, \neg, \bot, \top)$ is a Boolean algebra called the Lindenbaum algebra of CPL over the language $\mathsf{L}(V)$. Therefore, a map $v: \mathsf{L}(V) \to \{0, 1\}$ is a valuation iff it is a homomorphism of ${\bf L}(V)$ into ${\bf 2}$ (where $\bf 2$ denotes the Boolean algebra of two elements $\{0,1\}$).

\begin{definition}\label{def:atomsAlgebra}
An element $a$ of a Boolean algebra ${\bf A}$ is said to be an {\em atom} of ${\bf A}$ if $a>\bot$ and for any other element $b \in A$ such that $a\geq b\geq \bot$, either $a=b$ or $b=\bot$.  
\end{definition}
For every algebra ${\bf A}$, we shall henceforth denote by $ \atom({\bf A})$ the set of its atoms and we will denote its elements by $\alpha,\beta,\gamma$ etc. If $ \atom({\bf A}) \neq \emptyset$, ${\bf A}$ is called {\em atomic}, otherwise  
${\bf A}$ is said to be {\em atomless}. If $\bf A$ is finite, then it is atomic.
In particular, if $V$ is finite, the Lindenbaum algebra ${\bf L}(V)$ is finite as well and  thus atomic \cite{Bez}. In fact, if $| \cdot |$ denotes the cardinality map, if  $| V | = n$ then $| \atom({\bf L}(V)) | = 2^n$, and $|{\bf L}(V) | = 2^{2^n}$.
The following proposition collects well-known and needed facts about atoms (see e.g. \cite[\S I]{BS} and \cite[\S 16]{Halmos}). It recalls, among other things, that  $ \atom({\bf A})$  is  a  partition of an atomic algebra $\bf A$. Recall that a {\em partition} of a Boolean algebra is a collection of pairwise disjoint elements different from $\bot$ whose supremum is $\top$. 

\begin{proposition}\label{prop:atomsAlgebra}
Every finite Boolean algebra is atomic. Further, 
for every finite Boolean algebra ${\bf A}$ the following hold:
\begin{enumerate}[(i)]
\item for every $\alpha,\beta\in \atom({\bf A})$, $\alpha\wedge\beta=\bot$;
\item for every $a\in A$, $a=\bigvee_{\alpha\leq a}\alpha$. Thus, in particular, $\bigvee_{\alpha\in \atom(\bf A)} \alpha=\top$;
\item for each $\alpha\in \atom({\bf A})$, the map $h_\alpha: {\bf A}\to {\bf 2}$ such that $h_\alpha(a)= 1$ if $a\geq \alpha$ and 
$h_\alpha(a)= 0$ otherwise,
is a homomorphism. Furthermore, the map $\lambda:\alpha \mapsto h_\alpha$ is a 1-1 correspondence between $\atom({\bf A})$ and the set of homomorphisms of ${\bf A}$ in ${\bf 2}$;
\item if ${\bf A} = {\bf L}(V)$ with $V$ finite, the map $\lambda$ as in {\rm(iii)} is a 1-1 correspondence between the atoms of ${\bf L}(V)$ and the set $\Omega$ of valuations of $V$.
\end{enumerate}
Moreover, a subset $X=\{x_1,\ldots, x_m\} \subseteq A$ coincides with $\atom({\bf A})$ iff the following two conditions are satisfied:
\begin{enumerate}
\item[(a)] $X$ is a partition of $\bf A$ (i.e.\ $x_i\wedge x_j=\bot$ if $i\neq j$, and $\bigvee_{i=1}^m x_i=\top$);
\item[(b)] every $x_i\in X$ is such that $\bot<x_i$ and there is no $b\in A$ such that $\bot<b<x_i$.
\end{enumerate}
\end{proposition}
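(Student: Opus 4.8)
The plan is to prove the items in sequence, establishing atomicity first from finiteness and then bootstrapping each claim from the preceding ones. For atomicity, given any $b>\bot$ in a finite ${\bf A}$, if $b$ is not already an atom there is some $b'$ with $\bot<b'<b$; iterating produces a strictly descending chain which must terminate by finiteness, and its last term is an atom below $b$. Taking $b=\top$ yields $\atom({\bf A})\neq\emptyset$. This ``every nonzero element lies above some atom'' fact is what I would reuse throughout. For (i), given distinct atoms $\alpha,\beta$, note $\alpha\wedge\beta\leq\alpha$, so the atom property forces $\alpha\wedge\beta\in\{\bot,\alpha\}$; the value $\alpha$ would give $\alpha\leq\beta$, hence $\alpha=\beta$, contradiction, so $\alpha\wedge\beta=\bot$. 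For (ii), set $c=\bigvee_{\alpha\leq a}\alpha\leq a$; if $c<a$ then $a\wedge\neg c>\bot$ carries an atom $\beta$ lying below $a$ (hence below $c$) and below $\neg c$ at once, forcing $\beta=\bot$, a contradiction; thus $c=a$, and $a=\top$ gives the displayed join.

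Part (iii) carries the real content. That $h_\alpha$ is a homomorphism I would check directly: the clauses for $\top,\bot,\wedge$ are immediate from the equivalence $\alpha\leq a\wedge b \iff \alpha\leq a \text{ and } \alpha\leq b$, while for negation the key observation is that, since $\alpha$ is an atom, $\alpha=(\alpha\wedge a)\vee(\alpha\wedge\neg a)$ forces \emph{exactly one} of $\alpha\leq a$, $\alpha\leq\neg a$ to hold, giving $h_\alpha(\neg a)=1-h_\alpha(a)$; the clause for $\vee$ then follows by De Morgan. Injectivity of $\lambda$ is clear, since $h_\alpha=h_\beta$ gives $h_\alpha(\beta)=h_\beta(\beta)=1$, i.e.\ $\alpha\leq\beta$, whence $\alpha=\beta$. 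Surjectivity is the main obstacle, and here I would combine (i) and (ii): given a homomorphism $h:{\bf A}\to{\bf 2}$, preservation of the finite join $\top=\bigvee_\alpha\alpha$ yields $\max_\alpha h(\alpha)=1$, while disjointness together with $h(\alpha\wedge\beta)=\min(h(\alpha),h(\beta))$ rules out two atoms taking value $1$; so there is exactly one atom $\alpha_0$ with $h(\alpha_0)=1$. Expanding an arbitrary $a$ via (ii) then gives $h(a)=1\iff\alpha_0\leq a$, that is $h=h_{\alpha_0}$.

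Part (iv) is immediate, since the preliminaries already identify the homomorphisms ${\bf L}(V)\to{\bf 2}$ with the valuations in $\Omega$, so (iii) specialises directly. Finally, for the characterization the forward direction merely repackages (i), (ii) and the definition of atom. For the converse, assuming (a) and (b): condition (b) says each $x_i$ is an atom, so $X\subseteq\atom({\bf A})$; to obtain equality I would take an arbitrary atom $\alpha$, write $\alpha=\alpha\wedge\bigvee_i x_i=\bigvee_i(\alpha\wedge x_i)$ using (a), deduce that some $\alpha\wedge x_i>\bot$, and use that both $\alpha$ and $x_i$ are atoms to force $\alpha=x_i\in X$. Since every step beyond the surjectivity argument in (iii) is a short order-theoretic manipulation, I expect no difficulty there; the only place where care is needed is keeping track of which of (i) and (ii) each homomorphism argument relies on.
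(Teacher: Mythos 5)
Your proof is correct; the paper itself states this proposition without proof, deferring to standard references (Burris--Sankappanavar and Halmos), and your argument is precisely the standard textbook one: atomicity by descending chains, the exclusive dichotomy $\alpha\leq a$ versus $\alpha\leq\neg a$ for the homomorphism property, and uniqueness of the atom sent to $1$ for surjectivity. The only reading choice worth noting is that in (i) you correctly interpret the statement as applying to \emph{distinct} atoms, which is clearly the intended meaning.
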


\section{Boolean algebras of conditionals}\label{sec:BAC}
In this section we introduce the notion of Boolean algebras of conditionals and prove some basic properties.  For any Boolean algebra ${\bf A}$, the construction we are going to present builds a Boolean algebra of conditionals that we shall denote by $\mathcal{C}({\bf A})$. 
In the following, given a Boolean algebra ${\bf A}$, we will write $A'$ for $A\setminus \{\bot\}$.

Intuitively, in a Boolean algebra of conditionals over ${\bf A}$ we will allow {\em basic conditionals}, i.e.\ objects of the form $(a \mid b)$ for $a \in A$ and $b \in A'$, to be freely combined with the usual Boolean operations 
up to certain extent.
Recall from the introduction that our main
  goal is to distinguish, as far as this is possible, the properties
  of the uncertainty measure from the algebraic properties of
  conditionals. This means that we must  pin down properties which make sense in the context of
  conditional reasoning under uncertainty. Those properties are summed up in the
  following four informal requirements, which guide our construction. 
 
\begin{enumerate}
\item[R1] For every $b\in A'$, the conditional $(b \mid b)$ will be the top element of $\mathcal{C}({\bf A})$, while $(\neg b \mid b)$ will be the bottom; 
\item[R2] Given $b \in A'$, the set of conditionals $A \mid b = \{(a \mid b) : a \in A\}$ will be the domain of a Boolean subalgebra of $\mathcal{C}({\bf A})$, and in particular when $b = \top$, this subalgebra will be isomorphic to ${\bf A}$;
\item[R3] In a conditional $(a \mid b)$ we can replace the consequent $a$ by $a \land b$, that is,  we require the conditionals $(a \mid b)$ and $(a \land b \mid b)$ to represent the same element of $\mathcal{C}({\bf A})$; 
\item[R4] For all $a\in A$ and all $b,c\in A'$, if $a \leq b \leq c$, then the result of conjunctively combining the conditionals $(a \mid b)$ and $ (b \mid c)$ must yield the conditional $(a \mid c)$. 
\end{enumerate} 

Whilst conditions R1-R3 do not require delving into particular justifications,
  it is worth noting that R4  encodes a sort of restricted chaining of conditionals and it is inspired by the chain rule of conditional probabilities: $P(a \mid b) \cdot P(b \mid c) = P(a \mid c)$ whenever $a \leq b \leq c$. 

Given these four requirements, the formal construction of the algebra $\mathcal{C}({\bf A})$ 
 is done in three steps described next.\footnote{Our construction is
   inspired by the one by Mundici for \emph{algebraic tensor products} \cite{mundici}.}

The first one is to consider the set of objects $\AmA  = \{ (a \mid b) : a \in A, b \in A' \}$ and the  algebra
$${\bf Free}(\AmA) = (Free(\AmA), \sqcap,\sqcup,\sim,\bot^{*},\top^{*}). $$
Recall from Section \ref{sec:algpre} that ${\bf Free}(\AmA)$ is (up to isomorphism) the Boolean algebra whose elements are equivalence classes (modulo equi-provability) of Boolean terms generated by all pairs $(a \mid b)\in \AmA$ taken as propositional variables.
In other words, in ${\bf Free}(\AmA)$ two Boolean terms can be identified (i.e.\ they belong to the same class) only if one term can be rewritten into the other one by  using only the laws of Boolean algebras. For instance $(a \mid b) \sqcap (c \mid b)$ and $(c \mid b) \sqcap (a \mid b)$ clearly belong to the same class in ${\bf Free}(\AmA)$, but $(a \land c \mid b)$ does not, a fact that is not in agreement with requirement R2.

Therefore, in a second step, in order to accommodate the requirements R1-R4 above we need to identify more classes in ${\bf Free}(\AmA)$. In particular,
we would like $(a \land c \mid b)$, $(a \mid b) \sqcap (c \mid b)$ and $(c \mid b) \sqcap (a \mid b)$ to represent the same element in the algebra $\mathcal{C}({\bf A})$.
Thus, to enforce this and all the other desired identifications in ${\bf Free}(\AmA)$, we consider the congruence relation on ${\bf Free}(\AmA)$ generated by the subset 
$\mathfrak{C} \subseteq Free(\AmA) \times Free(\AmA)$ 
containing the following pairs of terms: 
\begin{itemize}
\item[(C1)] $((b \mid b),\top^*)$, for all $b\in A'$;
\item[(C2)] $((a_1 \mid b)\sqcap (a_2 \mid b), (a_1\wedge a_2 \mid b))$, for all $a_1, a_2\in A$, $b\in A'$;
\item[(C3)] $(\sim\!(a \mid b), (\neg a \mid b))$, for all $a\in A$, $b\in A'$;
\item[(C4)] $((a\wedge b \mid b), (a \mid b))$, for all $a\in A$, $b\in A'$;
\item[(C5)] $((a\mid b) \sqcap (b \mid c), (a \mid c))$, for all $a\in A$, $b,c\in A'$ such that $a \leq b \leq c$.
\end{itemize}
Note that (C1)-(C5) faithfully account for the requirements R1-R4 where, in particular,  (C2) and (C3) account for R2. 
In particular, observe that, continuing the discussion above, now the elements $(a \land c \mid b)$, $(a \mid b) \sqcap (c \mid b)$ and $(c \mid b) \sqcap (a \mid b)$ belong to the same class under the equivalence $\equiv_{\mathfrak{C}}$.

Then,  we finally propose the following definition.
\begin{definition}\label{def:BAC}
For every Boolean algebra ${\bf A}$, we define the {\em Boolean algebra of conditionals} of ${\bf A}$ as the quotient structure 
$$
\mathcal{C}({\bf A})={\bf Free}(\AmA)/_{\equiv_\mathfrak{C}}.
$$ 
\end{definition}

Note that, by construction, if $\bf A$ is finite, so is $\mathcal{C}({\bf A})$. For the sake of an unambiguous notation, we will henceforth distinguish the operations of ${\bf A}$ from those of $\mathcal{C}({\bf A})$ by adopting the following signature: 
$$
\mathcal{C}({\bf A})=(\mathcal{C}(A),\sqcap,\sqcup,\sim,\bot_{\mathfrak{C}},\top_{\mathfrak{C}}).
$$

\begin{remark}[Notational convention]\label{notation}
Since $\mathcal{C}({\bf A})$ is a {\em quotient} of ${\bf Free}(\AmA)$, its generic element is a class $[t]_{\equiv_{\mathfrak{C}}}$, for $t$ being  a Boolean term, whose members are  equivalent to $t$ under $\equiv_{\mathfrak{C}}$. 
For the sake of a clear notation and without danger of confusion, we will henceforth identify $[t]_{\equiv_{\mathfrak{C}}}$ with one of its representative elements and, in particular, by  $t$ itself. 
Given two elements $t_1, t_2$ of $\mathcal{C}({\bf A})$, we will write $t_1=t_2$ meaning that $t_1$ and $t_2$ determine the same equivalence class of $\mathcal{C}({\bf A})$ or, equivalently, that $t_1\equiv_{\mathfrak{C}} t_2$. \qed
\end{remark}

It is then clear that, using the above notation convention, the following equalities, which correspond to (C1)--(C5) above, hold in any Boolean algebra of conditionals $\mathcal{C}({\bf A})$.
\begin{proposition}\label{prop2} 
Any Boolean algebra of conditionals $\mathcal{C}({\bf A})$ satisfies the following properties for all $a, a'\in A$ and $b,c\in A'$: 
\begin{enumerate}[(i)]
\item\label{e1} $(b\mid b)=\top_\mathfrak{C}$;
\item\label{e2}  $(a\mid b)\sqcap  (c\mid b)=(a\wedge c\mid b)$;
\item\label{e3} $\sim\! (a\mid b)=(\neg a\mid b)$;
\item\label{e4}  $(a\wedge b\mid b)=(a\mid b)$;
\item\label{e8} if $a\leq b\leq c$, then $(a\mid b)\sqcap (b\mid c)=(a\mid c)$. 
\end{enumerate}
\end{proposition}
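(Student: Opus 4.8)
The plan is to prove each of the five equalities (i)--(v) directly from the definition of $\mathcal{C}({\bf A})$ as the quotient ${\bf Free}(\AmA)/_{\equiv_\mathfrak{C}}$, using the notational convention of Remark~\ref{notation} that identifies an element of the quotient with a representative Boolean term. The key observation is that by the very construction of the congruence $\equiv_\mathfrak{C}$, each of the generating pairs (C1)--(C5) in $\mathfrak{C}$ is contained in $\equiv_\mathfrak{C}$; hence for each generating pair $(s,t)\in\mathfrak{C}$ we have $s\equiv_\mathfrak{C} t$, which under our convention reads precisely as the equality $s=t$ in $\mathcal{C}({\bf A})$.

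Concretely, I would match each claimed identity with its generating clause. For (i), the pair $((b\mid b),\top^*)\in\mathfrak{C}$ is exactly (C1), so $(b\mid b)=\top_\mathfrak{C}$ holds in the quotient. For (ii), clause (C2) gives $(a\mid b)\sqcap(c\mid b)=(a\wedge c\mid b)$ for all $a,c\in A$ and $b\in A'$; note this covers (ii) after renaming the variable $a_2$ to $c$. For (iii), clause (C3) yields $\sim\!(a\mid b)=(\neg a\mid b)$ directly. For (iv), clause (C4) gives $(a\wedge b\mid b)=(a\mid b)$. Finally, for (v), clause (C5) states precisely that whenever $a\leq b\leq c$ one has $(a\mid b)\sqcap(b\mid c)=(a\mid c)$. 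In each case the argument is a one-line appeal to the fact that the relevant pair is among the generators of $\equiv_\mathfrak{C}$, combined with the observation recalled in Section~\ref{sec:algpre} that $s\equiv s'$ implies $[s]=[s']$ in a quotient algebra.

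The subtle point worth flagging is that $\equiv_\mathfrak{C}$ is the congruence \emph{generated by} $\mathfrak{C}$, not merely the relation $\mathfrak{C}$ itself; so strictly one should remark that $\mathfrak{C}\subseteq\,\equiv_\mathfrak{C}$ since $\equiv_\mathfrak{C}$ is by definition the smallest congruence containing $\mathfrak{C}$. This containment is immediate from the definition and is all that is needed: the equalities asserted in the proposition are exactly the pairs we deliberately placed into $\mathfrak{C}$, so nothing beyond the generators is required here.

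I do not expect a genuine obstacle in this proof, since it is essentially a bookkeeping check that the statement merely transcribes (C1)--(C5) into the equational notation of the quotient. The only mild care needed is notational: ensuring the bottom element claim is not inadvertently asserted (the proposition lists five items, and the bottom $(\neg b\mid b)=\bot_\mathfrak{C}$ from requirement R1 would follow by combining (i) and (iii), but is not among (i)--(v) here), and confirming that the free variables in each clause range over the intended sets $A$ and $A'=A\setminus\{\bot\}$ so that every expression $(a\mid b)$ is a legitimate basic conditional. With these conventions in place, the proof reduces to the five direct citations above.
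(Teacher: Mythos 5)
Your proof is correct and is exactly the argument the paper intends: the paper gives no separate proof of Proposition \ref{prop2}, simply remarking that the equalities "correspond to (C1)--(C5)" and hold under the notational convention identifying classes with representatives, which is precisely your one-line appeal to $\mathfrak{C}\subseteq\,\equiv_\mathfrak{C}$. Your extra care about the generated congruence versus the generating set and about the ranges of the variables is sound but does not change the substance.
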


Straightforward consequences of (\ref{e4}) and (\ref{e8}) above are the following. 

\begin{corollary}\label{cor:props} \mbox{ }
\begin{enumerate}[(i)]
\item $(b \to a \mid b) = (a \mid b)$;
\item $(a \land b \mid \top)=(a\mid b)\sqcap (b\mid \top)$;
\item $(a \land b \mid c) = (a\mid b \land c) \sqcap (b \mid c)$.
\end{enumerate}
\end{corollary}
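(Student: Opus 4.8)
The plan is to derive all three identities purely from Proposition \ref{prop2}(\ref{e4}) and (\ref{e8}), the only subtlety being that (\ref{e8}) requires the chain condition $a\leq b\leq c$, which need not hold for arbitrary arguments. The uniform trick will be to first use (\ref{e4}) to replace each consequent by its meet with the corresponding antecedent; this leaves each conditional unchanged while forcing the relevant ordering, after which (\ref{e8}) applies and (\ref{e4}) is invoked once more to simplify the outcome.

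For (i), I would first record the purely Boolean identity $(b\to a)\wedge b = (\neg b\vee a)\wedge b = a\wedge b$, which already holds in ${\bf A}$. Since the two sides are equal elements of $A$, the conditionals $((b\to a)\wedge b\mid b)$ and $(a\wedge b\mid b)$ are literally the same object of $\mathcal{C}({\bf A})$. Applying (\ref{e4}) at both ends then gives $(b\to a\mid b) = ((b\to a)\wedge b\mid b) = (a\wedge b\mid b) = (a\mid b)$, as required.

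For (iii), I would normalize both factors on the right-hand side by (\ref{e4}): $(a\mid b\wedge c) = (a\wedge b\wedge c\mid b\wedge c)$ and $(b\mid c) = (b\wedge c\mid c)$. Now the triple $(a\wedge b\wedge c,\ b\wedge c,\ c)$ satisfies $a\wedge b\wedge c\leq b\wedge c\leq c$, so (\ref{e8}) yields
$$(a\wedge b\wedge c\mid b\wedge c)\sqcap (b\wedge c\mid c) = (a\wedge b\wedge c\mid c).$$
A final application of (\ref{e4}) collapses the right-hand side to $(a\wedge b\mid c)$, establishing (iii). Item (ii) is then immediate as the special case $c=\top$, using $b\wedge\top=b$.

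I do not anticipate any genuine obstacle here, as everything reduces to bookkeeping on top of the two stated equalities. The only point demanding care is to check, before each use of (\ref{e8}), that the normalized consequents really do lie below their antecedents, so that the chain condition of (\ref{e8}) is met; the reductions via (\ref{e4}) are precisely what guarantee this.
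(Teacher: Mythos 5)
Your proof is correct and follows exactly the route the paper intends: the paper simply asserts the corollary as a "straightforward consequence" of Proposition \ref{prop2}(\ref{e4}) and (\ref{e8}), and your normalization trick (replace each consequent by its meet with the antecedent via (\ref{e4}) so that the chain condition of (\ref{e8}) is satisfied, then simplify back) is precisely the intended derivation, with (ii) correctly obtained as the $c=\top$ instance of (iii).
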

Notice that (iii) above corresponds to the qualitative version of axiom CP3 of \cite[Definition 3.2.3]{Halpern2003}.

It is convenient to distinguish the elements of $\mathcal{C}({\bf A})$ in {\em basic} and {\em compound} conditionals. The former are expressions of the form $(a\mid b)$, while 
the latter are those terms $t$ which are (non trivial) Boolean combination of basic conditionals but which are not equivalent  modulo $\equiv_{\mathfrak{C}}$ (and hence not {\em equal}), to any element of $\mathcal{C}({\bf A})$ of the form $(a\mid b)$.  
For instance, if $b_1\neq b_2\in A'$ there is no general rule, among (C1)--(C5) above, which allows us to identify in $\mathcal{C}({\bf A})$ the term $(a\mid b_1)\sqcap(a\mid b_2)$ with a basic conditional of the form $(x\mid y)$ whilst, 
the term $(a_1\mid b)\sqcap(a_2\mid b)$ coincides in $\mathcal{C}({\bf A})$ with the basic conditional $(a_1\wedge a_2\mid b)$, as required by (C2). 

\begin{example}\label{ex:fourelements}
Let us consider the four elements Boolean algebra ${\bf A}$ 
whose domain is $\{\top,a,\neg a,\bot\}$. 
Then, $\AmA=\{(\top,\top)$, $(\top,a)$, $(\top,\neg a)$, $(a,\top)$, $ (a,a)$, $(a,\neg a)$, $(\neg a,\top)$, $(\neg a,a)$, $ (\neg a,\neg a)$, $(\bot,\top)$, $(\bot,a)$, $(\bot,\neg a)\}$ has
cardinality $12$ and $\mathbf{Free}(\AmA)$ is the free Boolean algebra of  $2^{2^{12}}$ elements, i.e. the finite Boolean algebra of $2^{12}$ atoms.
However, 
in $\mathcal{C}({\bf A})$ the following equations hold (and the conditionals below are hence identified): 
\begin{enumerate}
\item $\top_{\mathfrak{C}} = {(\top\mid \top)}= (a \mid \top) \sqcup (\neg a \mid \top) = (\top \mid a)=  {(a\mid a)}={(\neg a\mid \neg a)}$; 
\item $({\top\mid \top})\sqcap ({a\mid \top})={(\top\wedge a\mid\top)}=({a\mid \top}) = \;\sim\!(\neg a \mid \top)$; 
\item $({\top\mid \top})\sqcap ({\neg a\mid \top})={(\top\wedge \neg a\mid\top)}=({\neg a\mid \top}) = \;\sim\!(a \mid \top)$; 
\item $\bot_{\mathfrak{C}} = \msim ({\top\mid\top})= ({\bot\mid\top}) =(a\mid \top)\sqcap(\neg a\mid \top)=(a\mid \top)\sqcap\msim(a\mid\top)$. 
\end{enumerate}

Thus, it is easy to see that $\mathcal{C}({\bf A})$ contains only four elements that are not redundant under $\equiv_\mathfrak{C}$: $(\top\mid\top), (a\mid\top),(\neg a\mid\top),(\bot\mid\top)$. 
As we will show in Section \ref{sec:atoms} (see Theorem \ref{th:atoms}) 
$\mathcal{C}(\bf A)$ has $2$ atoms and it is indeed isomorphic to ${\bf A}$. \qed
\end{example}

Next, we present some further basic properties of Boolean algebras of conditionals which are not immediate from the construction. However, since their proofs are essentially trivial, we also omit them.

\begin{proposition}\label{prop3}
The following conditions hold in every Boolean algebra of conditionals $\mathcal{C}({\bf A})$:
\begin{enumerate}[(i)]
\item\label{e5} for all $a,c\in A$,    $(a\mid\top)=(c\mid\top)$ iff $a= c$;
\item\label{e6} for all $b\in A'$, $(\neg b\mid b)=\bot_\mathfrak{C}$; 
\item\label{e7} for all $a,c\in A$, and $b\in A'$, $(a\mid b)\sqcup (c\mid b)=(a\vee c\mid b)$;
\end{enumerate}
\end{proposition}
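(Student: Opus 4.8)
The plan is to prove the three items of Proposition~\ref{prop3} by reducing each to the defining properties already established in Proposition~\ref{prop2}, together with the general theory of Boolean algebras. The only genuinely delicate point is item~(\ref{e5}), whose ``only if'' direction asserts that the conditioning map $a \mapsto (a \mid \top)$ is \emph{injective}; the reverse implication and items~(\ref{e6}), (\ref{e7}) are essentially formal manipulations inside $\mathcal{C}({\bf A})$.

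For item~(\ref{e6}), I would simply instantiate the identities of Proposition~\ref{prop2}. By (\ref{e3}) we have $\sim\!(b \mid b) = (\neg b \mid b)$, and by (\ref{e1}) the left-hand side equals $\sim\!\top_{\mathfrak{C}} = \bot_{\mathfrak{C}}$ in any Boolean algebra. Hence $(\neg b \mid b) = \bot_{\mathfrak{C}}$. For item~(\ref{e7}), I would exploit the De~Morgan laws available in every Boolean algebra together with (\ref{e2}) and (\ref{e3}): compute
\begin{align*}
(a \mid b) \sqcup (c \mid b)
 &= \sim\!\bigl(\sim\!(a \mid b) \sqcap \sim\!(c \mid b)\bigr)\\
 &= \sim\!\bigl((\neg a \mid b) \sqcap (\neg c \mid b)\bigr)\\
 &= \sim\!(\neg a \wedge \neg c \mid b)\\
 &= (\neg(\neg a \wedge \neg c) \mid b)
  = (a \vee c \mid b),
\end{align*}
using (\ref{e3}) in the second and fourth steps, (\ref{e2}) in the third, and the Boolean identity $\neg(\neg a \wedge \neg c) = a \vee c$ holding in ${\bf A}$ in the last. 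This is purely syntactic once the congruence $\equiv_{\mathfrak{C}}$ is in place.

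The ``if'' direction of item~(\ref{e5}) is immediate: if $a = c$ in ${\bf A}$ then $(a \mid \top)$ and $(c \mid \top)$ are literally the same generator of ${\bf Free}(\AmA)$, hence the same element of $\mathcal{C}({\bf A})$. The ``only if'' direction is where the work lies, and it cannot follow from Proposition~\ref{prop2} alone (which only imposes identifications): one must show that the congruence $\equiv_{\mathfrak{C}}$ does not collapse distinct elements of the form $(a \mid \top)$. The natural strategy is to produce a homomorphism $h : \mathcal{C}({\bf A}) \to {\bf 2}$, or more robustly a homomorphism $\mathcal{C}({\bf A}) \to {\bf A}$, that separates them. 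Concretely, I would seek a map on generators $(x \mid y)$ that respects the generating relations (C1)--(C5), restricts to the identity (or an isomorphism) on the subalgebra $A \mid \top$, and then invoke the universal property of the free algebra followed by passage to the quotient. The cleanest version is to define a retraction sending $(a \mid \top) \mapsto a$ and to check it is well defined modulo $\equiv_{\mathfrak{C}}$; then distinctness of $a,c$ in ${\bf A}$ forces distinctness of their images, hence of $(a \mid \top),(c \mid \top)$.

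The main obstacle, then, is verifying that such a separating homomorphism exists, i.e.\ that it is well defined on the quotient by checking compatibility with each of the five generating pairs (C1)--(C5). Requirement R2 anticipates exactly the statement that $A \mid \top$ is a copy of ${\bf A}$, so I expect this separating map to be available from the atom characterization promised in Section~\ref{sec:atoms} (Theorem~\ref{th:atoms}), or from an explicit construction of homomorphisms $\mathcal{C}({\bf A}) \to {\bf 2}$; the paper's remark that proofs are ``essentially trivial'' suggests the authors intend to appeal to such structural facts rather than to re-derive the separation by hand. If forced to argue directly, I would define the homomorphism on $\AmA$ by a valuation-based recipe compatible with the chain rule encoded in (C5), and discharge the five compatibility checks one by one, the (C5) case being the only one requiring attention to the order constraint $a \leq b \leq c$.
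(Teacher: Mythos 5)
The paper does not actually supply a proof of this proposition: it is introduced with the remark that the proofs are ``essentially trivial'' and then omitted, so there is no official argument to compare yours against line by line. Your treatments of (ii) and (iii) are correct and complete, and are surely the formal manipulations the authors intend: (ii) is (C1) combined with (C3), and (iii) is De Morgan together with (C2) and (C3). The ``if'' half of (i) is likewise fine.

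The gap is in the ``only if'' half of (i), which you rightly isolate as the only non-formal claim but then leave at the level of a strategy. Two concrete issues. First, your ``cleanest version'' --- a retraction $\mathcal{C}({\bf A})\to{\bf A}$ restricting to the identity on $A\mid\top$ --- risks circularity if it is obtained by extending a map off the subalgebra ${\bf A}\mid\top$, since knowing that ${\bf A}\mid\top$ is a faithful copy of ${\bf A}$ is exactly the injectivity being proved; the map must be defined on the generators of ${\bf Free}(\AmA)$ and only afterwards shown to respect (C1)--(C5), as your fallback acknowledges. Second, the fallback itself is never discharged, and the most obvious candidate fails: sending $(x\mid y)$ to the truth value of $y\to x$ under an ultrafilter containing $a\wedge\neg c$ violates (C3), because an ultrafilter containing $\neg y$ assigns value $1$ to both $(x\mid y)$ and $(\neg x\mid y)$. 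A recipe that does work (for finite ${\bf A}$, the setting of the remainder of the paper) is the one the paper only develops later: fix a sequence $\langle\alpha_1,\dots,\alpha_{n-1}\rangle$ of pairwise distinct atoms with $\alpha_1\leq a\wedge\neg c$ and set $h(x\mid y)=1$ iff $\alpha_i\leq x$ for the least $i$ such that $\alpha_i\leq y$; the verification of (C1)--(C5) is routine (the (C5) case uses $a\leq b\leq c$ exactly as you anticipate), the map factors through $\equiv_\mathfrak{C}$ by the universal property of the free algebra, and it separates $(a\mid\top)$ from $(c\mid\top)$. This is in substance the content of Theorem \ref{th:atoms} and of the $\mathsf{CL}$-interpretations of Section \ref{sec:logic}; without exhibiting this assignment (or an equivalent one) and checking the five compatibility conditions, your proof of (i) is not complete.
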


For every fixed $b\in A'$, we can now consider the set $A\mid b=\{(a\mid b)\mid a\in A\}$  of all conditionals having $b$ as antecedent. 
The following  is  an immediate consequence of (i-iii) of Proposition \ref{prop2} and Proposition \ref{prop3} \eqref{e7} above.

\begin{corollary}\label{subalgebra}
 For every  algebra $\mathcal{C}({\bf A})$ and for every $b\in A'$ the structure ${\bf A} \mid b=(A \mid b, \sqcap, \sqcup, \neg, \bot_{\mathfrak{C}}, \top_{\mathfrak{C}})$ is a Boolean subalgebra of $\mathcal{C}({\bf A})$. In particular, the algebra ${\bf A}\mid \top$ is isomorphic to ${\bf A}$.
\end{corollary}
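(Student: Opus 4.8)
The plan is to verify the two claims separately: first that $A \mid b$ is closed under the operations of $\mathcal{C}(\mathbf{A})$ and contains its distinguished constants, and then that the map $a \mapsto (a \mid \top)$ witnesses the asserted isomorphism.

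For the subalgebra claim, I would observe that each of the cited identities expresses a Boolean operation applied to elements of $A \mid b$ as a single basic conditional with the \emph{same} antecedent $b$. Concretely, closure under $\sqcap$ follows from Proposition \ref{prop2}\eqref{e2}, since $(a \mid b) \sqcap (c \mid b) = (a \wedge c \mid b) \in A \mid b$; closure under $\sqcup$ follows from Proposition \ref{prop3}\eqref{e7}, giving $(a \mid b) \sqcup (c \mid b) = (a \vee c \mid b)$; and closure under $\msim$ follows from Proposition \ref{prop2}\eqref{e3}, giving $\msim (a \mid b) = (\neg a \mid b)$. It then remains to check that the constants lie in $A \mid b$: Proposition \ref{prop2}\eqref{e1} gives $\top_\mathfrak{C} = (b \mid b)$ and Proposition \ref{prop3}\eqref{e6} gives $\bot_\mathfrak{C} = (\neg b \mid b)$, both of which have antecedent $b$. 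Since $A \mid b$ is a subset of $\mathcal{C}(A)$ closed under all operations and containing both constants, it is the universe of a Boolean subalgebra.

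For the isomorphism, I would define $\varphi : \mathbf{A} \to \mathbf{A} \mid \top$ by $\varphi(a) = (a \mid \top)$. Surjectivity is immediate from the definition of $A \mid \top$. That $\varphi$ is a homomorphism is exactly the specialisation of the identities above to the case $b = \top$: one reads off $\varphi(a \wedge c) = \varphi(a) \sqcap \varphi(c)$, $\varphi(a \vee c) = \varphi(a) \sqcup \varphi(c)$ and $\varphi(\neg a) = \msim \varphi(a)$, together with $\varphi(\top) = \top_\mathfrak{C}$ and $\varphi(\bot) = (\neg \top \mid \top) = \bot_\mathfrak{C}$. Injectivity is the one point that is not purely formal: it rests on Proposition \ref{prop3}\eqref{e5}, which guarantees that $(a \mid \top) = (c \mid \top)$ forces $a = c$. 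A bijective homomorphism of Boolean algebras is an isomorphism, whence $\mathbf{A} \cong \mathbf{A} \mid \top$.

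I expect no genuine obstacle here: the whole argument is a direct bookkeeping of the identities in Propositions \ref{prop2} and \ref{prop3}. The only substantive ingredient is the injectivity of $\varphi$, which is not a formal consequence of the congruence-generating clauses (C1)--(C5) but is supplied externally by Proposition \ref{prop3}\eqref{e5}; everything else reduces to reading off the stated equalities.
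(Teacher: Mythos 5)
Your proof is correct and follows essentially the same route as the paper, which states the corollary as an immediate consequence of Proposition \ref{prop2} (i)--(iii) and Proposition \ref{prop3}: closure of $A\mid b$ under the Boolean operations and constants via the identities with fixed antecedent $b$, and the isomorphism $a\mapsto (a\mid\top)$ whose injectivity is supplied by Proposition \ref{prop3} (i). You have merely spelled out the bookkeeping that the paper leaves implicit, and correctly isolated the injectivity step as the only non-formal ingredient.
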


As in any Boolean algebra, the lattice order relation in $\mathcal{C}({\bf A})$, denoted by $\leq$,  is defined as follows: for every $t_1, t_2\in \mathcal{C}({\bf A})$,
$$
t_1\leq t_2\mbox{ iff }t_1\sqcap t_2=t_1\mbox{ iff }t_1\sqcup t_2=t_2.
$$

The following propositions  collect some general properties related to the lattice order $\leq$ defined above. 
Nevertheless, some further and stronger properties on the $\leq$-relation between basic conditionals will be provided at the end of Section \ref{sec:atoms}, once the atomic structure of the algebras of conditionals $\mathcal{C}({\bf A})$ will be characterised in that section.

\begin{proposition}\label{prop:ordine}
In every  algebra $\mathcal{C}({\bf A})$ the following properties hold for  every $a,c\in A$ and $b\in A'$:
\begin{enumerate}[(i)]
\item\label{p1} $(a\mid b)\geq (b\mid b)$ iff $a\geq b$;
\item\label{p2}  if $a\leq c$, then $(a\mid b)\leq (c\mid b)$; in particular $a\leq c$ iff $(a\mid \top)\leq (c\mid\top)$; 
\item \label{p3} if $a \leq b \leq d$, then $ (a \mid b) \geq (a \mid d)$; in particular $(a \mid b) \geq (a \mid a \lor b)$; 
\item\label{p5}  if $(a\mid b)\neq(c\mid b)$, then $a\land b \neq c \land b$;
\item \label{p6}  $(a\wedge b\mid \top)\leq(a \mid b) \leq (b \to a \mid \top)$;
\item\label{p7} if  $a \land d = \bot$ and $ \bot < a \leq b$, then $(a \mid \top) \sqcap  (d \mid b) = \bot_\mathfrak{C}$; 
\item\label{p8}  $(b \mid \top) \sqcap (a \mid b) \leq (a \mid \top) $;
\end{enumerate}
\end{proposition}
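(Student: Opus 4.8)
The plan is to derive every item directly from the defining identities of Proposition~\ref{prop2} (i.e.\ the generators (C1)--(C5)), together with Proposition~\ref{prop3}, Corollary~\ref{cor:props}, Corollary~\ref{subalgebra}, and the definition of the order, $t_1\leq t_2$ iff $t_1\sqcap t_2=t_1$. Two observations will be reused throughout. First, $(\bot\mid b)=\bot_\mathfrak{C}$ for every $b\in A'$: since $\bot=b\wedge\neg b$, we get $(\bot\mid b)=(\neg b\wedge b\mid b)=(\neg b\mid b)=\bot_\mathfrak{C}$ by (C4) and Proposition~\ref{prop3}(ii). Second, and more importantly, the ``reduction to $\top$'' furnished by Corollary~\ref{cor:props}(ii), namely $(a\wedge b\mid\top)=(a\mid b)\sqcap(b\mid\top)$, which combined with the injectivity $(x\mid\top)=(y\mid\top)\Leftrightarrow x=y$ of Proposition~\ref{prop3}(i) converts order/equality statements about arbitrary conditionals into statements about elements of $A$. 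The forward inequalities are one-line computations; the real content sits in the ``iff'' clauses of (i) and (ii), where this reduction is the crucial device.

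For (ii), if $a\leq c$ then $a\wedge c=a$, so by (C2), $(a\mid b)\sqcap(c\mid b)=(a\wedge c\mid b)=(a\mid b)$, i.e.\ $(a\mid b)\leq(c\mid b)$. For the equivalence at $\top$, the converse assumes $(a\mid\top)\leq(c\mid\top)$, hence $(a\wedge c\mid\top)=(a\mid\top)$ by (C2), and Proposition~\ref{prop3}(i) gives $a\wedge c=a$, i.e.\ $a\leq c$. For (i), since $(b\mid b)=\top_\mathfrak{C}$ by (C1), the relation $(a\mid b)\geq(b\mid b)$ is just $(a\mid b)=\top_\mathfrak{C}$; if $a\geq b$ then $a\wedge b=b$ and $(a\mid b)=(a\wedge b\mid b)=(b\mid b)=\top_\mathfrak{C}$ by (C4) and (C1), while conversely $(a\mid b)=\top_\mathfrak{C}$ yields $(a\wedge b\mid\top)=(a\mid b)\sqcap(b\mid\top)=(b\mid\top)$ by Corollary~\ref{cor:props}(ii), whence $a\wedge b=b$ by Proposition~\ref{prop3}(i), i.e.\ $a\geq b$. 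Item (iv) is proved in contrapositive form: if $a\wedge b=c\wedge b$ then $(a\mid b)=(a\wedge b\mid b)=(c\wedge b\mid b)=(c\mid b)$ by two uses of (C4).

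The remaining items follow from the chain rule (C5) and from Corollary~\ref{cor:props}(ii). For (iii), $a\leq b\leq d$ and (C5) give $(a\mid d)=(a\mid b)\sqcap(b\mid d)\leq(a\mid b)$, the displayed case being the instance $d=a\vee b$. For (v), the left inequality is immediate from Corollary~\ref{cor:props}(ii), as $(a\wedge b\mid\top)=(a\mid b)\sqcap(b\mid\top)\leq(a\mid b)$; the right inequality is obtained by complementation, applying the left inequality to $\neg a$ to get $(\neg a\wedge b\mid\top)\leq(\neg a\mid b)$, and observing that $\sim\!(a\mid b)=(\neg a\mid b)$ and $\sim\!(b\to a\mid\top)=(b\wedge\neg a\mid\top)$ by (C3), so taking complements reverses the order and yields $(a\mid b)\leq(b\to a\mid\top)$. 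For (vii), Corollary~\ref{cor:props}(ii) gives $(b\mid\top)\sqcap(a\mid b)=(a\wedge b\mid\top)$, which is $\leq(a\mid\top)$ by (ii) since $a\wedge b\leq a$. Finally for (vi), from $a\leq b$ and Corollary~\ref{cor:props}(ii) we have $(a\mid\top)=(a\mid b)\sqcap(b\mid\top)$, hence $(a\mid\top)\sqcap(d\mid b)=(a\mid b)\sqcap(d\mid b)\sqcap(b\mid\top)=(a\wedge d\mid b)\sqcap(b\mid\top)=(\bot\mid b)\sqcap(b\mid\top)=\bot_\mathfrak{C}$, using (C2), the hypothesis $a\wedge d=\bot$, and $(\bot\mid b)=\bot_\mathfrak{C}$ (the hypothesis $\bot<a$ is in fact not needed).

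I expect the only delicate point to be the backward direction of the equivalences in (i) and (ii): unlike the forward inequalities, these cannot be obtained by term rewriting alone, since a priori the congruence $\equiv_\mathfrak{C}$ might collapse distinct conditionals. The whole difficulty is therefore concentrated in \emph{faithfulness}, i.e.\ the injectivity of $a\mapsto(a\mid\top)$ recorded in Proposition~\ref{prop3}(i); once the arbitrary-antecedent cases are pushed down to the antecedent $\top$ via the chain-rule identity of Corollary~\ref{cor:props}(ii), that single fact closes every converse. Everything else is routine Boolean manipulation, so I would organise the write-up around the two reusable observations $(\bot\mid b)=\bot_\mathfrak{C}$ and $(a\wedge b\mid\top)=(a\mid b)\sqcap(b\mid\top)$, and then dispatch the seven items in the order (ii), (i), (iv), (iii), (v), (vii), (vi).
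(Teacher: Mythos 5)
Your proof is correct, and while most items follow the paper's own computations, two sub-arguments take a genuinely different and arguably cleaner route. For the converse of (i), the paper decomposes $b=(a\wedge b)\vee c$ with $c\neq\bot$ and then asserts $(c\mid b)\neq\bot_\mathfrak{C}$ to conclude $(a\mid b)<\top_\mathfrak{C}$ --- a claim that is not justified at that stage of the development; you instead push everything down to antecedent $\top$ via $(a\wedge b\mid\top)=(a\mid b)\sqcap(b\mid\top)$ and invoke only the injectivity of $x\mapsto(x\mid\top)$ from Proposition~\ref{prop3}(i), which is the same faithfulness fact the paper's proof of (ii) already relies on; this makes the logical dependencies tighter and more honest. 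For the right inequality in (v), the paper argues by splitting on $(b\vee\neg b\mid\top)$ and using the chain rule, whereas you obtain it by Boolean duality from the (immediate) left inequality applied to $\neg a$; this is shorter and avoids the case analysis. The remaining items --- (ii), (iii), (iv), (vi), (vii) --- coincide in substance with the paper's proofs (your observation that $\bot<a$ is not needed in (vi) is accurate), and your explicit identification of Proposition~\ref{prop3}(i) as the single non-rewriting ingredient is a fair diagnosis of where the real content of the equivalences lies.
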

\begin{proof}
See Appendix. 
\end{proof}

Some properties in the proposition above have a clear logical reading. For instance, (\ref{p6}) tells us that in a Boolean algebra of conditionals, a basic conditional $(a\mid b)$ is a weaker construct than the conjunction $a\wedge b$ but  stronger than the material implication $b \to a$, in accordance to previous considerations in the literature, see e.g.\ \cite{DP94}. As a consequence, this suggests that  
a conditional $(a \mid b)$ can be evaluated to {\em true} when both $b$ and $a$ are so (i.e. when $a \land b$ is true), while $(a \mid b)$ can be evaluated as {\em false} when $a$ is false and $b$ is true (i.e. when falsifying $b \to a$). Furthermore, \eqref{p8} can be read as a form of {\em modus ponens} with respect to conditional expressions: from $b$ and $(a \mid b)$ it follows $a$. 
We refer this discussion on logical issues of conditionals to Section \ref{sec:logic} in which we will introduce and study a logic of conditionals and where we will propose a formal definition of {\em truth} for them.

We now end this section presenting a few further properties of Boolean algebras of conditionals regarding the disjunction in the antecedents.
\begin{proposition}\label{prop5}
In every  algebra $\mathcal{C}({\bf A})$ the following properties hold for all $a, a'\in A$ and $b, b'\in A'$:
\begin{enumerate}[(i)]
\item\label{p12} $(a \mid b) \sqcap (a \mid b') \leq (a \mid b \lor b')$; 
in particular, $(a \mid b) \sqcap (a \mid \neg b) \leq (a \mid \top)$;  
\item \label{p11}  if $a \leq b \land b'$, then $(a \mid b) \sqcap (a \mid b') = (a \mid b \lor b')$; 
\item\label{p13} $(a\mid b)\leq (b\to a\mid b\vee b')$;
\item\label{p14} $(a\mid b)\sqcap (a'\mid b')\leq ((b\to a) \wedge (b'\to a') \mid b\vee b')$.

\end{enumerate}
\end{proposition}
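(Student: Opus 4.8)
The plan is to derive all four inequalities purely from the identities already established---chiefly Proposition \ref{prop2}, Proposition \ref{prop3}\eqref{e7}, Corollary \ref{cor:props}, and the order facts of Proposition \ref{prop:ordine}---without appealing to the atomic structure of Section \ref{sec:atoms}. Throughout I would write $c=b\lor b'$, so that $b\land c=b$ and $b'\land c=b'$, and I would isolate two facts to be used repeatedly. The first is the \emph{product-rule} decomposition coming from Corollary \ref{cor:props}(iii), namely $(a\land b\mid c)=(a\mid b)\sqcap(b\mid c)$ (using $b\land c=b$), and its analogue for $b'$. The second is the \emph{partition-of-unity} identity $(b\mid c)\sqcup(b'\mid c)=(b\lor b'\mid c)=(c\mid c)=\top_{\mathfrak{C}}$, which follows from Proposition \ref{prop3}\eqref{e7} together with Proposition \ref{prop2}\eqref{e1}.

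For \eqref{p12}, set $X=(a\mid b)\sqcap(a\mid b')$. Meeting $X$ with $(b\mid c)$ and applying the product rule gives $X\sqcap(b\mid c)\leq(a\mid b)\sqcap(b\mid c)=(a\land b\mid c)\leq(a\mid c)$, where the last step is Proposition \ref{prop:ordine}\eqref{p2} applied to $a\land b\leq a$; symmetrically $X\sqcap(b'\mid c)\leq(a\mid c)$. Taking the join of these two inequalities and using distributivity in $\mathcal{C}({\bf A})$ together with the partition-of-unity identity yields $X=X\sqcap\top_{\mathfrak{C}}=X\sqcap\big((b\mid c)\sqcup(b'\mid c)\big)=\big(X\sqcap(b\mid c)\big)\sqcup\big(X\sqcap(b'\mid c)\big)\leq(a\mid c)$, which is exactly \eqref{p12}; the ``in particular'' clause is the instance $b'=\neg b$, for which $c=\top$. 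For \eqref{p11}, the inequality $\leq$ is precisely \eqref{p12}, so only $\geq$ remains, and this is where I would use the hypothesis $a\leq b\land b'$: from $a\leq b\leq c$ the chain rule Proposition \ref{prop2}\eqref{e8} gives $(a\mid c)=(a\mid b)\sqcap(b\mid c)\leq(a\mid b)$, and symmetrically $a\leq b'\leq c$ gives $(a\mid c)\leq(a\mid b')$, whence $(a\mid c)\leq(a\mid b)\sqcap(a\mid b')$ and equality follows.

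For \eqref{p13} I would recast the target $(a\mid b)\leq(b\to a\mid c)$ through the Boolean shunting law $x\leq y\sqcup\sim z\iff x\sqcap z\leq y$. Taking $x=(a\mid b)$, $z=(b\mid c)$ and $y=(a\mid c)$, and using $\sim(b\mid c)=(\neg b\mid c)$ from Proposition \ref{prop2}\eqref{e3}, the right-hand disjunction becomes $(a\mid c)\sqcup(\neg b\mid c)=(a\lor\neg b\mid c)=(b\to a\mid c)$ by Proposition \ref{prop3}\eqref{e7}. Hence it suffices to verify $x\sqcap z\leq y$, i.e.\ $(a\mid b)\sqcap(b\mid c)\leq(a\mid c)$; but the left-hand side equals $(a\land b\mid c)$ by the product rule, and $(a\land b\mid c)\leq(a\mid c)$ again by Proposition \ref{prop:ordine}\eqref{p2}. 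Finally \eqref{p14} is immediate from \eqref{p13}: applying it to each factor gives $(a\mid b)\leq(b\to a\mid c)$ and $(a'\mid b')\leq(b'\to a'\mid c)$, so $(a\mid b)\sqcap(a'\mid b')\leq(b\to a\mid c)\sqcap(b'\to a'\mid c)=((b\to a)\land(b'\to a')\mid c)$ by Proposition \ref{prop2}\eqref{e2}.

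The only genuinely non-routine step I anticipate is \eqref{p12}: the remaining parts are bookkeeping with the established identities, whereas \eqref{p12} requires spotting that $(b\mid c)$ and $(b'\mid c)$ jointly cover the top element and combining this observation with distributivity and the product-rule decomposition. Once \eqref{p12} is secured, parts \eqref{p11}--\eqref{p14} follow quickly, with \eqref{p13} resting on the complementation identity $\sim(b\mid c)=(\neg b\mid c)$ and the Boolean shunting law.
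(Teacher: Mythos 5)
Your proof is correct. Parts (i) and (iv) essentially reproduce the paper's own argument: for (i) the paper likewise writes $(a\mid b)\sqcap(a\mid b')$ as its meet with $(b\mid b\vee b')\sqcup(b'\mid b\vee b')=\top_{\mathfrak{C}}$, distributes, and bounds each disjunct via $(a\mid b)\sqcap(b\mid b\vee b')=(a\wedge b\mid b\vee b')$ (the paper derives this inline from Proposition \ref{prop2}(iv) and (v) rather than quoting Corollary \ref{cor:props}(iii), and merges the two bounds into $(a\wedge(b\vee b')\mid b\vee b')$ at the end instead of majorizing each by $(a\mid b\vee b')$ first --- immaterial differences); and (iv) is obtained in both cases by applying (iii) to each factor and invoking Proposition \ref{prop2}(ii). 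Where you genuinely diverge is in (ii) and (iii), and in both cases your route is shorter. For (ii) the paper goes back inside the distributed decomposition from (i) and shows that, under $a\leq b\wedge b'$, each disjunct can be simplified until the join collapses to $(a\mid b\vee b')$; you instead observe that the hypothesis puts $a\leq b\leq b\vee b'$ and $a\leq b'\leq b\vee b'$, so the chain rule (equivalently Proposition \ref{prop:ordine}(iii)) gives $(a\mid b\vee b')\leq(a\mid b)\sqcap(a\mid b')$ directly, and equality follows from (i). For (iii) the paper notes $b\to a\geq b'\wedge\neg b$, deduces $(b\to a\mid b'\wedge\neg b)=\top_{\mathfrak{C}}$, and then applies part (i) to $(b\to a\mid b)\sqcap(b\to a\mid b'\wedge\neg b)$ (a step that tacitly needs $b'\wedge\neg b\neq\bot$, with the degenerate case handled separately); your Boolean shunting argument $x\sqcap z\leq y\iff x\leq y\sqcup\sim z$ applied to $(a\mid b)\sqcap(b\mid b\vee b')=(a\wedge b\mid b\vee b')\leq(a\mid b\vee b')$ avoids both the appeal to (i) and that side condition, at the cost only of the complementation identity $\sim(b\mid b\vee b')=(\neg b\mid b\vee b')$. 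Both approaches stay entirely within the equational consequences of (C1)--(C5), as intended.
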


\begin{proof} See Appendix.
 \end{proof}

Observe that the logical reading of  property \eqref{p12} above is the
well-known {\em OR-rule}, typical of nonmonotonic reasoning (see
\cite{Adams1975,DP94}). This fact, although not being particularly
surprising, will be further strengthened in Section \ref{sec:logic}
where we will show that, indeed, Boolean algebras of conditionals
provide a sort of algebraic semantics for a nonmonotonic logic
related to System P. Further,  \eqref{p14} shows that the algebraic
conjunction of two basic conditionals is stronger than the operation
of {\em quasi-conjunction} introduced in the setting of measure-free
conditionals (see \cite{Adams1975} and \cite[Lemma 2]{DP94}) and recalled in Subsection \ref{sec:measure-free}. Also notice that, the point (iv) above, in the special case in which $a'=b$, $b'=c$ and $a\leq b\leq c$ actually gives $(a\mid b)\sqcap (b\mid c)=(a\mid c)=((b\to a)\wedge(c\to b)\mid b\vee c)$.  Therefore,  the requirement R4 is in agreement with the definition  of quasi-conjunction.

\section{The atoms of a Boolean algebra of conditionals}\label{sec:atoms}

As we already noticed, if ${\bf A}$ is finite, $\mathcal{C}({\bf A})$ is finite as well and hence atomic.
This section is devoted to investigate the atomic structure of finite Boolean algebras of conditionals. In particular, in Subsection \ref{sub41} we  provide a characterization of the atoms of $\mathcal{C}({\bf A})$ in terms of the atoms of ${\bf A}$. That characterization will be employed in Subsections \ref{sec:belowbasic} and \ref{sub43} to give, respectively, a full description of the atoms which stand below a basic conditional $(a\mid b)$ and to prove results concerning equalities  and inequalities among conditionals which improve those of Section \ref{sec:BAC}.

In this section and in rest of the paper, we will only deal with finite Boolean algebras.

\subsection{The atomic structure of $\mathcal{C}({\bf A})$}\label{sub41}

Let us recall the notation introduced in Section \ref{sec:algpre}: for every  Boolean algebra ${\bf A}$, we denote by $\atom({\bf A})$ the set of its atoms, that will be denoted by lower-case greek letters, $\alpha, \beta, \gamma$ etc.

 \begin{proposition} \label{lemma:repres} In a conditional algebra $\mathcal{C}({\bf A})$, the following   hold:
\begin{enumerate}[(i)]

\item each element $t$ of $\mathcal{C}(\bf A)$ is of the form $t = \bigsqcap_i (\bigsqcup_j (a_{i_j}\mid b_{i_j}))$; 

\item each basic conditional is of the form $(a \mid b) = \bigsqcup_{\alpha \leq a} (\alpha \mid b)$; 

\item in particular, every element of $\mathcal{C}({\bf A})$ is a $\sqcap$-$\sqcup$ combination of basic conditionals in the form $(\alpha\mid \bigvee X)$ where $\alpha\in \atom({\bf A})$ and $X\subseteq \atom({\bf A})$. 
\end{enumerate}
\end{proposition}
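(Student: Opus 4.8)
The plan is to prove the three items in order, since (ii) and (iii) follow naturally once (i) is established. The guiding principle throughout is that every element of $\mathcal{C}({\bf A})$ starts life as a Boolean term generated by the basic conditionals $(a\mid b)$, and the rewriting rules (C1)--(C5), captured algebraically by Proposition \ref{prop2} and Proposition \ref{prop3}, let us push those terms into increasingly canonical shapes.

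\textbf{Item (i).} First I would recall that $\mathcal{C}({\bf A})$ is by Definition \ref{def:BAC} a quotient of ${\bf Free}(\AmA)$, so any element is the class of a Boolean term $t$ built from the generators $(a\mid b)$ using $\sqcap,\sqcup,\sim$. In a Boolean algebra every element admits a \emph{conjunctive normal form}, i.e.\ a meet of joins of literals, where a literal is a generator or its negation. The only subtlety is eliminating the negations so that the literals become plain basic conditionals rather than negated ones; here I would invoke Proposition \ref{prop2}(\ref{e3}), namely $\sim\!(a\mid b)=(\neg a\mid b)$, which rewrites every negated generator as a (positive) basic conditional with the same antecedent. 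Applying this to each literal in the CNF yields exactly the form $t=\bigsqcap_i\bigl(\bigsqcup_j (a_{i_j}\mid b_{i_j})\bigr)$.

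\textbf{Item (ii).} For a single basic conditional $(a\mid b)$ I would decompose the consequent over the atoms of ${\bf A}$. By Proposition \ref{prop:atomsAlgebra}(ii) we have $a=\bigvee_{\alpha\le a}\alpha$ in ${\bf A}$, and then Proposition \ref{prop3}(\ref{e7}) tells us that disjunction in the consequent passes through the conditional bar, i.e.\ $(a\vee c\mid b)=(a\mid b)\sqcup(c\mid b)$. Iterating this distribution over the finitely many atoms below $a$ gives $(a\mid b)=\bigsqcup_{\alpha\le a}(\alpha\mid b)$, as claimed.

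\textbf{Item (iii).} Combining (i) and (ii) immediately reduces every element to a $\sqcap$-$\sqcup$ combination of conditionals $(\alpha\mid b)$ whose consequent is an atom of ${\bf A}$. It remains to put the \emph{antecedents} into the form $\bigvee X$ for $X\subseteq\atom({\bf A})$. Again Proposition \ref{prop:atomsAlgebra}(ii) writes any $b\in A'$ as $b=\bigvee X$ with $X=\{\alpha\in\atom({\bf A}):\alpha\le b\}$, and since $b\neq\bot$ this set is nonempty, so the antecedent already has the required shape with no further rewriting needed. The main obstacle I anticipate is purely bookkeeping: making sure that the normal-form argument in (i) is carried out using only Boolean-algebra identities plus the generator-specific rule for negation, and that the distributions in (ii) are justified by the proposition for disjunction rather than silently assuming the dual identity for conjunction (which, by requirement R2 and Proposition \ref{prop2}(\ref{e2}), holds only when the antecedents coincide). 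No single step is deep; the content lies in verifying that the already-established equalities suffice to reach the canonical form.
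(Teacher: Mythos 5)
Your proposal is correct and follows essentially the same route as the paper's own proof: conjunctive normal form in the free algebra plus the identity $\sim\!(a\mid b)=(\neg a\mid b)$ for (i), the atomic decomposition $a=\bigvee_{\alpha\leq a}\alpha$ together with Proposition \ref{prop3}(\ref{e7}) for (ii), and the combination of the two for (iii). The only difference is that you spell out the (trivial) rewriting of antecedents as joins of atoms, which the paper leaves implicit.
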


\begin{proof}
(i). It readily follows by recalling that (1) every element $t$ of $\mathcal{C}({\bf A})$ is, by construction, a Boolean combination of basic conditionals, (2) it can be expressed in conjunctive normal form, and (3) the negation of a basic conditional $(a \mid b)$ is the basic conditional $(\neg a \mid b)$. 
\vspace{.2cm}

\noindent (ii). The claim directly follows from  Proposition \ref{prop3} \eqref{e7} taking into account that $a = \bigvee_{\alpha \leq a} \alpha$ (recall Proposition \ref{prop:atomsAlgebra} (ii)). 
\vspace{.2cm}

\noindent (iii). It is a direct consequence of (i) and (ii). 
\end{proof}

 Now, let ${\bf A}$ be a Boolean algebra with $n$ atoms, i.e.\ $| \atom({\bf A}) | = n$. 
  For each $i \leq n-1$, let us define $Seq_i({\bf A})$ to be the set of sequences $\langle \alpha_1, \alpha_2, \ldots, \alpha_{i} \rangle$ of $i$ pairwise different elements of $\atom({\bf A})$. 
Thus, for every $\overline{\alpha}=\langle \alpha_1, \alpha_2, \ldots, \alpha_{i} \rangle\in Seq_i({\bf A})$, let us consider the compound conditional of $\mathcal{C}({\bf A})$ defined in the following way:
\begin{equation}\label{eq:atomsDef}
\omega_{\overline{\alpha}}= (\alpha_1 \mid \top) \sqcap  (\alpha_2 \mid \neg \alpha_1) \sqcap \ldots \sqcap
 (\alpha_{i} \mid \neg\alpha_1 \land \ldots \land \neg \alpha_{i-1}) . 
 \end{equation}

Intuitively, such a conjunction of conditonals encodes a sort of chained `defeasible' conditional statements about a set of mutually disjoint events: in principle $\alpha_1$ holds, but if $\alpha_1$ turns out to be false then in principle $\alpha_2$ holds, but if besides $\alpha_2$ turns out to be false as well, then in principle $\alpha_3$ holds, and so on \ldots 

These conjunctions of basic  conditionals will play an important role in describing the atomic structure of $\mathcal{C}({\bf A})$ and enjoy suitable properties. 
To begin with, let us consider sets of those compound conditionals of a given length: for each $1 \leq i \leq n-1$, let  
$$  Part_i(\mathcal{C}({\bf A})) =  \{ \omega_{\overline \alpha}  \mid   \overline{\alpha} \in Seq_i({\bf A}) \}. $$
\begin{example}\label{ex:neww}
Let ${\bf A}$ be the Boolean algebra with $4$ atoms, $\atom({\bf A})=\{\alpha_1,\ldots, \alpha_4\}$. For $i=1$, the set $Part_1(\mathcal{C}(\bf A))$ is easily built by considering all sequences of length 1 of atoms of ${\bf A}$, $Seq_1(\mathcal{C}({\bf A}))= \{\langle \alpha_1\rangle, \langle \alpha_2\rangle, \langle \alpha_3\rangle, \langle \alpha_4\rangle\}$, and hence: 
$$
Part_1(\mathcal{C}({\bf A}))=\{\omega_{\langle \alpha_1\rangle}, \dots, \omega_{\langle \alpha_4\rangle}\}=\{(\alpha_1\mid \top),\ldots, (\alpha_4\mid \top)\}.
$$
For $i = 2$, we have to consider sequences of atoms of length 2, i.e. $$Seq_2(\mathcal{C}({\bf A}))= \{\langle \alpha_1, \alpha_2 \rangle, \langle \alpha_1, \alpha_3,\rangle, \langle \alpha_1, \alpha_4 \rangle, \langle \alpha_2, \alpha_3 \rangle, \ldots \},$$ and then the corresponding set $Part_2(\mathcal{C}({\bf A}))$ is composed by 12 Boolean terms like
\begin{itemize}
\item[] $\omega_{\langle \alpha_1, \alpha_2 \rangle}=(\alpha_1\mid \top)\sqcap (\alpha_2\mid \neg \alpha_1)$; \quad $\omega_{\langle \alpha_1, \alpha_3 \rangle}=(\alpha_1\mid \top)\sqcap (\alpha_3\mid \neg \alpha_1)$;
\item[] $\omega_{\langle \alpha_1, \alpha_4 \rangle}=(\alpha_1\mid \top)\sqcap (\alpha_4\mid \neg \alpha_1)$; \quad $\omega_{\langle \alpha_2, \alpha_3 \rangle}=(\alpha_2\mid \top)\sqcap (\alpha_3\mid \neg \alpha_2)$;
\item[] \ldots
\end{itemize} 
Finally, for $i = 3$, consider the set sequences of atoms of length 3, $$Seq_3(\mathcal{C}({\bf A}))=\{\langle \alpha_1, \alpha_2, \alpha_3 \rangle, \langle \alpha_1, \alpha_2, \alpha_4 \rangle, \langle \alpha_2, \alpha_1, \alpha_3 \rangle,\ldots\}.$$ Therefore, the set $Part_3(\mathcal{C}({\bf A}))$ contains 24 Boolean terms:
\begin{itemize}
\item[] $\omega_{\langle \alpha_1, \alpha_2, \alpha_3 \rangle}=(\alpha_1\mid \top)\sqcap (\alpha_2\mid \neg \alpha_1)\sqcap(\alpha_3\mid \neg \alpha_1\wedge \neg \alpha_2)$;
\item[] $\omega_{\langle \alpha_1, \alpha_2, \alpha_4 \rangle}=(\alpha_1\mid \top)\sqcap (\alpha_2\mid \neg \alpha_1)\sqcap(\alpha_4\mid \neg \alpha_1\wedge \neg \alpha_2)$;
\item[] $\omega_{\langle \alpha_2, \alpha_1, \alpha_3 \rangle}=(\alpha_2\mid \top)\sqcap (\alpha_1\mid \neg \alpha_2)\sqcap(\alpha_3\mid \neg \alpha_2\wedge \neg \alpha_1)$;
\item[] \ldots  \qed
\end{itemize} 
\end{example}

The following result shows that, for each $i$, 
$Part_i(\mathcal{C}({\bf A}))$ is a partition of  $\mathcal{C}({\bf A}) $(recall Section \ref{sec:algpre}), and the higher  the index $i$, the more refined the partition is.
 
 \begin{proposition} \label{useful} 
$Part_i(\mathcal{C}({\bf A}))$ is a {\em partition} of $\mathcal{C}({\bf A})$. 
\end{proposition}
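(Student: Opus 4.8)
The plan is to show that $Part_i(\mathcal{C}({\bf A}))$ satisfies the two defining properties of a partition recalled in Section \ref{sec:algpre}: the elements are pairwise disjoint (their pairwise meet is $\bot_{\mathfrak{C}}$) and their supremum is $\top_{\mathfrak{C}}$. I would prove these two facts separately, and I expect the pairwise-disjointness to be the more delicate of the two.

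For pairwise disjointness, take two distinct sequences $\overline\alpha, \overline\beta \in Seq_i({\bf A})$ and let $k$ be the least position at which they differ, so that $\alpha_j = \beta_j$ for $j < k$ but $\alpha_k \neq \beta_k$. The two compound conditionals $\omega_{\overline\alpha}$ and $\omega_{\overline\beta}$ share the common prefix of conjuncts up to position $k-1$, and at position $k$ they contain respectively $(\alpha_k \mid \neg\alpha_1 \land \cdots \land \neg\alpha_{k-1})$ and $(\beta_k \mid \neg\alpha_1 \land \cdots \land \neg\alpha_{k-1})$, with the \emph{same} antecedent $c := \neg\alpha_1 \land \cdots \land \neg\alpha_{k-1}$. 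By Proposition \ref{prop2}\eqref{e2} their conjunction collapses to $(\alpha_k \land \beta_k \mid c) = (\bot \mid c)$, since $\alpha_k, \beta_k$ are distinct atoms and hence disjoint by Proposition \ref{prop:atomsAlgebra}(i). It remains to argue $(\bot \mid c) = \bot_{\mathfrak{C}}$; this follows since $c \in A'$ (the atoms $\alpha_1,\ldots,\alpha_{k-1}$ do not exhaust $\atom({\bf A})$ as $i \leq n-1$, so $c \neq \bot$) and $(\bot \mid c) = (\neg c \land c \mid c) = (\neg c \mid c) = \bot_{\mathfrak{C}}$ by Proposition \ref{prop2}\eqref{e4} and Proposition \ref{prop3}\eqref{e6}. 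Monotonicity of $\sqcap$ then forces $\omega_{\overline\alpha} \sqcap \omega_{\overline\beta} = \bot_{\mathfrak{C}}$. I would also note in passing that each $\omega_{\overline\alpha}$ is itself different from $\bot_{\mathfrak{C}}$, which is needed for a genuine partition; this should follow from the representation results of the section (ultimately from the atom characterization to be proved in Theorem \ref{th:atoms}), or be established directly by exhibiting a homomorphism sending $\omega_{\overline\alpha}$ to $1$.

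For the supremum condition, I would argue by induction on $i$ that $\bigsqcup_{\overline\alpha \in Seq_i({\bf A})} \omega_{\overline\alpha} = \top_{\mathfrak{C}}$. The base case $i=1$ reads $\bigsqcup_{\alpha \in \atom({\bf A})} (\alpha \mid \top) = (\bigvee_\alpha \alpha \mid \top) = (\top \mid \top) = \top_{\mathfrak{C}}$, using Proposition \ref{prop3}\eqref{e7} together with Proposition \ref{prop:atomsAlgebra}(ii) and Proposition \ref{prop2}\eqref{e1}. For the inductive step, fixing a prefix $\langle \alpha_1,\ldots,\alpha_{i}\rangle$ with antecedent $c = \neg\alpha_1 \land \cdots \land \neg\alpha_{i}$, the extensions append a conjunct $(\gamma \mid c)$ ranging over the atoms $\gamma \notin \{\alpha_1,\ldots,\alpha_i\}$, i.e. those $\gamma \leq c$; by Proposition \ref{prop3}\eqref{e7} these disjoin to $(\bigvee_{\gamma \leq c} \gamma \mid c) = (c \mid c) = \top_{\mathfrak{C}}$, so summing over the last coordinate leaves the length-$i$ conjunction unchanged, and the induction hypothesis closes the argument.

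The step I expect to be the \textbf{main obstacle} is the manipulation showing that refining by the last conjunct sums back to the coarser term, because it requires distributing $\sqcup$ over the fixed prefix $\sqcap$-block and recognizing that the newly introduced antecedents are exactly $c = \neg\alpha_1 \land \cdots \land \neg\alpha_i$, so that the disjunction $\bigvee_{\gamma \leq c}\gamma$ genuinely recovers $c$ rather than some smaller element. Care is needed because $Seq_i$ consists of \emph{sequences} of distinct atoms (order matters), so the bookkeeping of which atoms remain available at depth $i+1$ must be done relative to a fixed prefix and then summed; I would organize this so that the distributivity and the identity $\bigvee_{\gamma \leq c}\gamma = c$ are applied to one prefix at a time before collecting over all prefixes of length $i$.
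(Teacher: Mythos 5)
Your proof is correct and follows essentially the same route as the paper's: the same least-differing-index argument (common antecedent, disjoint atoms, hence meet $\bot_{\mathfrak{C}}$) for pairwise disjointness, and the same induction on $i$ for the supremum, refining each prefix by the remaining atoms whose disjunction over the common antecedent yields $\top_{\mathfrak{C}}$. Your extra remark that each $\omega_{\overline{\alpha}}\neq\bot_{\mathfrak{C}}$ is a point the paper defers to the proof of Theorem \ref{th:atoms} rather than settling here, but otherwise the two arguments coincide.
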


\begin{proof}  See Appendix.
\end{proof}

As we already saw in Example \ref{ex:neww},  
if $\atom({\bf A})=\{\alpha_1,\ldots, \alpha_n\}$, then $Part_1(\mathcal{C}({\bf A}))=\{(\alpha_1\mid \top),\ldots,( \alpha_n\mid \top)\}$ and it gives   
the coarsest partition among those  that we denoted by $Part_i(\mathcal{C}({\bf A}))$. 

In the following, if $| \atom({\bf A}) | = n$,  for simplicity, we will write $Seq({\bf A})$  and $Part(\mathcal{C}({\bf A}))$ instead of $Seq_{n-1}({\bf A})$ and $Part_{n-1}(\mathcal{C}({\bf A}))$ respectively. 
Note that in this case, for every $\overline{\alpha}=\langle \alpha_1, \alpha_2, \ldots, \alpha_{n-1} \rangle\in Seq({\bf A})$, the compound conditional $\omega_{\overline{\alpha}}$ defined as in (\ref{eq:atomsDef}), can  be equivalently written as
$$ 
\omega_{\overline{\alpha}}  = (\alpha_1 \mid \top) \sqcap  (\alpha_2 \mid \alpha_2 \lor \dots \lor \alpha_n) \sqcap  (\alpha_3 \mid  \alpha_3 \lor \ldots \lor \alpha_n) \sqcap \ldots \sqcap
 (\alpha_{n-1} \mid \alpha_{n-1} \lor \alpha_n).
 $$ 

 Next theorem shows that these conditionals are in fact the atoms of $\mathcal{C}({\bf A})$.
 
 \begin{theorem}\label{th:atoms} 
Let ${\bf A}$ be a Boolean algebra such that $|\atom({\bf A})|=n$. Then, 
 $$ 
 \atom(\mathcal{C}({\bf A})) = \{ \omega_{\overline{\alpha}} \mid  \overline{\alpha} \in Seq({\bf A}) \}=Part(\mathcal{C}({\bf A})).
 $$ 
As a consequence, $| \atom(\mathcal{C}({\bf A})) | = n!$ and  $|\mathcal{C}({\bf A}) | = 2^{n!}$. 
\end{theorem}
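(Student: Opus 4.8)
The plan is to prove that the set $Part(\mathcal{C}(\mathbf{A})) = \{\omega_{\overline\alpha} \mid \overline\alpha \in Seq(\mathbf{A})\}$ coincides with $\atom(\mathcal{C}(\mathbf{A}))$ by verifying the two characterizing conditions (a) and (b) of Proposition \ref{prop:atomsAlgebra}. Condition (a), that $Part(\mathcal{C}(\mathbf{A}))$ is a partition, is already granted by Proposition \ref{useful} applied with $i = n-1$, so the real work is condition (b): each $\omega_{\overline\alpha}$ is strictly above $\bot_{\mathfrak{C}}$ and nothing lies strictly between $\bot_{\mathfrak{C}}$ and $\omega_{\overline\alpha}$.

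First I would establish that each $\omega_{\overline\alpha} \neq \bot_{\mathfrak{C}}$. Since the $\omega_{\overline\alpha}$ for $\overline\alpha \in Seq(\mathbf{A})$ form a partition whose join is $\top_{\mathfrak{C}}$, if one of them were $\bot_{\mathfrak{C}}$ it would be redundant, but a cleaner route is to exhibit, for each $\overline\alpha$, a homomorphism $\mathcal{C}(\mathbf{A}) \to \mathbf{2}$ sending $\omega_{\overline\alpha}$ to $1$; this simultaneously shows the element is nonzero and (via Proposition \ref{prop:atomsAlgebra}(iii)) prepares the counting argument. The crucial step for condition (b) is minimality: I must show no element $t$ satisfies $\bot_{\mathfrak{C}} < t < \omega_{\overline\alpha}$. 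Here I would use Proposition \ref{lemma:repres}(iii), which says every element is a $\sqcap$-$\sqcup$ combination of conditionals of the form $(\beta \mid \bigvee X)$ with $\beta$ an atom of $\mathbf{A}$. Meeting $\omega_{\overline\alpha}$ with any such generator should, using the defining equalities of Proposition \ref{prop2} (especially chaining (v) and the $\sqcap$-rule (ii)), collapse either to $\omega_{\overline\alpha}$ itself or to $\bot_{\mathfrak{C}}$, leaving no room for an intermediate element.

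The main obstacle I anticipate is exactly this collapse computation: showing that for every generator $(\beta \mid \bigvee X)$, the meet $\omega_{\overline\alpha} \sqcap (\beta \mid \bigvee X)$ is either $\omega_{\overline\alpha}$ or $\bot_{\mathfrak{C}}$. The intuition is that $\omega_{\overline\alpha}$ encodes the total order $\alpha_1 > \alpha_2 > \cdots > \alpha_n$ on the atoms of $\mathbf{A}$; a generator $(\beta \mid \bigvee X)$ asserts that $\beta$ is the $\le$-least atom inside $X$ with respect to this order, and this is a yes/no condition relative to any fixed ordering. To make this rigorous I would compute, within $\omega_{\overline\alpha}$, the value of $(\beta \mid \bigvee X)$ by identifying the first index $k$ with $\alpha_k \in X$ and applying properties \eqref{p7} and \eqref{p8} of Proposition \ref{prop:ordine} together with the chaining rule: if $\beta = \alpha_k$ the conditional is absorbed (meet gives $\omega_{\overline\alpha}$), and if $\beta \neq \alpha_k$ but $\beta \in X$ the disjointness forces the meet to $\bot_{\mathfrak{C}}$. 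Since every $t$ with $t \le \omega_{\overline\alpha}$ is a $\sqcap$-$\sqcup$ combination of such generators, distributivity then pins $t$ to either $\bot_{\mathfrak{C}}$ or $\omega_{\overline\alpha}$, establishing minimality.

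Once (a) and (b) are verified, Proposition \ref{prop:atomsAlgebra} gives $\atom(\mathcal{C}(\mathbf{A})) = Part(\mathcal{C}(\mathbf{A}))$. The cardinality claims are then immediate: $|Seq(\mathbf{A})|$ is the number of sequences of $n-1$ pairwise distinct atoms drawn from $n$ atoms, which is $n!/1! = n!$ (equivalently, each such sequence extends uniquely to a full ordering of all $n$ atoms since the last atom is forced), so $|\atom(\mathcal{C}(\mathbf{A}))| = n!$, and since a finite Boolean algebra with $m$ atoms has $2^m$ elements, $|\mathcal{C}(\mathbf{A})| = 2^{n!}$.
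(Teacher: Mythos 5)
Your proposal is correct and follows essentially the same route as the paper's proof: establish that $Part(\mathcal{C}({\bf A}))$ is a partition via Proposition \ref{useful} with $i=n-1$, then prove minimality by showing that the meet of $\omega_{\overline{\alpha}}$ with any element — reduced via Proposition \ref{lemma:repres} to a $\sqcap$-$\sqcup$ combination of basic conditionals with atomic consequents — collapses to either $\omega_{\overline{\alpha}}$ or $\bot_{\mathfrak{C}}$, exactly the case analysis the paper carries out. The only cosmetic difference is that you certify $\omega_{\overline{\alpha}} \neq \bot_{\mathfrak{C}}$ by exhibiting a homomorphism onto ${\bf 2}$, where the paper invokes the partition property together with a symmetry argument; both work.
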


 \begin{proof} 
To show that $Part(\mathcal{C}({\bf A}))$ coincides with $\atom(\mathcal{C}({\bf A}))$, by  Proposition \ref{prop:atomsAlgebra}, we have to prove the following two conditions:
 
\vspace{.2cm}
 
\noindent(a) {\em $Part(\mathcal{C}({\bf A}))$ is a partition of $\mathcal{C}({\bf A})$. }

\vspace{0.2cm}

\noindent(b) {\em Any  $\omega_{\overline \alpha} \in Part(\mathcal{C}({\bf A}))$ is such that $\bot_\mathfrak{C}<\omega_{\overline{\alpha}}$ and there is no  $t \in \mathcal{C}({\bf A})$ such that $\bot_\mathfrak{C}  < t <  \omega_{\overline \alpha}$. }
 
\vspace{0.2cm} 

\noindent  It is clear that (a) is just the case $i= n-1$ in Proposition \ref{useful}. As for (b), 
note first that each $\omega_{\overline \alpha} \in Part(\mathcal{C}({\bf A}))$ is different from $\bot_{\mathfrak{C}}$. Indeed, it follows from (a) and a symmetry argument on the elements of $ Part(\mathcal{C}({\bf A}))$.
Thus,
it is enough to show that, if $t$ is any element of $\mathcal{C}({\bf A})$ which is not $\bot_\mathfrak{C}$, then either $t\sqcap \omega_{\overline{\alpha}}=\omega_{\overline{\alpha}}$, or $t\sqcap \omega_{\overline{\alpha}}=\bot_\mathfrak{C}$. We show this claim by cases on the form of $t$:

\begin{itemize}
\item[(i)] Assume $t$ is basic conditional of the form $t = (\gamma \mid b)$ with $\gamma \in \atom({\bf A})$.  Let $\overline{\alpha}=\langle \alpha_1, \alpha_2, \ldots, \alpha_{n-1} \rangle$. Since $\gamma \in \atom({\bf A})$, then $\gamma = \alpha_i$ for some $1 \leq i \leq n$. Then we have two cases: either  $b =  \alpha_i \lor \dots \lor \alpha_{n}$, and in that case $\omega_{\overline \alpha} \sqcap   (\gamma \mid b) = \omega_{\overline \alpha} $, or otherwise $b$ is of the form $b = \alpha_i \lor \alpha_k \lor c$, for some $k < i$  and some $c\in A$. If the latter is the case, we have $(\gamma \mid b) \sqcap (\alpha_k \mid \alpha_k \lor \ldots \lor \alpha_n) = (\alpha_i \mid  \alpha_i \lor \alpha_k \lor c) \sqcap  (\alpha_k \mid \alpha_k \lor \ldots \lor \alpha_n) \leq  (\alpha_i \mid  \alpha_i \lor \alpha_k) \sqcap (\alpha_k \mid \alpha_k \lor \alpha_i) = \bot_\mathfrak{C}$, and hence  $(\gamma \mid b) \sqcap \omega_{\overline \alpha} = \bot_\mathfrak{C}$ as well. 

\item[(ii)] Assume $t$ is  a basic conditional $t = (a \mid b)$. By (ii) of Proposition\ \ref{lemma:repres}, we can express $(a \mid b) = \bigsqcup_{\alpha \in  \atom({\bf A}): \alpha  \leq a} (\alpha \mid b)$. Hence,  $\omega_{\overline \alpha} \sqcap   (a \mid b) = \bigsqcup_{\gamma} \omega_{\overline \alpha} \sqcap (\gamma \mid b)$, but by (i), for each $\gamma$, $\omega_{\overline \alpha} \sqcap (\gamma \mid b)$ is either $\omega_{\overline \alpha}$ or $\bot_{\mathfrak{C}}$. So this is also the case  for $\omega_{\overline \alpha} \sqcap  t$. 

\item[(iii)] Finally, assume $t$ is an arbitrary element of $\mathcal{C}({\bf A})$. By Proposition \ref{lemma:repres} above, $t$ is a $\sqcap$-$\sqcup$ combination of basic conditionals, i.e.\ it can be displayed as $t = \bigsqcap_i (\bigsqcup_j (a_{i_j}\mid b_{i_j}))$. Then we have
$$
\omega_{\overline{\alpha}} \sqcap t =  \omega_{\overline{\alpha}} \sqcap \left (\bigsqcap_i \left(\bigsqcup_j (a_{i_j}\mid b_{i_j})\right)\right )  = 
\bigsqcap_i \left(\bigsqcup_j \omega_{\overline{\alpha}} \sqcap (a_{i_j}\mid b_{i_j})\right). 
$$ 
By (ii), each  $\omega_{\overline{\alpha}} \sqcap (a_{i_j}\mid b_{i_j})$ is either equal to $\bot_{\mathfrak{C}}$ or  to $\omega_{\overline{\alpha}}$, and hence so is $\omega_{\overline{\alpha}} \sqcap t$. 
\end{itemize}

Therefore, we have proved that $Part(\mathcal{C}({\bf A})) = \atom(\mathcal{C}({\bf A}))$.  
 \end{proof}

\begin{figure}[ht] 
\begin{center}
\definecolor{qqqqff}{rgb}{0.,0.,1.}
\begin{tikzpicture}
 \coordinate[label=left: ${\alpha_1}$] (a) at (2.389196370707718,2.7932465230629733);
 \coordinate[label=left: ${\alpha_2}$] (b) at (4.176028518092002,2.947505413484638);
 \coordinate[label=right: ${\alpha_3}$] (c) at (5.114436768157129,2.8703759682738057);
 \coordinate[label=below: ${\bot}$] (d) at (3.764671476967562,1.4049165092679916);
 \coordinate[label=right: ${\neg \alpha_1}$] (e) at (5.525793809281566,4.412964872490453);
 \coordinate[label=left: ${\neg \alpha_3}$] (f) at (2.800553411832157,4.33583542727962);
 \coordinate[label=right: ${\neg \alpha_2}$] (h) at (3.738961661897287,4.258705982068787);
 \coordinate[label=above: ${\top}$] (g) at (4.150318703021727,5.801294886285434);
\draw (2.389196370707718,2.7932465230629733)-- (3.764671476967562,1.4049165092679916);
\draw (3.764671476967562,1.4049165092679916)-- (5.114436768157129,2.8703759682738057);
\draw (2.389196370707718,2.7932465230629733)-- (3.738961661897287,4.258705982068787);
\draw (3.738961661897287,4.258705982068787)-- (5.114436768157129,2.8703759682738057);
\draw (2.800553411832157,4.33583542727962)-- (4.176028518092002,2.947505413484638);
\draw (4.176028518092002,2.947505413484638)-- (5.525793809281566,4.412964872490453);
\draw (2.800553411832157,4.33583542727962)-- (4.150318703021727,5.801294886285434);
\draw (4.150318703021727,5.801294886285434)-- (5.525793809281566,4.412964872490453);
\draw (4.150318703021727,5.801294886285434)-- (3.738961661897287,4.258705982068787);
\draw (2.800553411832157,4.33583542727962)-- (2.389196370707718,2.7932465230629733);
\draw (4.176028518092002,2.947505413484638)-- (3.764671476967562,1.4049165092679916);
\draw (5.525793809281566,4.412964872490453)-- (5.114436768157129,2.8703759682738057);
\begin{scriptsize}
\draw [fill=black] (2.389196370707718,2.7932465230629733) circle (1.5pt);
\draw [fill=black] (3.764671476967562,1.4049165092679916) circle (1.5pt);
\draw [fill=black] (5.114436768157129,2.8703759682738057) circle (1.5pt);
\draw [fill=black] (3.738961661897287,4.258705982068787) circle (1.5pt);
\draw [fill=black] (2.800553411832157,4.33583542727962) circle (1.5pt);
\draw [fill=black] (4.176028518092002,2.947505413484638) circle (1.5pt);
\draw [fill=black] (5.525793809281566,4.412964872490453) circle (1.5pt);
\draw [fill=black] (4.150318703021727,5.801294886285434) circle (1.5pt);
\end{scriptsize}
\end{tikzpicture}
\end{center}
\caption{\small{The Boolean algebra with 3 atoms and 8 elements.}}
\label{fig:threeAtoms}
\end{figure}
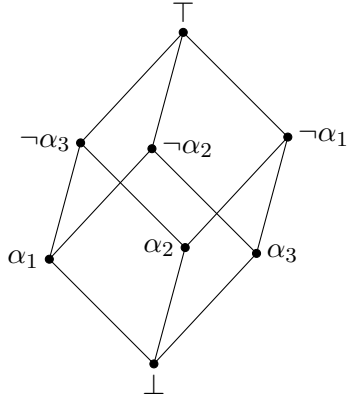

\begin{example}\label{Ex:atoms}
Let ${\bf A}$ be the Boolean algebra of 3 atoms $\alpha_1, \alpha_2, \alpha_3$, and 8 elements, see Figure \ref{fig:threeAtoms}. 
Theorem \ref{th:atoms} tells us that the atoms of the conditional algebra $\mathcal{C}({\bf A})$ are as follows:
$$
\atom(\mathcal{C}({\bf A}))=\{(\alpha_i\mid \top)\sqcap  (\alpha_j\mid \neg \alpha_i) : i,j=1,2,3 \mbox{ and }i\neq j\}. 
$$ 
Therefore, $\mathcal{C}({\bf A})$ has six atoms and $2^6=64$ elements (see Figure \ref{fig:sixAtoms}). In particular, 
the  atoms are: 
\begin{itemize}
\item[] \hfill $\omega_1=(\alpha_1\mid \top)\sqcap (\alpha_2\mid \neg \alpha_1)$, 
\hspace{0.2cm} $\omega_2=(\alpha_1\mid \top)\sqcap (\alpha_3\mid \neg \alpha_1)$, \hfill \mbox{}
\item[] \hfill $\omega_3=(\alpha_2\mid \top)\sqcap (\alpha_1\mid \neg \alpha_2)$, 
\hspace{0.2cm} $\omega_4=(\alpha_2\mid \top)\sqcap (\alpha_3\mid \neg \alpha_2)$, \hfill \mbox{}
\item[] \hfill $\omega_5=(\alpha_3\mid \top)\sqcap (\alpha_1\mid \neg \alpha_3)$,  
\hspace{0.2cm} $\omega_6=(\alpha_3 \mid \top)\sqcap (\alpha_2\mid \neg \alpha_3)$. \hfill \mbox{}
\end{itemize}
Notice that $\omega_1\sqcup  \omega_2= (\alpha_1\mid\top)$. Indeed, $((\alpha_1\mid \top)\sqcap  (\alpha_2\mid \neg \alpha_1)) \sqcup  ((\alpha_1\mid \top)\sqcap  (\alpha_3\mid \neg \alpha_1))= (\alpha_1\mid \top) \sqcap  (\alpha_2\vee \alpha_3\mid \neg \alpha_1)=(\alpha_1\mid \top)\sqcap  (\neg \alpha_1\mid \neg \alpha_1)= (\alpha_1\mid \top)\sqcap  \top_\mathfrak{C} = (\alpha_1\mid\top)$. Analogously, we can also derive $\omega_3\sqcup  \omega_4 = (\alpha_2\mid\top)$ and $\omega_5\sqcup  \omega_6 = (\alpha_3\mid \top)$. 

Now, let us consider the conditional $t= (\alpha_1\mid \neg \alpha_3)$. Obviously, $t=\bigsqcup \{\omega_i: \omega_i\leq t\}$ and thanks to Proposition \ref{prop:ordine}, we can check that 
$$
t=\omega_1\sqcup  \omega_2\sqcup  \omega_5.
$$

As a matter of fact, since $\omega_1\sqcup  \omega_2 =  (\alpha_1\mid\top)$, we have 
$$\omega_1\sqcup  \omega_2\sqcup  \omega_5=(\alpha_1\mid\top)\sqcup((\alpha_3\mid \top)\sqcap(\alpha_1\mid \neg\alpha_3))=((\alpha_1\mid \top)\sqcup (\alpha_3\mid \top))\sqcap((\alpha_1\mid\top)\sqcup(\alpha_1\mid\neg \alpha_3)).
$$ Now, $(\alpha_1\mid \top)\sqcup (\alpha_3\mid \top)=(\alpha_1\vee\alpha_3\mid \top)=(\neg\alpha_3\to \alpha_1\mid \top)$, while Proposition \ref{prop:ordine} (iii) implies that $(\alpha_1\mid\top)\sqcup(\alpha_1\mid\neg \alpha_3)=(\alpha_1\mid\neg \alpha_3)$ because $\alpha_1\leq\neg\alpha_3\leq \top$. Finally, by Proposition \ref{prop:ordine} (v) $(\alpha_1\mid \neg\alpha_3)\leq (\neg\alpha_3\to\alpha_1\mid \top)$ and hence $\omega_1\sqcup  \omega_2\sqcup  \omega_5=(\alpha_1\mid\neg\alpha_3)=t$. 
\qed
\end{example}

\begin{figure}[h!]

\begin{center}
\mbox{} \\
\definecolor{qqqqff}{rgb}{0,0,1}
\begin{tikzpicture}[scale=.7]
\draw [line width=0.2pt] (9.5,1.6)-- (12,0.6);
\draw [line width=0.2pt] (10.5,1.6)-- (12,0.6);
\draw [line width=0.2pt] (11.5,1.6)-- (12,0.6);
\draw [line width=0.2pt] (12.5,1.6)-- (12,0.6);
\draw [line width=0.2pt] (13.5,1.6)-- (12,0.6);
\draw [line width=0.2pt] (14.5,1.6)-- (12,0.6);
\draw [line width=1.2pt] (9.5,1.6)-- (5,3.5);
\draw [line width=0.2pt] (5,3.5)-- (10.5,1.6);
\draw [line width=0.2pt] (9.5,1.6)-- (6,3.5);
\draw [line width=0.2pt] (9.5,1.6)-- (8,3.5);1
\draw [line width=0.2pt] (9.5,1.6)-- (10,3.5);
\draw [line width=0.2pt] (9.5,1.6)-- (12,3.5);
\draw [line width=1.2pt] (10.5,1.6)-- (5,3.5);
\draw [line width=0.2pt] (10.5,1.6)-- (9,3.5);
\draw [line width=0.2pt] (10.5,1.6)-- (13,3.5);
\draw [line width=0.2pt] (10.5,1.6)-- (14,3.5);
\draw [line width=0.2pt] (11.5,1.6)-- (6,3.5);
\draw [line width=0.2pt] (11.5,1.6)-- (7,3.5);
\draw [line width=0.2pt] (11.5,1.6)-- (11,3.5);
\draw [line width=0.2pt] (11.5,1.6)-- (14,3.5);
\draw [line width=0.2pt] (11.5,1.6)-- (15,3.5);
\draw [line width=0.2pt] (12.5,1.6)-- (8,3.5);
\draw [line width=0.2pt] (12.5,1.6)-- (9,3.5);
\draw [line width=0.2pt] (12.5,1.6)-- (11,3.5);
\draw [line width=0.2pt] (12.5,1.6)-- (17,3.5);
\draw [line width=0.2pt] (12.5,1.6)-- (18,3.5);
\draw [line width=0.2pt] (13.5,1.6)-- (13,3.5);
\draw [line width=1.2pt] (13.5,1.6)-- (10,3.5);
\draw [line width=0.2pt] (13.5,1.6)-- (15,3.5);
\draw [line width=0.2pt] (13.5,1.6)-- (17,3.5);
\draw [line width=0.2pt] (13.5,1.6)-- (19,3.5);
\draw [line width=0.2pt] (14.5,1.6)-- (19,3.5);
\draw [line width=0.2pt] (14.5,1.6)-- (18,3.5);
\draw [line width=0.2pt] (14.5,1.6)-- (16,3.5);
\draw [line width=0.2pt] (11.5,1.6)-- (16,3.5);
\draw [line width=0.2pt] (14.5,1.6)-- (12,3.5);
\draw [line width=0.2pt] (2.5,6)-- (5,3.5);
\draw [line width=0.2pt] (2.5,6)-- (6,3.5);
\draw [line width=0.2pt] (2.5,6)-- (7,3.5);
\draw [line width=0.2pt] (3.5,6)-- (5,3.5);
\draw [line width=0.2pt] (3.5,6)-- (8,3.5);
\draw [line width=0.2pt] (3.5,6)-- (9,3.5);
\draw [line width=0.2pt] (4.5,6)-- (6,3.5);
\draw [line width=0.2pt] (4.5,6)-- (9,3.5);
\draw [line width=0.2pt] (4.5,6)-- (11,3.5);
\draw [line width=0.2pt] (5.5,6)-- (7,3.5);
\draw [line width=0.2pt] (5.5,6)-- (9,3.5);
\draw [line width=0.2pt] (5.5,6)-- (11,3.5);
\draw [line width=1.2pt] (6.5,6)-- (5,3.5);
\draw [line width=1.2pt] (6.5,6)-- (10,3.5);
\draw [line width=0.2pt] (6.5,6)-- (13,3.5);
\draw [line width=0.2pt] (7.5,6)-- (6,3.5);
\draw [line width=0.2pt] (7.5,6)-- (10,3.5);
\draw [line width=0.2pt] (7.5,6)-- (13,3.5);
\draw [line width=0.2pt] (8.5,6)-- (7,3.5);
\draw [line width=0.2pt] (8.5,6)-- (13,3.5);
\draw [line width=0.2pt] (8.5,6)-- (15,3.5);
\draw [line width=0.2pt] (9.5,6)-- (8,3.5);
\draw [line width=0.2pt] (9.5,6)-- (10,3.5);
\draw [line width=0.2pt] (9.5,6)-- (17,3.5);
\draw [line width=0.2pt] (10.5,6)-- (5,3.5);
\draw [line width=0.2pt] (10.5,6)-- (12,3.5);
\draw [line width=0.2pt] (10.5,6)-- (14,3.5);
\draw [line width=0.2pt] (11.5,6)-- (6,3.5);
\draw [line width=0.2pt] (11.5,6)-- (16,3.5);
\draw [line width=0.2pt] (11.5,6)-- (12,3.5);
\draw [line width=0.2pt] (12.5,6)-- (12,3.5);
\draw [line width=0.2pt] (12.5,6)-- (8,3.5);
\draw [line width=0.2pt] (12.5,6)-- (18,3.5);
\draw [line width=0.2pt] (13.5,6)-- (12,3.5);
\draw [line width=0.2pt] (13.5,6)-- (10,3.5);
\draw [line width=0.2pt] (13.5,6)-- (19,3.5);
\draw [line width=0.2pt] (14.5,6)-- (14,3.5);
\draw [line width=0.2pt] (14.5,6)-- (16,3.5);
\draw [line width=0.2pt] (14.5,6)-- (7,3.5);
\draw [line width=0.2pt] (21.5,6)-- (19,3.5);
\draw [line width=0.2pt] (21.5,6)-- (18,3.5);
\draw [line width=0.2pt] (21.5,6)-- (17,3.5);
\draw [line width=0.2pt] (20.5,6)-- (19,3.5);
\draw [line width=0.2pt] (20.5,6)-- (15,3.5);
\draw [line width=0.2pt] (20.5,6)-- (14,3.5);
\draw [line width=0.2pt] (19.5,6)-- (18,3.5);
\draw [line width=0.2pt] (19.5,6)-- (16,3.5);
\draw [line width=0.2pt] (19.5,6)-- (11,3.5);
\draw [line width=0.2pt] (18.5,6)-- (16,3.5);
\draw [line width=0.2pt] (18.5,6)-- (11,3.5);
\draw [line width=0.2pt] (18.5,6)-- (15,3.5);
\draw [line width=0.2pt] (17.5,6)-- (19,3.5);
\draw [line width=0.2pt] (17.5,6)-- (13,3.5);
\draw [line width=0.2pt] (17.5,6)-- (14,3.5);
\draw [line width=0.2pt] (16.5,6)-- (18,3.5);
\draw [line width=0.2pt] (16.5,6)-- (13,3.5);
\draw [line width=0.2pt] (16.5,6)-- (9,3.5);
\draw [line width=0.2pt] (15.5,6)-- (17,3.5);
\draw [line width=0.2pt] (15.5,6)-- (13,3.5);
\draw [line width=0.2pt] (15.5,6)-- (9,3.5);
\draw [line width=0.2pt] (2.49,6.01)-- (5.03,8.98);
\draw [line width=0.2pt] (2.49,6.01)-- (6.03,8.98);
\draw [line width=0.2pt] (2.49,6.01)-- (7.03,8.98);
\draw [line width=0.2pt] (3.49,6.01)-- (5.03,8.98);
\draw [line width=0.2pt] (3.49,6.01)-- (8.03,8.98);
\draw [line width=0.2pt] (3.49,6.01)-- (9.03,8.98);
\draw [line width=0.2pt] (4.49,6.01)-- (6.03,8.98);
\draw [line width=0.2pt] (4.49,6.01)-- (9.03,8.98);
\draw [line width=0.2pt] (4.49,6.01)-- (11.03,8.98);
\draw [line width=0.2pt] (5.49,6.01)-- (7.03,8.98);
\draw [line width=0.2pt] (5.49,6.01)-- (9.03,8.98);
\draw [line width=0.2pt] (5.49,6.01)-- (11.03,8.98);
\draw [line width=0.2pt] (6.49,6.01)-- (5.03,8.98);
\draw [line width=0.2pt] (6.49,6.01)-- (10.03,8.98);
\draw [line width=0.2pt] (6.49,6.01)-- (13.03,8.98);
\draw [line width=0.2pt] (7.49,6.01)-- (6.03,8.98);
\draw [line width=0.2pt] (7.49,6.01)-- (10.03,8.98);
\draw [line width=0.2pt] (7.49,6.01)-- (13.03,8.98);
\draw [line width=0.2pt] (8.49,6.01)-- (7.03,8.98);
\draw [line width=0.2pt] (8.49,6.01)-- (13.03,8.98);
\draw [line width=0.2pt] (8.49,6.01)-- (15.03,8.98);
\draw [line width=0.2pt] (9.49,6.01)-- (8.03,8.98);
\draw [line width=0.2pt] (9.49,6.01)-- (10.03,8.98);
\draw [line width=0.2pt] (9.49,6.01)-- (17.03,8.98);
\draw [line width=0.2pt] (10.49,6.01)-- (5.03,8.98);
\draw [line width=0.2pt] (10.49,6.01)-- (12.03,8.98);
\draw [line width=0.2pt] (10.49,6.01)-- (14.03,8.98);
\draw [line width=0.2pt] (11.49,6.01)-- (6.03,8.98);
\draw [line width=0.2pt] (11.49,6.01)-- (16.03,8.98);
\draw [line width=0.2pt] (11.49,6.01)-- (12.03,8.98);
\draw [line width=0.2pt] (12.49,6.01)-- (12.03,8.98);
\draw [line width=0.2pt] (12.49,6.01)-- (8.03,8.98);
\draw [line width=0.2pt] (12.49,6.01)-- (18.03,8.98);
\draw [line width=0.2pt] (13.49,6.01)-- (12.03,8.98);
\draw [line width=0.2pt] (13.49,6.01)-- (10.03,8.98);
\draw [line width=0.2pt] (13.49,6.01)-- (19.03,8.98);
\draw [line width=0.2pt] (14.49,6.01)-- (14.03,8.98);
\draw [line width=0.2pt] (14.49,6.01)-- (16.03,8.98);
\draw [line width=0.2pt] (14.49,6.01)-- (7.03,8.98);
\draw [line width=0.2pt] (21.49,6.01)-- (19.03,8.98);
\draw [line width=0.2pt] (21.49,6.01)-- (18.03,8.98);
\draw [line width=0.2pt] (21.49,6.01)-- (17.03,8.98);
\draw [line width=0.2pt] (20.49,6.01)-- (19.03,8.98);
\draw [line width=0.2pt] (20.49,6.01)-- (15.03,8.98);
\draw [line width=0.2pt] (20.49,6.01)-- (14.03,8.98);
\draw [line width=0.2pt] (19.49,6.01)-- (18.03,8.98);
\draw [line width=0.2pt] (19.49,6.01)-- (16.03,8.98);
\draw [line width=0.2pt] (19.49,6.01)-- (11.03,8.98);
\draw [line width=0.2pt] (18.49,6.01)-- (16.03,8.98);
\draw [line width=0.2pt] (18.49,6.01)-- (11.03,8.98);
\draw [line width=0.2pt] (18.49,6.01)-- (15.03,8.98);
\draw [line width=0.2pt] (17.49,6.01)-- (19.03,8.98);
\draw [line width=0.2pt] (17.49,6.01)-- (13.03,8.98);
\draw [line width=0.2pt] (17.49,6.01)-- (14.03,8.98);
\draw [line width=0.2pt] (16.49,6.01)-- (18.03,8.98);
\draw [line width=0.2pt] (16.49,6.01)-- (13.03,8.98);
\draw [line width=0.2pt] (16.49,6.01)-- (9.03,8.98);
\draw [line width=0.2pt] (15.49,6.01)-- (17.03,8.98);
\draw [line width=0.2pt] (15.49,6.01)-- (13.03,8.98);
\draw [line width=0.2pt] (15.49,6.01)-- (9.03,8.98);
\draw [line width=0.2pt] (9.34,11.13)-- (5.02,8.99);
\draw [line width=0.2pt] (5.02,8.99)-- (10.34,11.13);
\draw [line width=0.2pt] (9.34,11.13)-- (6.02,8.99);
\draw [line width=0.2pt] (9.34,11.13)-- (8.02,8.99);
\draw [line width=0.2pt] (9.34,11.13)-- (10.02,8.99);
\draw [line width=0.2pt] (9.34,11.13)-- (12.02,8.99);
\draw [line width=0.2pt] (10.34,11.13)-- (7.02,8.99);
\draw [line width=0.2pt] (10.34,11.13)-- (9.02,8.99);
\draw [line width=0.2pt] (10.34,11.13)-- (13.02,8.99);
\draw [line width=0.2pt] (10.34,11.13)-- (14.02,8.99);
\draw [line width=0.2pt] (11.34,11.13)-- (6.02,8.99);
\draw [line width=0.2pt] (11.34,11.13)-- (7.02,8.99);
\draw [line width=0.2pt] (11.34,11.13)-- (11.02,8.99);
\draw [line width=0.2pt] (11.34,11.13)-- (14.02,8.99);
\draw [line width=0.2pt] (11.34,11.13)-- (15.02,8.99);
\draw [line width=0.2pt] (12.34,11.13)-- (8.02,8.99);
\draw [line width=0.2pt] (12.34,11.13)-- (9.02,8.99);
\draw [line width=0.2pt] (12.34,11.13)-- (11.02,8.99);
\draw [line width=0.2pt] (12.34,11.13)-- (17.02,8.99);
\draw [line width=0.2pt] (12.34,11.13)-- (18.02,8.99);
\draw [line width=0.2pt] (13.34,11.13)-- (13.02,8.99);
\draw [line width=0.2pt] (13.34,11.13)-- (10.02,8.99);
\draw [line width=0.2pt] (13.34,11.13)-- (15.02,8.99);
\draw [line width=0.2pt] (13.34,11.13)-- (17.02,8.99);
\draw [line width=0.2pt] (13.34,11.13)-- (19.02,8.99);
\draw [line width=0.2pt] (14.34,11.13)-- (19.02,8.99);
\draw [line width=0.2pt] (14.34,11.13)-- (18.02,8.99);
\draw [line width=0.2pt] (14.34,11.13)-- (16.02,8.99);
\draw [line width=0.2pt] (11.34,11.13)-- (16.02,8.99);
\draw [line width=0.2pt] (14.34,11.13)-- (12.02,8.99);
\draw [line width=0.2pt] (9.34,11.11)-- (11.88,12.3);
\draw [line width=0.2pt] (10.34,11.11)-- (11.88,12.3);
\draw [line width=0.2pt] (11.34,11.11)-- (11.88,12.3);
\draw [line width=0.2pt] (12.34,11.11)-- (11.88,12.3);
\draw [line width=0.2pt] (13.34,11.11)-- (11.88,12.3);
\draw [line width=0.2pt] (14.34,11.11)-- (11.88,12.3);
 \coordinate[label=below: ${\bot_\mathfrak{C}}$] (a1) at (12,0.6);
 \coordinate[label=below: ${\omega_1}$] (a1) at (9.5,1.6);
 \coordinate[label=below: ${\omega_2}$] (a1) at (10.5,1.6);
 \coordinate[label=below: ${\omega_3}$] (a1) at (11.5,1.6);
 \coordinate[label=below: ${\omega_4}$] (a1) at (12.5,1.6);
 \coordinate[label=below: ${\omega_5}$] (a1) at (13.5,1.6);
 \coordinate[label=below: ${\omega_6}$] (a1) at (14.5,1.6);
   \coordinate[label=left: ${\mathbf{t}}$] (a1) at (6.5,6);
 \coordinate[label=left: $\mathbf{(\alpha_1\mid \top)}$] (a1) at (5,3.5);
  \coordinate[label=below: $\mathbf{(\alpha_2\mid \top)}$] (a1) at (11,3.5);
   \coordinate[label=right: $\mathbf{(\alpha_3\mid \top)}$] (a1) at (19,3.5);
   \coordinate[label=above: ${\top_\mathfrak{C}}$] (a1) at (11.88,12.3);

\draw [fill=black] (12,0.6) circle (4pt);
\draw [fill=lightgray] (9.5,1.6) circle (4pt);
\draw [fill=lightgray] (10.5,1.6) circle (4pt);
\draw [fill=lightgray] (11.5,1.6) circle (4pt);
\draw [fill=lightgray] (12.5,1.6) circle (4pt);
\draw [fill=lightgray] (13.5,1.6) circle (4pt);
\draw [fill=lightgray] (14.5,1.6) circle (4pt);
\draw [fill=black] (12,3.5) circle (1.5pt);
\draw [fill=black] (11,3.5) circle (4pt);
\draw [fill=black] (10,3.5) circle (1.5pt);
\draw [fill=black] (9,3.5) circle (1.5pt);
\draw [fill=black] (8,3.5) circle (1.5pt);
\draw [fill=black] (7,3.5) circle (1.5pt);
\draw [fill=black] (6,3.5) circle (1.5pt);
\draw [fill=black] (5,3.5) circle (4pt);
\draw [fill=black] (13,3.5) circle (1.5pt);
\draw [fill=black] (14,3.5) circle (1.5pt);
\draw [fill=black] (15,3.5) circle (1.5pt);
\draw [fill=black] (16,3.5) circle (1.5pt);
\draw [fill=black] (17,3.5) circle (1.5pt);
\draw [fill=black] (18,3.5) circle (1.5pt);
\draw [fill=black] (19,3.5) circle (4pt);
\draw [fill=black] (11.5,6) circle (1.5pt);
\draw [fill=black] (10.5,6) circle (1.5pt);
\draw [fill=black] (9.5,6) circle (1.5pt);
\draw [fill=black] (8.5,6) circle (1.5pt);
\draw [fill=black] (7.5,6) circle (1.5pt);
\draw [fill=black,xshift=-2.5pt,yshift=-1.5pt] (6.5,6) rectangle +(6pt, 6pt);
\draw [fill=black] (5.5,6) circle (1.5pt);
\draw [fill=black] (4.5,6) circle (1.5pt);
\draw [fill=black] (3.5,6) circle (1.5pt);
\draw [fill=black] (2.5,6) circle (1.5pt);
\draw [fill=black] (21.5,6) circle (1.5pt);
\draw [fill=black] (20.5,6) circle (1.5pt);
\draw [fill=black] (19.5,6) circle (1.5pt);
\draw [fill=black] (18.5,6) circle (1.5pt);
\draw [fill=black] (17.5,6) circle (1.5pt);
\draw [fill=black] (16.5,6) circle (1.5pt);
\draw [fill=black] (15.5,6) circle (1.5pt);
\draw [fill=black] (14.5,6) circle (1.5pt);
\draw [fill=black] (13.5,6) circle (1.5pt);
\draw [fill=black] (12.5,6) circle (1.5pt);
\draw [fill=black] (2.5,6) circle (1.5pt);
\draw [fill=black] (12.03,8.98) circle (1.5pt);
\draw [fill=black] (11.03,8.98) circle (1.5pt);
\draw [fill=black] (10.03,8.98) circle (1.5pt);
\draw [fill=black] (9.03,8.98) circle (1.5pt);
\draw [fill=black] (8.03,8.98) circle (1.5pt);
\draw [fill=black] (7.03,8.98) circle (1.5pt);
\draw [fill=black] (6.03,8.98) circle (1.5pt);
\draw [fill=black] (5.03,8.98) circle (1.5pt);
\draw [fill=black] (13.03,8.98) circle (1.5pt);
\draw [fill=black] (14.03,8.98) circle (1.5pt);
\draw [fill=black] (15.03,8.98) circle (1.5pt);
\draw [fill=black] (16.03,8.98) circle (1.5pt);
\draw [fill=black] (17.03,8.98) circle (1.5pt);
\draw [fill=black] (18.03,8.98) circle (1.5pt);
\draw [fill=black] (19.03,8.98) circle (1.5pt);
\draw [fill=black] (11.49,6.01) circle (1.5pt);
\draw [fill=black] (10.49,6.01) circle (1.5pt);
\draw [fill=black] (9.49,6.01) circle (1.5pt);
\draw [fill=black] (8.49,6.01) circle (1.5pt);
\draw [fill=black] (7.49,6.01) circle (1.5pt);
\draw [fill=black] (6.49,6.01) circle (1.5pt);
\draw [fill=black] (5.49,6.01) circle (1.5pt);
\draw [fill=black] (4.49,6.01) circle (1.5pt);
\draw [fill=black] (3.49,6.01) circle (1.5pt);
\draw [fill=black] (2.49,6.01) circle (1.5pt);
\draw [fill=black] (21.49,6.01) circle (1.5pt);
\draw [fill=black] (20.49,6.01) circle (1.5pt);
\draw [fill=black] (19.49,6.01) circle (1.5pt);
\draw [fill=black] (18.49,6.01) circle (1.5pt);
\draw [fill=black] (17.49,6.01) circle (1.5pt);
\draw [fill=black] (16.49,6.01) circle (1.5pt);
\draw [fill=black] (15.49,6.01) circle (1.5pt);
\draw [fill=black] (14.49,6.01) circle (1.5pt);
\draw [fill=black] (13.49,6.01) circle (1.5pt);
\draw [fill=black] (12.49,6.01) circle (1.5pt);
\draw [fill=black] (9.34,11.13) circle (1.5pt);
\draw [fill=black] (10.34,11.13) circle (1.5pt);
\draw [fill=black] (11.34,11.13) circle (1.5pt);
\draw [fill=black] (12.34,11.13) circle (1.5pt);
\draw [fill=black] (13.34,11.13) circle (1.5pt);
\draw [fill=black] (14.34,11.13) circle (1.5pt);
\draw [fill=black] (12.02,8.99) circle (1.5pt);
\draw [fill=black] (11.02,8.99) circle (1.5pt);
\draw [fill=black] (10.02,8.99) circle (1.5pt);
\draw [fill=black] (9.02,8.99) circle (4pt);
\draw [fill=black] (8.02,8.99) circle (1.5pt);
\draw [fill=black] (7.02,8.99) circle (1.5pt);
\draw [fill=black] (6.02,8.99) circle (1.5pt);
\draw [fill=black] (5.02,8.99) circle (1.5pt);
\draw [fill=black] (13.02,8.99) circle (1.5pt);
\draw [fill=black] (14.02,8.99) circle (4pt);
\draw [fill=black] (15.02,8.99) circle (4pt);
\draw [fill=black] (16.02,8.99) circle (1.5pt);
\draw [fill=black] (17.02,8.99) circle (1.5pt);
\draw [fill=black] (18.02,8.99) circle (1.5pt);
\draw [fill=black] (19.02,8.99) circle (1.5pt);
\draw [fill=black] (11.88,12.3) circle (4pt);
\draw [fill=black] (9.34,11.11) circle (1.5pt);
\draw [fill=black] (10.34,11.11) circle (1.5pt);
\draw [fill=black] (11.34,11.11) circle (1.5pt);
\draw [fill=black] (12.34,11.11) circle (1.5pt);
\draw [fill=black] (13.34,11.11) circle (1.5pt);
\draw [fill=black] (14.34,11.11) circle (1.5pt);
\end{tikzpicture}
\end{center}

\caption{\small{The algebra of conditionals $\mathcal{C}({\bf A})$ of Example \ref{Ex:atoms}, where $\atom({\bf A})= \{\alpha_1, \alpha_2, \alpha_3\}$  and  $\atom(\mathcal{C}({\bf A})) = \{\omega_1,\omega_2,\omega_3,\omega_4,\omega_5,\omega_6\}$. The element $t= (\alpha_1\mid \neg \alpha_3)$ (squared node) is  obtained as $\omega_1\sqcup  \omega_2\sqcup  \omega_5$. 
The atoms of $\mathcal{C}({\bf A})$ are marked by grey dots, while the elements $a$ of the original algebra ${\bf A}$, regarded as conditionals $(a \mid \top)$, correspond to the bigger black dots.}}
\label{fig:sixAtoms}
\end{figure}
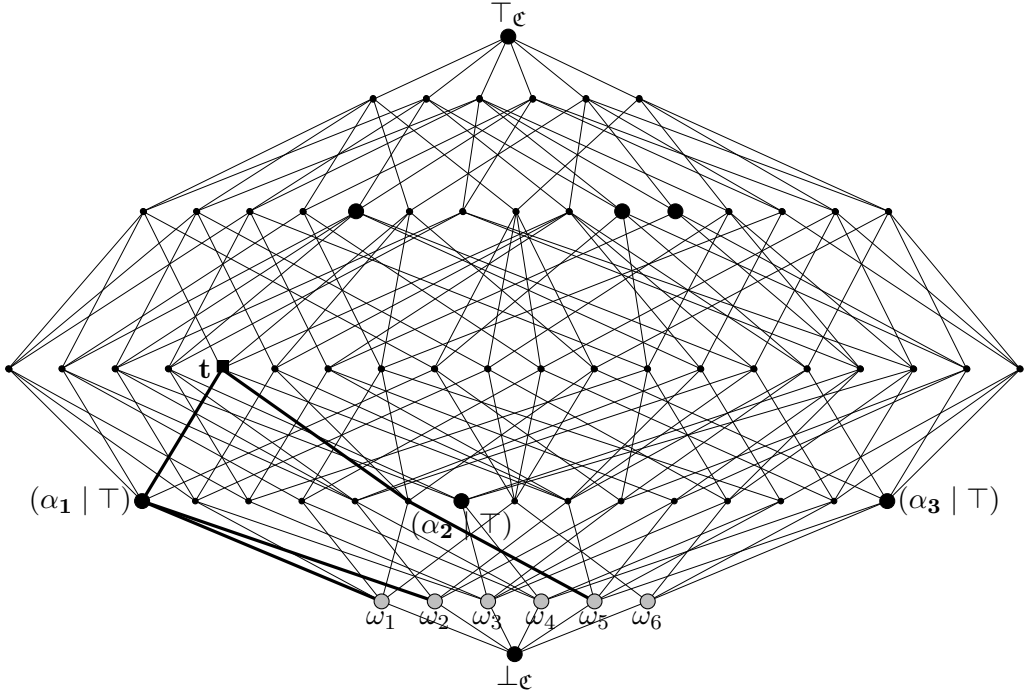 

Thanks to Theorem \ref{th:atoms} and without danger of confusion, for every sequence $\overline{\alpha}\in Seq({\bf A})$, we will henceforth {denote by $\omega_{\overline{\alpha}}$} the atom of $\mathcal{C}({\bf A})$ which is uniquely associated to the sequence $\overline{\alpha}$.

A simple argument about the cardinality of Boolean algebras shows that not every finite Boolean algebra is isomorphic to a finite Boolean algebra of conditionals. In fact, from Theorem \ref{th:atoms}, there is no Boolean algebras of conditionals of cardinality different from $2^{n!}$ for {every} $n$ whence, for instance, there is no Boolean algebra of conditionals with $2^8$ elements. Therefore, the class of all finite Boolean algebras of conditionals forms a proper subset of the class of all finite Boolean algebras. The following corollary shows that the cardinality of a finite Boolean algebra is enough to describe it as an isomorphic copy of some $\mathcal{C}({\bf A})$. 

\begin{corollary}\label{cor:SurjectiveC}
Let ${\bf B}$ be a Boolean algebra of cardinality $2^{n!}$. Then there exists a Boolean algebra ${\bf A}$ of cardinality {$2^n$} such that ${\bf B}\cong\mathcal{C}({\bf A})$. 
\end{corollary}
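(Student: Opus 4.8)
The plan is to reduce the claim to two standard facts about finite Boolean algebras: first, that a finite Boolean algebra is determined up to isomorphism by the number of its atoms, and second, the atom count of $\mathcal{C}({\bf A})$ already computed in Theorem~\ref{th:atoms}. Thus the corollary is essentially a cardinality-matching argument resting on the classification of finite Boolean algebras.

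First I would translate the hypothesis on cardinality into a statement about atoms. Since ${\bf B}$ is finite it is atomic (Proposition~\ref{prop:atomsAlgebra}), and by part (ii) of that proposition every element of ${\bf B}$ is the join of the atoms below it; the assignment $a \mapsto \{\alpha \in \atom({\bf B}) : \alpha \leq a\}$ is then an isomorphism of ${\bf B}$ onto the power-set algebra of $\atom({\bf B})$. In particular $|{\bf B}| = 2^{|\atom({\bf B})|}$, so the assumption $|{\bf B}| = 2^{n!}$ forces $|\atom({\bf B})| = n!$. This same representation shows that any two finite Boolean algebras with the same number of atoms are isomorphic, since each is isomorphic to the power-set algebra of an abstract set of that cardinality.

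Next I would exhibit the witness ${\bf A}$. Take ${\bf A}$ to be the power-set algebra of an $n$-element set (equivalently $\mathbf{2}^n$): its atoms are exactly the $n$ singletons, so $|\atom({\bf A})| = n$ and $|{\bf A}| = 2^n$, as required. By Theorem~\ref{th:atoms}, from $|\atom({\bf A})| = n$ it follows that $\mathcal{C}({\bf A})$ has exactly $n!$ atoms. Finally I would conclude by the uniqueness noted above: ${\bf B}$ and $\mathcal{C}({\bf A})$ are both finite Boolean algebras with $n!$ atoms, hence ${\bf B} \cong \mathcal{C}({\bf A})$ with ${\bf A}$ of cardinality $2^n$.

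The only point requiring any care is the bookkeeping linking cardinality to atom count, and this is not really an obstacle; the substantive input, namely that $\mathcal{C}({\bf A})$ has $n!$ atoms, is supplied wholesale by Theorem~\ref{th:atoms}. I would remark only that the argument establishes surjectivity of the construction ${\bf A} \mapsto \mathcal{C}({\bf A})$ onto the isomorphism classes of Boolean algebras of cardinality $2^{n!}$, and not injectivity, consistent with the preceding observation that the cardinalities $2^{n!}$ do not exhaust all finite Boolean algebra sizes.
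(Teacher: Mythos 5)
Your proof is correct, and its logical core is the same as the paper's: both arguments come down to the facts that a finite Boolean algebra is determined up to isomorphism by $|\atom(\cdot)|$ and that $|\atom(\mathcal{C}({\bf A}))| = n!$ by Theorem \ref{th:atoms}, so one only needs to match atom counts. The one genuine difference is in how the witness ${\bf A}$ is produced. You take it abstractly as the power-set algebra $\mathbf{2}^n$ on $n$ elements; the paper instead builds ${\bf A}$ \emph{inside} ${\bf B}$, by partitioning the $n!$ atoms of ${\bf B}$ into $n$ blocks of $(n-1)!$ atoms each and taking the joins of the blocks as the atoms of a subalgebra ${\bf A}_{\mathcal{P}} \leq {\bf B}$. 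Since any two $n$-atom finite Boolean algebras are isomorphic, the two witnesses coincide up to isomorphism and the paper's extra construction adds nothing to the truth of the statement as literally formulated; your route is the shorter one. What the paper's version buys is a concrete realization of the event algebra as a subalgebra of the given algebra of conditionals (together with the observation that the choice of partition is immaterial up to isomorphism), which is a mildly more informative picture but not needed for the corollary. Your closing remark that the argument gives surjectivity onto isomorphism classes of size $2^{n!}$, and nothing about injectivity, is accurate and consistent with the discussion preceding the corollary.
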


\begin{proof}
Let $\beta_1,\ldots, \beta_{n!}$ be the atoms of ${\bf B}$ and let $\mathcal{P}=\{P_1,\ldots, P_n\}$ be a partition of $\atom({\bf B})$ in subsets of  $(n-1)!$ elements. For every $P_i=\{\beta_{i_1},\ldots, \beta_{i_{(n-1)!}}\}$, let $a_i=\bigvee_{j=1}^{(n-1)!}\beta_{i_j}$. Since $\mathcal{P}$ is a partition, if $a_i\neq a_j$, then $a_i\wedge a_j=\bot_{\bf B}$ and $\bigvee_{i=1}^{n}a_i=\top_{\bf B}$. Thus, the subalgebra ${\bf A}_\mathcal{P}$ of ${\bf B}$ generated by $a_1,\ldots, a_{n}$ is such that $\atom({\bf A}_\mathcal{P})=\{a_1,\ldots, a_n\}$. Notice, that, if $\mathcal{P}$ and $\mathcal{P}'$ are two different partitions of $\atom({\bf B})$ in subsets of cardinality $(n-1)!$, then ${\bf A}_{\mathcal{P}}\cong{\bf A}_{\mathcal{P}'}$.
Thus, $\mathcal{C}({\bf A}_\mathcal{P})\cong{\bf B}$ since $|\atom(\mathcal{C}({\bf A}_\mathcal{P}))|=|\atom({\bf B})|$. 
\end{proof}

\subsection{Characterising the atoms below a basic conditional}\label{sec:belowbasic}
Now, we will be concerned with the description of the set of atoms of $\mathcal{C}({\bf A})$ below (according to the lattice ordering in  $\mathcal{C}({\bf A})$) a given basic conditional $(a\mid b)$.  In general, for every Boolean algebra ${\bf B}$ and for every $b\in B$, we will henceforth write $\atom_\leq(b)$ to denote the subset of $\atom({\bf B})$  below $b$. Thus, in particular, for every $(a\mid b)\in \mathcal{C}({\bf A})$,
$$
\atom_\leq(a\mid b)=\{\omega_{\overline{\alpha}}\in \atom(\mathcal{C}({\bf A}))\mid \omega_{\overline{\alpha}}\leq (a\mid b)\}.
$$

\begin{proposition}\label{prop:atomsbelow}
Let ${\bf A}$ be a Boolean algebra, let $\overline{\alpha}=\langle \alpha_1, \alpha_2, \ldots, \alpha_{n-1} \rangle \in Seq({\bf A})$ and let $(a\mid b)\in \mathcal{C}({\bf A})$ be such that $a \leq b$. 
Then, the following conditions are equivalent: 
\begin{enumerate}[(i)]
\item $\omega_{\overline{\alpha}}  \in \atom_\leq(a \mid b)$;
\item there is an index $i \leq n-1$ such that  $\alpha_j \leq \neg b$  for all $j < i$ and $\alpha_i \leq a$;
\item $\alpha_i \leq a$ for the smallest index $i \leq n-1$ such that $\alpha_i \leq b$.
\end{enumerate}
\end{proposition}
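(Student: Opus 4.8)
The plan is to reduce statement (i) to a condition on a single, canonically determined atom of ${\bf A}$ and then to recognise that this condition is verbatim (ii) and (iii). Throughout, let $\alpha_n$ denote the unique atom of ${\bf A}$ not occurring in $\overline{\alpha}$, and let $<_{\overline{\alpha}}$ be the total order $\alpha_1 <_{\overline{\alpha}} \cdots <_{\overline{\alpha}} \alpha_{n-1} <_{\overline{\alpha}} \alpha_n$ on $\atom({\bf A})$ induced by the sequence. Writing $T_i = \alpha_i \lor \cdots \lor \alpha_n$, recall the rewritten form $\omega_{\overline{\alpha}} = (\alpha_1 \mid \top) \sqcap (\alpha_2\mid T_2)\sqcap\cdots\sqcap(\alpha_{n-1}\mid T_{n-1})$ noted just before Theorem \ref{th:atoms}, so that each $(\alpha_j\mid T_j)$ is a conjunct of $\omega_{\overline{\alpha}}$, hence $\geq \omega_{\overline{\alpha}}$.

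The crux of the whole argument is the following claim, which isolates the behaviour of $\omega_{\overline{\alpha}}$ against a basic conditional with \emph{atomic} consequent: for every atom $\gamma$ and every $b\in A'$,
\[
\omega_{\overline{\alpha}} \leq (\gamma\mid b)\quad\text{if and only if}\quad \gamma\leq b \text{ and } \gamma \text{ is the } <_{\overline{\alpha}}\text{-least atom below } b.
\]
For the ``if'' direction, if $\gamma=\alpha_i$ is $<_{\overline{\alpha}}$-least below $b$, then every atom of $b$ is $\geq_{\overline{\alpha}}\alpha_i$, so $\alpha_i\leq b\leq T_i$; Proposition \ref{prop:ordine}(iii) then yields $(\alpha_i\mid b)\geq(\alpha_i\mid T_i)\geq\omega_{\overline{\alpha}}$. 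The only exceptional subcase is $\gamma=\alpha_n$, which forces $b=\alpha_n$ and $(\alpha_n\mid\alpha_n)=\top_\mathfrak{C}\geq\omega_{\overline{\alpha}}$.

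The ``only if'' direction is where I expect the real work to sit; it refines the case analysis already present in the proof of Theorem \ref{th:atoms}. Assume $\omega_{\overline{\alpha}}\leq(\gamma\mid b)$. If $\gamma\not\leq b$ then $\gamma\wedge b=\bot$, so $(\gamma\mid b)=(\gamma\wedge b\mid b)=(\bot\mid b)=\bot_\mathfrak{C}$ by Proposition \ref{prop2}(iv) together with Proposition \ref{prop:ordine}(ii) and Proposition \ref{prop3}(ii), contradicting that $\omega_{\overline{\alpha}}$ is an atom; hence $\gamma\leq b$. If $\gamma=\alpha_i$ were not $<_{\overline{\alpha}}$-least below $b$, there would be an atom $\alpha_k<_{\overline{\alpha}}\alpha_i$ with $\alpha_k\leq b$; shrinking antecedents twice via Proposition \ref{prop:ordine}(iii) and merging via Proposition \ref{prop2}(ii) gives
\[
(\gamma\mid b)\sqcap(\alpha_k\mid T_k)\ \leq\ (\alpha_i\mid \alpha_i\lor\alpha_k)\sqcap(\alpha_k\mid \alpha_i\lor\alpha_k)\ =\ (\alpha_i\wedge\alpha_k\mid \alpha_i\lor\alpha_k)\ =\ \bot_\mathfrak{C}.
\]
Since $(\alpha_k\mid T_k)$ is a conjunct of $\omega_{\overline{\alpha}}$ and $(\gamma\mid b)\geq\omega_{\overline{\alpha}}$ by hypothesis, this forces $\omega_{\overline{\alpha}}=\bot_\mathfrak{C}$, a contradiction. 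This establishes the claim, which I regard as the main obstacle.

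With the claim in hand the remainder is bookkeeping. By Proposition \ref{lemma:repres}(ii), $(a\mid b)=\bigsqcup_{\gamma\leq a}(\gamma\mid b)$, and since $\omega_{\overline{\alpha}}$ is an atom it lies below this join iff it lies below some joinand $(\gamma\mid b)$ with $\gamma\leq a$. By the claim this happens iff the (unique) $<_{\overline{\alpha}}$-least atom below $b$ is itself $\leq a$; that atom is $\alpha_i$ for the smallest index $i$ with $\alpha_i\leq b$. Because $a\leq b$, the requirement $\alpha_i\leq a$ is consistent with $\alpha_i\leq b$, and all earlier $\alpha_j$ fail to lie below $b$, i.e.\ $\alpha_j\leq\neg b$; this is precisely (ii), and equally (iii), so that (ii)$\Leftrightarrow$(iii) falls out immediately from $a\leq b$. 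The one point requiring separate care is the degenerate situation in which the $<_{\overline{\alpha}}$-least atom below $b$ is the omitted atom $\alpha_n$ (equivalently $b=\alpha_n$): there (ii) and (iii) admit no eligible index $i\leq n-1$, and the equivalence should be verified directly against $(a\mid b)\in\{\bot_\mathfrak{C},\top_\mathfrak{C}\}$.
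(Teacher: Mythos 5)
Your proof is correct and follows essentially the same route as the paper's: both reduce via Proposition \ref{lemma:repres}(ii) and atomicity to conditionals $(\gamma\mid b)$ with atomic consequent, prove the "if" direction by shrinking the antecedent with Proposition \ref{prop:ordine}(iii), and prove the "only if" direction by colliding two distinct atomic consequents over a common antecedent to produce $\bot_\mathfrak{C}$. Your "$<_{\overline{\alpha}}$-least atom below $b$" claim is just a uniform repackaging of the paper's iterative case analysis (its case (a)) together with its separate case (b) for $\gamma=\alpha_n$; the degenerate situation $b=\alpha_n$ that you rightly flag is equally present, and only implicitly excluded, in the paper's own argument (which assumes $\alpha_n<b$ strictly).
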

\begin{proof}
Let us start by showing that (ii) implies (i). 
Since $\omega_{\overline{\alpha}}\leq (a\mid b)=\bigvee_{\beta\leq a}(\beta\mid b)$ iff $\omega_{\overline{\alpha}}\leq (\beta\mid b)$ for some $\beta\leq a$, it is sufficient to prove the claim for the case in which $a=\beta$.  In such a case, if  $\alpha_i = \beta$ and $\neg \alpha_1 \land \ldots \land \neg \alpha_{i-1} \geq b$ then, since  $a\leq b$ and $\alpha_i=\beta=a$, one has $b\geq\alpha_i$  whence $\neg \alpha_1 \land \ldots \land \neg \alpha_{i-1} \geq b\geq \alpha_i$. Therefore, by Proposition \ref{prop:ordine} (iii), 
$$
(\alpha_i \mid \neg \alpha_1 \land \ldots \land \neg \alpha_{i-1}) \leq (\alpha_i\mid b)= (\beta \mid b),
$$ 
and hence $\omega_{\overline{\alpha}}   =  (\alpha_1 \mid \top) \sqcap  \ldots \sqcap  (\alpha_{i} \mid \neg\alpha_1 \land \ldots \land \neg \alpha_{i-1}) \sqcap \dots  \leq (\beta \mid b)$ as well. 

To prove the other direction, (i) implies (ii), we consider two cases:

\vspace{.2cm}

\noindent(a) There is $i \leq n-1$ such that $\alpha_i = \beta$. If $i = 1$, since $\top \geq b$, then the claim is fulfilled. Thus, assume $i > 1$ and $\omega_{\overline{\alpha}}  \sqcap (\beta \mid b) = \omega_{\overline{\alpha}}$, and let us prove $\neg \alpha_1 \land \ldots \land \neg \alpha_{i-1} \geq b$. Indeed, we have the following subcases:

\begin{itemize}

\item  If $\alpha_1 \leq b$, we would have  $(\alpha_1 \mid \top) \sqcap (\beta \mid b) \leq (\alpha_1 \mid b) \sqcap (\alpha_i \mid b) = \bot_\mathfrak{C}$, and hence $\omega_{\overline{\alpha}}  \sqcap (\beta \mid b)  = \bot_\mathfrak{C}$,  contradiction. Therefore $\alpha_1 \leq \neg b$. 

\item  If  $\alpha_2 \leq b$, since $\alpha_1 \leq \neg b$, we would have $(\alpha_2 \mid \neg \alpha_1) \sqcap (\beta \mid b) \leq (\alpha_2 \mid b) \sqcap (\beta \mid b) = \bot_\mathfrak{C}$, and hence $\omega_{\overline{\alpha}}  \sqcap (\beta \mid b)  = \bot_\mathfrak{C}$,  contradiction.  Therefore,  $\alpha_2 \leq \neg b$.

\item[\ldots]

\item If  $\alpha_{i-1} \leq b$, since $\alpha_1 \leq \neg b$, $\alpha_2 \leq \neg b$, \ldots, $\alpha_{i-2} \leq \neg b$, we would have  $(\alpha_{i-1} \mid \neg \alpha_1 \land \ldots \land \alpha_{i-2}) \sqcap (\beta \mid b) \leq (\alpha_2 \mid b) \sqcap (\beta \mid b) = \bot_\mathfrak{C}$, and hence $\omega_{\overline{\alpha}}  \sqcap (\beta \mid b)  = \bot$,  contradiction.  Therefore,  $\alpha_{i-1} \leq \neg b$.

\end{itemize}

As a consequence, $\alpha_1 \lor \ldots \lor \alpha_{i-1} \leq \neg b$ or, equivalently, $\neg \alpha_1 \land \ldots \land \neg \alpha_{i-1} \geq b$.  

\vspace{.2cm} 

\noindent(b) $\beta = \alpha_n$, where $\alpha_n$ is the remaining atom not appearing in $\overline{\alpha}$. In this case, one can show that $\omega_{\overline{\alpha}}  \sqcap (\beta \mid b) = \bot_\mathfrak{C}$, and hence $\omega_{\overline{\alpha}} \not\leq  (\beta \mid b)$. 
Indeed, if $\beta = \alpha_n < b$, it means that $b \geq \alpha_i \lor  \alpha_n$, with $i \leq n-1$. Then, the expression $(\alpha_i \mid \neg \alpha_1 \land \ldots \neg \alpha_{i-1}) = (\alpha_i \mid \alpha_i \lor \ldots \lor \alpha_n)$ appears as a conjunct in the atom  $\omega_{\overline{\alpha}}$. 
Thus,  $\omega_{\overline{\alpha}}  \sqcap (\beta \mid b) \leq (\alpha_i \mid \alpha_i \lor \ldots \lor \alpha_n) \sqcap (\beta \mid b) \leq (\alpha_i \mid \alpha_i \lor \alpha_n) \sqcap (\beta \mid \alpha_i \lor  \alpha_n) = \bot_\mathfrak{C}$. 

Finally, notice that (iii) is just an equivalent rewriting of (ii)  observing that if $\alpha_i \leq a$ then $\alpha_i \leq b$  as well, since we are assuming $a \leq b$. 
\end{proof}

As a consequence of the above characterisation, one can compute the number of atoms below a given conditional.
\begin{corollary}\label{lemma:counting}
Let ${\bf A}$ be a Boolean algebra with $|\atom({\bf A})|=n$.
For every basic conditional $(a\mid b)\in \mathcal{C}({\bf A})$ with $a\leq b$, $|\atom_\leq(a\mid b)|=n!\cdot \frac{|\atom_\leq(a)|}{|\atom_\leq(b)|}$. 
\end{corollary}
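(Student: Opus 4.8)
The plan is to turn the computation of $|\atom_\leq(a\mid b)|$ into a purely combinatorial question about sequences of atoms, to which a symmetry argument applies. Write $k=|\atom_\leq(b)|$ and $m=|\atom_\leq(a)|$, and recall from Theorem \ref{th:atoms} that $\atom(\mathcal{C}({\bf A}))=\{\omega_{\overline{\alpha}}\mid\overline{\alpha}\in Seq({\bf A})\}$ has exactly $n!$ elements, since a sequence $\overline{\alpha}=\langle\alpha_1,\ldots,\alpha_{n-1}\rangle$ of $n-1$ distinct atoms is the same datum as a linear order of all $n$ atoms of ${\bf A}$ (append the unique missing atom). By Proposition \ref{prop:atomsbelow}(iii), for $a\leq b$ we have $\omega_{\overline{\alpha}}\in\atom_\leq(a\mid b)$ iff the first entry $\alpha_i$ of $\overline{\alpha}$ that lies below $b$ also lies below $a$. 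So it suffices to count the sequences in $Seq({\bf A})$ whose first entry below $b$ is below $a$.

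First I would establish the key symmetry. Given two atoms $\gamma,\gamma'\leq b$, the operation that swaps every occurrence of $\gamma$ and $\gamma'$ in a sequence is an involution of $Seq({\bf A})$ that carries the set of sequences whose first-below-$b$ entry is $\gamma$ bijectively onto the set of those whose first-below-$b$ entry is $\gamma'$: the entries strictly before that position are all $\leq\neg b$ (an atom lies below $b$ or below $\neg b$), hence distinct from both $\gamma$ and $\gamma'$, so they are untouched and the position of the first-below-$b$ entry is preserved. Consequently each of the $k$ atoms below $b$ occurs as the first-below-$b$ entry of the same number $s$ of sequences. Summing over the $m$ atoms below $a$ (all of which are below $b$ since $a\leq b$), the sequences we want number exactly $m\cdot s$, while the sequences that possess \emph{some} entry below $b$ number $k\cdot s$.

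It then remains to evaluate $k\cdot s$. A sequence in $Seq({\bf A})$ omits exactly one atom of ${\bf A}$, so if $k\geq 2$ at least one atom below $b$ survives in every sequence; hence every sequence has a first-below-$b$ entry and $k\cdot s=|Seq({\bf A})|=n!$. This yields $m\cdot s=(m/k)\cdot n!=n!\cdot\frac{|\atom_\leq(a)|}{|\atom_\leq(b)|}$, as required.

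The one point that needs separate care — and the main obstacle — is precisely the passage from length-$(n-1)$ sequences to the total-order count $n!$, which breaks when $k=1$, i.e.\ when $b$ is an atom: then the single atom below $b$ can be the omitted one, so Proposition \ref{prop:atomsbelow}(iii) (which only inspects the first $n-1$ positions) fails to register it, and one gets $k\cdot s=n!-(n-1)!<n!$. This case must be disposed of directly: if $b$ is an atom then $a\leq b$ forces $a\in\{\bot,b\}$, so $(a\mid b)$ is either $\bot_\mathfrak{C}$ or, by Proposition \ref{prop2}(i), $(b\mid b)=\top_\mathfrak{C}$, and the formula returns $0$ or $n!$ accordingly. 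More generally, whenever $a=b$ one has $(a\mid b)=\top_\mathfrak{C}$ and both sides of the identity equal $n!$, so one may safely reduce to the case $a<b$ with $a\neq\bot$, which already guarantees $k\geq 2$ and makes the symmetry count above exact.
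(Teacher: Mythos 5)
Your proof is correct. It rests on the same underlying idea as the paper's --- a symmetry among the atoms below $b$, each of which must account for an equal share of a total of $n!$ --- but the two arguments cash this out differently. The paper first reduces to the case $a=\alpha$ an atom via $(a\mid b)=\bigsqcup_{\alpha\leq a}(\alpha\mid b)$ and disjointness, then writes $n!=|\atom_\leq(b\mid b)|=\sum_{\alpha_i\leq b}|\atom_\leq(\alpha_i\mid b)|$ and concludes that each summand equals $n!/k$ ``by a symmetric argument,'' without exhibiting the symmetry. You instead work entirely in the sequence model of $\atom(\mathcal{C}({\bf A}))$ supplied by Proposition \ref{prop:atomsbelow}(iii) and realise the symmetry as an explicit transposition involution on $Seq({\bf A})$, checking that it fixes the prefix of entries below $\neg b$ and hence preserves the position of the first-below-$b$ entry; this is a self-contained combinatorial rendering of exactly the step the paper leaves implicit. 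The price of working with length-$(n-1)$ sequences is the degenerate case $k=1$, where the unique atom below $b$ may be the omitted one and the count $k\cdot s$ drops to $n!-(n-1)!$; your separate disposal of that case (and of $a\in\{\bot,b\}$, where $(a\mid b)$ collapses to $\bot_\mathfrak{C}$ or $\top_\mathfrak{C}$) is correct and parallels the paper's own separate treatment of $b$ atomic inside its preliminary Claim. Net effect: same decomposition and same symmetry principle, with your version being the more explicit and the paper's the more compressed.
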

\begin{proof} See Appendix.
\end{proof}

\subsection{Equalities and inequalities among conditionals} \label{sub43}
 
The results proved in the previous subsections allow us to determine when two basic conditionals are equal. Further, in this subsection, we will investigate properties regarding the order among conditionals which improve those of Section \ref{sec:BAC}. Let us start with two preliminary lemmas.

\begin{lemma}\label{neq12} Let $ a, b, c \in A$ be such that $\bot < a < b$ and  $a < c$.  Then $(a \mid b) \geq (a \mid c)$ iff $b \leq c$. 
\end{lemma}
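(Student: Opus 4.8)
The plan is to reduce the statement to the combinatorial description of the atoms lying below a basic conditional provided by Proposition~\ref{prop:atomsbelow}. The first thing to observe is that the hypotheses $\bot < a < b$ and $a < c$ give $a \leq b$ and $a \leq c$, so that Proposition~\ref{prop:atomsbelow} applies to both $(a\mid b)$ and $(a\mid c)$; moreover $b,c \in A'$. I would treat the two implications separately, since they have very different flavours.

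For the easy direction, assume $b \leq c$. Then $a \leq b \leq c$ is a chain, and Proposition~\ref{prop:ordine}\,(iii) (instantiated with its $d$ taken to be $c$) yields directly $(a \mid b) \geq (a \mid c)$. No reference to atoms is needed here.

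The substantive direction is the converse, which I would prove by contraposition: assuming $b \not\leq c$, I will exhibit an atom of $\mathcal{C}({\bf A})$ lying below $(a\mid c)$ but not below $(a\mid b)$, which contradicts $(a\mid b) \geq (a\mid c)$. Since $b \not\leq c$, there is an atom $\beta \in \atom({\bf A})$ with $\beta \leq b$ and $\beta \not\leq c$; because $a \leq c$, this $\beta$ also satisfies $\beta \not\leq a$. Choose any atom $\gamma \leq a$ (which exists as $a > \bot$); then $\gamma \leq a \leq b$ and $\gamma \leq a \leq c$, and $\gamma \neq \beta$. Now form a sequence $\overline{\alpha} = \langle \beta, \gamma, \ldots\rangle \in Seq({\bf A})$ placing $\beta$ and $\gamma$ in the first two positions and completing it arbitrarily with the remaining atoms. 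Applying the form (iii) of Proposition~\ref{prop:atomsbelow}: the least index $i$ with $\alpha_i \leq b$ is $i=1$, and $\alpha_1 = \beta \not\leq a$, so $\omega_{\overline{\alpha}} \not\leq (a\mid b)$; whereas the least index $i$ with $\alpha_i \leq c$ is $i=2$ (since $\beta \not\leq c$ but $\gamma \leq c$), and $\alpha_2 = \gamma \leq a$, so $\omega_{\overline{\alpha}} \leq (a\mid c)$. This $\omega_{\overline{\alpha}}$ is the desired separating atom, completing the contrapositive.

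The single delicate point, which I expect to be the only genuine obstacle, is that this construction requires a sequence of length $n-1 \geq 2$, i.e.\ $|\atom({\bf A})| = n \geq 3$, in order to house two distinct atoms $\beta,\gamma$ in the first two slots. This is harmless, though, and I would dispose of it with a short remark: if $n = 2$, then $\bot < a < b$ forces $a$ to be an atom and $b = \top$, whence $a < c$ forces $c = \top = b$, so $b \leq c$ holds trivially; thus the contrapositive hypothesis $b \not\leq c$ can occur only when $n \geq 3$, where the construction is available.
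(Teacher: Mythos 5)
Your proof is correct and follows essentially the same route as the paper's: the forward direction is a one-line appeal to an earlier order property (you cite Proposition~\ref{prop:ordine}(iii) where the paper invokes Proposition~\ref{prop5}(ii); both work), and for the converse you construct exactly the same separating atom $(\beta\mid\top)\sqcap(\gamma\mid\neg\beta)\sqcap\cdots$ with $\beta\leq b$, $\beta\not\leq c$ and $\gamma\leq a$, verified via Proposition~\ref{prop:atomsbelow}. Your closing remark disposing of the degenerate case $|\atom({\bf A})|=2$ is a careful touch the paper omits but does not actually need.
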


\begin{proof} One direction is easy. 
If $b \leq c$ then $b \lor c = c$, and by  Proposition 3.9 (ii), we have $(a \mid b) \sqcap (a \mid c) = (a \mid b \lor c) = (a \mid c)$, that is, $(a \mid b) \geq (a \mid c)$.

As for the other, let $\alpha$ be an atom of $\bf A$ such that $\alpha \leq a$. If $b \nleq c$,  there is an atom $\beta$ such that $\beta \leq b$ but $\beta\not\leq c$ (and thus $\beta \not\leq a$). Then let $\omega_{\overline{\beta}} \in \atom(\mathcal{C}({\bf A}))$ be of the form $\omega_{\overline{\beta}} = (\beta \mid \top) \sqcap (\alpha \mid \neg \beta) \sqcap \ldots $. Then it follows that $\omega_{\overline{\beta}} \leq (a \mid c)$, since $\beta \nleq c$ and both $\alpha \leq c$ and $\alpha \leq a$, but $\omega_{\overline{\beta}} \not\leq (a \mid b)$, since $\beta \leq b$ but $\beta \nleq a$. Hence $(a \mid b) \ngeq (a \mid c)$. 
\end{proof}

As a direct consequence of this lemma, we have the following property that strengthens  Proposition \ref{prop3} (i). 
\begin{corollary}\label{neq1} Let $ a, b, c \in A$ be such that $\bot < a < b$ and $a < c$.  Then $(a \mid b) = (a \mid c)$ iff $b = c$. 
\end{corollary}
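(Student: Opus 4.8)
The plan is to derive Corollary~\ref{neq1} directly from Lemma~\ref{neq12} by the elementary observation that equality in a lattice is the conjunction of two inequalities. Under the standing hypotheses $\bot < a < b$ and $a < c$, I note that $(a \mid b) = (a \mid c)$ holds if and only if both $(a \mid b) \geq (a \mid c)$ and $(a \mid b) \leq (a \mid c)$ hold simultaneously.

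For the forward direction, suppose $(a \mid b) = (a \mid c)$. Then in particular $(a \mid b) \geq (a \mid c)$, so Lemma~\ref{neq12} immediately gives $b \leq c$. To obtain the reverse containment $c \leq b$, I would invoke the symmetry of the situation: the hypotheses $\bot < a < b$, $a < c$ are symmetric in $b$ and $c$ once we know $a < b$ and $a < c$ both hold (the condition $\bot < a$ is independent of $b,c$, and $a < b$ follows from $a < b$, while $a < c$ is assumed; note that having just shown $b \leq c$ together with $a < b$ yields $a < c$, so both antecedents of the lemma are genuinely available in the swapped form). Applying Lemma~\ref{neq12} with the roles of $b$ and $c$ interchanged, the equality $(a \mid c) = (a \mid b)$ gives $(a \mid c) \geq (a \mid b)$, hence $c \leq b$. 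Combining $b \leq c$ and $c \leq b$ yields $b = c$.

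For the converse direction, assume $b = c$. Then trivially $(a \mid b) = (a \mid c)$, since the two conditionals are literally the same element of $\mathcal{C}({\bf A})$; no appeal to the lemma is even needed here.

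I expect no genuine obstacle in this argument, as the corollary is labelled a direct consequence of the lemma. The only point requiring a moment's care is the symmetric reapplication of Lemma~\ref{neq12}: one must check that the swapped hypotheses ($\bot < a < c$ and $a < b$) are indeed satisfied. The condition $\bot < a$ is given; $a < c$ is given directly (yielding $a < c$ in the swapped ``$a < b$''-slot after relabelling); and $a < b$ is given, which supplies the swapped ``$\bot < a < c$''-requirement after relabelling once we know $a<b$. Thus both inequalities of the lemma are legitimately available in each orientation, and the equivalence follows by antisymmetry of the lattice order $\leq$ on $\mathcal{C}({\bf A})$.
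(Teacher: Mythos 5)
Your proof is correct and matches the paper's intent exactly: the paper states the corollary as ``a direct consequence'' of Lemma \ref{neq12} without spelling out the details, and the argument it has in mind is precisely your two applications of the lemma (once with $b,c$ swapped, which is legitimate since $\bot<a<c$ and $a<b$ both hold under the stated hypotheses) combined with antisymmetry of $\leq$. The only remark is that your justification of the swapped hypotheses is more laboured than necessary --- $a<c$ is assumed outright, so you need not derive it from $b\leq c$ and $a<b$.
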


\begin{lemma}\label{neq2} Let $a, b, c, d \in A$ be such that $\bot < a < b$ and $\bot < c < d$.  Then $(a \mid b) = (c \mid d)$ iff $a = c$ and $b = d$. 
\end{lemma}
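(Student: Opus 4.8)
The left-to-right implication is the substantial one; the converse is immediate. The plan is to show that the whole set $\atom_\leq(a\mid b)$ determines both $a$ and $b$ uniquely, so that the hypothesis $(a\mid b)=(c\mid d)$ forces $\atom_\leq(a\mid b)=\atom_\leq(c\mid d)$ and hence $a=c$ and $b=d$.

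Since $a<b$ we have $a\leq b$, so Proposition \ref{prop:atomsbelow} applies to $(a\mid b)$ (and likewise to $(c\mid d)$). Let $n=|\atom({\bf A})|$. Recall that the atoms of $\mathcal{C}({\bf A})$ are exactly the $\omega_{\overline{\alpha}}$ with $\overline{\alpha}\in Seq({\bf A})$ (Theorem \ref{th:atoms}), and that exactly $(n-1)!$ sequences in $Seq({\bf A})$ begin with a fixed atom $\gamma$. For each $\gamma\in\atom({\bf A})$, I would count the number $N(\gamma)$ of atoms $\omega_{\overline{\alpha}}\leq(a\mid b)$ whose defining sequence satisfies $\alpha_1=\gamma$. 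Using the criterion of Proposition \ref{prop:atomsbelow}(iii) — namely $\omega_{\overline{\alpha}}\leq(a\mid b)$ iff the first atom of $\overline{\alpha}$ lying below $b$ also lies below $a$ — I would establish the following trichotomy: if $\gamma\leq a$ then the smallest index with $\alpha_i\leq b$ is $i=1$ for every such sequence and the test succeeds, so $N(\gamma)=(n-1)!$; if $\gamma\leq b$ but $\gamma\nleq a$ then again $i=1$ but the test fails for every sequence, so $N(\gamma)=0$; and if $\gamma\nleq b$ then $0<N(\gamma)<(n-1)!$.

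The key point, and the step I expect to require the hypotheses $\bot<a<b$, is the strictness in the last case. On one hand $N(\gamma)>0$: choosing an atom $\beta\leq a$ (which exists since $a>\bot$) and a sequence $\overline{\alpha}=\langle\gamma,\beta,\dots\rangle$, the first atom of $\overline{\alpha}$ below $b$ is $\beta\leq a$, so $\omega_{\overline{\alpha}}\leq(a\mid b)$. On the other hand $N(\gamma)<(n-1)!$: since $a<b$ strictly, $\atom_\leq(b)\setminus\atom_\leq(a)\neq\emptyset$, so picking $\delta\leq b$ with $\delta\nleq a$ (note $\delta\neq\gamma$) and the sequence $\langle\gamma,\delta,\dots\rangle$, the first atom below $b$ is $\delta\nleq a$, whence $\omega_{\overline{\alpha}}\nleq(a\mid b)$.

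Granting the trichotomy, the family $(N(\gamma))_{\gamma\in\atom({\bf A})}$ recovers both $\atom_\leq(a)=\{\gamma:N(\gamma)=(n-1)!\}$ and $\atom_\leq(b)=\{\gamma:N(\gamma)\in\{0,(n-1)!\}\}$, and therefore, by Proposition \ref{prop:atomsAlgebra}(ii), $a=\bigvee\atom_\leq(a)$ and $b=\bigvee\atom_\leq(b)$. Since $(a\mid b)=(c\mid d)$ yields $\atom_\leq(a\mid b)=\atom_\leq(c\mid d)$ and hence identical counts $N(\gamma)$ for every $\gamma$ (the counts for $(c\mid d)$ being defined in the same way, and applicable because $c\leq d$), we conclude $\atom_\leq(a)=\atom_\leq(c)$ and $\atom_\leq(b)=\atom_\leq(d)$, that is $a=c$ and $b=d$, as required.
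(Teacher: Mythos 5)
Your proof is correct, but it takes a genuinely different route from the paper's. The paper argues by contradiction with a single separating witness: assuming $a\neq c$, it picks an atom $\alpha\leq a$ with $\alpha\nleq c$ and an atom $\gamma\leq d$ with $\gamma\nleq c$, exhibits one atom $\omega_{\overline{\alpha}}=(\alpha\mid\top)\sqcap(\gamma\mid\neg\alpha)\sqcap\ldots$ of $\mathcal{C}({\bf A})$ lying below $(a\mid b)$ but not below $(c\mid d)$ (via Proposition \ref{prop:atomsbelow}), concludes $a=c$, and then invokes Corollary \ref{neq1} (hence Lemma \ref{neq12}) to get $b=d$. You instead prove a reconstruction result: the counting function $N(\gamma)$ of atoms below $(a\mid b)$ with first coordinate $\gamma$ obeys the trichotomy $N(\gamma)=(n-1)!$ iff $\gamma\leq a$, $N(\gamma)=0$ iff $\gamma\leq b$ and $\gamma\nleq a$, and $0<N(\gamma)<(n-1)!$ iff $\gamma\nleq b$, so that $\atom_\leq(a\mid b)$ determines $\atom_\leq(a)$ and $\atom_\leq(b)$ outright. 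Your trichotomy is verified correctly against Proposition \ref{prop:atomsbelow}(iii), including the only delicate points (the strict inequalities in the third case, which use $a>\bot$ and $a<b$, and the existence of a least index $i$ with $\alpha_i\leq b$, which holds because $b$ contains at least two atoms). What your approach buys: it is self-contained (no appeal to Lemma \ref{neq12} or Corollary \ref{neq1}, which it in fact reproves as a by-product), it recovers $a$ and $b$ simultaneously rather than sequentially, and it refines the count $|\atom_\leq(a\mid b)|=n!\cdot|\atom_\leq(a)|/|\atom_\leq(b)|$ of Corollary \ref{lemma:counting} by localizing it over first coordinates. The paper's argument is shorter and needs only one well-chosen atom, at the cost of leaning on the previously established lemmas.
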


 \begin{proof} One direction is trivial.  
As for the other, 
assume $a \neq c$, and hence there is an atom $\alpha$ of ${\bf A}$ such that $\alpha \leq a$ but $\alpha \not\leq c$. Further, since $c < d$ there exists an atom $\gamma$ of ${\bf A}$ such that $\gamma \leq d$ but $\gamma \nleq c$. Consider any atom $\omega_{\overline{\alpha}}$ of $\mathcal{C}({\bf A})$ of the form  $\omega_{\overline{\alpha}} = (\alpha\mid \top) \sqcap (\gamma \mid \neg \alpha) \sqcap \ldots$. Then, using Proposition \ref{prop:atomsbelow}, one can check that $\omega_{\overline{\alpha}} \leq (a \mid b)$ since $\alpha \leq a$, but $\omega_{\overline{\alpha}} \not\leq (c \mid d)$ since $\alpha \nleq c$ and both $\gamma \leq d$ and $\gamma \nleq c$.  
Therefore,  there are atoms below $(a \mid b)$ that are not below $(c \mid d)$, hence $(a \mid b) \neq (c \mid d)$, contradiction. Hence, it must be $a = c$. 

Finally, we  apply Corollary \ref{neq1} to get $b = d$ as well. 
\end{proof}

As a consequence of the above two lemmas, we can {provide necessary and sufficient conditions for the equality between two basic conditionals.} 

\begin{theorem} \label{equality} For any pair of conditionals $(a \mid b), (c \mid d) \in \mathcal{C}({\bf A})$, we have $(a \mid b) = (c \mid d)$ iff either  $(a \mid b) = (c \mid d) = \top$, or $(a \mid b) = (c \mid d) = \bot$, or $a \land b = c \land d$ and $b = d$. 
\end{theorem}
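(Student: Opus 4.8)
The plan is to reduce the statement to Lemma~\ref{neq2}, which already settles the ``generic'' case, and to dispatch the two degenerate values $\top_{\mathfrak C}$ and $\bot_{\mathfrak C}$ by hand. The first move is a normalisation: by Proposition~\ref{prop2}\eqref{e4} we have $(a\mid b)=(a\wedge b\mid b)$ and $(c\mid d)=(c\wedge d\mid d)$, so writing $a'=a\wedge b$ and $c'=c\wedge d$ I may assume throughout that $a'\leq b$ and $c'\leq d$. Under this convention the third disjunct of the statement, ``$a\wedge b=c\wedge d$ and $b=d$'', reads simply ``$a'=c'$ and $b=d$'', which is exactly the conclusion delivered by Lemma~\ref{neq2}.

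The ``if'' direction is immediate and I would dispose of it first: if both conditionals equal $\top_{\mathfrak C}$ (resp.\ both equal $\bot_{\mathfrak C}$) they are trivially equal, and if $a'=c'$ and $b=d$ then $(a\mid b)=(a'\mid b)=(c'\mid d)=(c\mid d)$. For the ``only if'' direction I would argue by cases on the common value $e:=(a\mid b)=(c\mid d)$. If $e=\top_{\mathfrak C}$ we are in the first disjunct and if $e=\bot_{\mathfrak C}$ in the second, so the only substantial case is $e\notin\{\top_{\mathfrak C},\bot_{\mathfrak C}\}$.

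In that remaining case the key observation is that ``being neither top nor bottom'' forces precisely the strict inequalities $\bot<a'<b$ and $\bot<c'<d$ required by Lemma~\ref{neq2}, and, pleasantly, only the \emph{easy} implications are needed to see this. Indeed, if $a'=b$ then $e=(a'\mid b)=(b\mid b)=\top_{\mathfrak C}$ by Proposition~\ref{prop2}\eqref{e1}, while if $a'=\bot$ then $e=(\bot\mid b)=(\neg b\wedge b\mid b)=(\neg b\mid b)=\bot_{\mathfrak C}$ by Proposition~\ref{prop2}\eqref{e4} together with Proposition~\ref{prop3}\eqref{e6}; both possibilities contradict $e\notin\{\top_{\mathfrak C},\bot_{\mathfrak C}\}$, so $\bot<a'<b$, and the identical argument gives $\bot<c'<d$. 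Applying Lemma~\ref{neq2} to $(a'\mid b)=(c'\mid d)$ then yields $a'=c'$ and $b=d$, i.e.\ the third disjunct.

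The proof is therefore essentially a packaging of Lemma~\ref{neq2}, and I do not expect any genuinely new difficulty. The only point that requires care is the bookkeeping of the case analysis: one must check that excluding the two degenerate values is \emph{exactly} what supplies the hypotheses $\bot<a'<b$ and $\bot<c'<d$ of that lemma. Note in particular that the converse (harder) directions of the characterisations of $\top_{\mathfrak C}$ and $\bot_{\mathfrak C}$ are never invoked, since splitting on the value $e$ rather than on $a',b,c',d$ lets me derive the needed strict inequalities purely by contradiction from the two elementary facts $(b\mid b)=\top_{\mathfrak C}$ and $(\bot\mid b)=\bot_{\mathfrak C}$.
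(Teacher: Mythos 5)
Your proof is correct and follows exactly the route the paper intends: the paper states the theorem as an immediate consequence of Lemma~\ref{neq2} (via Corollary~\ref{neq1}) without writing out the details, and your argument supplies precisely those details — normalising via $(a\mid b)=(a\wedge b\mid b)$, checking that excluding $\top_{\mathfrak C}$ and $\bot_{\mathfrak C}$ yields the strict inequalities $\bot<a\wedge b<b$ and $\bot<c\wedge d<d$, and then invoking Lemma~\ref{neq2}. No gaps; the case bookkeeping is handled correctly.
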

Thanks to the characterisation Theorem \ref{equality}, one can easily compute the number of (distinct) basic conditionals in a given algebra of conditionals $\mathcal{C}({\bf A})$. Indeed, to compute the number of basic conditionals different from $\top$ and $\bot$ amounts to counting all pairs $(M, N)$ with $\emptyset \neq M \subset N \subseteq \atom({\bf A})$. 

\begin{corollary} Let $\bf A$ be a Boolean algebra with $n$ atoms, i.e. with $| \atom({\bf A}) | = n$. Then the number of basic conditionals in $\mathcal{C}({\bf A})$ is  $\mathsf{bc}(n) = 2 + \sum_{r=2}^{n} \binom{n}{r} \cdot (2^r-2) $.  
\end{corollary}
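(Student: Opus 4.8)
The plan is to count basic conditionals by exploiting the equality criterion of Theorem~\ref{equality}, which tells us that, apart from the two distinguished values $\top$ and $\bot$, a basic conditional $(a \mid b)$ is completely determined by the pair $(a \land b,\, b)$. First I would dispose of the two trivial values: by Proposition~\ref{prop2}(i) we have $(b \mid b) = \top_\mathfrak{C}$ and by Proposition~\ref{prop3}(ii) we have $(\neg b \mid b) = \bot_\mathfrak{C}$, so $\top$ and $\bot$ are themselves basic conditionals; moreover Theorem~\ref{equality} guarantees that all conditionals equal to $\top$ (resp.\ to $\bot$) collapse to a single element, so these two values contribute exactly $2$ to the count.

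For the remaining basic conditionals I would set up a bijection with a combinatorial family of pairs of atom-sets. By Proposition~\ref{prop2}(iv), $(a \mid b) = (a \land b \mid b)$, so every basic conditional admits a representative whose consequent lies below its antecedent; it is thus determined by the pair of subsets of $\atom({\bf A})$ given by $M = \atom_\leq(a \land b)$ and $N = \atom_\leq(b)$, and clearly $M \subseteq N$. Using the two identities above together with the criterion of Theorem~\ref{equality}, one checks that $(a \mid b) = \top$ exactly when $a \land b = b$, i.e.\ $M = N$, and that $(a \mid b) = \bot$ exactly when $a \land b = \bot$, i.e.\ $M = \emptyset$. Consequently the basic conditionals different from $\top$ and $\bot$ are in one-to-one correspondence with the pairs $(M, N)$ satisfying $\emptyset \neq M \subsetneq N \subseteq \atom({\bf A})$: injectivity is the nontrivial direction of Theorem~\ref{equality}, while surjectivity follows since any such pair is realised by $b = \bigvee N$ and $a = \bigvee M$, which is then neither $\top$ nor $\bot$.

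It then remains to count these pairs. I would stratify by the size $r = |N|$: there are $\binom{n}{r}$ antecedent-sets $N$ of size $r$, and for each of them the admissible $M$ are precisely the nonempty proper subsets of $N$, of which there are $2^r - 2$. Since $r = 1$ already contributes $2^1 - 2 = 0$ and the case $N = \emptyset$ is excluded (as $b \in A'$ forces $N \neq \emptyset$), the sum effectively ranges over $2 \le r \le n$, giving $\sum_{r=2}^{n} \binom{n}{r}(2^r - 2)$ non-trivial basic conditionals. Adding back the two trivial values yields $\mathsf{bc}(n) = 2 + \sum_{r=2}^{n} \binom{n}{r}(2^r - 2)$, as claimed.

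I expect the only delicate point to be the bookkeeping of the boundary cases in the bijection, namely verifying that the pairs with $M = \emptyset$ or $M = N$ are exactly those already absorbed into $\bot$ and $\top$, so that they must be excluded to avoid double counting; once this is pinned down via Theorem~\ref{equality}, the rest is an elementary count of subsets.
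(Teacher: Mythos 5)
Your proposal is correct and follows essentially the same route as the paper: reduce via Theorem \ref{equality} to counting pairs $(M,N)$ with $\emptyset \neq M \subsetneq N \subseteq \atom({\bf A})$, stratify by $r = |N|$, and add $2$ for $\top$ and $\bot$. You merely spell out in more detail the bijection and the boundary cases ($M=\emptyset$ giving $\bot$, $M=N$ giving $\top$) that the paper asserts in the sentence preceding the corollary.
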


\begin{proof}  The counting of pairs $(M, N)$ with $\emptyset \neq M \subset N \subseteq \atom({\bf A})$ results from the following observations: 

\begin{itemize}
\item[(1)]  For each $N$ with $r$ elements, there are $2^r-2$ non-empty subsets $M$ of $N$ with less than $r$ elements (note that necessarily $| N | \geq 2$ since $\emptyset \neq M \subset N$); 

\item[(2)]  Hence, there are $\binom{n}{r} \cdot (2^r-2)$ such pairs $(M, N)$ such that $| N | = r$; 

\item[(3)]  Thus, the total number of such pairs will be  $\displaystyle{\sum_{r=2}^{n} \binom{n}{r} \cdot (2^r-2)}$. 
\end{itemize}
Finally, to get the total number of basic conditionals we only need to add 2 (for the $\bot$ and $\top$ conditionals) to the above quantity. 
\end{proof}

For example, in the algebra  $\mathcal{C}({\bf A})$ of Fig. \ref{fig:sixAtoms}, built from the algebra $\bf A$ with 3 atoms ($n = 3$), we have  $\mathsf{bc}(3) = 14$ basic conditionals, out of 64 elements in total. Hence it contains 50 proper compound conditionals.  

Thanks to the results about the atomic structure of the algebras of conditionas $\mathcal{C}({\bf A})$, we can now also provide similar, but partial, characterisation results of when an inequality between two basic conditionals holds, stronger than those  shown  e.g. in  Proposition  \ref{prop:ordine}. 

\begin{lemma}\label{neq22} Let $a, b, c, d \in A$ be such that $\bot < a < b$, $\bot < c < d$. Then:
\begin{enumerate}[(i)]
\item if  $a \leq c$ and $b \geq d$ then $(a \mid b) \leq (c \mid d)$; 
\item if $(a \mid b) \leq (c \mid d)$ then $a \leq c$;
\item  if $c \leq b$ and $(a \mid b) \leq (c \mid d)$ then $b \geq d$.
\end{enumerate}
\end{lemma}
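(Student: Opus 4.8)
The plan is to prove (i) by a purely order-theoretic argument resting on Proposition~\ref{prop:ordine}, and to prove (ii) and (iii) by contraposition, exhibiting in each case a single atom of $\mathcal{C}({\bf A})$ that separates $(a\mid b)$ from $(c\mid d)$. The workhorse throughout will be Proposition~\ref{prop:atomsbelow}: since the hypotheses give $a\leq b$ and $c\leq d$, for any $\overline{\alpha}\in Seq({\bf A})$ the atom $\omega_{\overline{\alpha}}$ lies below $(a\mid b)$ iff the first atom of $\overline{\alpha}$ that is $\leq b$ is already $\leq a$, and similarly for $(c\mid d)$. Because $\mathcal{C}({\bf A})$ is finite, $(a\mid b)\leq (c\mid d)$ holds iff $\atom_\leq(a\mid b)\subseteq\atom_\leq(c\mid d)$, so it suffices to reason about these ``first atom below the antecedent'' conditions.

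For (i) no atoms are needed. From $a\leq c$ and Proposition~\ref{prop:ordine}(ii) I get $(a\mid b)\leq (c\mid b)$. Since $c<d$ and $d\leq b$, applying Proposition~\ref{prop:ordine}(iii) to the chain $c\leq d\leq b$ yields $(c\mid d)\geq (c\mid b)$. Chaining the two inequalities gives $(a\mid b)\leq (c\mid b)\leq (c\mid d)$, as desired.

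For (ii) I assume $a\nleq c$ and build a separating atom, reusing the witness construction of Lemma~\ref{neq2}. I pick an atom $\alpha\leq a$ with $\alpha\nleq c$, and, using $c<d$, an atom $\gamma\leq d$ with $\gamma\nleq c$, and take a sequence $\overline{\alpha}$ starting with $\alpha$ and, when $\alpha\nleq d$, placing such a $\gamma$ (then necessarily $\gamma\neq\alpha$) in the second position. Since $\alpha\leq a\leq b$, the first atom of $\overline{\alpha}$ below $b$ is $\alpha\leq a$, so $\omega_{\overline{\alpha}}\leq (a\mid b)$. For the other conditional, if $\alpha\leq d$ then $\alpha$ is also the first atom below $d$ and $\alpha\nleq c$; if $\alpha\nleq d$ then the first atom below $d$ is $\gamma\nleq c$. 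Either way $\omega_{\overline{\alpha}}\nleq (c\mid d)$, contradicting the hypothesis, so $a\leq c$. For (iii) I assume $c\leq b$ together with $d\nleq b$ and again seek a separating atom. I choose an atom $\delta\leq d$ with $\delta\nleq b$; the key point is that $\delta\nleq c$, for otherwise $\delta\leq c\leq b$ would contradict $\delta\nleq b$. Taking any atom $\alpha\leq a$ (so $\alpha\leq b$ and $\alpha\neq\delta$) I form $\overline{\alpha}=\langle\delta,\alpha,\ldots\rangle$: since $\delta\nleq b$ while $\alpha\leq b$, the first atom below $b$ is $\alpha\leq a$, giving $\omega_{\overline{\alpha}}\leq (a\mid b)$; whereas $\delta\leq d$ sits first, so the first atom below $d$ is $\delta\nleq c$, giving $\omega_{\overline{\alpha}}\nleq (c\mid d)$. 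This contradicts $(a\mid b)\leq (c\mid d)$, whence $b\geq d$.

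The hard part will be the bookkeeping around the ``first atom below the antecedent'' condition of Proposition~\ref{prop:atomsbelow}: I must place the chosen atoms at the correct positions of the sequence and verify that the relevant first-hitting atom is the intended one. In particular the argument for (iii) turns entirely on the implication $\delta\nleq b\Rightarrow\delta\nleq c$, which is exactly where the extra hypothesis $c\leq b$ is consumed and without which the conclusion can fail. A minor point I would check but not dwell on is that the constructed sequences really have the required length $n-1$ of distinct atoms; this is automatic, since whenever the contrapositive forces a second distinguished atom (the $\gamma$ in (ii) or the pair $\delta,\alpha$ in (iii)), the failure $d\nleq b$ or $\alpha\nleq d$ already guarantees $|\atom({\bf A})|\geq 3$.
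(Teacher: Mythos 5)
Your proof is correct and follows essentially the same strategy as the paper's: parts (ii) and (iii) are proved by contraposition with the same separating atoms $\langle\alpha,\gamma,\ldots\rangle$ and $\langle\delta,\alpha,\ldots\rangle$ read off via Proposition~\ref{prop:atomsbelow}, and your case split in (ii) on whether $\alpha\leq d$ even handles the degenerate possibility $\gamma=\alpha$ a bit more carefully than the paper does. The only divergence is in (i), where you apply Proposition~\ref{prop:ordine}(iii) directly to the chain $c\leq d\leq b$ instead of the paper's detour through $(c\wedge b\mid b)\leq(c\wedge b\mid d)$ via Lemma~\ref{neq12}; both chains are valid.
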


\begin{proof}
\begin{itemize}
\item[(i)] Assume  $a \leq c$ and $b \geq d$. In such a case we have $\bot < c \land b < b$. Indeed, if it were $c \land b = b$,  it would mean $b \leq c$, and then we would have $a < b \leq c < d$, hence $b < d$ that is in contradiction with the hypothesis $b \geq d$. 
Therefore we have the following chain of inequalities:  
 $$
 (a \mid b) \leq (c \mid b) = (c \land b \mid b) \leq (c \land b \mid d) \leq (c \mid d).
 $$
 
Observe that the first and third inequalities are clear from (iii) of Proposition \ref{prop:ordine}, while the second one follows from Lemma \ref{neq12} due to the fact that $c \land b < b$ and $c \land d < d$. 

\item[(ii)] Assume $a \nleq c$, and hence assume there is an atom $\alpha$ of ${\bf A}$ such that $\alpha \leq a$ but $\alpha \not\leq c$. Further, since $c < d$ there exists an atom $\gamma$ of ${\bf A}$ such that $\gamma \leq d$ but $\gamma \nleq c$. Consider any atom $\omega_{\overline{\alpha}}$ of $\mathcal{C}({\bf A})$ of the form  $\omega_{\overline{\alpha}} = (\alpha\mid \top) \sqcap (\gamma \mid \neg \alpha) \sqcap \ldots$. Then, using Proposition \ref{prop:atomsbelow}, one can check that $\omega_{\overline{\alpha}} \leq (a \mid b)$ since $\alpha \leq a$, but $\omega_{\overline{\alpha}} \not\leq (c \mid d)$ since $\alpha \nleq c$ and both $\gamma \leq d$ and $\gamma \nleq c$. Therefore, $(a \mid b) \nleq (c \mid d)$. 

\item[(iii)] Assume $b \ngeq d$. In this case, we can further assume $a \leq c$, otherwise the previous item can be applied. 
Let $\alpha$ be an atom of $\bf A$ such that $\alpha \leq a$, and hence $\alpha \leq b$ and $\alpha \leq c \leq d$ as well. 

If $b \ngeq d$,  there is an atom $\beta$ such that $\beta \leq d$ but $\beta\not\leq b$ (and thus $\beta \not\leq a$ and $\beta \not\leq c$ as well since $a < b$ and $c \leq b$). Then let $\omega_{\overline{\beta}} \in \atom(\mathcal{C}({\bf A}))$ be of the form $\omega_{\overline{\beta}} = (\beta \mid \top) \sqcap (\alpha \mid \neg \beta) \sqcap \ldots $. Then it follows that $\omega_{\overline{\beta}} \leq (a \mid b)$, since $\beta \nleq b$ and both $\alpha \leq a$ and $\alpha \leq b$, but $\omega_{\overline{\beta}} \not\leq (c \mid d)$, since $\beta \leq d$ but $\beta \nleq c$. Therefore, $(a \mid b) \nleq (c \mid d)$. 
\end{itemize}
\end{proof}

Note that the property (i) in the above lemma is  stronger  than both (ii) and (iii) of Proposition \ref{prop:ordine}. 

From these properties, we can express the following characterisation result, although with an additional assumption (the one in (iii) above) that restricts the scope of its application. It remains as an open problem whether this extra condition could be eventually removed. 

\begin{corollary} \label{inequality} For any pair of conditionals $(a \mid b), (c \mid d) \in \mathcal{C}({\bf A})$ such that $c \land d \leq b$, we have $(a \mid b) \leq (c \mid d)$ iff either  $(c \mid d) = \top$, $(a \mid b) = \bot$, or $a \land b \leq c \land d$ and $b \geq d$. 
\end{corollary}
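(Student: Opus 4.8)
The plan is to reduce the statement to Lemma~\ref{neq22} by first normalising the consequents and then peeling off the degenerate cases. Using Proposition~\ref{prop2}(iv) I may replace each conditional by its normalised form, writing $a^* = a \land b$ and $c^* = c \land d$, so that $(a \mid b) = (a^* \mid b)$ and $(c \mid d) = (c^* \mid d)$ with $a^* \leq b$, $c^* \leq d$, and the standing hypothesis $c \land d \leq b$ reading simply as $c^* \leq b$. I will also record the elementary translations of the extremal conditionals: $(a \mid b) = \bot_\mathfrak{C}$ iff $a^* = \bot$, $(a \mid b) = \top_\mathfrak{C}$ iff $a^* = b$, $(c \mid d) = \top_\mathfrak{C}$ iff $c^* = d$, and $(c \mid d) = \bot_\mathfrak{C}$ iff $c^* = \bot$; these follow from Proposition~\ref{prop2}(i), Proposition~\ref{prop3}(ii) and the equality characterisation of Theorem~\ref{equality}. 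In these terms the three disjuncts on the right-hand side become $c^* = d$, $a^* = \bot$, and the pair ($a^* \leq c^*$ and $b \geq d$).

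For the direction from right to left, the first two disjuncts give the conclusion at once, since $\bot_\mathfrak{C} \leq (c \mid d)$ and $(a \mid b) \leq \top_\mathfrak{C}$ always hold. For the third disjunct I assume $a^* \leq c^*$ and $b \geq d$; if $a^* = \bot$ or $c^* = d$ I fall back to one of the trivial cases, so I may assume $a^* > \bot$ and $c^* < d$. Then $c^* \geq a^* > \bot$, and $a^* = b$ is impossible, for it would force $b = a^* \leq c^* \leq d \leq b$ and hence $c^* = d$; thus $\bot < a^* < b$ and $\bot < c^* < d$, and Lemma~\ref{neq22}(i) yields $(a^* \mid b) \leq (c^* \mid d)$.

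For the direction from left to right, I assume $(a^* \mid b) \leq (c^* \mid d)$. If $c^* = d$ or $a^* = \bot$ the right-hand disjunction already holds, so I suppose $c^* < d$ and $a^* > \bot$ and derive the third disjunct. Here $c^* = \bot$ is impossible, since it would give $(a^* \mid b) \leq \bot_\mathfrak{C}$ and hence $a^* = \bot$; likewise $a^* = b$ is impossible, since it would give $\top_\mathfrak{C} \leq (c^* \mid d)$ and hence $c^* = d$. Therefore $\bot < a^* < b$ and $\bot < c^* < d$, so Lemma~\ref{neq22}(ii) gives $a^* \leq c^*$, and Lemma~\ref{neq22}(iii)---whose extra hypothesis $c^* \leq b$ is exactly the standing assumption $c \land d \leq b$---gives $b \geq d$, establishing the remaining disjunct and completing the equivalence.

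The substantive content is carried entirely by Lemma~\ref{neq22}; the only real care needed is in the boundary bookkeeping, namely verifying that once the two extremal conditionals are excluded the strict inequalities $\bot < a^* < b$ and $\bot < c^* < d$ required to invoke the lemma are genuinely available. The extra hypothesis $c \land d \leq b$ is used precisely, and only, to meet the side condition $c^* \leq b$ of Lemma~\ref{neq22}(iii), which is exactly why its removal is left as an open problem.
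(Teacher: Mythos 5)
Your proof is correct and follows exactly the route the paper intends: the corollary is stated there without proof as an immediate consequence of Lemma \ref{neq22}, and your argument is precisely that derivation, with the normalisation $a^*=a\land b$, $c^*=c\land d$ and the boundary cases handled carefully so that the strict inequalities required by the lemma are available. The observation that the standing hypothesis $c\land d\leq b$ is used only to meet the side condition of Lemma \ref{neq22}(iii) also matches the paper's remark about why that assumption restricts the scope of the result.
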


\section{{Two tree-like representations of $\atom({\cal C}({\bf A}))$
  }}\label{sec:tree-like}
In this section we provide two representations of the set $\atom({\cal
  C}({\bf A}))$ of atoms of a conditional Boolean algebra  as trees
that will turn very helpful in the proof of the main result of next
Section \ref{sebsec:measure2}. We advise readers who are not
interested in the full detail of the proof to skim through this
section to get acquainted with the notation, which will be useful
later on.

\subsection{A first representation} \label{firsttree}

Let ${\bf A}$ be a Boolean algebra with $n$ atoms, say $\atom({\bf A}) = \{\alpha_1,\ldots, \alpha_n\}$, and let us inductively define the following rooted trees $\mathbb{T}(m)$ of increasing depth, for $m= 0, 1, \ldots, n$: 
\begin{itemize}
\item[($0$)]  $\mathbb{T}(0)$ consists of only one node: $(\top\mid \top)$. 
\item[($1$)]  $\mathbb{T}(1)$ is obtained from $\mathbb{T}(0)$ by attaching the following $n$ child nodes to $(\top\mid \top)$: $(\alpha_1\mid \top), \ldots, (\alpha_n\mid \top)$. 

\item[($k$)] For $k = 1, \ldots, n-2$, $\mathbb{T}(k+1)$ is obtained by spanning $\mathbb{T}(k)$ in the following way.  For each leaf $(\alpha_i \mid b)$ of $\mathbb{T}(k)$, let $Atup(\alpha_i,b)$ be the set of atoms that appear in the consequents of conditionals along the path from $(\alpha_i \mid b)$ to the root, including $\alpha_i$ itself. Then attach $(\alpha_i \mid b)$ with the $n-k$ child nodes of the form $(\alpha_j \mid b \land \neg \alpha_i)$, for each $\alpha_j \in \atom({\bf A}) \setminus Atup(\alpha_i,b)$. 

\item[($n-1$)]  Put $\mathbb{T} = \mathbb{T}(n-1)$.\footnote{In the above construction,  if we would proceed to the stage $n$, the leaves of $\mathbb{T}(n)$  would be of the form $(\alpha_j\mid\bigwedge_{\alpha_i\neq\alpha_j}\neg\alpha_i) =(\alpha_j\mid \alpha_j) =(\top\mid\top)$. For this reason, and in order not to trivialize the construction, we  stop the definition at level $n-1$. }
\end{itemize}

Figure \ref{figTreeAtoms1} clarifies the above construction in the case of an algebra $\bf A$ with four atoms.
The following hold:
\begin{itemize}
\item[Fact 1:] all the conditionals appearing in $\mathbb{T}$, but the root, are of the form $(\alpha_i\mid b)$ for $\alpha_i\in \atom({\bf A})$ and $b\in  A$;
\item[Fact 2:] at each level of the tree, all conditionals which are children of the same node 
share the same antecedent. {For instance the leafs of $\mathbb{T}(k+1)$ are all of the form $(\alpha_j \mid b \land \neg \alpha_i)$, for each $\alpha_j \in \atom({\bf A}) \setminus Atup(\alpha_i,b)$ (see the construction above).}
\item[Fact 3:] there is a bijective correspondence between the atoms of $\mathcal{C}({\bf A})$ and the paths from the root to the leaves in $\mathbb{T}$.  
\end{itemize}

\begin{figure}[!htbp]\label{figTreeAtoms1}
\definecolor{ududff}{rgb}{0.30196078431372547,0.30196078431372547,1.}
\definecolor{uuuuuu}{rgb}{0.26666666666666666,0.26666666666666666,0.26666666666666666}
\begin{center}
\begin{tikzpicture}[scale=.9]
\draw [dashed,line width=0.8pt] (8.25,-18.83)-- (5.25,-17.83);
\draw [line width=0.4pt] (8.25,-18.83)-- (7.25,-17.83);
\draw [line width=0.4pt] (8.25,-18.83)-- (9.25,-17.83);
\draw [line width=0.4pt] (8.25,-18.83)-- (11.25,-17.83);
\draw [dashed,line width=0.8pt] (5.25,-17.83)-- (2.69,-16.83);
\draw [line width=0.4pt] (5.25,-17.83)-- (3.75,-16.83);
\draw [line width=0.4pt] (5.25,-17.83)-- (4.63,-16.79);
\draw [line width=0.4pt] (7.25,-17.83)-- (5.71,-16.81);
\draw [line width=0.4pt] (7.25,-17.83)-- (6.63,-16.85);
\draw [line width=0.4pt] (7.25,-17.83)-- (7.81,-16.81);
\draw [line width=0.4pt] (9.25,-17.83)-- (8.75,-16.85);
\draw [line width=0.4pt] (9.25,-17.83)-- (9.77,-16.83);
\draw [line width=0.4pt] (9.25,-17.83)-- (10.77,-16.81);
\draw [line width=0.4pt] (11.25,-17.83)-- (11.77,-16.83);
\draw [line width=0.4pt] (11.25,-17.83)-- (12.75,-16.85);
\draw [line width=0.4pt] (11.25,-17.83)-- (13.85,-16.83);
\draw [dashed,line width=0.8pt] (2.69,-16.83)-- (1.25,-15.83);
\draw [line width=0.4pt] (2.69,-16.83)-- (1.92,-15.83);
\draw [line width=0.4pt] (3.75,-16.83)-- (2.62,-15.85);
\draw [line width=0.4pt] (3.75,-16.83)-- (3.25,-15.83);
\draw [line width=0.4pt] (4.63,-16.79)-- (3.89,-15.85);
\draw [line width=0.4pt] (4.63,-16.79)-- (4.6,-15.83);
\draw [line width=0.4pt] (5.71,-16.81)-- (5.11,-15.85);
\draw [line width=0.4pt] (5.71,-16.81)-- (5.71,-15.82);
\draw [line width=0.4pt] (6.63,-16.85)-- (6.25,-15.83);
\draw [line width=0.4pt] (6.63,-16.85)-- (6.84,-15.78);
\draw [line width=0.4pt] (7.81,-16.81)-- (7.25,-15.83);
\draw [line width=0.4pt] (7.81,-16.81)-- (7.84,-15.82);
\draw [line width=0.4pt] (8.75,-16.85)-- (8.67,-15.82);
\draw [line width=0.4pt] (8.75,-16.85)-- (9.25,-15.83);
\draw [line width=0.4pt] (9.77,-16.83)-- (9.67,-15.78);
\draw [line width=0.4pt] (9.77,-16.83)-- (10.25,-15.83);
\draw [line width=0.4pt] (10.77,-16.81)-- (10.8,-15.82);
\draw [line width=0.4pt] (10.77,-16.81)-- (11.4,-15.85);
\draw [line width=0.4pt] (11.77,-16.83)-- (11.91,-15.83);
\draw [line width=0.4pt] (11.77,-16.83)-- (12.62,-15.85);
\draw [line width=0.4pt] (12.75,-16.85)-- (13.25,-15.83);
\draw [line width=0.4pt] (12.75,-16.85)-- (13.89,-15.85);
\draw [line width=0.4pt] (13.85,-16.83)-- (14.59,-15.83);
\draw [line width=0.4pt] (13.85,-16.83)-- (15.25,-15.83);
\begin{scriptsize}
\draw [fill=black] (5.25,-17.83) circle (2.0pt);
\coordinate[label=left: $\alpha_1\mid\top$] (a1) at (5.25,-18.00);
\coordinate[label=below: $\alpha_2\mid\neg\alpha_1$] (a1) at (2.25,-16.80);
\coordinate[label=below: $\alpha_1\mid\neg\alpha_2$] (a1) at (5.44,-16.80);
\draw [fill=black] (8.25,-18.83) circle (2.0pt);
\draw [fill=black] (7.25,-17.83) circle (2.0pt);
\coordinate[label=left: $\alpha_2\mid\top$] (a1) at (7.25,-18.00);
\coordinate[label=right: $\alpha_3\mid\top$] (a1) at (9.15,-18.00);
\coordinate[label=right: $\alpha_4\mid\top$] (a1) at (11.15,-18.00);
\draw [fill=black] (8.25,-18.83) circle (2.0pt);
\draw [fill=black] (9.25,-17.83) circle (2.0pt);
\draw [fill=black] (8.25,-18.83) circle (2.0pt);
\coordinate[label=below: $\top\mid\top$] (a0) at (8.25,-18.83);
\draw [fill=black] (11.25,-17.83) circle (2.0pt);
\draw [fill=black] (5.25,-17.83) circle (2.0pt);
\draw [fill=black] (2.69,-16.83) circle (2.0pt);
\coordinate[label=below: $\alpha_3\mid\neg\alpha_1\wedge\neg\alpha_2$] (a1) at (1.10,-15.80); %(1.80,-15.33);
\draw [fill=black] (5.25,-17.83) circle (2.0pt);
\draw [fill=black] (3.75,-16.83) circle (2.0pt);
\draw [fill=black] (5.25,-17.83) circle (2.0pt);
\draw [fill=black] (4.63,-16.79) circle (2.0pt);
\draw [fill=black] (7.25,-17.83) circle (2.0pt);
\draw [fill=black] (5.71,-16.81) circle (2.0pt);
\draw [fill=black] (7.25,-17.83) circle (2.0pt);
\draw [fill=black] (6.63,-16.85) circle (2.0pt);
\draw [fill=black] (7.25,-17.83) circle (2.0pt);
\draw [fill=black] (7.81,-16.81) circle (2.0pt);
\draw [fill=black] (9.25,-17.83) circle (2.0pt);
\draw [fill=black] (8.75,-16.85) circle (2.0pt);
\draw [fill=black] (9.25,-17.83) circle (2.0pt);
\draw [fill=black] (9.77,-16.83) circle (2.0pt);
\draw [fill=black] (9.25,-17.83) circle (2.0pt);
\draw [fill=black] (10.77,-16.81) circle (2.0pt);
\draw [fill=black] (11.25,-17.83) circle (2.0pt);
\draw [fill=black] (11.77,-16.83) circle (2.0pt);
\draw [fill=black] (11.25,-17.83) circle (2.0pt);
\draw [fill=black] (12.75,-16.85) circle (2.0pt);
\draw [fill=black] (11.25,-17.83) circle (2.0pt);
\draw [fill=black] (13.85,-16.83) circle (2.0pt);
\draw [fill=black] (2.69,-16.83) circle (2.0pt);
\draw [fill=black] (1.25,-15.83) circle (2.0pt);
\draw [fill=black] (2.69,-16.83) circle (2.0pt);
\draw [fill=black] (1.92,-15.83) circle (2.0pt);
\draw [fill=black] (3.75,-16.83) circle (2.0pt);
\draw [fill=black] (2.62,-15.85) circle (2.0pt);
\draw [fill=black] (3.75,-16.83) circle (2.0pt);
\draw [fill=black] (3.25,-15.83) circle (2.0pt);
\draw [fill=black] (4.63,-16.79) circle (2.0pt);
\draw [fill=black] (3.89,-15.85) circle (2.0pt);
\draw [fill=black] (4.63,-16.79) circle (2.0pt);
\draw [fill=black] (4.6,-15.83) circle (2.0pt);
\draw [fill=black] (5.71,-16.81) circle (2.0pt);
\draw [fill=black] (5.11,-15.85) circle (2.0pt);
\draw [fill=black] (5.71,-16.81) circle (2.0pt);
\draw [fill=black] (5.71,-15.82) circle (2.0pt);
\draw [fill=black] (6.63,-16.85) circle (2.0pt);
\draw [fill=black] (6.25,-15.83) circle (2.0pt);
\draw [fill=black] (6.63,-16.85) circle (2.0pt);
\draw [fill=black] (6.84,-15.78) circle (2.0pt);
\draw [fill=black] (7.81,-16.81) circle (2.0pt);
\draw [fill=black] (7.25,-15.83) circle (2.0pt);
\draw [fill=black] (7.81,-16.81) circle (2.0pt);
\draw [fill=black] (7.84,-15.82) circle (2.0pt);
\draw [fill=black] (8.75,-16.85) circle (2.0pt);
\draw [fill=black] (8.67,-15.82) circle (2.0pt);
\draw [fill=black] (8.75,-16.85) circle (2.0pt);
\draw [fill=black] (9.25,-15.83) circle (2.0pt);
\draw [fill=black] (9.77,-16.83) circle (2.0pt);
\draw [fill=black] (9.67,-15.78) circle (2.0pt);
\draw [fill=black] (9.77,-16.83) circle (2.0pt);
\draw [fill=black] (10.25,-15.83) circle (2.0pt);
\draw [fill=black] (10.77,-16.81) circle (2.0pt);
\draw [fill=black] (10.8,-15.82) circle (2.0pt);
\draw [fill=black] (10.77,-16.81) circle (2.0pt);
\draw [fill=black] (11.4,-15.85) circle (2.0pt);
\draw [fill=black] (11.77,-16.83) circle (2.0pt);
\draw [fill=black] (11.91,-15.83) circle (2.0pt);
\draw [fill=black] (11.77,-16.83) circle (2.0pt);
\draw [fill=black] (12.62,-15.85) circle (2.0pt);
\draw [fill=black] (12.75,-16.85) circle (2.0pt);
\draw [fill=black] (13.25,-15.83) circle (2.0pt);
\draw [fill=black] (12.75,-16.85) circle (2.0pt);
\draw [fill=black] (13.89,-15.85) circle (2.0pt);
\draw [fill=black] (13.85,-16.83) circle (2.0pt);
\draw [fill=black] (14.59,-15.83) circle (2.0pt);
\draw [fill=black] (13.85,-16.83) circle (2.0pt);
\draw [fill=black] (15.25,-15.83) circle (2.0pt);
\end{scriptsize}
\end{tikzpicture}
\end{center}
\caption{\small{The tree $\mathbb{T}=\mathbb{T}(3)$ whose $24$ paths describe the atoms of $\mathcal{C}({\bf A})$ with $|\atom({\bf A})|=4$. In particular, notice that the left-most path (dashed in the figure) provides a description for the atom $\omega_{\overline{\alpha}}=(\alpha_1\mid \top)\sqcap(\alpha_2\mid\neg\alpha_1)\sqcap(\alpha_3\mid\neg\alpha_1\wedge\neg\alpha_2)$ where $\overline{\alpha}=\langle \alpha_1,\alpha_2,\alpha_3\rangle$.}}
\end{figure}
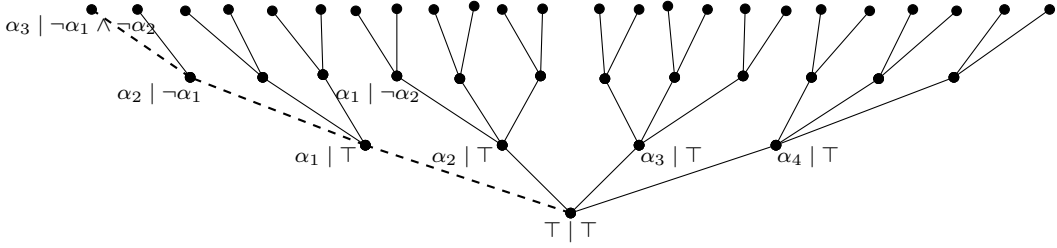

\subsection{A second representation}\label{subsec:second}

Let $\bf A$ be such that $\atom({\bf A}) = \{\alpha_1, \ldots, \alpha_n\}$. Consider a basic conditional of the form $(\alpha\mid b)$, with $\alpha\in \atom({\bf A})$. Without loss of generality, we will henceforth assume $\alpha=\alpha_1$. In what follows, we provide another description of  $\atom_\leq(\alpha_1\mid b)$, alternative to the one given in Proposition \ref{prop:atomsbelow}. To this end, let us start introducing the following notation: 
 for every $j=1,\ldots, n$, let $\mathbb{S}_j$ be the subset of atoms of $\mathcal{C}({\bf A})$ below $(\alpha_1\mid b)$ whose first conjunct is $(\alpha_j\mid \top)$, that is, equivalently,
\begin{equation}\label{eqSj}
\mathbb{S}_j=\{\omega_{\overline{\gamma}}\mid \overline{\gamma}=\langle \gamma_1,\ldots,\gamma_{n-1} \rangle \in Seq({\bf A}), \;   \omega_{\overline{\alpha}}\leq (\alpha_1\mid b) \mbox{ and }\gamma_1=\alpha_j\}.
\end{equation}
 Obviously $\atom_\leq(\alpha_1\mid b)=\bigcup_{j=1}^n \mathbb{S}_j$. Moreover, by construction, the sets $\mathbb{S}_j$ satisfy the following properties.
\begin{lemma}\label{lemma:property1Sj}
For every basic conditional of the form $(\alpha_1\mid b)$, the family $\{\mathbb{S}_j\}_{j=1,\ldots,n}$ is a partition of $\atom_{\leq}(\alpha_1\mid b)$. 
Further, the following properties hold:
\begin{enumerate}[(i)]
\item $\mathbb{S}_1=\{\omega_{\overline{\gamma}}\mid \langle \gamma_1,\ldots, \gamma_{n-1} \rangle\in Seq({\bf A}),\; \gamma_1=\alpha_1\}$.
\item For every $j\geq 2$, if $\alpha_j\leq b$, then $\mathbb{S}_j=\emptyset$.
\end{enumerate}
\end{lemma}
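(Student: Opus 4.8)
The plan is to derive all three assertions directly from the characterisation of atoms below a basic conditional in Proposition~\ref{prop:atomsbelow}, combined with the bijection between $\atom(\mathcal{C}({\bf A}))$ and $Seq({\bf A})$ from Theorem~\ref{th:atoms}. Throughout I would work under the standing assumption $\alpha_1 \leq b$: otherwise $\alpha_1 \wedge b = \bot$ (as $\alpha_1$ is an atom), forcing $(\alpha_1 \mid b) = \bot_\mathfrak{C}$ and making the statement degenerate. Under this assumption Proposition~\ref{prop:atomsbelow} applies to $(\alpha_1 \mid b)$ with consequent $a = \alpha_1$, and since $\alpha_1$ is an atom the condition ``$\gamma_i \leq \alpha_1$'' there reads simply ``$\gamma_i = \alpha_1$''. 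I will use the reformulation (iii): for $\overline{\gamma} = \langle \gamma_1, \ldots, \gamma_{n-1}\rangle$, one has $\omega_{\overline{\gamma}} \leq (\alpha_1 \mid b)$ iff $\gamma_i = \alpha_1$ for the least index $i$ with $\gamma_i \leq b$.

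For the partition claim, I would first observe that $\bigcup_{j=1}^n \mathbb{S}_j = \atom_\leq(\alpha_1 \mid b)$ (already noted before the lemma): each $\mathbb{S}_j \subseteq \atom_\leq(\alpha_1 \mid b)$ by definition, while conversely every atom $\omega_{\overline{\gamma}}$ has a well-defined first component $\gamma_1$, equal to $\alpha_j$ for a unique $j$, so each element of $\atom_\leq(\alpha_1 \mid b)$ lands in some $\mathbb{S}_j$. Pairwise disjointness follows from the same uniqueness: by Theorem~\ref{th:atoms} an atom determines its sequence and hence its first element, so $\mathbb{S}_j \cap \mathbb{S}_k \neq \emptyset$ would force $\alpha_j = \alpha_k$, i.e.\ $j = k$. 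Here ``partition'' must be read set-theoretically, allowing empty blocks, which is precisely what item (ii) exhibits.

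For (i), it suffices to show that the constraint $\omega_{\overline{\gamma}} \leq (\alpha_1 \mid b)$ is automatically met once $\gamma_1 = \alpha_1$. Indeed, if $\gamma_1 = \alpha_1$ then, since $\alpha_1 \leq b$, the least index $i$ with $\gamma_i \leq b$ is $i = 1$ and $\gamma_1 = \alpha_1$; hence $\omega_{\overline{\gamma}} \leq (\alpha_1 \mid b)$ by Proposition~\ref{prop:atomsbelow}(iii). Thus the defining condition of $\mathbb{S}_1$ collapses to $\gamma_1 = \alpha_1$, which is (i). For (ii), assume $j \geq 2$ and $\alpha_j \leq b$, and take any $\omega_{\overline{\gamma}}$ with $\gamma_1 = \alpha_j$. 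Again the least index $i$ with $\gamma_i \leq b$ is $i = 1$, but now $\gamma_1 = \alpha_j \neq \alpha_1$ since these are distinct atoms; so the condition of Proposition~\ref{prop:atomsbelow}(iii) fails, $\omega_{\overline{\gamma}} \not\leq (\alpha_1 \mid b)$, and therefore $\mathbb{S}_j = \emptyset$.

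I do not anticipate any serious obstacle: the content is essentially bookkeeping on Proposition~\ref{prop:atomsbelow}, with the partition being near-immediate from Theorem~\ref{th:atoms}. The only points requiring care are making the standing hypothesis $\alpha_1 \leq b$ explicit, so that $(\alpha_1 \mid b)$ stays non-degenerate and the characterisation is applicable, and reading ``partition'' in the loose sense that permits the empty blocks produced by (ii).
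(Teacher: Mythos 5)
Your proof is correct and follows essentially the same route as the paper: the partition claim is the same bookkeeping, and (ii) is exactly the paper's appeal to Proposition~\ref{prop:atomsbelow} spelled out. The only (harmless) variations are that for (i) the paper uses the direct inequality $\omega_{\overline{\gamma}} \leq (\alpha_1 \mid \top) \leq (\alpha_1 \mid b)$ rather than re-invoking Proposition~\ref{prop:atomsbelow}(iii), and that you usefully make explicit the standing hypothesis $\alpha_1 \leq b$, which the paper leaves implicit but does rely on.
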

\begin{proof}
We already noticed that $\atom_\leq(\alpha_1\mid b)=\bigcup_{j=1}^n \mathbb{S}_j$ and moreover, it is immediate to check that, for $j_1\neq j_2$, $\mathbb{S}_{j_1}\cap\mathbb{S}_{j_2}=\emptyset$. Thus, it is left to prove (i) and (ii).
\vspace{.2cm}

\noindent
(i). It is clear that any atom with first coordinate $\alpha_1$ is below $(\alpha_1 \mid b)$ since $(\alpha_1 \mid \top) \sqcap (\beta_2 \mid \neg \alpha_1) \sqcap \ldots \leq (\alpha_1 \mid \top) \leq (\alpha_1 \mid b)$. 
\vspace{.2cm}

\noindent(ii). It follows from Proposition \ref{prop:atomsbelow}.
\end{proof}

For every pair of sequences of symbols $\sigma_1,\sigma_2$, we will write $\sigma_1 \ll \sigma_2$ to denote that $\sigma_1$ is an {\em initial segment} of $\sigma_2$.  
\begin{definition}\label{def:blocks}
Let ${\bf A}$ be a Boolean algebra with $n$ atoms and let $1\leq i\leq n-1$. Then, for any sequence $\overline{\alpha}=\langle\alpha_{1},\ldots,\alpha_{i}\rangle\in Seq_i({\bf A})$, we define:
$$
\llb \alpha_{1},\ldots, \alpha_{i}\rrb=\{\omega_{\overline{\gamma}}\in \atom(\mathcal{C}({\bf A}))\mid \langle\alpha_{1},\ldots,\alpha_{i}\rangle\ll \overline{\gamma} \}.
$$
\end{definition}
In other words, $\llb \alpha_{1},\ldots, \alpha_{i}\rrb$ denotes the subset of atoms of $\mathcal{C}({\bf A})$ of the form
$\omega_{\overline{\gamma}}$, with $\overline{\gamma}=\langle \gamma_1,\ldots, \gamma_{n-1} \rangle$ where $\gamma_1=\alpha_{1}, \ldots, \gamma_i=\alpha_{i}$.

\begin{proposition}\label{prop:Sj}
Let ${\bf A}$ be a Boolean algebra with atoms $\alpha_1,\ldots, \alpha_n$, and let $b\in A$ be such that $\neg b=\beta_1\vee\ldots\vee\beta_k$ with $\beta_k=\alpha_n$. For every $t=2,\ldots, k-1$, let $\Pi_t$ denote the set of all permutations $\pi:\{1,\ldots, t\}\to\{1,\ldots, t\}$. Then,
$$
\mathbb{S}_n=\llb\alpha_n,\alpha_1\rrb\cup\bigcup_{i=1}^{k-1}\llb\alpha_n,\beta_i,\alpha_1\rrb\cup\bigcup_{t=2}^{k-1}\bigcup_{\pi\in \Pi_t}\llb\alpha_n, \beta_{\pi(1)},\ldots, \beta_{\pi(t)},\alpha_1\rrb.
$$
\end{proposition}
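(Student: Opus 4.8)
The plan is to reduce the membership test $\omega_{\overline{\gamma}}\in\mathbb{S}_n$ to a purely combinatorial condition on the prefix of the sequence $\overline{\gamma}$, and then to read off the three groups of blocks by sorting these prefixes according to their length. I would first isolate the relevant case: since $\alpha_n=\beta_k\leq\neg b$, for the conditional $(\alpha_1\mid b)$ to be non-trivial we must have $\alpha_1\leq b$, i.e.\ $\alpha_1\notin\{\beta_1,\ldots,\beta_k\}$ (otherwise $(\alpha_1\mid b)=\bot_\mathfrak{C}$ by Proposition \ref{prop2}\eqref{e4} and there is nothing below it). Under $\alpha_1\leq b$ I can apply the characterisation of Proposition \ref{prop:atomsbelow}: writing $\overline{\gamma}=\langle\gamma_1,\ldots,\gamma_{n-1}\rangle$, one has $\omega_{\overline{\gamma}}\leq(\alpha_1\mid b)$ iff $\gamma_i\leq\alpha_1$ for the \emph{least} index $i$ with $\gamma_i\leq b$. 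Because $\alpha_1$ is an atom, $\gamma_i\leq\alpha_1$ forces $\gamma_i=\alpha_1$, and $\gamma_j\not\leq b$ is equivalent to $\gamma_j\in\{\beta_1,\ldots,\beta_k\}$. Hence the condition becomes: $\alpha_1$ occurs in $\overline{\gamma}$ and every atom occurring strictly before it lies in $\{\beta_1,\ldots,\beta_k\}$.

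Next I would combine this with the defining constraint $\gamma_1=\alpha_n$ of $\mathbb{S}_n$ (recall Lemma \ref{lemma:property1Sj}). Looking at the initial segment of $\overline{\gamma}$ up to and including the first occurrence of $\alpha_1$, the above condition says precisely that this segment has the shape $\langle\alpha_n,\beta_{i_1},\ldots,\beta_{i_m},\alpha_1\rangle$, where $\beta_{i_1},\ldots,\beta_{i_m}$ are pairwise distinct elements of $\{\beta_1,\ldots,\beta_{k-1}\}$ (the atom $\alpha_n=\beta_k$ having already been consumed as $\gamma_1$) and $0\leq m\leq k-1$. By Definition \ref{def:blocks}, the collection of all atoms $\omega_{\overline{\gamma}}$ of $\mathcal{C}({\bf A})$ sharing a fixed such prefix is exactly the block $\llb\alpha_n,\beta_{i_1},\ldots,\beta_{i_m},\alpha_1\rrb$, and two distinct prefixes of this form determine disjoint blocks. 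Consequently $\mathbb{S}_n$ is the \emph{disjoint} union of all such blocks as the prefix ranges over these admissible sequences.

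Finally I would organise that union according to the length $t=m$ of the intermediate $\beta$-segment. The value $m=0$ contributes the single block $\llb\alpha_n,\alpha_1\rrb$; the value $m=1$ contributes $\bigcup_{i=1}^{k-1}\llb\alpha_n,\beta_i,\alpha_1\rrb$; and each $t$ with $2\leq t\leq k-1$ contributes the blocks obtained by ordering $t$ of the atoms among $\beta_1,\ldots,\beta_{k-1}$, which is recorded by the terms $\bigcup_{\pi\in\Pi_t}\llb\alpha_n,\beta_{\pi(1)},\ldots,\beta_{\pi(t)},\alpha_1\rrb$. Summing these three groups yields exactly the displayed right-hand side. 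The step that requires the most care is the faithful use of Proposition \ref{prop:atomsbelow}: one must check that it is the \emph{first} $b$-atom encountered along $\overline{\gamma}$ that is required to be $\alpha_1$, and that $\alpha_n$ (being $\leq\neg b$) never plays the role of that first $b$-atom, so that prefixes beginning with $\alpha_n$ are admissible precisely when the intermediate atoms are drawn from $\{\beta_1,\ldots,\beta_{k-1}\}$. The remaining delicate point is the combinatorial bookkeeping ensuring the enumeration of admissible prefixes by length is both exhaustive and non-redundant, so that the union genuinely partitions $\mathbb{S}_n$ with no block appearing twice.
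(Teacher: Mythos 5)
Your proof is correct and supplies exactly what the paper's one-line justification (``immediate consequence of the definition of $\llb\alpha_n,\alpha_1\rrb$, \dots'') leaves implicit: the reduction, via Proposition \ref{prop:atomsbelow} together with the constraint $\gamma_1=\alpha_n$ from the definition of $\mathbb{S}_n$, to the condition that the initial segment of $\overline{\gamma}$ up to the first occurrence of $\alpha_1$ be of the form $\langle\alpha_n,\beta_{i_1},\ldots,\beta_{i_m},\alpha_1\rangle$ with the $\beta_{i_j}$ distinct elements of $\{\beta_1,\ldots,\beta_{k-1}\}$, followed by the enumeration of these admissible prefixes by length. Note only that you (rightly) read the third group of terms as ranging over all \emph{ordered $t$-element selections} from $\{\beta_1,\ldots,\beta_{k-1}\}$, which is what the statement must intend even though the literal notation $\Pi_t$ would only permute the fixed set $\{\beta_1,\ldots,\beta_t\}$; your argument is the correct justification of that intended reading, and your standing assumption $\alpha_1\leq b$ matches the hypothesis under which $\mathbb{S}_n$ is used in Lemma \ref{lemma:cases}.
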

\begin{proof}
Immediate consequence of the definition of $\llb\alpha_n,\alpha_1\rrb$, $\llb\alpha_n,\beta_i,\alpha_1\rrb$ and $\llb\alpha_n, \beta_{\pi(1)},\ldots, \beta_{\pi(t)},\alpha_1\rrb$.
\end{proof}
Obviously, renaming the indexes of the atoms of ${\bf A}$, the above proposition provides a description for all $\mathbb{S}_j$'s. However, we preferred to show the case of $\mathbb{S}_n$ since that will be the case we shall need in Subsection \ref{sebsec:measure2}.

Let us start defining a tree whose nodes are sets of atoms in $\mathbb{S}_n$ as in Proposition \ref{prop:Sj} above. 
Further, keeping the same notation as in the statement of Proposition \ref{prop:Sj} above, we denote by $\beta_1,\ldots,\beta_k$ the elements of $\atom_\leq(\neg b)$ 
and $\beta_k=\alpha_n$. 

\begin{definition}\label{def:treeT}
With the above premises, we define the tree $\mathbb{B}$ in the following inductive way.
\begin{itemize}
\item[0.] The root of $\mathbb{B}$ is $\llb\alpha_n,\alpha_1\rrb$.
\item[1.] The child nodes of $\llb\alpha_n,\alpha_1\rrb$ are $\llb\alpha_n,\beta_1,\alpha_1\rrb,\ldots, \llb\alpha_n,\beta_{k-1},\alpha_1\rrb$.
\item[\ldots]
\item[t.] The child nodes of $\llb\alpha_n,\beta_{\pi(1)},\ldots, \beta_{\pi(t)},\alpha_1\rrb$ are $\llb\alpha_n,\beta_{\pi(1)},\ldots, \beta_{\pi(t)},\beta_{l_1},\alpha_1\rrb,\\ \ldots, \llb\alpha_n,\beta_{\pi(1)},\ldots, \beta_{\pi(t)},\beta_{l_m},\alpha_1\rrb$, where $\{\beta_{l_1},\ldots, \beta_{l_m}\}=\{\beta_1,\ldots, \beta_{k-1}\}\setminus\{\beta_{\pi(1)},\ldots, \beta_{\pi(t)}\}$. 
\end{itemize}
Given any  node $\llb\alpha_n,\beta_{\pi(1)},\ldots, \beta_{\pi(t)},\alpha_1\rrb$ of $\mathbb{B}$, we will henceforth denote by $\mathbb{B}\llb\alpha_n,\beta_{\pi(1)},\ldots, \beta_{\pi(t)},\alpha_1\rrb$ the subtree of $\mathbb{B}$ generated by $\llb\alpha_n,\beta_{\pi(1)},\ldots, \beta_{\pi(t)},\alpha_1\rrb$. 

\end{definition}
We clarify the above construction with the following example.
\begin{example}
In order  not to trivialize the example, let us consider the algebra $\mathcal{C}({\bf A})$  with $\atom({\bf A})=\{\alpha_1,\ldots,\alpha_5\}$. Let $b=\alpha_1\vee\alpha_2$, whence $\neg b=\alpha_3\vee\alpha_4\vee\alpha_5$, and $\mathbb{S}_5=\{\omega_{\overline{\gamma}}\in \atom(\mathcal{C}(\alpha))\mid \gamma_1=\alpha_5,\; \omega_{\overline{\gamma}}\leq (\alpha_1\mid b)\}$. Then, according to the above definition, we first need to consider the following subsets of $\mathbb{S}_5$: 
\begin{itemize}
\item[] $\llb\alpha_5,\alpha_1\rrb=\{\omega_{\overline{\gamma}}\mid \langle 5,1\rangle\ll\overline{\gamma}\} =$ \\ $ \{\omega_{\overline{\gamma}}\mid \overline{\gamma}\in \{ \langle 5, 1, 2,3\rangle, \langle 5, 1, 2, 4\rangle, \langle 5, 1, 3,2 \rangle, \langle 5, 1, 4,2 \rangle, \langle 5, 1, 3, 4\rangle, \langle 5, 1, 4, 3\rangle \}\}$; 
\item[] $\llb\alpha_5,\alpha_3,\alpha_1\rrb=\{\omega_{\overline{\gamma}}\mid \langle 5,3,1\rangle\ll\overline{\gamma}\}=\{\omega_{\overline{\gamma}}\mid \overline{\gamma}\in \{\langle 5,3,1, 2\rangle, \langle 5,3,1, 4\rangle\}\}$;
\item[] $\llb\alpha_5,\alpha_4,\alpha_1\rrb=\{\omega_{\overline{\gamma}}\mid \langle 5,4,1\rangle\ll\overline{\gamma}\}=\{\omega_{\overline{\gamma}}\mid \overline{\gamma}\in \{\langle 5,4,1, 2\rangle, \langle 5,4,1, 3\rangle\}\}$;
\item[] $\llb\alpha_5,\alpha_3,\alpha_4,\alpha_1\rrb=\{\omega_{\overline{\gamma}}\mid \overline{\gamma}=\langle 5,3,4,1\rangle\}$;
\item[] $\llb\alpha_5,\alpha_4,\alpha_3,\alpha_1\rrb=\{\omega_{\overline{\gamma}}\mid \overline{\gamma}=\langle 5,4,3,1\rangle\}$.
\end{itemize}
It is now clear that $\mathbb{S}_5=\llb\alpha_5,\alpha_1\rrb\cup\llb\alpha_5,\alpha_3,\alpha_1\rrb\cup \llb\alpha_5,\alpha_4,\alpha_1\rrb\cup\llb\alpha_5,\alpha_3,\alpha_4,\alpha_1\rrb\cup \llb\alpha_5,\alpha_4,\alpha_3,\alpha_1\rrb$. The resulting tree $\mathbb{B}$ is hence depicted as in Figure \ref{fig:treeB}.
\qed
\end{example}

\begin{figure}[!h]\label{fig:treeB}
\definecolor{ududff}{rgb}{0.30196078431372547,0.30196078431372547,1.}
\definecolor{uuuuuu}{rgb}{0.26666666666666666,0.26666666666666666,0.26666666666666666}
\begin{center}
\begin{tikzpicture}[scale=1.5]
\draw [line width=0.4pt] (0,0)-- (-1,1);
\draw [line width=0.4pt] (0,0)-- (1,1);
\draw [line width=0.4pt] (-1,1)-- (-1,2);
\draw [line width=0.4pt] (1,1)-- (1,2);
\begin{scriptsize}
\draw [fill=black] (0,0) circle (2.0pt);
\coordinate[label=below:{$\llb\alpha_5,\alpha_1\rrb$}] (a1) at (0,0);
\draw [fill=black] (-1,1) circle (2.0pt);
\coordinate[label=left:{$\llb\alpha_5,\alpha_3,\alpha_1\rrb$}] (a1) at (-1,1);
\draw [fill=black] (1,1) circle (2.0pt);
\coordinate[label=right:{$\llb\alpha_5,\alpha_4,\alpha_1\rrb$}] (a1) at (1,1);
\draw [fill=black] (-1,2) circle (2.0pt);
\coordinate[label=left:{$\llb\alpha_5,\alpha_3,\alpha_4,\alpha_1\rrb$}] (a1) at (-1,2);
\draw [fill=black] (1,2) circle (2.0pt);
\coordinate[label=right:{$\llb\alpha_5,\alpha_4,\alpha_3,\alpha_1\rrb$}] (a1) at (1,2);
\end{scriptsize}
\end{tikzpicture}
\end{center}
\caption{\small{The tree $\mathbb{B}$ for $\mathbb{S}_5$ when ${\bf A}$ has 5 atoms and $b=\alpha_1\vee\alpha_2$.}}
\end{figure}
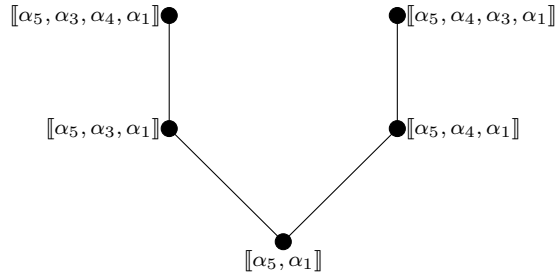

\section{Probabilities on Boolean algebras of conditionals}\label{sec:measures0}
With the desired Boolean algebraic structure for conditionals in
place, we are now in a position to tackle the main issue of this paper
which is the relation between  conditional probability functions on  a
Boolean algebra ${\bf A}$ and simple (i.e. ``unconditional'')
probabilities  on the conditional algebra $\mathcal{C}({\bf A})$. In
the following subsections we shall address two questions that go in
the direction of clarifying when, and under which conditions, a
probability function on a conditional algebra can be regarded as a conditional probability. In particular, in Subsection \ref{sebsec:measure1} we will determine under which conditions a simple measure on $\mathcal{C}({\bf A})$ satisfies the axioms of a conditional probability function on ${\bf A}$, while in Subsection \ref{sebsec:measure2} we will prove our main result, namely, a canonical way to define a simple measure on $\mathcal{C}({\bf A})$ which agrees on each basic conditional with a given conditional probability on $\bf A$.  
\subsection{Preliminary observations about probabilities on $\mathcal{C}({\bf A})$}\label{sebsec:measure1}
We assume the reader to be familiar with the usual and well-known notion of {\em (simple or
  unconditional) finitely-additive probability}. 
Let us recall the notion of a {\em conditional probability}
map which we take, with inessential variations,  from \cite[Definition 3.2.3]{Halpern2003}. See also \cite{Popper} and \cite{DeFinetti1935} where conditional probability was firstly considered as a primitive notion.

\begin{definition}
  For a Boolean algebra ${\bf A}$, a two-place function
  $CP:A\times A'\to[0,1]$, is a \emph{conditional probability} if it
  satisfies the following conditions, where we write, as usual, $(x \mid y)$ instead of $(x, y)$:
\begin{itemize}
\item[(CP1)] for all $b\in A'$, $CP(b\mid b)=1$;
\item[(CP2)] if $a_1, a_2\in A$, $a_1\wedge a_2=0$ and $b\in A'$,
  $CP(a_1\vee a_2\mid b)=CP(a_1\mid b)+CP(a_2\mid b)$;
\item[(CP3)] if $a\in A$ and $b\in A'$,
  $CP(a\mid b)=CP(a\wedge b\mid b)$;
\item[(CP4)] if $a \in A$ and $b, c\in A'$ with $a\leq b \leq c$,
  then $CP(a\mid c)= CP(a\mid b)\cdot  CP(b\mid c)$.
\end{itemize} 
\end{definition}

In what follows, for the sake of a simpler notation, if  $\mu:\mathcal{C}({\bf A})\to[0,1]$ is a (unconditional) probability we will write $\mu(a \mid b)$ instead of $\mu((a \mid b))$.

\begin{remark}\label{rem:subProb1}
Take any probability $\mu$ on $\mathcal{C}({\bf A})$ and fix an element $b \in A'$. Then, ${\bf A}\mid b$ is a Boolean subalgebra of $\mathcal{C}({\bf A})$ from Corollary \ref{subalgebra}, and the restriction $\mu_b$ of $\mu$ to ${\bf A}\mid b $ is a probability measure. Notice that $\mu_b$ also satisfies axiom (CP3). \qed
\end{remark}

Therefore, a simple  probability on $\mathcal{C}({\bf A})$, once restricted to basic conditionals, always satisfies properties  (CP1), (CP2) and (CP3) above. However, axiom (CP4), also known as {\em chain rule}, does not always hold as we show in the next example. 

\begin{example}\label{ex:nonCond}
In the conditional algebra $\mathcal{C}({\bf A})$ of Example \ref{Ex:atoms}, consider the elements $(\alpha_1\mid \top)$, $(\alpha_1\mid \alpha_1\vee \alpha_2)$ and $(\alpha_1\vee \alpha_2\mid \top)$. Clearly, in the Boolean algebra ${\bf A}$, we have $\alpha_1\leq \alpha_1\vee \alpha_2\leq \top$ and hence $(\alpha_1\mid \top)=(\alpha_1\mid \alpha_1\vee \alpha_2)\sqcap (\alpha_1\vee \alpha_2\mid\top)$ by Proposition \ref{prop2} \eqref{e8}. As usual, for every element $t\in \mathcal{C}({\bf A})$, let us write $\atom_\leq(t)$ to denote the set of atoms of $\mathcal{C}({\bf A})$ below $t$. Then, in particular, 
\begin{itemize}
\item[1.] $\atom_\leq(\alpha_1\mid \top)=\{\omega_1,\omega_2\}$,
\item[2.] $\atom_\leq(\alpha_1\vee \alpha_2\mid \top)=\atom_\leq(\alpha_1\mid \top)\sqcup \atom_\leq(\alpha_2\mid \top)=\{\omega_1, \omega_2, \omega_3, \omega_4\}$,
\item[3.] $\atom_\leq(\alpha_1\mid \alpha_1\vee \alpha_2)=\{\omega_1,\omega_2,\omega_5\}$. 
\end{itemize}
Notice that $\atom_\leq(\alpha_1\mid \top)\cap \atom_\leq(\alpha_1\vee \alpha_2\mid \top) = \atom_\leq(\alpha_1\mid \top) \neq \emptyset$ and in particular 
$$
\atom_\leq(\alpha_1\vee \alpha_2\mid \top)\setminus \atom_\leq(\alpha_1\mid \top) = \atom_\leq(\alpha_2\mid \top) \neq \emptyset.
$$ 
Let  $p:\atom(\mathcal{C}({\bf A}))\to[0,1]$ be the probability distribution defined by the following stipulation: 
$$
p(x)=\left\{
\begin{array}{ll}
0 & \mbox{ if }x\in \{\omega_1, \omega_2\},\\
1/4 &\mbox{ if }x\in \{\omega_3, \omega_4, \omega_5, \omega_6\}.
\end{array}
\right.
$$
Thus, let $\mu_p:\mathcal{C}({\bf A})\to[0,1]$ be the probability measure on $\mathcal{C}({\bf A})$ induced by $p$: for all $t\in \mathcal{C}({\bf A})$, $\mu_p(t)=\sum_{\omega\in \atom_{\leq}(t)}p(\omega)$. In particular we have: $\mu_p(\alpha_1\mid \top)=p(\omega_1)+p(\omega_2)=0$, while $\mu_p(\alpha_1\vee \alpha_2\mid \top)=p(\omega_1)+p(\omega_2)+p(\omega_3)+p(\omega_4)=1/2$ and $\mu_p(\alpha_1\mid \alpha_1\vee \alpha_2)=p(\omega_1)+p(\omega_2)+p(\omega_5)=1/4$.  Hence 
$$
0=\mu_p(\alpha_1\mid \top)\neq \mu_p(\alpha_1\mid \alpha_1\vee \alpha_2)\cdot \mu_p(\alpha_1\vee \alpha_2\mid \top)=1/8
$$ and thus $\mu_p$ does not satisfy (CP4) and is not a conditional probability.

One might wonder if the failure of (CP4) is a consequence of the fact that the distribution $p$ assigns $0$ to some atoms, and hence the measure $\mu_p$ is not positive. This is not the case. Indeed, consider the following distribution parametrized by $\epsilon$:
$$
p_\epsilon(x)=\left\{
\begin{array}{ll}
\epsilon & \mbox{ if }x\in \{\omega_1, \omega_2\},\\
1/4 - \epsilon/2 &\mbox{ if }x\in \{\omega_3, \omega_4, \omega_5, \omega_6\}.
\end{array}
\right.
$$
The equation $\mu_{p_\epsilon}(\alpha_1\mid \top) =  \mu_{p_\epsilon}(\alpha_1\mid \alpha_1\vee \alpha_2)\cdot \mu_{p_\epsilon}(\alpha_1\vee \alpha_2\mid \top)$ has solution only for  $\epsilon = 1/2$ and $\epsilon= 1/6$. In particular, for every $1/6<\epsilon<1/2$, the probability $\mu_{p_\epsilon}$ is positive but does not satisfy (CP4).
\qed
\end{example}

The above example can be easily generalised to algebras $\mathcal{C}({\bf A})$ with $\bf A$ having more than 3 atoms. In other words, discarding trivial cases, only a proper subclass of probabilities on a conditional algebra $\mathcal{C}({\bf A})$  gives rise to conditional probabilities on ${\bf A}$, and obviously, these are those satisfying the condition %Precisely, 
$$
\mu((a\mid b)\sqcap (b\mid c))=\mu(a\mid b)\cdot \mu(b\mid c), \mbox{ for all } a, b, c \in A' \mbox{ s.t. } a\leq b\leq c, 
$$
which is equivalent to the chain rule in condition (CP4) above as in $\mathcal{C}({\bf A})$, $(a\mid c)=(a\mid b)\sqcap (b\mid c)$ under the hypothesis that $a\leq b\leq c$.
Note that it amounts in turn to require that, in $\mathcal{C}({\bf
  A})$, any pair of conditional events $(a\mid b)$ and $(b\mid c)$,
with $a \leq b \leq c$, to be stochastically independent with respect to %the simple probability 
$\mu$. This motivates our terminology in the following definition.%\footnote{

\begin{definition}[Separable probabilities]
A probability $\mu:\mathcal{C}({\bf A})\to[0,1]$ is said to be {\em separable} if $\mu$ satisfies the chain rule (CP4).
\end{definition}

The following are two significant examples of separable probabilities on $\mathcal{C}({\bf A})$. 

\begin{example} 
\mbox{} 
\begin{itemize}
\item[(i)] First notice that a map $P$ from a finite Boolean algebra ${\bf A}$ to the  Boolean algebra of classical truth-values ${\bf 2}=\{0,1\}$ is a probability iff it is a homomorphism and hence a truth-valuation. Now, every probability $P$ of $\mathcal{C}({\bf A})$ into  ${\bf 2}$ is separable according to the previous definition.  Indeed, $P: \mathcal{C}({\bf A}) \to \{0,1\}$ satisfies (CP4) since, due to condition (C5) in Proposition \ref{prop2}, if $a \leq b \leq c$ then 
$$
P(a \mid c) = P( (a\mid b)\sqcap  (b\mid c)) = \min(P(a \mid b), P(b \mid c)) = P(a \mid b) \cdot P(b \mid c).
$$ 
Therefore, every $\{0,1\}$-valued probability on $\mathcal{C}({\bf A})$ is a conditional probability on ${\bf A}$.
\item[(ii)] For every finite Boolean algebra ${\bf A}$, the probability measure $\mu_u: \mathcal{C}({\bf A})\to[0,1]$ induced by the uniform distribution on the atoms of $\mathcal{C}({\bf A})$, i.e. the one defined by $\mu_u(\omega) = 1/ |\atom(\mathcal{C}({\bf A}))|$ for every $\omega \in \atom(\mathcal{C}({\bf A}))$,  is separable. 
Indeed, by Corollary \ref{lemma:counting}, one has  $\mu_u(a \mid b) = |\atom_\leq(a)| / |\atom_\leq(b)|$  if $a \leq b$.
\end{itemize}
\end{example}

We finish this section with an observation about convexity. It is well-known that every probability  on a finite  Boolean algebra ${\bf B}$ is a convex combination of homomorphisms of ${\bf B}$ into ${\bf 2}=\{0,1\}$. So in particular this is the case for $\mathcal{C}({\bf A})$. As shown in the example above, all these homomorphisms are separable. Therefore, since  not every probability on $\mathcal{C}({\bf A})$ with $|\atom(A)|\geq 3$ is separable 
the following corollary immediately follows.  

\begin{corollary}\label{cor1}
 Let ${\bf A}$ be a Boolean algebra such that  $|\atom(A)|\geq 3$. Then, the set of separable probabilities on $\mathcal{C}({\bf A})$ is not convex.
\end{corollary}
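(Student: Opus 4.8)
The plan is to argue by contradiction, exploiting the fact that the probabilities on the finite Boolean algebra $\mathcal{C}({\bf A})$ form a simplex whose extreme points are precisely the homomorphisms into ${\bf 2}$. The whole argument is short, since the essential ingredients have already been assembled just above the statement.

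First I would recall the structural fact noted in the preceding paragraph: every probability on the finite algebra $\mathcal{C}({\bf A})$ is a convex combination of the $\{0,1\}$-valued homomorphisms $h:\mathcal{C}({\bf A})\to{\bf 2}$. By part (i) of the Example immediately above, each such homomorphism $h$ is separable, i.e.\ it satisfies the chain rule (CP4). Thus the extreme points of the probability simplex all lie inside the set $S$ of separable probabilities on $\mathcal{C}({\bf A})$.

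Next I would assume, toward a contradiction, that $S$ is convex. Since $S$ contains all the homomorphisms into ${\bf 2}$, and these homomorphisms are exactly the vertices whose convex combinations exhaust the entire probability simplex, convexity of $S$ would force $S$ to contain every such convex combination, i.e.\ the whole set of probabilities on $\mathcal{C}({\bf A})$. In particular, \emph{every} probability on $\mathcal{C}({\bf A})$ would then be separable. Finally I would invoke (the generalisation of) Example \ref{ex:nonCond}, which produces, whenever $|\atom({\bf A})|\geq 3$, a probability on $\mathcal{C}({\bf A})$ that violates (CP4) and hence fails to be separable. This contradicts the previous conclusion, so $S$ cannot be convex.

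The only point requiring care — the main, though mild, obstacle — is guaranteeing the existence of a non-separable probability for \emph{every} $n=|\atom({\bf A})|\geq 3$, and not merely for $n=3$. This is precisely what the remark following Example \ref{ex:nonCond} asserts, and it can be secured by transporting the three-atom construction into an arbitrary $\mathcal{C}({\bf A})$ with $n\geq 3$ atoms: one fixes three atoms $\alpha_1,\alpha_2,\alpha_3$ of ${\bf A}$, replicates on the relevant atoms of $\mathcal{C}({\bf A})$ the mass assignment used there so that the chain rule fails for the triple $(\alpha_1\mid\top)$, $(\alpha_1\mid\alpha_1\vee\alpha_2)$, $(\alpha_1\vee\alpha_2\mid\top)$, and distributes the remaining mass in any way consistent with normalisation. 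With that witness in hand the contradiction goes through, completing the proof.
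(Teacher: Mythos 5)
Your proof is correct and follows essentially the same route as the paper: the paper's argument (given in the paragraph immediately preceding the corollary) likewise observes that every probability on the finite algebra $\mathcal{C}({\bf A})$ is a convex combination of $\{0,1\}$-valued homomorphisms, that all such homomorphisms are separable, and that the existence of a non-separable probability (Example \ref{ex:nonCond} and its generalisation to $n\geq 3$) then rules out convexity. Your extra care in sketching how the three-atom counterexample transports to arbitrary $n\geq 3$ only makes explicit a step the paper asserts without proof.
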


\subsection{Conditional probabilities on $\bf A$ as simple probabilities on  $\mathcal{C}({\bf A})$}\label{sebsec:measure2}

In this subsection we finally address the fundamental question that has motivated our investigation, namely: 

\begin{itemize}

\item[] ``Given a (positive) probability on an algebra of events $\bf A$, $P:{\bf A}\to[0,1]$, determine whether it can always be extended to a probability on the algebra of conditionals events $\mathcal{C}({\bf A})$, $\mu_P: \mathcal{C}({\bf A})\to [0, 1]$, agreeing on all basic conditionals,  that is, 
\begin{equation}\label{mainQuestion} \mu_P(a \mid b) = \frac{P(a\wedge b)}{P(b)} 
\end{equation}
for any basic conditional $(a \mid b) \in \mathcal{C}({\bf A})$.''
\end{itemize}

This is, in a slightly different setting, what Goodman and Nguyen call
in \cite{GN94} the {\em strong conditional event problem}. They solve
it positively by defining conditional events as countable unions of
{\em special  cylinders} in the infinite algebra ${\bf A}^\infty$ whose set
of atoms is $$\atom({\bf A}^\infty) = \{ (\alpha_1, \alpha_2, \ldots ):
\alpha_i \in \atom({\bf A}), i= 1, 2, \ldots \} = (\atom({\bf
  A}))^{\mathbb{N}}$$ i.e.\ infinite sequences of atoms of ${\bf A}$,
and by defining a probability $\hat{P}$ on ${\bf A}^\infty$ as the
product probability measure with identical marginals $P$ on each
factor space.

 In this section we show we can also solve the problem in a finitary setting by defining a suitable probability $\mu_P: \mathcal{C}({\bf A})\to [0, 1]$  on the {\em finite} Boolean algebra  $\mathcal{C}({\bf A})$.

To this end, given a positive probability on ${\bf A}$, we start defining  
 the following map on $\atom(\mathcal{C}({\bf A}))$. 

\begin{definition} Let $P: {\bf A} \to [0, 1]$ be a positive probability. Then we define the map 
$\mu_P:\atom(\mathcal{C}({\bf A}))\to[0,1]$ by the following stipulation: 
for every $\overline{\alpha}=\langle \alpha_1,\ldots, \alpha_{n-1}\rangle\in Seq({\bf A})$,
$$
\mu_P(\omega_{\overline{\alpha}}) = P(\alpha_1) \cdot P(\alpha_2 \mid \neg \alpha_1) \cdot \ldots \cdot P(\alpha_{n-1} \mid \neg\alpha_1 \land \ldots \land \neg \alpha_{n-2})
$$
or, equivalently,
$$
\mu_P(\omega_{\overline{\alpha}}) = P(\alpha_1) \cdot P(\alpha_2 \mid \alpha_2 \lor \ldots \lor \alpha_{n-1} \lor \alpha_n) \cdot \ldots \cdot P(\alpha_{n-1} \mid \alpha_{n-1} \lor \alpha_n).
$$
\end{definition}
Now, we show that the map $\mu_P$ is indeed a probability distribution. 

\begin{lemma}\label{MuPProb} The map $\mu_P$ is  a probability distribution on $\atom(\mathcal{C}({\bf A}))$, that is, 
$$\sum_{\overline{\alpha}  \in Seq({\bf A})} \mu_P(\omega_{\overline{\alpha}}) = 1.$$
\end{lemma}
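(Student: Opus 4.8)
The plan is to recognise the numbers $\mu_P(\omega_{\overline{\alpha}})$ as the path probabilities of a sampling-without-replacement scheme on $\atom({\bf A})$, so that summing them over all sequences becomes an instance of the law of total probability. Concretely, I would first observe that each $\overline{\alpha}=\langle \alpha_1,\ldots,\alpha_{n-1}\rangle\in Seq({\bf A})$ extends uniquely to a full ordering $\langle \alpha_1,\ldots,\alpha_n\rangle$ of $\atom({\bf A})$ by appending the single missing atom $\alpha_n$; this yields a bijection between $Seq({\bf A})$ and the set of all orderings of $\atom({\bf A})$. Since $\neg\alpha_1\wedge\ldots\wedge\neg\alpha_{n-1}=\alpha_n$ and $P(\alpha_n\mid\alpha_n)=1$, the defining $(n{-}1)$-fold product for $\mu_P(\omega_{\overline{\alpha}})$ equals the $n$-fold product $\prod_{i=1}^{n}P(\alpha_i\mid D_i)$, where I abbreviate $D_i=\neg\alpha_1\wedge\ldots\wedge\neg\alpha_{i-1}=\alpha_i\vee\ldots\vee\alpha_n$ (with $D_1=\top$). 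Positivity of $P$ guarantees that every $P(D_i)>0$, so all these conditionals are well defined, and $P(\alpha_i\mid D_i)=P(\alpha_i)/P(D_i)$ because $\alpha_i\leq D_i$.

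Next I would run an induction on $n=|\atom({\bf A})|$. The base case $n=1$ is immediate: the unique ordering contributes $P(\alpha_1\mid\top)=P(\top)=1$. For the inductive step, group the orderings by their first atom $\alpha$: the first factor is then $P(\alpha)$, and the remaining factors range over the orderings of $\atom({\bf A})\setminus\{\alpha\}$. The crucial point is that these remaining factors coincide with the analogous products computed for the restricted probability $P'(\cdot):=P(\cdot\mid\neg\alpha)$ on the Boolean algebra generated by $\atom({\bf A})\setminus\{\alpha\}$, whose top element is $\neg\alpha$ and whose atoms are exactly $\atom({\bf A})\setminus\{\alpha\}$. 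Indeed, for $i\geq 2$ all the events $\alpha_i$ and $D_i$ lie below $\neg\alpha$, so the extra conditioning on $\neg\alpha$ cancels and $P'(\alpha_i\mid D_i)=P(\alpha_i\wedge D_i)/P(D_i)=P(\alpha_i\mid D_i)$. By the induction hypothesis applied to $P'$ the inner sum equals $1$, whence
$$
\sum_{\overline{\alpha}\in Seq({\bf A})}\mu_P(\omega_{\overline{\alpha}})=\sum_{\alpha\in\atom({\bf A})}P(\alpha)\cdot 1=\sum_{\alpha\in\atom({\bf A})}P(\alpha)=P(\top)=1,
$$
where the penultimate equality uses $\bigvee_{\alpha}\alpha=\top$ (Proposition \ref{prop:atomsAlgebra}(ii)) together with finite additivity.

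I expect the main obstacle to be the bookkeeping in the inductive step, namely verifying cleanly that conditioning the tail of the product on $\neg\alpha$ leaves its value unchanged and that $P'$ is again a \emph{positive} probability on an algebra with one fewer atom, so that the induction hypothesis genuinely applies. Everything else (the bijection, the reduction from the $(n{-}1)$-fold to the $n$-fold product, and the final collapse to $P(\top)$) is routine once this self-similarity under conditioning is established. As an aside, the whole computation simply expresses that the $\mu_P(\omega_{\overline{\alpha}})$ are the probabilities of drawing the atoms one by one without replacement, each draw being proportional to its remaining $P$-mass; their sum is then $1$ by construction, and the induction above is a rigorous rendering of this sampling picture.
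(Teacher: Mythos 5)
Your proof is correct and follows essentially the same route as the paper's: both identify $\mu_P(\omega_{\overline{\alpha}})$ with the path probability of drawing the atoms sequentially without replacement and then sum by grouping on the first atom drawn, using that at each stage the conditional probabilities of the remaining atoms sum to $1$. The paper packages this recursion as a product of values along the tree $\mathbb{T}$ of Section \ref{firsttree}, whereas you make it explicit as an induction on $|\atom({\bf A})|$ via the conditioned measure $P'=P(\cdot\mid\neg\alpha)$; this is only a difference of bookkeeping, and your check that the tail factors are invariant under conditioning on $\neg\alpha$ (and that $P'$ is again positive) is precisely the point the paper's tree argument leaves implicit.
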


\begin{proof} See Appendix.
\end{proof}

Having  a distribution $\mu_P$ on the atoms of $\mathcal{C}({\bf A})$, we can define the corresponding probability measure (that we shall keep denoting by $\mu_P$) on $\mathcal{C}({\bf A})$ in the obvious way. 

\begin{definition} Given a positive probability $P$ on $\bf A$, the probability measure on $\mathcal{C}({\bf A})$ induced by the distribution $\mu_P$ on $\atom(\mathcal{C}({\bf A}))$, i.e. for $
t \in \mathcal{C}({\bf A})$, 
\begin{equation}\label{MuP}
 \mu_P(t) = \sum_{\omega \in  \atom(\mathcal{C}({\bf A})), \omega \leq t}\mu_P(\omega)
\end{equation}
will be called the {\em canonical extension} of $P$ to $\mathcal{C}({\bf A})$. 
\end{definition}

The next three lemmas provide a necessary technical  preparation for the main result of this section, namely Theorem \ref{thm:main1}. For them, we invite the reader to recall the main definitions and constructions of Subsection \ref{sec:belowbasic} and, in particular, Definitions \ref{def:blocks} and \ref{def:treeT}.
With an abuse of notation, for every subset $X$ of $\atom(\mathcal{C}({\bf A}))$ we write $\mu_P(X)$ for $\mu_P(\bigvee_{\omega\in X}\omega)=\sum_{\omega\in X}\mu_P(\omega)$. 

\begin{lemma}\label{lemma:prep1}
Let $\alpha_{i_1},\ldots,\alpha_{i_t}\in \atom({\bf A})$. Then
\begin{enumerate}[(i)]
\item
$\mu_P(\llb \alpha_{i_1},\ldots,\alpha_{i_t} \rrb)=P(\alpha_{i_1})\cdot \frac{P(\alpha_{i_2})}{P(\neg \alpha_{i_1})}\cdot\ldots\cdot\frac{P(\alpha_{i_t})}{P(\neg\alpha_{i_1}\wedge\neg\alpha_{i_2}\wedge\ldots\wedge\neg\alpha_{i_{t-1}})}.%\cdot\frac{P(\alpha_1)}{P(\neg\alpha_n\wedge\neg\beta_1\wedge\ldots\wedge\neg\beta_{t})}.
$
\item $\mu_P(\llb \alpha_{i_1},\ldots,\alpha_{i_t} \rrb)=\mu_P(\llb \alpha_{i_1},\ldots,\alpha_{i_{j-1}},\alpha_{i_{j+1}},\ldots,\alpha_{i_t} \rrb)\cdot \frac{P(\alpha_{i_j})}{P(\neg\alpha_{i_1}\wedge\neg\alpha_{i_2}\wedge\ldots\wedge\neg\alpha_{i_{t-1}})}.$
\end{enumerate}
\end{lemma}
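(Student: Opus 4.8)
The plan is to establish (i) first and then read off (ii) from the closed‑form product it provides. Throughout I would write $e_t=\neg\alpha_{i_1}\wedge\ldots\wedge\neg\alpha_{i_t}$ (with $e_0=\top$) and abbreviate the claimed right‑hand side of (i) by $Q_t=P(\alpha_{i_1})\cdot\prod_{l=2}^{t}\frac{P(\alpha_{i_l})}{P(e_{l-1})}$; all denominators are nonzero since $P$ is positive. The combinatorial fact driving the argument is that extending the fixed prefix by one atom refines the block: for $t\leq n-2$ one has the disjoint decomposition
$$
\llb\alpha_{i_1},\ldots,\alpha_{i_t}\rrb=\bigsqcup_{\beta}\llb\alpha_{i_1},\ldots,\alpha_{i_t},\beta\rrb ,
$$
where $\beta$ ranges over the atoms of ${\bf A}$ distinct from $\alpha_{i_1},\ldots,\alpha_{i_t}$, that is, exactly the atoms below $e_t$ (two distinct atoms being disjoint).

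I would prove (i) by downward induction on $t$. The base case $t=n-1$ is immediate: here $\llb\alpha_{i_1},\ldots,\alpha_{i_{n-1}}\rrb$ is the single atom $\omega_{\langle\alpha_{i_1},\ldots,\alpha_{i_{n-1}}\rangle}$, and $\mu_P$ of it equals $Q_{n-1}$ directly from the definition of $\mu_P$, using that $P(\alpha_{i_l}\mid e_{l-1})=P(\alpha_{i_l})/P(e_{l-1})$ because $\alpha_{i_l}\leq e_{l-1}$. For the inductive step, assume the formula for prefixes of length $t+1$. By the disjoint decomposition, finite additivity of $\mu_P$ on atoms, and the induction hypothesis,
$$
\mu_P(\llb\alpha_{i_1},\ldots,\alpha_{i_t}\rrb)=\sum_{\beta}\mu_P(\llb\alpha_{i_1},\ldots,\alpha_{i_t},\beta\rrb)=\sum_{\beta}Q_t\cdot\frac{P(\beta)}{P(e_t)}=Q_t\cdot\frac{\sum_\beta P(\beta)}{P(e_t)} .
$$
Finally $\sum_\beta P(\beta)=P(e_t)$, since the $\beta$ are precisely the atoms below $e_t$, so that $e_t=\bigvee_{\beta\leq e_t}\beta$ and additivity of $P$ applies (Proposition \ref{prop:atomsAlgebra}). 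Hence the last quotient is $1$ and $\mu_P(\llb\alpha_{i_1},\ldots,\alpha_{i_t}\rrb)=Q_t$. Equivalently, one may expand $\mu_P(\llb\alpha_{i_1},\ldots,\alpha_{i_t}\rrb)$ as the sum of $\mu_P(\omega)$ over all completions of the prefix, factor out the common head $Q_t$, and recognise the remaining sum as $\sum_{\sigma}\mu_{P'}(\omega_\sigma)$ for the conditional probability $P'=P(\,\cdot\mid e_t)$ on the subalgebra generated by the atoms below $e_t$; that sum is $1$ by Lemma \ref{MuPProb}.

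For (ii) I would just compare the closed forms given by (i) for the two blocks. The numerators $\prod_l P(\alpha_{i_l})$ coincide on both sides, and the extra factor $\frac{P(\alpha_{i_j})}{P(e_{t-1})}$ supplies exactly the one numerator $P(\alpha_{i_j})$ and the one denominator $P(e_{t-1})$ by which the length‑$t$ product differs from the product attached to the shortened prefix. The cleanest instance is when $\alpha_{i_j}$ is the final element of the prefix, where (ii) is literally the identity $Q_t=Q_{t-1}\cdot\frac{P(\alpha_{i_t})}{P(e_{t-1})}$, i.e. peeling off the last factor of the product in (i); in that form the statement is an immediate rewriting of (i) with no further computation.

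The only genuine obstacle is the summation step inside (i): showing that the contributions of all completions of a fixed prefix collapse to $1$. Everything else is bookkeeping with products. I would isolate this step and discharge it either through the additivity identity $\sum_\beta P(\beta)=P(e_t)$ in the induction, or, equivalently, by invoking Lemma \ref{MuPProb} on the conditioned subalgebra; both rest on the same two facts, namely that the atoms extending the prefix are exactly the atoms below $e_t$, and that the positivity of $P$ keeps every denominator nonzero.
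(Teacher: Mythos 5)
Your argument for part (i) is correct and is in substance the paper's own proof, presented slightly differently: the paper factors $\mu_P(\llb\alpha_{i_1},\ldots,\alpha_{i_t}\rrb)$ as $K\cdot H$, where $K$ is the claimed product and $H$ is the sum over all completions of the tail, and then proves $H=1$ by induction on the number $l=n-t$ of remaining atoms, grouping the completions by their first element; your downward induction on $t$ is the same recursion, with your key identity $\sum_\beta P(\beta)=P(e_t)$ playing exactly the role of the paper's $\sum_j P(\gamma_j)/P(\Psi)=1$. The alternative route you sketch via Lemma \ref{MuPProb} applied to the conditioned subalgebra is legitimate but is not what the paper does.

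Part (ii) is where your argument (and, to be fair, the lemma as stated) runs into trouble. Your claim that the two closed forms from (i) ``differ by exactly one numerator and one denominator'' is true only when the deleted atom $\alpha_{i_j}$ is the last or the second-to-last entry of the prefix. If $j\leq t-2$, deleting $\alpha_{i_j}$ alters every later partial conjunction: the denominators $P(\neg\alpha_{i_1}\wedge\ldots\wedge\neg\alpha_{i_m})$ with $m\geq j$ are replaced by conjunctions that drop $\neg\alpha_{i_j}$ and absorb the next atom of the sequence instead, and these have different probabilities in general. For instance, with $t=3$ and $j=1$ the two sides of (ii) differ by the factor $P(\neg\alpha_{i_2})/P(\neg\alpha_{i_1})$, so the identity as written fails for a generic positive $P$. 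The paper disposes of (ii) with the words ``direct computation'', and the only instance it ever invokes (in the proof of Lemma \ref{lemma:cases0}) is $j=t-1$, where your ``peel off one factor'' reading is exactly right. The honest fix is therefore to state and prove (ii) only for $j\in\{t-1,t\}$ (which is all that is needed downstream), rather than to assert the single-factor discrepancy for an arbitrary position $j$.
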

\begin{proof} See Appendix.

\end{proof}
In the next two lemmas, we will present results which allow us to compute the measure $\mu_P$ of a basic conditional $(\alpha\mid b)$, for $\alpha\in \atom({\bf A})$. In particular, adopting the same conventions used in Subsection \ref{subsec:second} we will henceforth fix, without loss of generality, $\alpha=\alpha_1$.
Also, we invite the reader to remind the definition of $\llb\alpha_1,\ldots, \alpha_i\rrb$ as the set of atoms of $\mathcal{C}({\bf A})$ whose initial segment is $\langle \alpha_1,\ldots, \alpha_i\rangle$ (Definition \ref{def:blocks}) and the definition of the tree $\mathbb{B}$ (Definition \ref{def:treeT}) together with the definition of subtree of $\mathbb{B}$ generated by a $\llb\alpha_1,\ldots, \alpha_i\rrb$.

\begin{lemma}\label{lemma:cases0}
Let $(\alpha_1\mid b)$ be a basic conditional such that 
$\neg b=\beta_1\vee\ldots\vee \beta_k$ and $\beta_k=\alpha_n$. Then the following holds: 
for all $t\in \{1,\ldots, k-1\}$, 
$$
\mu_P(\mathbb{B}\llb\alpha_n,\beta_1,\ldots, \beta_t,\alpha_1\rrb)=\mu_P(\llb\alpha_n,\beta_1,\ldots, \beta_{t-1},\alpha_1\rrb)\cdot \frac{P(\beta_t)}{P(b)},
$$
where $\mathbb{B}\llb\alpha_n,\beta_1,\ldots, \beta_t,\alpha_1\rrb$ is understood as the set of all the atoms belonging to its nodes.
\end{lemma}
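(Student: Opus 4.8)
The plan is to prove the statement by induction on the ``remaining depth'' of the subtree, i.e.\ on the number of atoms below $\neg b$ that do not yet appear before $\alpha_1$ in the node. Throughout I would use Lemma \ref{lemma:prep1}(i) as the basic computational engine, together with elementary arithmetic about the quantities
$$ Q(B) = P\Big(\neg\alpha_n \wedge \textstyle\bigwedge_{\beta\in B}\neg\beta\Big) = 1 - P(\alpha_n) - \sum_{\beta\in B} P(\beta), \qquad B \subseteq \{\beta_1,\ldots,\beta_{k-1}\}, $$
writing for short $Q_s = Q(\{\beta_1,\ldots,\beta_s\})$. Since $\{\alpha_n,\beta_1,\ldots,\beta_{k-1}\} = \atom_\leq(\neg b)$, the first identity is $Q_{k-1} = 1 - P(\neg b) = P(b)$; and because $P$ is positive and $b\neq\bot$ (as $\alpha_1\leq b$), every $Q(B) \geq Q_{k-1} = P(b) > 0$, so all denominators below are nonzero. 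The second, telescoping, identity is that for the complementary set $R = \{\beta_1,\ldots,\beta_{k-1}\}\setminus B$ one has $P(b) + \sum_{\beta\in R}P(\beta) = Q(B)$, which follows at once by substituting $P(b) = Q_{k-1}$.

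First I would record two consequences of Lemma \ref{lemma:prep1}(i). Writing $N_t = \llb \alpha_n,\beta_1,\ldots,\beta_t,\alpha_1\rrb$, a direct expansion gives
$$ \mu_P(N_t) = P(\alpha_n)\cdot\prod_{s=1}^{t}\frac{P(\beta_s)}{Q_{s-1}}\cdot\frac{P(\alpha_1)}{Q_t}; $$
note that the denominator $Q_t$ attached to the final factor $P(\alpha_1)$ depends only on the \emph{set} of atoms preceding $\alpha_1$, not on their order. Cancelling common factors yields the ratio identity $\mu_P(N_t) = \mu_P(N_{t-1})\cdot \frac{P(\beta_t)}{Q_t}$ and, more generally, for any node $\nu$ of $\mathbb{B}$ with $\alpha_1$-prefix set $B$ and any remaining atom $\gamma_l\in\{\beta_1,\ldots,\beta_{k-1}\}\setminus B$, if $\nu_l$ denotes the child of $\nu$ obtained by inserting $\gamma_l$ just before $\alpha_1$, then $\mu_P(\nu_l) = \mu_P(\nu)\cdot\frac{P(\gamma_l)}{Q(B\cup\{\gamma_l\})}$.

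The core is the intermediate claim that for every node $\nu$ of $\mathbb{B}$ with prefix set $B$ one has $\mu_P(\mathbb{B}\nu) = \mu_P(\nu)\cdot \frac{Q(B)}{P(b)}$, where $\mathbb{B}\nu$ is read as the set of atoms in the subtree generated by $\nu$. I would prove this by induction on $|R|$, with $R$ the set of still-available atoms. When $R=\emptyset$ the subtree is the single node $\nu$ and $Q(B)=P(b)$, so the claim is trivial. For the inductive step I decompose $\mathbb{B}\nu = \nu \,\sqcup\, \bigsqcup_{\gamma_l\in R}\mathbb{B}\nu_l$; the blocks attached to distinct nodes are pairwise disjoint sets of atoms (they differ in the position of $\alpha_1$, or in the atoms preceding it), so $\mu_P$ is additive over this decomposition. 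Applying the inductive hypothesis to each child $\nu_l$ (whose available set has size $|R|-1$) together with the ratio identity gives $\mu_P(\mathbb{B}\nu_l) = \mu_P(\nu)\cdot\frac{P(\gamma_l)}{P(b)}$, whence
$$ \mu_P(\mathbb{B}\nu) = \mu_P(\nu)\Big(1 + \sum_{\gamma_l\in R}\frac{P(\gamma_l)}{P(b)}\Big) = \mu_P(\nu)\cdot\frac{Q(B)}{P(b)} $$
by the telescoping identity. The lemma then follows by applying the claim to $\nu = N_t$ (so $Q(B)=Q_t$) and the ratio identity relating $N_t$ to its parent $N_{t-1}$: $\mu_P(\mathbb{B}N_t) = \mu_P(N_t)\cdot\frac{Q_t}{P(b)} = \mu_P(N_{t-1})\cdot\frac{P(\beta_t)}{Q_t}\cdot\frac{Q_t}{P(b)} = \mu_P(N_{t-1})\cdot\frac{P(\beta_t)}{P(b)}$.

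The main obstacle I anticipate is bookkeeping rather than conceptual: one must state and prove the intermediate claim for \emph{arbitrary} nodes of $\mathbb{B}$, whose prefixes list the $\beta$'s in an arbitrary order, rather than only for the naturally-ordered nodes $N_t$, because the children produced in the inductive step are themselves not naturally ordered. This is harmless provided one verifies that the conditioning probabilities $Q(B)$ depend only on the prefix \emph{as a set} — precisely what the explicit formula from Lemma \ref{lemma:prep1}(i) shows. Keeping the distinction between ``node/block'' and ``generated subtree'' notationally clean, and confirming the pairwise disjointness that licenses additivity of $\mu_P$, are the only points that require real care.
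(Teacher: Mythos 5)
Your proof is correct and follows essentially the same route as the paper's: both proceed by induction on the remaining depth of the subtree (the paper phrases it as reverse induction on $t$), decompose the subtree generated by a node into the node itself plus the pairwise-disjoint child subtrees, and close the induction with the telescoping identity $1+\sum_{\gamma_l\in R}P(\gamma_l)/P(b)=Q(B)/P(b)$ together with Lemma \ref{lemma:prep1}. The one point where you are more careful than the paper is in stating the inductive claim for \emph{arbitrary} prefixes and observing that $Q(B)$ depends only on the prefix as a set: the paper's induction, as literally written, is stated only for the naturally-ordered nodes $\llb\alpha_n,\beta_1,\ldots,\beta_t,\alpha_1\rrb$ even though the children arising in its inductive step are not of that form, a gap it bridges implicitly by relabelling.
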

\begin{proof} See Appendix.

\end{proof}
For the next one, recall the definition of the sets $\mathbb{S}_j$ as we did in Subsection \ref{subsec:second} and, in particular, Lemma \ref{lemma:property1Sj}.
\begin{lemma}\label{lemma:cases}
Let $(\alpha_1\mid b)$ be a basic conditional and let $b\geq \alpha_1$. 
Then, 
\begin{enumerate}[(i)]
\item $\mu_P(\mathbb{S}_1)=P(\alpha_1)$;
\item for any $2\leq j\leq n$, $\mu_P(\mathbb{S}_j)=\displaystyle{\frac{P(\alpha_1)\cdot P(\alpha_j)}{P(b)}}$.
\end{enumerate}
\end{lemma}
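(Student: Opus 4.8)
The plan is to prove the two items separately, reducing each to the block-measure formula of Lemma~\ref{lemma:prep1} together with the subtree identity of Lemma~\ref{lemma:cases0}, so that the whole combinatorial sum collapses to a single top-level decomposition of the tree $\mathbb{B}$.

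For (i) the argument is immediate: by Lemma~\ref{lemma:property1Sj}(i) the set $\mathbb{S}_1$ is exactly the collection of atoms $\omega_{\overline\gamma}$ whose first coordinate is $\alpha_1$, which in the notation of Definition~\ref{def:blocks} is the block $\llb\alpha_1\rrb$. Specialising Lemma~\ref{lemma:prep1}(i) to $t=1$ gives $\mu_P(\llb\alpha_1\rrb)=P(\alpha_1)$, which is the claim.

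For (ii) I would first split according to whether $\alpha_j\leq b$ or $\alpha_j\leq\neg b$; since $\alpha_j$ is an atom and $j\geq 2$ (so $\alpha_j\neq\alpha_1$), these cases are exhaustive and exclusive. If $\alpha_j\leq b$ then Lemma~\ref{lemma:property1Sj}(ii) yields $\mathbb{S}_j=\emptyset$, so the relevant case is $\alpha_j\leq\neg b$; by renaming the atoms $\alpha_2,\dots,\alpha_n$ while keeping $\alpha_1$ fixed (as sanctioned by the remark following Proposition~\ref{prop:Sj}) I may assume $j=n$, so that $\alpha_n=\beta_k$ is one of the atoms $\beta_1,\dots,\beta_k$ of $\neg b$. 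The key step is then to read $\mathbb{S}_n$ off the tree $\mathbb{B}$ of Definition~\ref{def:treeT}: its nodes are precisely the blocks listed in Proposition~\ref{prop:Sj}, these blocks are pairwise disjoint (they have distinct initial segments), and their union is $\mathbb{S}_n$. Decomposing the whole tree into its root $\llb\alpha_n,\alpha_1\rrb$ and the subtrees hanging from the $k-1$ children $\llb\alpha_n,\beta_i,\alpha_1\rrb$ gives, by additivity of $\mu_P$ over this disjoint family,
\begin{equation*}
\mu_P(\mathbb{S}_n)=\mu_P(\llb\alpha_n,\alpha_1\rrb)+\sum_{i=1}^{k-1}\mu_P(\mathbb{B}\llb\alpha_n,\beta_i,\alpha_1\rrb).
\end{equation*}

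Next I would apply Lemma~\ref{lemma:cases0} with $t=1$ (relabelling $\beta_1$ as $\beta_i$), which gives $\mu_P(\mathbb{B}\llb\alpha_n,\beta_i,\alpha_1\rrb)=\mu_P(\llb\alpha_n,\alpha_1\rrb)\cdot P(\beta_i)/P(b)$, whence
\begin{equation*}
\mu_P(\mathbb{S}_n)=\mu_P(\llb\alpha_n,\alpha_1\rrb)\left(1+\frac{\sum_{i=1}^{k-1}P(\beta_i)}{P(b)}\right).
\end{equation*}
Since the $\beta_i$ are disjoint atoms with $\beta_1\vee\dots\vee\beta_{k-1}=\neg b\wedge\neg\alpha_n$, one has $\sum_{i=1}^{k-1}P(\beta_i)=P(\neg b)-P(\alpha_n)=1-P(b)-P(\alpha_n)$, so the parenthesised factor simplifies to $P(\neg\alpha_n)/P(b)$. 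Finally Lemma~\ref{lemma:prep1}(i) with $t=2$ gives $\mu_P(\llb\alpha_n,\alpha_1\rrb)=P(\alpha_n)\cdot P(\alpha_1)/P(\neg\alpha_n)$; multiplying the two factors the term $P(\neg\alpha_n)$ cancels and leaves $\mu_P(\mathbb{S}_n)=P(\alpha_1)P(\alpha_n)/P(b)$, i.e. the asserted formula for $j=n$, and hence for every $j$ with $\alpha_j\leq\neg b$.

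I expect the main difficulty to be bookkeeping rather than conceptual. One must verify carefully that the blocks indexing the nodes of $\mathbb{B}$ really are pairwise disjoint and together exhaust $\mathbb{S}_n$, so that $\mu_P$ splits additively over them, and that invoking Lemma~\ref{lemma:cases0} absorbs the entire sum over the permutations $\pi\in\Pi_t$ into the single per-child contribution $\mu_P(\mathbb{B}\llb\alpha_n,\beta_i,\alpha_1\rrb)$ — this is what reduces the computation to the top level of the tree. Positivity of $P$ is used throughout to ensure the conditional quotients appearing in Lemma~\ref{lemma:prep1} are well defined.
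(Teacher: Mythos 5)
Your proof is correct and follows essentially the same route as the paper's: for (i) you identify $\mathbb{S}_1=\llb\alpha_1\rrb$, and for (ii) you reduce without loss of generality to $j=n$ with $\alpha_n\leq\neg b$, decompose $\mathbb{S}_n=\mathbb{B}\llb\alpha_n,\alpha_1\rrb$ into the root block plus the subtrees hanging from its children, absorb each subtree via Lemma~\ref{lemma:cases0} with $t=1$, and cancel $P(\neg\alpha_n)$ against the value of $\mu_P(\llb\alpha_n,\alpha_1\rrb)$ from Lemma~\ref{lemma:prep1}(i), exactly as in the paper. The only cosmetic difference is in (i), where you correctly cite Lemma~\ref{lemma:prep1}(i) while the paper's text points to Lemma~\ref{lemma:cases0}(i), apparently a typo.
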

\begin{proof} See Appendix.
\end{proof}

We can now prove the following.
\begin{theorem}\label{thm:main1}
For every positive probability measure $P$ on ${\bf A}$, the canonical extension $\mu_P$ on $\mathcal{C}({\bf A})$ is such that, for every basic conditional $(a\mid b)$, 
$$
\mu_P(a\mid b)=\frac{P(a\wedge b)}{P(b)}.
$$
\end{theorem}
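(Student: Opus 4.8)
The plan is to prove the identity by two successive reductions that bring an arbitrary basic conditional down to the case of an \emph{atomic} consequent, and then to evaluate that case using the partition $\{\mathbb{S}_j\}$ together with Lemma~\ref{lemma:cases}. Throughout, positivity of $P$ ensures $P(b)>0$, so every quotient is well defined.

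First I would reduce to the case $a\leq b$. By Proposition~\ref{prop2}(\ref{e4}) we have $(a\mid b)=(a\wedge b\mid b)$ in $\mathcal{C}({\bf A})$, hence $\mu_P(a\mid b)=\mu_P(a\wedge b\mid b)$; on the right-hand side $P((a\wedge b)\wedge b)=P(a\wedge b)$, so replacing $a$ by $a\wedge b$ alters neither side. We may thus assume $a\leq b$, in which case $a\wedge b=a$. Next I would reduce to an atomic consequent. By Proposition~\ref{lemma:repres}(ii), $(a\mid b)=\bigsqcup_{\alpha\in\atom({\bf A}),\,\alpha\leq a}(\alpha\mid b)$, and these basic conditionals are pairwise disjoint: for distinct atoms $\alpha,\alpha'$, Proposition~\ref{prop2}(\ref{e2}) gives $(\alpha\mid b)\sqcap(\alpha'\mid b)=(\alpha\wedge\alpha'\mid b)=(\bot\mid b)$, and since $\bot\leq\neg b$ monotonicity (Proposition~\ref{prop:ordine}(\ref{p2})) yields $(\bot\mid b)\leq(\neg b\mid b)=\bot_\mathfrak{C}$ by Proposition~\ref{prop3}(\ref{e6}), so $(\bot\mid b)=\bot_\mathfrak{C}$. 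By additivity of $\mu_P$ we get $\mu_P(a\mid b)=\sum_{\alpha\leq a}\mu_P(\alpha\mid b)$, while $P(a\wedge b)=P(a)=\sum_{\alpha\leq a}P(\alpha)$. It therefore suffices to prove $\mu_P(\alpha\mid b)=P(\alpha)/P(b)$ for every atom $\alpha\leq b$.

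For this atomic case I fix an atom $\alpha\leq b$ and relabel the atoms of ${\bf A}$ so that $\alpha=\alpha_1$. By Lemma~\ref{lemma:property1Sj} the family $\{\mathbb{S}_j\}_{j=1}^{n}$ is a partition of $\atom_\leq(\alpha_1\mid b)$, so by the definition of $\mu_P$ as the measure induced by its distribution, $\mu_P(\alpha_1\mid b)=\sum_{j=1}^{n}\mu_P(\mathbb{S}_j)$. Lemma~\ref{lemma:property1Sj}(ii) annihilates every term with $j\geq 2$ and $\alpha_j\leq b$, leaving only $\mathbb{S}_1$ and those $\mathbb{S}_j$ with $\alpha_j\leq\neg b$; to these I apply Lemma~\ref{lemma:cases}, obtaining $\mu_P(\mathbb{S}_1)=P(\alpha_1)$ and $\mu_P(\mathbb{S}_j)=P(\alpha_1)P(\alpha_j)/P(b)$. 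Summing over the atoms $\alpha_j\leq\neg b$ and using additivity of $P$, namely $\sum_{\alpha_j\leq\neg b}P(\alpha_j)=P(\neg b)=1-P(b)$, gives
$$
\mu_P(\alpha_1\mid b)=P(\alpha_1)+\frac{P(\alpha_1)}{P(b)}\bigl(1-P(b)\bigr)=\frac{P(\alpha_1)}{P(b)},
$$
which completes the proof after combining with the two reductions.

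The genuinely hard work sits in the assumed Lemma~\ref{lemma:cases} (and in the telescoping tree computations of Lemmas~\ref{lemma:prep1} and~\ref{lemma:cases0} that feed it). Granting those, the only point in the theorem itself that demands care is the correct bookkeeping of the partition $\{\mathbb{S}_j\}$: recognizing which blocks are forced to be empty and that the surviving ones, via $\sum_{\alpha_j\leq\neg b}P(\alpha_j)=1-P(b)$, telescope so as to collapse the whole expression to $P(\alpha_1)/P(b)$.
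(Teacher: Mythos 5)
Your proof is correct and follows essentially the same route as the paper's: reduce to atomic consequents via Proposition~\ref{lemma:repres}(ii) and additivity, then evaluate $\mu_P(\alpha_1\mid b)$ by summing over the partition $\{\mathbb{S}_j\}$ using Lemma~\ref{lemma:property1Sj} and Lemma~\ref{lemma:cases}, with the same telescoping $P(\alpha_1)+\frac{P(\alpha_1)}{P(b)}(1-P(b))=\frac{P(\alpha_1)}{P(b)}$. The only cosmetic difference is that you first normalize to $a\leq b$ via $(a\mid b)=(a\wedge b\mid b)$, whereas the paper instead disposes of atoms $\alpha\not\leq b$ by noting $(\alpha\mid b)=\bot_{\mathfrak{C}}$; these are equivalent.
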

\begin{proof}
Let $P$ be given as in the hypothesis, and let $\mu_P$ be defined on $\mathcal{C}({\bf A})$ as in (\ref{MuP}). By definition and Lemma \ref{MuPProb}, $\mu_P$ is a positive probability function. Thus, it is left to prove that, for every conditional $(a\mid b)$, $\mu_P(a\mid b)=P(a\wedge b)/P(b)$.

To this end, recall that for each basic conditional we have $(a\mid b) =(\bigvee_{\alpha_i\leq a} \alpha_i\mid b)=\bigsqcup_{\alpha_i\leq a}(\alpha_i\mid b)$. Thus, since $\mu_P$ is additive, it is sufficient to prove the claim for those conditionals of the form $(\alpha\mid b)$ where $\alpha\in \atom({\bf A})$. Without loss of generality we will assume $\alpha=\alpha_1$. 

Notice first that, if $\alpha_1\not\leq b$, then $(\alpha_1\mid b)=\bot_{\mathfrak{C}}$, whence the claim is trivial. Thus, we shall henceforth assume that $\alpha_1\leq b$.
Now, from Lemma \ref{lemma:property1Sj}, $\{\mathbb{S}_j\}_{j=1,\ldots, n}$ is a partition of $\atom_{\leq}(\alpha_1\mid b)$. Thus,  
\begin{equation}\label{eq:ThmMain}
\mu_P(\alpha_1\mid b)=\sum_{j=1}^n\mu_P(\mathbb{S}_j)=\mu_P(\mathbb{S}_1)+\sum_{j=2}^n\mu_P(\mathbb{S}_j).
\end{equation} 
From Lemma \ref{lemma:property1Sj} (ii), $\mu_P(\mathbb{S}_j)=0$ for all $j$ such that $\alpha_j\leq b$,  therefore,
$$
\sum_{j=2}^n \mu_P(\mathbb{S}_j)=\sum_{j: \alpha_j\leq \neg b}\mu_P(\mathbb{S}_j). 
$$
By Lemma \ref{lemma:cases} (ii) above, we also have: 
$$
\sum_{j: \alpha_j\leq \neg b}\mu_P(\mathbb{S}_j)=\sum_{j: \alpha_j\leq \neg b}\frac{P(\alpha_1)\cdot P(\alpha_j)}{P(b)}=\frac{P(\alpha_1)}{P(b)}\cdot \sum_{j: \alpha_j\leq \neg b}P(\alpha_j)=\frac{P(\alpha_1)}{P(b)}\cdot P(\neg b).
$$
Finally, using this and Lemma  \ref{lemma:cases} (i), from (\ref{eq:ThmMain})  we get: 

$$
\begin{array}{lll}
\mu_P(\alpha_1\mid b)&=&P(\alpha_1)+\frac{P(\alpha_1)}{P(b)}\cdot (1-P(b))\\
&=&P(\alpha_1)+\frac{P(\alpha_1)}{P(b)}-P(\alpha_1)\\
& = &P(\alpha_1\mid b).
 \end{array}
$$
\end{proof}

An immediate consequence of the above theorem is that, for every positive probability $P$ on ${\bf A}$ and for every  $\overline{\alpha}=\langle \alpha_1,\ldots, \alpha_{n-1}\rangle\in Seq({\bf A})$, one has
\begin{equation}\label{eqindependence}
\begin{array}{lll}
\mu_P(\omega_{\overline{\alpha}})&=&\mu_P((\alpha_1\mid \top)\sqcap(\alpha_2\mid \neg \alpha_1)\sqcap\ldots\sqcap(\alpha_{n-1}\mid\neg\alpha_1\wedge\neg \alpha_2\wedge\ldots\wedge \neg\alpha_{n-2}))\\
&=&P(\alpha_1\mid \top)\cdot P(\alpha_2\mid \neg \alpha_1)\cdot\ldots\cdot P(\alpha_{n-1}\mid\neg\alpha_1\wedge\neg \alpha_2\wedge\ldots\wedge \neg\alpha_{n-2})\\
&=& \mu_P(\alpha_1\mid \top)\cdot\mu_P(\alpha_2\mid \neg \alpha_1)\cdot\ldots\cdot\mu_P(\alpha_{n-1}\mid\neg\alpha_1\wedge\neg \alpha_2\wedge\ldots\wedge \neg\alpha_{n-2}).
\end{array}
\end{equation}
That is, the basic conditionals which conjunctively define an atom of $\mathcal{C}({\bf A})$ are jointly independent with respect to the probability $\mu_P$.

Furthermore, the following result provides a characterization for the positive and separable probabilities on $\mathcal{C}({\bf A})$.

\begin{corollary}\label{cor:main1}
For every finite Boolean algebra ${\bf A}$ and for every positive probability $\mu$ on $\mathcal{C}({\bf A})$, the following are equivalent:
\begin{enumerate}[(i)]
\item $\mu$ is separable;
\item there is a positive probability $P_\mu$ on ${\bf A}$ such that 
for every basic conditional $(a\mid b)$, 
$$
\mu(a\mid b)=\frac{P_\mu(a\wedge b)}{P_\mu(b)}=\mu_{P_{\mu}}(a\mid b).
$$
\end{enumerate}
\end{corollary}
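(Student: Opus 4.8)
The plan is to prove the two implications separately, using as a bridge the function $P_\mu(a) := \mu(a\mid\top)$, i.e. the restriction of $\mu$ to the subalgebra ${\bf A}\mid\top$.

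For the direction (i) $\Rightarrow$ (ii), I would first observe that, by Corollary \ref{subalgebra}, the map $a\mapsto(a\mid\top)$ is an isomorphism of ${\bf A}$ onto the Boolean subalgebra ${\bf A}\mid\top$ of $\mathcal{C}({\bf A})$, so that $P_\mu(a)=\mu(a\mid\top)$ is a (finitely additive) probability on ${\bf A}$. Positivity of $P_\mu$ follows from positivity of $\mu$: if $a\neq\bot$ then $(a\mid\top)\neq\bot_\mathfrak{C}$ by Proposition \ref{prop3}(i), hence $(a\mid\top)>\bot_\mathfrak{C}$ lies above some atom of $\mathcal{C}({\bf A})$, and therefore $\mu(a\mid\top)>0$. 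It then remains to recover the conditional values. Fixing a basic conditional $(a\mid b)$ with $b\neq\bot$, I would apply separability, i.e. the chain rule (CP4), to the chain $a\wedge b\leq b\leq\top$, obtaining $\mu(a\wedge b\mid\top)=\mu(a\wedge b\mid b)\cdot\mu(b\mid\top)$. Since $(a\wedge b\mid b)=(a\mid b)$ by Proposition \ref{prop2}(iv), this reads $P_\mu(a\wedge b)=\mu(a\mid b)\cdot P_\mu(b)$, and dividing by $P_\mu(b)>0$ yields $\mu(a\mid b)=P_\mu(a\wedge b)/P_\mu(b)$. The remaining equality with $\mu_{P_\mu}(a\mid b)$ is then immediate from Theorem \ref{thm:main1}.

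For the direction (ii) $\Rightarrow$ (i), I would assume $\mu(a\mid b)=P_\mu(a\wedge b)/P_\mu(b)$ on all basic conditionals and verify that $\mu$ satisfies the chain rule. Let $a\leq b\leq c$ with $b,c\in A'$. Using $a\wedge b=a$, $b\wedge c=b$ and $a\wedge c=a$, one computes
$$\mu(a\mid b)\cdot\mu(b\mid c)=\frac{P_\mu(a)}{P_\mu(b)}\cdot\frac{P_\mu(b)}{P_\mu(c)}=\frac{P_\mu(a)}{P_\mu(c)}=\mu(a\mid c).$$
On the other hand, by Proposition \ref{prop2}(v) we have $(a\mid c)=(a\mid b)\sqcap(b\mid c)$, so the displayed identity is exactly $\mu\big((a\mid b)\sqcap(b\mid c)\big)=\mu(a\mid b)\cdot\mu(b\mid c)$, which is the separability condition (CP4).

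As for the main obstacle, there is no deep difficulty here: both halves are short once the earlier machinery is in place. The only points demanding care are (a) justifying that $P_\mu$ is a genuine \emph{positive} probability on ${\bf A}$, which rests on the subalgebra isomorphism of Corollary \ref{subalgebra} together with positivity of $\mu$, and (b) ensuring the chain rule is applied to an admissible chain, namely one whose antecedents all lie in $A'$ (which is guaranteed since $b,\top\neq\bot$). Finally, I would stress that the claim concerns only the values on basic conditionals, so no separate verification that $\mu$ coincides with $\mu_{P_\mu}$ on compound conditionals is needed.
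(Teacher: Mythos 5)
Your proposal is correct and follows essentially the same route as the paper: both directions hinge on taking $P_\mu$ to be the restriction of $\mu$ to ${\bf A}\mid\top$ (via Corollary \ref{subalgebra}), using the chain rule on $a\wedge b\leq b\leq\top$ together with $(a\wedge b\mid b)=(a\mid b)$ for (i)$\Rightarrow$(ii), and the telescoping cancellation $\frac{P_\mu(a)}{P_\mu(b)}\cdot\frac{P_\mu(b)}{P_\mu(c)}=\frac{P_\mu(a)}{P_\mu(c)}$ for (ii)$\Rightarrow$(i). The only cosmetic difference is that the paper phrases the converse direction by appealing to the separability of $\mu_{P_\mu}$ rather than writing out the ratio computation, but the underlying argument is identical.
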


\begin{proof}
$(i)\Rightarrow(ii)$. 
Let $P_\mu$ be the restriction of $\mu$ to the Boolean subalgebra ${\bf A}\mid \top$ of $\mathcal{C}({\bf A})$. Thus, as ${\bf A}\mid \top$ is isomorphic to ${\bf A}$ by Corollary \ref{subalgebra},  $P_\mu$ is a positive probability of ${\bf A}$ (Remark \ref{rem:subProb1} plus the trivial observation that the positivity of $\mu$ induces the positivity of $P_\mu$). Furthermore, for every basic conditional $(a\mid b)$ of $\mathcal{C}({\bf A})$, from Theorem \ref{thm:main1} and the definition of $P_\mu$,
$$
\mu_{P_\mu}(a\mid b)=\frac{P_\mu(a\wedge b)}{P_\mu(b)}=\frac{\mu(a\wedge b\mid \top)}{\mu(b\mid \top)}. 
$$
In order to conclude the proof notice that, since $\mu$ is separable,  $\mu(a\mid b)\cdot \mu(b\mid \top)=\mu(a\wedge b\mid b)\cdot \mu(b\mid \top)=\mu(a\wedge b\mid \top)$. Therefore, $\mu_{P_\mu}(a\mid b)=\displaystyle{\frac{\mu(a\wedge b \mid \top)}{\mu(b \mid \top)}=\mu(a\mid b)}$.
\vspace{.2cm}

\noindent $(2)\Rightarrow(1)$. For each basic conditional $(a\mid b)\in \mathcal{C}({\bf A})$, $\mu(a\mid b)=\mu_P(a\mid b)$, and since $\mu_P$ is separable, for all $a\leq b\leq c\in A$, one has $\mu(a\mid c)=\mu_P(a\mid c)=\mu_P(a\mid b)\cdot\mu_P(b\mid c)=\mu(a\mid b)\cdot\mu(b\mid c)$ which settles the claim.
\end{proof}

Corollary \ref{cor:main1} implies that, if $\mu$ is separable, the values on basic conditionals $(a \mid b)$ only depend on the values on basic conditionals of the form $(\alpha \mid \top)$, with $\alpha$ an atom of $\bf A$. Indeed, if $\mu$  is separable, then 
$$\mu(a \mid b) = \frac{\mu(a \land b \mid \top)}{\mu(b \mid \top)}.$$
Therefore, any other measure $\mu'$ that coincides with $\mu$ on the basic conditionals of the form $(\alpha \mid \top)$, will coincide as well on each basic conditional. 

However, although positive separable measures on $\mathcal{C}({\bf A})$ are characterized through Corollary \ref{cor:main1} above as those measures which coincide with $\mu_P$ on basic conditionals, the measures of the form $\mu_P$ are not the unique positive and separable probability functions on $\mathcal{C}({\bf A})$. Indeed, let $\mu_P$ be the canonical extension to $\mathcal{C}({\bf A})$ of a positive probability $P:{\bf A}\to[0,1]$. Let $\atom({\bf A})={\alpha_1,\ldots, \alpha_n}$ (with $n\geq 3$) and fix two atoms $\omega_1,\omega_2\in \atom_\leq(\alpha_1\mid\top)$. Assume, without loss of generality, that $\mu_P(\omega_1)\leq \mu_P(\omega_2)$ and take any $0<\varepsilon<\frac{\min(\mu_P(\omega_1), 1-\mu_P(\omega_2))}{2}$ so that  $\mu_P(\omega_1)-\varepsilon>0$ and $\mu_P(\omega_2)+\varepsilon<1$. Now,  for every $\omega\in \atom(\mathcal{C}({\bf A}))$, define
$$
\mu'(\omega)=\left\{
\begin{array}{ll}
\mu_P(\omega)&\mbox{ if }\omega\neq \omega_1, \omega\neq \omega_2,\\
\mu_P(\omega_1)-\varepsilon&\mbox{ if }\omega=\omega_1,\\
\mu_P(\omega_2)+\varepsilon&\mbox{ if }\omega=\omega_2.
\end{array}
\right.
$$
Clearly, $\sum_{\omega\in \atom(\mathcal{C}({\bf A}))}\mu'(\omega)=1$ and $\mu'(\omega)>0$ for all $\omega\in \atom(\mathcal{C}({\bf A}))$. Thus, $\mu'$ extends to a positive probability, that we still denote by $\mu'$, on the whole $\mathcal{C}({\bf A})$. Further, by definition, $\mu'(\alpha_1\mid\top)=\mu_P(\alpha_1\mid \top)$, whence for all $\alpha_i\in \atom({\bf A})$, $\mu'(\alpha_i\mid \top)=\mu_P(\alpha_i\mid \top)$ and therefore, by the above considerations, 
\begin{equation}\label{eqSeparableCounter}
\mu'(a\mid b)=\mu_P(a\mid b)\mbox{ for every basic conditional }(a\mid b)\in \mathcal{C}({\bf A}). 
\end{equation}
Moreover, the above property and the separability of $\mu_P$ imply the separability of $\mu'$. Indeed, for every $a\leq b\leq c\in A'$,
$$
\mu'((a\mid b)\sqcap(b\mid c))=\mu'(a\mid c)=\mu_P(a\mid c)=\mu_P(a\mid b)\cdot\mu_P(b\mid c)=\mu'(a\mid b)\cdot\mu'(b\mid c). 
$$
However, $\mu'$ is not expressible as the canonical extension $\mu_{P'}$ for some positive $P'$ on ${\bf A}$. For otherwise, assuming that $\omega_1=(\alpha_1\mid \top)\sqcap(\alpha_2\mid \neg\alpha_1)\sqcap\ldots\sqcap(\alpha_{n-1}\mid \bigwedge_{i=1}^{n-1}\neg\alpha_i)$, from (\ref{eqindependence}) and (\ref{eqSeparableCounter}), one would have
$$
\mu'(\omega_1)=\mu'(\alpha_1\mid \top)\cdot\ldots\cdot\mu'(\alpha_{n-1}\mid \bigwedge_{i=1}^{n-1}\neg\alpha_i)=\mu_P(\alpha_1\mid \top)\cdot\ldots\cdot\mu_P(\alpha_{n-1}\mid \bigwedge_{i=1}^{n-1}\neg\alpha_i)=\mu_P(\omega_1)
$$
while $\mu'(\omega_1)=\mu_P(\omega_1)-\varepsilon$ and $\varepsilon>0$.

Summing up, besides the whole class of (simple) probabilities on a Boolean algebra of conditionals $\mathcal{C}({\bf A})$, we can identify three  relevant subclasses:
\begin{enumerate}
\item $\Sigma=\{\mu:\mathcal{C}({\bf A})\to[0,1] \mid \mu \mbox{ is a  positive probability function}\}$
\item $\Gamma=\{\mu:\mathcal{C}({\bf A})\to[0,1] \mid \mu \mbox{ is positive and separable}\}$
\item $\Pi=\{\mu:\mathcal{C}({\bf A})\to[0,1] \mid \mu=\mu_P \mbox{ for some positive probability } P \mbox{ on } {\bf A} \}$
\end{enumerate}
Obviously $\Pi\subseteq\Gamma\subseteq\Sigma$. Furthermore, 
for  $| \atom({\bf A})| \geq 3$   all inclusions are proper as witnessed by the above and Example \ref{ex:nonCond}.

\section{A logical reading of Boolean conditionals}\label{sec:logic}
In this section we undertake a natural step towards defining a logic to reason with
conditionals whose semantics is in accordance with the notion of
the Boolean algebra of conditionals as described above. This logic, that we call the Logic of Boolean Conditionals (LBC) is axiomatized in the following subsection, while its semantics is introduced in Subsection \ref{subsec:sem} where we also prove a completeness theorem. In Subsection \ref{sec:NMR}, we investigate the relation between LBC and non-monotonic reasoning.

\subsection{Syntax and axiomatics}
Let us start by considering the classical propositional logic language $\mathsf{L}$, built from a finite
set of propositional variables $p_1, p_2, \ldots p_m$. Let
  $\vdash_{CPL}$ denote derivability in  Classical Propositional Logic.
Based on $\mathsf{L}$, we define the language $\mathsf{CL}$ of
conditionals 
by the following stipulations: 

\begin{itemize}

\item[-] Basic (or atomic) conditional formulas, expressions of the form $(\varphi \mid \psi)$ where $\varphi, \psi \in \mathsf{ L}$ and such that $ \not\vdash_{CPL} \neg\psi$, are in $\mathsf{CL}$.

\item[-] 
Further, if $\Phi, \Psi  \in \mathsf{ CL}$, then $\neg \Phi, \Phi \land \Psi \in \mathsf{ CL}$.\footnote{We use the same symbols for connectives in $\mathsf{ L}$ and in $\mathsf{ CL}$ without danger of confusion.} Other connectives like $\lor$, $\to$ and $\leftrightarrow$ are defined as usual. 

\end{itemize}
Note that we do not allow the nesting of conditionals, as usually done
in the vast literature on the modal approaches to Conditional Logics
(see, for a general survey, \cite{Arlo-Costa}.) 

Actually, we could also allow purely propositional formulas from $\mathsf{L}$ to be part of $\mathsf{ CL}$ but, as a matter of fact, any proposition $\varphi$ will  be always identifiable with the conditional $(\varphi \mid \top)$. Next an axiomatic system for a logic of Boolean conditionals over the language $\mathsf{ CL}$ is presented. 

\begin{definition} The {\em Logic of Boolean conditionals} (LBC for
  short) has the following axioms\footnote{Actually, Axiom (A4) is not
    independent since it follows from (A1) and (A3), but we leave it
    for the sake of coherence with the algebraic part. In the same vein, we could replace the equivalence connective $\leftrightarrow$ by the implication connective $\to$ in Axioms (A2) and (A3), since the other direction can be deduced by classical reasoning and using axiom (A4) and the rule (R1) respectively.} and rules:

\begin{itemize}

\item[(CPL)] For any tautology of CPL, the formula resulting from a uniform replacement of the variables by basic conditionals.

\item[(A1)] $(\psi \mid \psi)$

\item[(A2)] $\neg (\varphi \mid \psi) \leftrightarrow  (\neg \varphi \mid \psi)$

\item[(A3)] $(\varphi \mid \psi) \land (\delta \mid \psi) \leftrightarrow (\varphi \land \delta \mid \psi)$

\item[(A4)] $(\varphi \mid \psi) \leftrightarrow (\varphi \land \psi \mid \psi)$

\item[(A5)] $(\varphi \mid \psi) \leftrightarrow (\varphi \mid \chi) \land (\chi \mid \psi)$,  if $\vdash_{CPL} \varphi \to \chi$ and  $\vdash_{CPL} \chi \to \psi$

\item[(R1)]  from $\vdash_{CPL} \varphi \to \psi$ derive $(\varphi \mid \chi) \to (\psi \mid \chi)$
\item[(R2)]  from $\vdash_{CPL} \chi \leftrightarrow \psi$ derive $(\varphi \mid \chi) \leftrightarrow (\varphi \mid \psi)$
\item[(MP)] Modus Ponens: from $\Phi$ and $\Phi \to \Psi$ derive $\Psi$
\end{itemize} 

\noindent The notion of proof 
  in LBC,
$\vdash_{LBC}$, is defined as usual from the above axioms and rules. 
\end{definition}

As we pointed out at the beginning of this section, in the language $\mathsf{ CL}$, basic conditional formulas $(\varphi \mid \psi)$ can be regarded, in logical terms, as new (complex) propositional variables where the compound conditional formulas are built upon using the classical CPL connectives $\land$ and $\neg$. Therefore, deductions in LBC employ axioms (A1)-(A5) and rules (R1), (R2) to reason about the internal structure of basic conditional formulas, and the axioms and rules of CPL (captured by the axiom schema (CPL) and the Modus Ponens rule) for compound conditional formulas. This means that, if we denote by $\mathsf{AX}$ the set of all instantiations of the axioms (A1)-(A5) once closed by rules (R1) and (R2), deductions in LBC can in fact be translated to deductions from $\mathsf{AX}$ (taken as a theory) only using the rules and axioms of CPL. This is formally expressed in the first item of following lemma. As a consequence of this translation, among other facts, it follows that the logic LBC satisfies the deduction theorem, also stated in the same lemma. 

\begin{lemma}\label{lemma:DedThm} For any set of $\mathsf{ CL}$-formulas $\Gamma \cup \{\Phi, \Psi\}$, the following properties hold, where $\mathsf{AX}$ is as above:
\begin{itemize}
\item $\Gamma \vdash_{LBC} \Phi$ iff $\Gamma \cup \mathsf{AX} \vdash_{CPL} \Phi$.
 \item Deduction theorem: $\Gamma \cup \{\Psi\} \vdash_{LBC} \Phi$ iff   $\Gamma\vdash_{LBC} \Psi \to \Phi$.
\end{itemize}
\end{lemma}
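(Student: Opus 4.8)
The plan is to establish the first item — the reduction of LBC-derivability to CPL-derivability from the theory $\mathsf{AX}$ — and then obtain the deduction theorem as a one-line consequence. The conceptual key is that once we regard each basic conditional $(\varphi \mid \psi)$ as an atomic propositional letter, the compound fragment of LBC is governed entirely by the schema (CPL) together with (MP), i.e.\ by classical propositional logic over these new atoms. The remaining ingredients (A1)--(A5), (R1) and (R2) only concern the internal shape of basic conditionals; crucially, the premises of (R1) and (R2) are \emph{metalevel} conditions of the form $\vdash_{CPL}\cdots$, so these rules can fire regardless of the hypotheses in $\Gamma$ and merely license a fixed collection of theorems. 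Collecting all instances of (A1)--(A5) and closing under (R1), (R2) yields exactly the set $\mathsf{AX}$, and by construction $\vdash_{LBC}\psi$ for each $\psi\in\mathsf{AX}$ (instances of (A1)--(A5) are axioms, while conclusions of (R1)/(R2) are obtained in LBC from their CPL-provable side conditions).

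For the forward direction of item 1, I would argue by induction on the length of an LBC-derivation of $\Phi$ from $\Gamma$, showing that each line is CPL-derivable from $\Gamma\cup\mathsf{AX}$. The cases are routine: a hypothesis in $\Gamma$, an instance of (A1)--(A5), or a conclusion of (R1)/(R2) already lies in $\Gamma\cup\mathsf{AX}$; an instance of (CPL) is a propositional tautology over the basic-conditional atoms, hence CPL-derivable; and an application of (MP) is reproduced verbatim since (MP) is a rule of CPL too. For the converse, I would run the symmetric induction on a CPL-derivation of $\Phi$ from $\Gamma\cup\mathsf{AX}$: propositional-tautology lines are recovered by the (CPL) schema of LBC, hypotheses from $\Gamma$ are available, members of $\mathsf{AX}$ are LBC-theorems by the observation above, and (MP) is shared. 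This establishes $\Gamma\vdash_{LBC}\Phi$ iff $\Gamma\cup\mathsf{AX}\vdash_{CPL}\Phi$.

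The deduction theorem then follows by chaining three equivalences. By item 1, $\Gamma\cup\{\Psi\}\vdash_{LBC}\Phi$ is equivalent to $\Gamma\cup\{\Psi\}\cup\mathsf{AX}\vdash_{CPL}\Phi$; by the deduction theorem for classical propositional logic, applied to the propositional language whose atoms are the basic conditionals, this is equivalent to $\Gamma\cup\mathsf{AX}\vdash_{CPL}\Psi\to\Phi$; and by item 1 once more this is equivalent to $\Gamma\vdash_{LBC}\Psi\to\Phi$.

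The one point that requires care — and which I expect to be the only real subtlety — is the treatment of (R1) and (R2). In many axiomatic systems a necessitation-style rule (from a theorem, infer a strictly stronger assertion about it) destroys the deduction theorem. Here the analogous risk is avoided precisely because the premises of (R1) and (R2) are CPL-provability statements rather than LBC-derivations from $\Gamma$: these rules never apply to a hypothesis-dependent formula, so they can be absorbed once and for all into the hypothesis-independent theory $\mathsf{AX}$. Making this observation explicit is what legitimizes both the reduction in item 1 and the invocation of the CPL deduction theorem in item 2.
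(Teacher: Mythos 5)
Your proof is correct and follows essentially the same route the paper takes: the paper leaves this lemma without a formal proof, justifying it only by the preceding remark that (R1) and (R2) have metalevel $\vdash_{CPL}$ side conditions and therefore only contribute a fixed, hypothesis-independent set of theorems, so that LBC-deductions translate into CPL-deductions from $\mathsf{AX}$ and the classical deduction theorem transfers. Your write-up simply makes that translation argument explicit by induction on derivations, which is exactly the intended proof.
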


The above axiomatic system is clearly inspired by the key
properties of Boolean algebras of conditionals we discussed in Section \ref{sec:BAC}. 
Indeed, as expected, we can prove that the Lindenbaum algebra of provably equivalent formulas of LBC can be seen, in fact, as a Boolean algebra of conditionals. 
In slightly more detail, for any $\Phi, \Psi\in \mathsf{CL}$, let us write $\Phi\equiv\Psi$ if $\vdash_{LBC}\Phi\leftrightarrow\Psi$. Notice that rules (R1) and (R2) ensure that $\equiv$ preserves propositional logical equivalence in the sense that if $\vdash_{CPL} \varphi \leftrightarrow \varphi'$ and $\vdash_{CPL} \psi \leftrightarrow \psi'$, then $(\varphi \mid \psi) \equiv (\varphi' \mid \psi')$. Moreover, $\equiv$ is compatible, in the sense of Section \ref{sec:algpre}, with Boolean operations. Thus, the quotient set $\mathsf{CL}/_\equiv$ endowed with the following operations is a Boolean algebra:
\begin{itemize}
\item[] $\top_\equiv =[(\top\mid\top)]_\equiv$ and $\bot_\equiv=[(\bot\mid \top)]_{\equiv}$; 
\item[] $[\Phi]_\equiv\sqcap[\Psi]_\equiv=[\Phi\wedge\Psi]_{\equiv}$;
\item[] $[\Phi]_\equiv\sqcup[\Psi]_\equiv=[\Phi\vee\Psi]_{\equiv}$;
\item[] $\msim[\Phi]_\equiv=[\neg\Phi]_\equiv$.
\end{itemize}
We claim that  ${\bf CL}=(\mathsf{CL}/_\equiv, \sqcap, \sqcup, \sim, \top_\equiv, \bot_\equiv) $ is isomorphic to $\mathcal{C}({\bf L})$, the Boolean algebra of conditionals of the Lindenbaum algebra ${\bf L}$ of CPL over the language $\mathsf{L}$. Note that, by definition,  
$\vdash_{LBC} \Phi$ iff $[\Phi]_\equiv = \top_\equiv$.

\begin{theorem}\label{thm:LogIso}
${\bf CL} \cong\mathcal{C}({\bf L})$.
\end{theorem}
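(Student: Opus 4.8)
The plan is to establish the isomorphism by exhibiting two mutually inverse Boolean homomorphisms, thereby obtaining surjectivity and injectivity simultaneously and sidestepping any direct completeness argument for LBC. The correspondence to exploit is the one matching basic objects: the basic conditional $([\varphi]\mid[\psi])$ of $\mathcal{C}(\mathbf{L})$ with the class $[(\varphi\mid\psi)]_\equiv$ of the corresponding basic conditional formula in $\mathbf{CL}$. Since $\mathbf{CL}$ is generated as a Boolean algebra by the classes of basic conditional formulas, and $\mathcal{C}(\mathbf{L})$ is generated by its basic conditionals (Proposition \ref{lemma:repres}, or directly from Definition \ref{def:BAC}), it will suffice to define homomorphisms on these generators and check that they invert one another there.

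First I would build $g\colon\mathcal{C}(\mathbf{L})\to\mathbf{CL}$ using the universal property of the free algebra. Define $f$ on the generating set $L\mid L$ of $\mathbf{Free}(L\mid L)$ by $f((a\mid b))=[(\varphi\mid\psi)]_\equiv$, where $\varphi,\psi$ are any CPL-representatives of $a=[\varphi]$ and $b=[\psi]$ (with $b\neq\bot$, matching the constraint $\not\vdash_{CPL}\neg\psi$). This value is independent of the chosen representatives: rule (R2) absorbs a change in the conditioning formula $\psi$, while rule (R1), applied in both directions, absorbs a change in the consequent $\varphi$. By the universal property, $f$ extends uniquely to a homomorphism $\bar f\colon\mathbf{Free}(L\mid L)\to\mathbf{CL}$. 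The crucial step is that $\ker\bar f$ contains all the pairs (C1)--(C5) generating $\equiv_{\mathfrak{C}}$: axioms (A1)--(A5) are precisely the LBC-theorems asserting the equivalence of the two sides of (C1)--(C5), where for (A5) one reads the hypothesis $a\leq b\leq c$ of (C5) as the side conditions $\vdash_{CPL}\varphi\to\chi$ and $\vdash_{CPL}\chi\to\psi$. Since $\ker\bar f$ is a congruence and $\equiv_{\mathfrak{C}}$ is the least congruence containing those pairs, we get $\equiv_{\mathfrak{C}}\subseteq\ker\bar f$, so $\bar f$ factors through the quotient to yield $g$ with $g(([\varphi]\mid[\psi]))=[(\varphi\mid\psi)]_\equiv$.

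Next I would construct $h\colon\mathbf{CL}\to\mathcal{C}(\mathbf{L})$ in the opposite direction. Define $h_0$ on $\mathsf{CL}$-formulas by recursion, sending a basic conditional $(\varphi\mid\psi)$ to $([\varphi]\mid[\psi])$ (well defined as an element of $\mathcal{C}(\mathbf{L})$ since $\not\vdash_{CPL}\neg\psi$ forces $[\psi]\neq\bot$) and commuting with the maps $\neg\mapsto\,\sim$ and $\wedge\mapsto\sqcap$. To see that $h_0$ descends to $\mathbf{CL}$ it is enough to prove soundness, i.e.\ $\vdash_{LBC}\Theta$ implies $h_0(\Theta)=\top_{\mathfrak{C}}$: applying this to $\Theta=\Phi\leftrightarrow\Psi$ and using that $h_0$ commutes with all derived connectives yields $h_0(\Phi)=h_0(\Psi)$ whenever $\Phi\equiv\Psi$ (recall that in a Boolean algebra $x\leftrightarrow y=\top$ iff $x=y$). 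Soundness follows by induction on LBC-derivations: the instances of (A1)--(A5) map to identities (i)--(v) of Proposition \ref{prop2}; rule (R1) maps to the monotonicity property (ii) of Proposition \ref{prop:ordine}; rule (R2) produces literally equal conditionals because $\vdash_{CPL}\chi\leftrightarrow\psi$ gives $[\chi]=[\psi]$; the (CPL) schema maps to substitution instances of classical tautologies, which evaluate to $\top_{\mathfrak{C}}$ in any Boolean algebra; and (MP) is preserved since $h_0(\Phi)=\top_{\mathfrak{C}}$ together with $h_0(\Phi\to\Psi)=\top_{\mathfrak{C}}$ forces $h_0(\Psi)=\top_{\mathfrak{C}}$. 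The induced map $h$ is a homomorphism because the operations of $\mathbf{CL}$ are computed on representatives.

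Finally I would compose the two maps. Both $g\circ h$ and $h\circ g$ are Boolean endomorphisms fixing every basic generator, since $h(g(([\varphi]\mid[\psi])))=h([(\varphi\mid\psi)]_\equiv)=([\varphi]\mid[\psi])$ and symmetrically $g(h([(\varphi\mid\psi)]_\equiv))=[(\varphi\mid\psi)]_\equiv$. As the basic conditionals generate each algebra, the two compositions equal the respective identities, so $g$ and $h$ are mutually inverse isomorphisms and $\mathbf{CL}\cong\mathcal{C}(\mathbf{L})$. The main obstacle, and the only place demanding real care, is the soundness verification making $h$ well defined: one must check that every axiom instance and every rule application of LBC is validated by the algebraic identities of Section \ref{sec:BAC}, paying particular attention to the (CPL) schema and to the CPL side conditions of (A5), (R1) and (R2), which must be correctly read off as order and equality relations in $\mathbf{L}$. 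Carrying out the argument in two directions in this way avoids the otherwise delicate task of proving injectivity of a single map directly, which would amount to an algebraic completeness theorem for LBC via a normal-form argument.
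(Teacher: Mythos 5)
Your proposal is correct, and it reaches the isomorphism by a genuinely different route from the paper. The paper works syntactically: it passes to a canonical language of full DNF representatives, observes that on the resulting free algebra ${\bf Free}(V^*)$ the congruence of LBC-provable equivalence contains the generating pairs (C1)--(C5), and then — this is the delicate half — shows by a proof-transformation argument that every LBC-derivation can be rewritten over DNF representatives without any use of (R1) and (R2), so that LBC proves no equivalences beyond those generated by (C1)--(C5); the isomorphism then follows by identifying both algebras with ${\bf Free}(V^*)/_{\equiv_{\mathfrak{C}}}$. You instead build two explicit homomorphisms: $g\colon\mathcal{C}({\bf L})\to{\bf CL}$ obtained from the universal property of ${\bf Free}(\AmA)$ once one checks that the kernel of the induced map contains $\mathfrak{C}$ (which is exactly the observation that (A1)--(A5) prove the two sides of (C1)--(C5) equivalent, with the side condition of (C5) read off as the side condition of (A5)), and $h\colon{\bf CL}\to\mathcal{C}({\bf L})$ obtained from a soundness induction on LBC-derivations against the identities of Propositions \ref{prop2} and \ref{prop:ordine}. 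The payoff of your arrangement is that the hard direction — that LBC does not collapse elements the algebra keeps distinct, i.e.\ injectivity of $h$ — falls out of the generator argument for $g\circ h=\mathrm{id}$ rather than requiring the paper's normal-form surgery on derivations; the cost is that you must carry out the full soundness induction (including the (CPL) schema and (MP)), which the paper partly sidesteps by working inside a free Boolean algebra from the start. Both proofs ultimately verify the same correspondence between the axioms of LBC and the defining pairs of $\equiv_{\mathfrak{C}}$; yours packages it in the standard algebraizability style and, to my reading, is the cleaner of the two.
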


The proof is not difficult but the details are a bit tedious and can be found in the Appendix. However, it is convenient to point out that the isomorphism $\iota$ between ${\bf CL}$ and $\mathcal{C}({\bf L})$ acts on basic conditionals in the expected way: for each pair of formulas $\varphi, \psi$ such that $\not\vdash_{CPL}\neg \psi$, the element 
$[(\varphi\mid \psi)]_\equiv$ of ${\bf CL}$ is mapped by $\iota$ to $([\varphi]\mid [\psi])\in \mathcal{C}({\bf L})$. As for operations it is enough to ask that $\iota$ commutes with $\sqcap$ and $\sim$. Then, if for instance we have two basic conditionals of ${\bf CL}$ of the form $[(\varphi\mid \psi)]_\equiv$ and $[(\delta\mid \psi)]_\equiv$, by definition of $\equiv$, $\iota([(\varphi\mid \psi)]_\equiv\sqcap[(\delta\mid \psi)]_\equiv)=\iota([(\varphi\wedge\delta\mid\psi)]_\equiv)=([\varphi\wedge\psi]\mid [\delta])$ and, in turn by the property of the conditional algebra $\mathcal{C}({\bf L})$, $([\varphi\wedge\delta]\mid [\psi])=([\varphi]\mid[\psi])\wedge([\delta]\mid[\psi])=\iota([(\varphi\mid\psi)]_\equiv)\wedge\iota([(\delta\mid\psi)]_\equiv)$.

\subsection{Semantics and completeness}\label{subsec:sem}

The intuitive idea in defining a semantics for the logic LBC is that, as in classical logic (recall Proposition \ref{prop:atomsAlgebra} (iv)) the evaluations of $\mathsf{CL}$-formulas
should be in 1-1 correspondence with the atoms of the algebra $\mathcal{C}({\bf L})$, 
 which is, by Theorem \ref{thm:LogIso}, isomorphic to the Lindenbaum
 algebra  ${\bf CL}$ of LBC. As we have seen in
 Section \ref{sec:atoms}, atoms of  $\mathcal{C}({\bf L})$ can be
 described by sequences of atoms of $\bf L$.

In the following let $\Omega$ be the set of (classical) evaluations  $v: \mathsf{L} \to \{0, 1\}$ for the propositional language $\mathsf{L}$. 
Recall from Section \ref{sec:algpre} that if $\mathsf{L}$ is built from $m$ propositional variables, then $| \Omega | = 2^m$. Moreover, we can identify every evaluation $v \in \Omega$ with its corresponding  minterm and hence with an atom of Lindenbaum algebra $\bf L$. 
Therefore, it follows  from Section \ref{sec:atoms}  that  the atoms of $\mathcal{C}({\bf L})$ are of the form  
$$
(v_1\mid \top)\sqcap(v_2\mid \neg v_1)\sqcap\ldots\sqcap(v_{2^m-1}\mid \neg v_1\wedge\ldots\wedge\neg v_{2^m-2}),
$$
where $\langle v_1,\ldots, v_{2^m-1}\rangle\in Seq({\bf L})$.
The idea is then to define {\em $\mathsf{CL}$-interpretations} as sequences  $e = \langle v_1,\ldots, v_{2^m}\rangle$ of pair-wise distinct  $2^m$ $\mathsf{L}$-interpretations of $\Omega$ and to stipulate that such a $\mathsf{CL}$-interpretation $e$ makes true a conditional $(\varphi \mid \psi)$ when the atom in $\mathcal{C}({\bf L})$ determined by $e$ 
\begin{equation}\label{eq:atomsCL}
\omega_{e}=(v_1 \mid \top) \sqcap  (v_2 \mid \neg v_1) \sqcap \ldots \sqcap
 (v_{2^m-1} \mid \neg v_1 \land \ldots \land \neg v_{2^m-2}),
 \end{equation}
 is below $(\varphi \mid \psi)$, the latter thought of as an element of $\mathcal{C}({\bf L})$ as well. This condition has an easier expression according to (iii) of Proposition \ref{prop:atomsbelow}, that leads to the following natural definition. In the following we will assume $|\Omega|=m$ and we fix $n=2^m$ to simplify the reading. 

\begin{definition}
A  {\em $\mathsf{CL}$-interpretation} is a sequence $e = \langle v_1, v_2, \ldots, v_{n}\rangle$ of $n$ pairwise distinct $v_1, \ldots, v_{n} \in \Omega$. We denote also by $e$ the corresponding {\em evaluation} of $\mathsf{CL}$-formulas, i.e. the mapping $e: \mathsf{CL} \to \{0, 1\}$ defined as follows: 

\begin{itemize}

\item[-] for basic $\mathsf{CL}$-formulas: $e(\varphi \mid \psi) = 1$ if $v_i \models \varphi$ for the lowest index $i$ such that $v_i \models \psi$,  and $e(\varphi \mid \psi) = 0$ otherwise.

\item[-]  for compound $\mathsf{CL}$-formulas: $e$ is extended using Boolean truth-functions. 
\end{itemize}

\noindent The corresponding notion of consequence is as expected: for any set of $\mathsf{CL}$-formulas $\Gamma \cup \{\Phi\}$, $\Gamma \models_{LBC} \Phi$ if for every $\mathsf{CL}$-interpretation $e$ such that $e(\Psi)=1$  for all $\Psi \in \Gamma$, it also holds $e(\Phi) = 1$. 
\end{definition}

By Proposition \ref{prop:atomsAlgebra}, given any homomorphism $h:\mathcal{C}({\bf L})\to\{0,1\}$, let $\omega_h$ be its corresponding atom, i.e.,  $\lambda(\omega_h)=h$ in the terminology of Proposition \ref{prop:atomsAlgebra} (iii). We further denote by $\Lambda(h)$ the sequence $\langle v_1,\ldots, v_{n-1}, v_n\rangle$ such that $\langle v_1,\ldots, v_{n-1}\rangle$  univocally determines $\omega_h$ via (\ref{eq:atomsCL}) and $v_n$ is the only evaluation left in $\Omega\setminus\{v_1,\ldots, v_{n-1}\}$.

\begin{lemma}\label{lemma:homoCL}
For every homomorphism $h:\mathcal{C}({\bf L})\to\{0,1\}$, 
$\Lambda(h)$ is a $\mathsf{CL}$-interpretation. Further,  for each $\mathsf{CL}$-formula $\Phi$, $h(\Phi)=1$ iff $\Lambda(h)(\Phi) = 1$.
\end{lemma}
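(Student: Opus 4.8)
The plan is to treat the two assertions separately: the first is essentially a counting remark, while the second reduces, by a structural induction, to the case of basic conditionals, which is then settled through Proposition \ref{prop:atomsbelow}.

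First I would show that $\Lambda(h)$ is a $\mathsf{CL}$-interpretation. By Proposition \ref{prop:atomsAlgebra}(iii), $h$ satisfies $h(x)=1$ iff $x\geq\omega_h$, and by Theorem \ref{th:atoms} the atom $\omega_h$ equals $\omega_{\overline{\alpha}}$ for a unique $\overline{\alpha}=\langle v_1,\ldots,v_{n-1}\rangle\in Seq({\bf L})$ of pairwise distinct evaluations (identifying atoms of ${\bf L}$ with elements of $\Omega$). Since $|\Omega|=n$, the evaluation $v_n$ left over is uniquely determined and distinct from the others, so $\Lambda(h)=\langle v_1,\ldots,v_{n-1},v_n\rangle$ is a sequence of $n$ pairwise distinct evaluations, i.e. a $\mathsf{CL}$-interpretation. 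For the second claim I would first note that both $\Phi\mapsto h(\Phi)$ (read via the isomorphism $\iota$ of Theorem \ref{thm:LogIso}, i.e. $h(\Phi):=h(\iota([\Phi]_\equiv))$) and $\Phi\mapsto\Lambda(h)(\Phi)$ are Boolean homomorphisms into ${\bf 2}$: the former because $h$ and $\iota$ are homomorphisms and $\neg,\land$ on $\mathsf{CL}$ correspond to $\sim,\sqcap$ in $\mathcal{C}({\bf L})$, the latter by the very definition of the evaluation on compound formulas. Hence by induction on the structure of $\Phi$ it suffices to establish $h(\varphi\mid\psi)=\Lambda(h)(\varphi\mid\psi)$ for every basic conditional.

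So I would fix a basic conditional $(\varphi\mid\psi)$, where $\not\vdash_{CPL}\neg\psi$, i.e. $[\psi]\neq\bot$. Since $\iota$ sends $[(\varphi\mid\psi)]_\equiv$ to $([\varphi]\mid[\psi])$ and, by Proposition \ref{prop2}(iv), $([\varphi]\mid[\psi])=([\varphi\land\psi]\mid[\psi])$ with $a:=[\varphi\land\psi]\leq[\psi]=:b$, one gets $h(\varphi\mid\psi)=1$ iff $\omega_h\in\atom_\leq(a\mid b)$. On the other side, $\Lambda(h)(\varphi\mid\psi)=1$ iff $v_{i_0}\models\varphi$, where $i_0$ is the least index with $v_{i_0}\models\psi$ (such an index exists since $[\psi]\neq\bot$ and $\Lambda(h)$ enumerates all of $\Omega$); moreover $v_{i_0}\models\varphi$ iff $v_{i_0}\models\varphi\land\psi$ iff $\alpha_{v_{i_0}}\leq a$, and in general $v_i\models\psi$ iff $\alpha_{v_i}\leq b$. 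The core of the argument is then to match these two characterizations using Proposition \ref{prop:atomsbelow}: when $i_0\leq n-1$ we have $\alpha_{v_1},\ldots,\alpha_{v_{i_0-1}}\leq\neg b$ and $\alpha_{v_{i_0}}\leq b$, so $i_0$ is precisely the smallest index $\leq n-1$ with $\alpha_{v_{i_0}}\leq b$, and Proposition \ref{prop:atomsbelow}(iii) yields $\omega_h\in\atom_\leq(a\mid b)$ iff $\alpha_{v_{i_0}}\leq a$, i.e. iff $v_{i_0}\models\varphi$; hence the two sides agree.

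The main (minor) obstacle is the remaining case $i_0=n$, which falls outside the index range $1,\ldots,n-1$ covered by Proposition \ref{prop:atomsbelow} and must be handled directly. Here none of $v_1,\ldots,v_{n-1}$ satisfies $\psi$, so $b=[\psi]$ is forced to be the unique atom $\alpha_{v_n}$ missing from $\overline{\alpha}$; consequently $(a\mid b)$ collapses to $\top_\mathfrak{C}$ when $a=\alpha_{v_n}$ (i.e. $v_n\models\varphi$) and to $\bot_\mathfrak{C}$ when $a=\bot$ (i.e. $v_n\not\models\varphi$). Since $\omega_h\leq\top_\mathfrak{C}$ always and $\omega_h\not\leq\bot_\mathfrak{C}$ (being an atom), in both subcases $\omega_h\in\atom_\leq(a\mid b)$ holds exactly when $v_n\models\varphi$, which once more coincides with $\Lambda(h)(\varphi\mid\psi)$. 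Together with the previous case this settles the basic conditionals, and the induction then gives the full equivalence $h(\Phi)=1$ iff $\Lambda(h)(\Phi)=1$ for all $\mathsf{CL}$-formulas $\Phi$.
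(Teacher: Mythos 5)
Your proof is correct and follows essentially the same route as the paper's: reduce by structural induction to the case of basic conditionals (both maps being Boolean homomorphisms into $\{0,1\}$) and then match $h$ and $\Lambda(h)$ via Proposition \ref{prop:atomsbelow}. The only difference is that you treat the boundary case $i_0=n$ explicitly (where $[\psi]$ is the leftover atom and $([\varphi]\mid[\psi])$ collapses to $\top_\mathfrak{C}$ or $\bot_\mathfrak{C}$), whereas the paper disposes of it by restricting the ``interesting case'' to conditionals $\not\equiv\top$; this is a welcome clarification of a glossed-over point, not a different argument.
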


\begin{proof} 
If $h:\mathcal{C}({\bf L})\to\{0,1\}$ is a homomorphism, $\Lambda(h)$ is a $\mathsf{CL}$-interpretation by definition. Now we prove, by structural induction on the complexity of $\Phi$, that $\Lambda(h)(\Phi) = 1$ iff $h(\Phi)=1$.  The interesting case is when $\Phi$ is a basic conditional $(\varphi\mid \psi)$ such that $(\varphi\mid \psi) \not\equiv \top$. 
In that case, $h(\Phi)=1$ iff $\omega_h\leq (\varphi\mid\psi)$ iff  (by Proposition \ref{prop:atomsbelow}) there exists an index  $i\leq n-1$ such that $v_i(\varphi)=v_i(\psi)=1$ and $v_j(\psi)=0$ for all $j<i$, and thus, iff  $\Lambda(h)(\Phi) = 1$.
\end{proof}

Now the soundness and completeness of LBC easily follow from the above.  

\begin{theorem}[Soundness and completeness]\label{ThmCompletness} LBC  is sound and complete w.r.t. $\mathsf{CL}$-evaluations, i.e. $ \vdash_{LBC} \; =  \; \models_{LBC} $. 
\end{theorem}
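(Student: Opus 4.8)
The plan is to transport the whole question into the finite Boolean algebra $\mathcal{C}({\bf L})$ and to read off both provability and validity from its atoms. Two bridges are already available. On the syntactic side, Theorem \ref{thm:LogIso} gives an isomorphism $\iota \colon {\bf CL}\to\mathcal{C}({\bf L})$, and by construction $\vdash_{LBC}\Phi$ iff $[\Phi]_\equiv=\top_\equiv$, hence iff $\iota([\Phi]_\equiv)=\top_{\mathfrak{C}}$. On the semantic side, Lemma \ref{lemma:homoCL} together with Proposition \ref{prop:atomsAlgebra}(iii) pairs each homomorphism $h\colon\mathcal{C}({\bf L})\to\{0,1\}$, equivalently each atom $\omega_h=\lambda^{-1}(h)$, with a $\mathsf{CL}$-interpretation $\Lambda(h)$ so that $h(\iota([\Phi]_\equiv))=1$ iff $\Lambda(h)(\Phi)=1$. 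The strategy is simply to close the loop between these two correspondences.

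First I would record that $h\mapsto\Lambda(h)$ is a bijection between the homomorphisms $\mathcal{C}({\bf L})\to\{0,1\}$ (equivalently, by $\lambda$, the atoms of $\mathcal{C}({\bf L})$) and the $\mathsf{CL}$-interpretations: a $\mathsf{CL}$-interpretation $e=\langle v_1,\ldots,v_n\rangle$ is determined by its initial segment $\langle v_1,\ldots,v_{n-1}\rangle\in Seq({\bf L})$, which by Theorem \ref{th:atoms} names a unique atom $\omega_e$ through (\ref{eq:atomsCL}), and conversely. Writing $h_e=\lambda(\omega_e)$, Lemma \ref{lemma:homoCL} then yields, for every $\mathsf{CL}$-formula $\Phi$,
\[
e(\Phi)=1 \iff h_e(\iota([\Phi]_\equiv))=1 \iff \omega_e\leq\iota([\Phi]_\equiv).
\]

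Next I would reduce both consequence relations to a single algebraic inequality. Since $\mathsf{L}$ has finitely many variables, ${\bf L}$ and hence $\mathcal{C}({\bf L})$ are finite, so for any $\Gamma$ the meet $d_\Gamma=\bigsqcap_{\Psi\in\Gamma}\iota([\Psi]_\equiv)$ is attained by a finite subconjunction. Using the deduction theorem (Lemma \ref{lemma:DedThm}) and the standard fact that $\vdash_{LBC}(\Psi_1\wedge\cdots\wedge\Psi_k)\to\Phi$ iff $\iota([\Psi_1]_\equiv)\sqcap\cdots\sqcap\iota([\Psi_k]_\equiv)\leq\iota([\Phi]_\equiv)$, one obtains
\[
\Gamma\vdash_{LBC}\Phi \iff d_\Gamma\leq\iota([\Phi]_\equiv).
\]
On the semantic side the displayed equivalence gives $\Gamma\models_{LBC}\Phi$ iff every atom $\omega\leq d_\Gamma$ also satisfies $\omega\leq\iota([\Phi]_\equiv)$; since $\mathcal{C}({\bf L})$ is atomic and $d_\Gamma=\bigsqcup_{\omega\leq d_\Gamma}\omega$ (Proposition \ref{prop:atomsAlgebra}(ii)), this is again equivalent to $d_\Gamma\leq\iota([\Phi]_\equiv)$. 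Comparing the two characterizations gives $\vdash_{LBC}\;=\;\models_{LBC}$: soundness is the forward implication, and for completeness one takes the contrapositive, exhibiting, whenever $d_\Gamma\not\leq\iota([\Phi]_\equiv)$, an atom $\omega\leq d_\Gamma\sqcap\sim\!\iota([\Phi]_\equiv)$ whose associated interpretation satisfies $\Gamma$ but refutes $\Phi$.

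Each step is routine once Theorem \ref{thm:LogIso} and Lemma \ref{lemma:homoCL} are in hand; the only point demanding genuine care is the bookkeeping around a possibly infinite $\Gamma$. The main (and minor) obstacle is thus justifying that the meet $d_\Gamma$ may be replaced by a finite subconjunction and that $\Gamma\vdash_{LBC}\Phi$ reduces to $d_\Gamma\leq\iota([\Phi]_\equiv)$, i.e.\ the compactness supplied by the finiteness of $\mathcal{C}({\bf L})$ combined with a correct application of the deduction theorem. Everything else is the classical ``atoms are the two-valued models'' argument, now legitimate precisely because the algebra of conditionals is Boolean and finite.
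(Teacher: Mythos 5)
Your proposal is correct and follows essentially the same route as the paper's own proof: both pass through the isomorphism ${\bf CL}\cong\mathcal{C}({\bf L})$ of Theorem \ref{thm:LogIso} and use Lemma \ref{lemma:homoCL} to match homomorphisms (equivalently, atoms) of $\mathcal{C}({\bf L})$ with $\mathsf{CL}$-interpretations. You merely spell out more of the routine bookkeeping (the bijection $h\mapsto\Lambda(h)$, the reduction of $\Gamma\vdash_{LBC}\Phi$ to the single inequality $d_\Gamma\leq\iota([\Phi]_\equiv)$ via finiteness and the deduction theorem) that the paper's terser argument leaves implicit.
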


\begin{proof}  Soundness is easy. As for completeness, assume that $\Gamma \not\vdash_{LBC} \Phi$. Thus, in particular $\Phi$ is false in $\mathcal{C}({\bf L})$, meaning that there exists a homomorphism $h:\mathcal{C}({\bf L})\to\{0,1\}$ such that $h(\gamma)=1$ for all $\gamma\in \Gamma$, and $h(\Phi)=0$. Thus, by Lemma \ref{lemma:homoCL}, $\Lambda(h)$ is a $\mathsf{CL}$-interpretation such that $\Lambda(h)(\gamma) = 1$ for every $\gamma \in \Gamma$ and $\Lambda(h)(\Phi) = 0$, i.e. $\Gamma \not\models_{LBC} \Phi$ . 
 \end{proof}

 \begin{remark} One could also define an alternative semantics for LBC in terms of a class of Kripke models with order relations, more in the style of conditional logics (see \cite{Halpern2003}). This Kripke-style semantics can also be shown to be adequate for LBC (and hence equivalent to the previous more algebraic semantics), although it is not fully exploited as the language of LBC does not contain nested applications of the conditioning operator nor pure propositional formulas.  Indeed, one can define  {\em LBC-Kripke models}  as structures $M = (W, \{ \succ_w\}_{w \in W}, e)$, where (i) $W$ is a non-empty set of worlds, (ii) for each world $w \in W$, $\succ_w$ is a (dual) well-order\footnote{A dual well-order in $W$ is a total order $\succ$ such that every subset $B \subseteq W$ has a greatest element.} 
such that $w \succ_w w'$ for every $ w \neq w' \in W$, and  (iii) $e: W\times V \to \{0, 1\}$ is a valuation function for propositional variables, that naturally extends to any Boolean combination of variables. 
For every $w \in W$ and $B \subseteq W$, let $best_w(B)$ denote the
greatest element  in $B$ according to $\succ_w$. Moreover, for any propositional (non-conditional) $\varphi$, we will use $\llb \varphi \rrb$ to denote the set of  worlds in $M$ evaluating $\varphi$ to true, i.e.\ $\llb \varphi \rrb = \{w \in W \mid  e(w, \varphi) = 1\}$. 

Truth in a pointed model is defined by induction as follows. For each $w \in W$ we define: %$(M, w) \models \Phi$ : 

\begin{itemize}
\item  $(M, w)  \models_K (\varphi \mid \psi)$ if either $\llb \psi \rrb = \emptyset$ or 
$best_w(\llb \psi \rrb) \in \llb \varphi \rrb$. Equivalently,  in case $\llb \psi \rrb \neq \emptyset$, then $(M, w) \models_K (\varphi \mid \psi)$ if  $(M, best_w(\llb \psi \rrb)) \models_K \varphi$.
\item Satisfaction for Boolean combinations of conditionals is defined in the usual way. 
\end{itemize}
It is not difficult to see that LBC is also sound w.r.t. to the class of LBC-Kripke models. As for completeness, assume as usual that  $\Gamma \not\vdash_{LBC} \Phi$, with $\Gamma$ a finite set, and let us prove that  $\Gamma \not\models_{K} \Phi$. From Lemma \ref{lemma:DedThm}, the condition $\Gamma \not\vdash_{LBC} \Phi$ is equivalent to 
$\not\vdash_{LBC} \Phi'$, where $\Phi' = \bigwedge_{\Psi \in \Gamma}\Psi \to \Phi$. This means that $[\Phi']_\equiv \neq [\top]_\equiv$ in the (finite) Lindenbaum algebra ${\bf CL}$. By Theorem \ref{thm:LogIso}, for practical purposes we can identify ${\bf CL}$ with the Boolean algebra of conditionals $\mathcal{C}({\bf L})$. Therefore, %that means in turn 
there is an atom $\omega \in \mathcal{C}({\bf L})$ such that $\omega \not \leq [\Phi']_\equiv$. As we have seen in Section \ref{sec:atoms}, atoms of $ \mathcal{C}({\bf L})$ can be described by sequences of pair-wise different atoms $\omega_i$ of ${\bf L}$. But each atom $\omega_i$ of ${\bf L}$ can in turn be identified with (the minterm associated to) a Boolean evaluation $v_i$ of the propositional variables $V$. Therefore if we let $\Omega$
 be the set of evaluations of the propositional variables $V$, we can assume
$\omega =  \langle v_1, \ldots, v_{n-1} \rangle$, with $n = 2^{|V|}$ and where $v_1, \ldots, v_{n-1} \in \Omega$.  Finally, we consider an LBC-Kripke model $M^* = (\Omega, \{ \succ^*_v\}_{v \in \Omega}, e^*)$ where: 
\begin{itemize}
\item $\succ^*_{v_1} \subset \Omega \times \Omega$ is such that  $v_i \succ^*_{v_1} v_{i+1}$ for $i=1, \ldots, n-1$\footnote{Where $v_n = \Omega \setminus \{v_1, \ldots, v_{n-1}\}$.} and for all $i\neq 1$, $\succ^*_{v_i}$ is an arbitrary (dual) well-order on $\Omega$; 
\item $e^*: \Omega \times V \to \{0, 1\}$ is such that $e^*(v, p) = v(p)$ for every $p \in V$. 
\end{itemize}
Then, it is clear  that the condition $\omega \not \leq [\Phi']_\equiv$ guarantees that $(M^*, v_1) \not\models_{K} \Phi'$.  \qed
 \end{remark}
An immediate consequence of Theorem \ref{ThmCompletness} and the fact that there are only finitely many $\mathsf{CL}$-evaluations is the decidability of the calculus LBC.
Furthermore, a basic conditional $(\varphi\mid \psi)$ is satisfiable in LBC iff there exists a  {\em classical} valuation $v$ such that $v(\varphi\wedge \psi)=1$. Indeed, if  a {\em $\mathsf{CL}$-interpretation} $e = \langle v_1, v_2, \ldots, v_{n}\rangle$  is such that $e(\varphi\mid \psi) = 1$, there is $i$ such that $v_i(\varphi\wedge \psi)=1$. Conversely, if $v(\varphi\wedge \psi)=1$, then any  {\em $\mathsf{CL}$-interpretation} $e = \langle v, v_2, \ldots, v_{n}\rangle$, by definition, is a model of the conditional $(\varphi\mid \psi)$. 
Thus,  the satisfiability of a basic conditional $(\varphi\mid\psi)$ reduces to the classical satisfiability.  
 However, this direct reduction does not generally apply to the cases in which the conditional formula is a (non-trivial) Boolean combination of basic conditionals.  
 
 Moreover, since  basic conditionals of the form $(\varphi\mid \top)$ are equivalent to  $\varphi$, it is also  easy to see that the classical satisfiability  is a subproblem of the satisfiability  of LBC which shows the latter to be NP-hard.

Determining a possible NP-containment for the LBC-satisfiability is out of the scope of the present paper and it will be addressed in our future work.

\subsection{Relation to non-monotonic reasoning models}\label{sec:NMR}

Conditionals possess  an implicit non-monotonic behaviour. Given a
conditional $(\varphi \mid \psi)$, it does not follow in general that
we can freely strengthen its antecedent, i.e. in general, $(\varphi
\mid \psi) \not \vdash_{LBC} (\varphi \mid \psi\land \chi)$. For
instance, $\varphi, \psi,\chi$ can be such that $\varphi \land \psi
\not \models \bot$ while $\varphi \land \psi \land \chi \models
\bot$. Actually, and not very surprisingly, the logic $\vdash_{LBC}$
satisfies the analogues of the KLM-properties which characterize the well-known system P of preferential entailment \cite{LM92,Makinson2005}.

\begin{lemma}\label{Lemma:Logic}
$\vdash_{LBC}$ satisfies the following properties: 
\begin{itemize}

\item[] Reflexivity: $\vdash_{LBC} (\varphi \mid \varphi)$ 

\item[]  Left logical equivalence: if $\models_{CPL} \varphi \leftrightarrow \psi$ then $(\chi  \mid \varphi) \vdash_{LBC} (\chi  \mid \psi)$

\item[]  Right weakening: if $\models_{CPL} \varphi \to \psi$ then $(\varphi \mid \chi) \vdash_{LBC} (\psi  \mid \chi)$

\item[] Cut: $(\varphi \mid \psi)  \land (\chi \mid \varphi \land \psi) \vdash_{LBC} (\chi \mid \psi)$

\item[] OR: $(\varphi \mid \psi) \land (\varphi \mid \chi) \vdash_{LBC} (\varphi \mid \psi \lor \chi)$

\item[] AND: $(\varphi \mid \psi) \land (\delta \mid \psi) \vdash_{LBC} (\varphi \land \delta \mid \psi)$

\item[] Cautious Monotony: $(\varphi \mid \psi) \land (\chi \mid \psi) \vdash_{LBC} (\chi \mid \varphi \land \psi )$

\end{itemize}
\end{lemma}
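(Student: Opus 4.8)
The plan is to route the whole lemma through the soundness and completeness theorem (Theorem \ref{ThmCompletness}), which lets me replace each claim of the form $\{\Phi_1,\Phi_2\}\vdash_{LBC}\Psi$ by its semantic counterpart $\{\Phi_1,\Phi_2\}\models_{LBC}\Psi$, i.e. by the assertion that every $\mathsf{CL}$-interpretation $e=\langle v_1,\dots,v_n\rangle$ with $e(\Phi_1)=e(\Phi_2)=1$ also satisfies $e(\Psi)=1$. The single fact I would isolate first is the ``least index'' reading of a basic conditional that comes straight from the definition of $e$: writing $\min_e(\psi)$ for the least index $i$ with $v_i\models\psi$ (which exists because well-formedness of a conditional guarantees $\not\vdash_{CPL}\neg\psi$, so $\psi$ is realised somewhere in the enumeration $\Omega=\{v_1,\dots,v_n\}$), one has $e(\varphi\mid\psi)=1$ iff $v_{\min_e(\psi)}\models\varphi$.

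The technical heart I would prove once and reuse is a bookkeeping lemma on least indices: if $\vdash_{CPL}\theta\to\sigma$ and $v_{\min_e(\sigma)}\models\theta$, then $\min_e(\theta)=\min_e(\sigma)$. This holds because every $j<\min_e(\sigma)$ fails $\sigma$ and hence fails the stronger $\theta$, while $v_{\min_e(\sigma)}$ realises $\theta$ by assumption. With this in hand the three substantive KLM rules fall out uniformly. For Cut, from $e(\varphi\mid\psi)=1$ the index $i_0:=\min_e(\psi)$ satisfies $\varphi\land\psi$; applying the lemma with $\theta=\varphi\land\psi$ and $\sigma=\psi$ gives $\min_e(\varphi\land\psi)=i_0$, so $e(\chi\mid\varphi\land\psi)=1$ forces $v_{i_0}\models\chi$, whence $e(\chi\mid\psi)=1$. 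Cautious Monotony is the same computation read the other way: $i_0=\min_e(\psi)$ satisfies $\varphi$ and $\chi$, hence $\varphi\land\psi$, and the lemma (again $\theta=\varphi\land\psi$, $\sigma=\psi$) gives $\min_e(\varphi\land\psi)=i_0$ with $v_{i_0}\models\chi$, i.e. $e(\chi\mid\varphi\land\psi)=1$. For OR, let $i_0:=\min_e(\psi\lor\chi)$; then $v_{i_0}$ satisfies $\psi$ or $\chi$, and in the first case the lemma (with $\theta=\psi$, $\sigma=\psi\lor\chi$) yields $\min_e(\psi)=i_0$ so that $e(\varphi\mid\psi)=1$ gives $v_{i_0}\models\varphi$, the second case being symmetric using $e(\varphi\mid\chi)=1$; either way $e(\varphi\mid\psi\lor\chi)=1$.

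The remaining four properties are immediate from the same reading and do not even need the bookkeeping lemma: Reflexivity is $v_{\min_e(\varphi)}\models\varphi$ by the definition of $\min_e$; Left Logical Equivalence holds because $\models_{CPL}\varphi\leftrightarrow\psi$ makes $\varphi$ and $\psi$ true at exactly the same valuations, so $\min_e(\varphi)=\min_e(\psi)$ and $e(\chi\mid\varphi)=e(\chi\mid\psi)$ identically; Right Weakening uses that $v_{\min_e(\chi)}\models\varphi$ together with $\models_{CPL}\varphi\to\psi$ gives $v_{\min_e(\chi)}\models\psi$; and AND uses that $v_{\min_e(\psi)}$ satisfies both $\varphi$ and $\delta$, hence $\varphi\land\delta$.

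I expect the only delicate points to be organisational rather than mathematical: keeping the well-formedness side conditions $\not\vdash_{CPL}\neg(\cdot)$ in place so that each $\min_e$ is defined, and stating the bookkeeping lemma in exactly the generic form that serves Cut, Cautious Monotony and OR at once, with OR the one rule that genuinely needs the disjunctive case split and is the closest analogue of the algebraic inequality of Proposition \ref{prop5}(i). A fully syntactic alternative is also available: one derives the logical transcription of Corollary \ref{cor:props}(iii), namely $\vdash_{LBC}(\varphi\land\delta\mid\chi)\leftrightarrow(\varphi\mid\delta\land\chi)\land(\delta\mid\chi)$, from axioms (A4) and (A5), and then obtains Cut and Cautious Monotony by combining it with (A3) and rule (R1). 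Since completeness is already available, however, the semantic route is shorter and avoids re-running the appendix proof of the OR-rule.
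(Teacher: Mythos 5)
Your proposal is correct, but it takes a genuinely different route from the paper's own proof. The paper argues purely syntactically: Reflexivity, Left Logical Equivalence, Right Weakening and AND are read off directly as (A1), (R2), (R1) and (A3); Cut and Cautious Monotony are derived by short chains of rewritings using (A3), (A4), (A5) and (R1) (essentially the syntactic transcription of Corollary \ref{cor:props}(iii), which is exactly the ``fully syntactic alternative'' you sketch at the end); and OR is obtained by a case split on $(\psi \mid \psi\lor\chi) \sqcup (\chi \mid \psi\lor\chi) = \top$, mirroring the algebraic proof of Proposition \ref{prop5}(i). You instead invoke Theorem \ref{ThmCompletness} to replace each derivability claim by validity over $\mathsf{CL}$-interpretations and then verify everything with a single ``least index'' bookkeeping lemma. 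This is legitimate and non-circular, since completeness is established in Subsection \ref{subsec:sem} before the lemma is stated, and your bookkeeping lemma (if $\vdash_{CPL}\theta\to\sigma$ and the first $\sigma$-world satisfies $\theta$, then the first $\theta$-world coincides with the first $\sigma$-world) does correctly dispatch Cut, Cautious Monotony and OR, with the remaining four properties immediate. The trade-off is worth being explicit about: the paper's derivation exhibits the KLM rules as theorems of the axiom system itself, independently of the semantics and of the algebraic machinery behind Theorem \ref{thm:LogIso}, whereas your argument is shorter and more uniform but carries the full weight of completeness (and hence of the atom-structure results of Section \ref{sec:atoms}) as a prerequisite. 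Both are acceptable; just make sure, if you write up the semantic version, to keep the well-formedness side conditions ($\not\vdash_{CPL}\neg\psi$, $\not\vdash_{CPL}\neg(\varphi\land\psi)$, etc.) visible so that every $\min_e(\cdot)$ you use is actually defined.
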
   

\begin{proof}  
 {\em Reflexivity}, \emph{Left Logical Equivalence}, 
  \emph{Right Weakening} and \emph{AND} correspond to (A1), (R2),
  (R1), and (A3) of LBC, respectively. The other cases are
  proved as follows. 
  
  \vspace{.1cm}

\noindent {\em Cut}:  
by (A4), $(\chi \mid \varphi \land \psi) \land (\varphi \mid \psi)$ is equivalent to $(\chi \land  \varphi \land \psi \mid \varphi \land \psi) \land (\varphi \land \psi \mid \psi)$, and by (A5), it is equivalent to $(\chi \land  \varphi \land \psi \mid \psi) $, and by (R1) this clearly implies $(\chi  \mid \psi)$. 

\vspace{.1cm}

\noindent {\em Cautious Monotony}: 
by (A3), $(\varphi \mid \psi) \land (\chi \mid \psi)$ is equivalent to $(\varphi \land \chi \mid \psi)$, which by (A4) is in turn equivalent   $(\varphi \land \chi \land \psi \mid \psi)$, and by (A5) implies  $(\varphi \land \chi \land \psi \mid  \varphi \land \psi)$, which by (A3) is equivalent to $( \chi  \mid  \varphi \land \psi)$. 

\vspace{.1cm}

\noindent {\em OR}: 
 $(\varphi \mid \psi) \land (\varphi \mid \chi)$ is equivalent to $[(\varphi \mid \psi) \land (\varphi \mid \chi) \land (\psi \mid \psi \lor \chi)] \lor [(\varphi \mid \psi) \land (\varphi \mid \chi) \land (\chi \mid \psi \lor \chi)]$, and this implies $[(\varphi \land \psi \mid \psi) \land (\psi\mid \psi \lor \chi)] \lor   [ (\varphi \land \chi \mid \chi) \land (\chi \mid \psi \lor \chi)]$, that is equivalent to 
 $(\varphi \land \psi \mid \psi \lor \chi) \lor (\varphi \land \chi \mid \psi \lor \chi)$, which finally implies $(\varphi \mid  \psi \lor \chi)$.  
\end{proof}

Now, let us fix a set of (atomic) conditional statements $K$, and let
us define the consequence relation associated to $K$:  $\varphi\;\!
\nmdash_K \!\;\psi$ if $K \vdash_{LBC} (\psi \mid \varphi)$. 
Our last claim is easily derived from the previous lemma.

\begin{theorem}\label{prop:preferential}  $\nmdash_K$ is a preferential consequence relation. 
\end{theorem}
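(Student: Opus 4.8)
The plan is to check that $\nmdash_K$ satisfies each of the rules defining a preferential consequence relation in the sense of \cite{KLM,LM92}, i.e. Reflexivity, Left Logical Equivalence, Right Weakening, Cut, Cautious Monotony and OR (the AND rule being derivable from these). The whole argument is a direct translation of Lemma \ref{Lemma:Logic} through the definition $\varphi \nmdash_K \psi \iff K \vdash_{LBC} (\psi \mid \varphi)$, so first I would record the three structural facts about $\vdash_{LBC}$ that make the translation go through: it is monotone (any theorem of LBC, such as $(\varphi \mid \varphi)$ coming from (A1), is derivable from $K$), it is closed under conjunction introduction (from $K \vdash_{LBC} \Phi$ and $K \vdash_{LBC} \Psi$ one gets $K \vdash_{LBC} \Phi \land \Psi$, since LBC contains CPL), and it is transitive (if $K \vdash_{LBC} \Phi$ and $\Phi \vdash_{LBC} \Psi$ then $K \vdash_{LBC} \Psi$). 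All three are immediate from Lemma \ref{lemma:DedThm}, which reduces $\vdash_{LBC}$ to CPL-derivability from $\mathsf{AX} \cup K$.

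With these in place, each preferential rule I would obtain by the same three-move pattern: rewrite the $\nmdash_K$-premises as conditional formulas provable from $K$, conjoin them, apply the matching clause of Lemma \ref{Lemma:Logic}, and read off the conclusion. For instance, Cut of System P reads ``from $\psi \nmdash_K \varphi$ and $\varphi \land \psi \nmdash_K \chi$ infer $\psi \nmdash_K \chi$''; unfolding the definition this is ``from $K \vdash_{LBC} (\varphi \mid \psi)$ and $K \vdash_{LBC} (\chi \mid \varphi \land \psi)$ infer $K \vdash_{LBC} (\chi \mid \psi)$'', which follows by conjoining the two premises and applying the Cut clause $(\varphi \mid \psi) \land (\chi \mid \varphi \land \psi) \vdash_{LBC} (\chi \mid \psi)$ of Lemma \ref{Lemma:Logic} together with transitivity. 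Cautious Monotony, OR, Right Weakening, Left Logical Equivalence and Reflexivity are handled identically, matching the homonymous clauses of the lemma; Reflexivity in particular uses monotonicity to pass from the theorem $(\varphi \mid \varphi)$ to $K \vdash_{LBC} (\varphi \mid \varphi)$, i.e. $\varphi \nmdash_K \varphi$.

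The only point requiring genuine care -- and the place where a slip is most likely -- is the bookkeeping of the two arguments of the conditioning bar: under the definition the \emph{consequent} of $\nmdash_K$ sits to the left of $\mid$ and the \emph{antecedent} to the right, so e.g. $(\varphi \mid \psi)$ corresponds to $\psi \nmdash_K \varphi$, and the antecedent-conjunctions appearing in Cut, Cautious Monotony and OR must be tracked against the correct slot. Beyond this there is no real obstacle: since Lemma \ref{Lemma:Logic} already establishes the relevant clauses in their $\vdash_{LBC}$ form, the theorem is essentially a notational restatement of that lemma, and no new algebraic or semantic input is needed.
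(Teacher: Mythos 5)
Your proposal is correct and follows exactly the route the paper takes: the paper's entire proof of Theorem \ref{prop:preferential} is the one-line observation that it is ``easily derived from the previous lemma'', i.e.\ from Lemma \ref{Lemma:Logic}, and your argument simply spells out that derivation (unfolding the definition of $\nmdash_K$, invoking the matching clauses of the lemma, and using the structural properties of $\vdash_{LBC}$ guaranteed by Lemma \ref{lemma:DedThm}). Your attention to the antecedent/consequent slot-swapping under the bar is exactly the right place to be careful, and nothing is missing.
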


However, the following also well-known rule %of Rational Monotonicity, 
\begin{itemize}
 \item[] {\em Rational Monotonicity}:  if $\psi \; \nmdash\; \varphi$ and $\psi  \;|\!\!\!\not\sim \neg \chi$ then $ \psi \land \chi \;\nmdash\; \varphi$
 \end{itemize}
  does not hold in general for  $\nmdash = \nmdash_K$
  %in LBC 
  as the following example shows, and thus $\nmdash_K$ is not a rational consequence relation in the sense of Lehmann and Magidor \cite{LM92}. 
  
  \begin{example}
  Fix a propositional language $\mathsf{L}$ with 2 propositional variables, say $p, q$, so that it has four minterms ($\alpha_1 = p \land q, \alpha_2 = p \land \neg q,  \alpha_3 =\neg p \land q,  \alpha_4 = \neg p \land \neg q$), that correspond to the 4 atoms of its Lindenbaum algebra ${\bf L}$. 
The Lindenbaum algebra ${\bf CL}$ of the language of conditionals  $\mathsf{CL}$ is isomorphic to the algebra $\mathcal{C}({\bf L})$ with 24 atoms and $2^{24}$ elements. 
  
Consider  the following propositions: 
$\varphi = \alpha_1 \lor \alpha_4$, $\psi = \top$, $\chi = \alpha_1 \lor \alpha_3$ and the following set of only one conditional statement $K = \{(\varphi \mid \psi)\} = \{(\alpha_1 \lor \alpha_4 \mid \top)\}$. Note that  $(\neg \chi \mid \psi) = (\neg (\alpha_1 \lor \alpha_3) \mid \top) \equiv (\alpha_2 \lor \alpha_4 \mid \top)$ and $(\varphi \mid \chi \land \psi ) =  (\alpha_1 \lor \alpha_4 \mid \alpha_1 \lor \alpha_3)  \equiv (\alpha_1\mid \alpha_1 \lor \alpha_3)$. Then we have:
 \begin{enumerate}
 \item[(1)] $K \vdash (\varphi \mid \psi)$
 \item[(2)]  $K \not \vdash_{LBC} (\neg \chi \mid \psi)$, i.e.\ $(\alpha_1 \lor \alpha_4 \mid \top)  \not \vdash_{LBC} (\alpha_2 \lor \alpha_4 \mid \top)$, but 
 \item[(3)] $K \not \vdash_{LBC} (\varphi \mid \chi \land \psi )$, i.e.\ $(\alpha_1 \lor \alpha_4 \mid \top)  \not \vdash_{LBC}  (\alpha_1\mid \alpha_1 \lor \alpha_3)$ \end{enumerate}
  
(1) and (2) are clear. Hence, let us prove (3). Via Theorem \ref{ThmCompletness}, it is enough to show that there is an atom from $\mathcal{C}({\bf L})$ below $(\alpha_1 \lor \alpha_4 \mid \top)$ but not below $(\alpha_1\mid \alpha_1 \lor \alpha_3)$ (recall to this end Proposition \ref{prop:atomsbelow}). Consider the atom from $\mathcal{C}({\bf L})$ $\omega_{\overline{\beta}}$, with $\overline{\beta} = \langle\alpha_4, \alpha_2, \alpha_3\rangle$, that is, $\omega_{\overline{\beta}} = (\alpha_4 \mid \top) \sqcap (\alpha_2 \mid \neg \alpha_4) \sqcap (\alpha_3 \mid \neg \alpha_4 \land \neg \alpha_2 )$. Then: 
 
 - Clearly  $\omega_{\overline{\beta}} \leq (\alpha_1 \lor \alpha_4 \mid \top)$. 
 
 - However,  $\omega_{\overline{\beta}} \not\leq (\alpha_1\mid \alpha_1 \lor \alpha_3)$. Indeed, the least index $i$ for which $\beta_i \leq \alpha_1 \lor \alpha_3$ is $i=3$: $\beta_3 = \alpha_3 \leq \alpha_1 \lor \alpha_3$, but $\alpha_3 \not \leq \alpha_1$. Then $\mathcal{C}({\bf L})\not\models (\alpha_1 \lor \alpha_4 \mid \top)  \to  (\alpha_1\mid \alpha_1 \lor \alpha_3)$ and hence, by Theorem \ref{thm:LogIso} and Lemma \ref{lemma:DedThm},  $(\alpha_1 \lor \alpha_4 \mid \top)  \not \vdash_{LBC}  (\alpha_1\mid \alpha_1 \lor \alpha_3)$. \qed
 \end{example}

Let us notice that Rational Monotonicity does hold whenever we further require the relation $\nmdash_K$ to be defined by a {\em complete} theory $K$. Indeed, if $K$ is complete,  for each $\varphi$ and $\psi$, one has $\varphi\;\nmdash_K\;  \psi$ iff $\varphi \;|\!\!\!\not\sim_K \neg\psi$. Then, if we replace $\varphi \;|\!\!\!\not\sim_K \neg\psi$ by $\varphi\;\nmdash_K\;  \psi$ in  the previous expression of the Rational Monotony property, what we get is:
\begin{itemize}
\item[]  If $\psi \; \nmdash_K \; \varphi$ and $\psi  \;\nmdash_K\; \chi$ then $ \psi \land \chi \;\nmdash_K\; \varphi$,
\end{itemize}
that is, we are back to the Cautious Monotony property  (see \cite[\S3.4]{LM92}).

We finally notice that axiom (A2) of LBC implies 
$$  \vdash_{LBC} (\varphi\mid \psi) \vee (\neg\varphi\mid \psi).$$
In general, this does not imply that $\nmdash_K$ satisfies the so-called
{\em Conditional Excluded Middle} property \cite{Bezz}, namely
$$  \text{ either }  \psi\;\nmdash_K\;\varphi \text{ or }  \psi\;\nmdash_K\;\neg\varphi.$$
However if $K$ is complete,
then $\psi\; \nmdash_K\;\neg \varphi$ is logically equivalent to
$\psi \not\nmdash_K \;\varphi$, yielding the property of Conditional
Excluded Middle.

\section{Related work on conditional algebras}\label{sec:related}

In this section we discuss related algebraic approaches to conditional
events, which we group in (three-valued) Measure-free conditionals and Conditional
Event Algebras. The former lead necessarily to non-Boolean structures
whereas the latter to Boolean structures. Our
approach combines elements of both. Boolean Algebras of Conditionals
have a similar language as Measure-free conditionals and share the underlying Boolean structure with
Conditional Event Algebras.

\subsection{Measure-free conditionals}\label{sec:measure-free}

One of the most relevant
 approaches to study conditionals independently from conditional
 probability and outside modal logic has been to consider measure-free
 conditionals as three-valued objects: given  two propositions $a, b$
 of a classical propositional language $L$, a conditional $(a \mid b)$
 is true when $a$ and $b$ are true, is false when $a$ is false and $b$
 is true, and {\em inapplicable} when $b$ is false.  This three-valued
 approach actually goes back to de Finetti in the Thirties
 \cite{DeFinetti1937} and Schay in the Sixties \cite{Sh68}, and was
 later further developed among others by Calabrese \cite{Cal},
 Goodman, Nguyen and Walker \cite{GN94,GoodNgu,GNW,Wal94}, Gilio and Sanfilippo
 \cite{GS2}, and Dubois and Prade
 \cite{DP91,DP94}. In particular the last authors have also formally related measure-free conditionals and non-monotonic reasoning. The rest of this section is mainly from \cite{DP94}.

The idea is to extend any Boolean interpretation $v : L  \to \{0, 1\}$ for the language $L$ to a three-valued interpretation $v: (L \mid  L)  \to \{0, 1, ?\}$ on the set of conditionals $(L \mid  L)$ built from $L$ and defined as follows:
$$v (a \mid b) = \left \{
\begin{array}{ll}
v(a), & \mbox{if } v(b) = 1 \\
?, & \mbox{otherwise.}
\end{array}
\right .
$$
Although the third truth-value ``$?$'' denotes {inapplicable}, it is usually be understood as ``any truth-value'', i.e. one can take $?$ as the set $\{0, 1\}$. Indeed, this interpretation is compatible with taking $v(a \mid b)$  as the set of solutions $x \in \{0,1\}$ of the equation
$$v(a \land b) =x \land^* v(b),$$
where $\land^*$ is the Boolean truth-function for conjunction, 
once values for $v(a)$ and $v(b)$ have been fixed, and hence $v(a \land b)$ as well. In particular, when $b = \top$, there is a unique solution $v(a \mid \top) = \{v(a)\}$, for every $v$ and $a$.

From an algebraic point of view, considering plain events belonging to a Boolean algebra $\bf A$ (f.i.\ the Lindenbaum algebra for the above propositional language $L$), the above three-valued semantics led Goodman and Nguyen's to the following definition of a conditional event \cite{GoodNgu}, not as another event, but as a set of events in $\bf A$: 
$$ (a \mid b) = \{x \in A \mid x \land b = a \land b \}, $$
that turns out to actually be an interval in the algebra $\bf A$, indeed, it can be checked that 
$$ (a \mid b) =  \{x \in A \mid a \land b \leq x \leq b \to  a \} = [a \land b,  b \to a] .$$
It is worth noticing that conditionals of the form $(a \mid \top)$, where $\top$ is the top element of $\bf A$, can be safely identified with the plain event $a \in A$, since $[a \land \top,  \top \to a] = \{a\}$. Moreover, this definition leads, in turn, to usually accepted equalities among conditionals like  
$$
(a \mid b) = (a \land b \mid b) =  (a \leftrightarrow b \mid b) = (b \to a \mid b), 
$$ 
since all of them define the same interval. By Proposition \ref{prop2} and Corollary \ref{cor:props}, the above equalities  hold in every Boolean algebras of conditionals. However, observe that, contrarily to Boolean conditionals, if $a < \top$, then $(a \mid a) \neq (\top \mid \top)$.

From the above definition, it directly follows that the condition for the equality between two conditionals is the same as we determined in Theorem \ref{equality} (except for the case where both are $\top_{\mathfrak{C}}$ or $\bot_{\mathfrak{C}}$): 
$$(a \mid  b) = (c \mid d) \;\mbox{   iff   }\;  a\land b = c \land d \; \mbox{ and } \; b = d, $$    
while the commonly adopted and compatible notion of ordering is the one defined by the interval order: 
\begin{equation}\label{eqOrderFinal}
(a \mid b) \leq (c \mid d)  \;\mbox{   iff   }\;   a \land b \leq c \land d  \; \mbox{ and  } \; b \to a \leq d \to c, 
\end{equation}
 that is of course a partial order, with  ${\sf T} = (\top \mid \top)$ and ${\sf F} = (\bot \mid \top)$ being the top and bottom conditionals respectively. It is interesting to notice that, if $\bot<a<b$ and $\bot<c<d$, the left-to-right direction of (\ref{eqOrderFinal}) holds in every $\mathcal{C}({\bf A})$. Indeed, under those further conditions, $(a\mid b)\leq (c\mid d)$ gives, by Lemma \ref{neq22} (ii), that $a=a\wedge b\leq c=c\wedge d$. Moreover, $a>\bot$ implies $\bot<b\to a$, and $b\to a\leq a<b$. Similarly, $\bot< d\to c< d$. Therefore, since $(a\mid b)=(b\to a\mid b)$ and $(c\mid d)=(d\to c\mid d)$, one has $b\to a\leq d\to c$ as well.

It is clear  that, in the above setting of three-valued conditionals, any conditional can always be equated to another one of the form $(a \mid b)$ where $a \leq b$. Thus the set of conditionals built from events from a Boolean algebra $\bf A$ can be identified with the set $A \mid A = \{(a \mid b) : a, b \in A, a \leq b \}$, that is in 1-1 correspondence with the set $Int({\bf A})$ of intervals in $\bf A$. 
Therefore, in this setting, conditioning can be viewed in fact as an external operation ${\bf A} \times {\bf A} \to Int({\bf A})$.

Several attempts have been made to define reasonable operations on measure-free conditionals as internal operations on $Int({\bf A})$, in particular operations of ``conjunction'' $\wedge$, ``disjunction'' $\lor$ and ``negation'' $\neg$.
There is a widespread consensus on the definition of the negation $\neg$ by stipulating $\neg(a\mid b)=(\neg a\mid b)$, which coincides with the one on Boolean conditionals. 
As to conjunction, there have been some reasonable proposals in the literature, corresponding to different possibilities of defining the truth-table of a three-valued conjunction on  $\{0, 1, ?\}$ (see \cite{DP89,DP94}), but two of them emerge as the major competing proposals, as they are the only ones satisfying the following qualitative counterpart of Bayes rule: $(a \mid b) \mbox{ AND } (b \mid \top) = (a \land b \mid \top)$.\footnote{Note that this a particular case of our Requirement R4 for Boolean algebras of conditionals, see (v) of Proposition \ref{prop2} and (ii) of Corollary \ref{cor:props}.}
\begin{description}
\item[Goodman and Nguyen] (cf. \cite{GoodNgu}): $(a\mid b)\wedge_I (c\mid d)= 
( a\wedge c \mid (\neg a\wedge b)\vee(\neg c\wedge d)\vee(b\land d))$ 
\item[Schay, Calabrese] (cf. \cite{Sh68,Cal}):  $(a\mid b) \wedge_Q\; (c\mid d)=((b\to a)\wedge (d\to c)\mid b\vee d)$
\end{description}

The operation $\land_I$ is in fact a genuine {\em interval-based conjunction}, in the sense that the interval interpreting $(a\mid b)\wedge_I (c\mid d)$ is the result of computing all the conjunctions of the elements of the intervals interpreting the conditionals $(a\mid b)$ and $(c\mid d)$, that is, if $ (a \mid b) \land_I (c \mid d) = (u \mid v)$ then $[u \land v, v \to u] = \{ x \land y  \mid x \in [a\land b, b \to a], y \in [c\land d, d \to c] \}$. 

On the other hand, the operation $\wedge_Q$, also known as {\em quasi-conjunction} \cite{Adams1975}, is tightly related to the interval order on conditionals defined above, in the sense that 
$(a \mid  b) \leq (c \mid d)$ iff $(a \mid b) \land_Q (c \mid d) = (a \mid b)$. 
Dubois and Prade \cite{DP94} consider that only the use of the quasi-conjunction $\land_Q$ is appropriate in the context of non-monotonic reasoning, where conditionals $(a \mid b)$  are viewed as non-monotonic conditional assertions or default rules ``if $b$ then generally $a$'', usually denoted as $b \;|\!\!\sim a$. 
In fact, consider an entailment relation between finite sets of conditionals and conditionals as follows: a finite set of conditionals $K = \{(a_i \mid b_i)\}_{i \in I}$ entails another conditional $(c \mid d)$, written $K \models (c \mid d)$, if either $(c \mid d) \in K$  or there is a non-empty subset $S \subseteq K$ s.t. $C(S) \leq (c \mid d)$, where $C(S) = \land_Q \{ (a \mid b) \mid (a \mid b) \in S \}$ is the quasi-conjunction of all the conditionals in $S$. Then they show that this notion of entailment basically coincides with the nonmonotonic consequence relation of System P. More explicitly, they show the following characterisation result: \\

\begin{tabular}{rll}
$K \models (c \mid d)$ & iff & $(c \mid d)$ can be derived from $K$ using the rules of system P \\
& & and the axiom schema $(x \mid x)$. \\
\end{tabular}
\newline

On the other hand, corresponding disjunction operations $\vee_I$ and $\vee_Q$ on conditionals are defined by De Morgan's laws from $\wedge_I,\wedge_Q$ respectively.

All these conjunctions and disjunctions are commutative, associative and idempotent, and moreover they coincide over conditionals with a common antedecent. More precisely, $(a \land c \mid b) = (a \mid b) \land_I (c \mid b) = (a \mid b) \land_Q (c \mid b)$. 
However, none of the algebras $({A\mid A}, \wedge_i, \lor_i, \neg, {\sf T}, {\sf F})$, for $i \in \{I,Q\}$, is in fact a Boolean algebra. 
For instance, Schay \cite{Sh68} and Calabrese \cite{Cal} show that $\wedge_Q$ and $\vee_Q$ %are not even distributive 
 do not distribute
with respect to each other.

It is worth noticing that the above  conjunctions are defined in order to make the class $A\mid A$ of conditional objects closed under $\wedge_i$, and hence an algebra. Therefore, for every $a_1,b_1,a_2,b_2\in A$, and for every $i \in \{I, Q\}$, there exists $c,d\in A$ such that, $(a_1\mid b_1)\wedge_i (a_2\mid b_2)=(c\mid d)$.
On the other hand,  our construction of conditional algebra defines a structure whose domain strictly contains all the elements $(a\mid b)$ for $a$ in $A$, and $b \in A'$. 
This allows us to relax this condition of closure as stated above. Indeed,  for every pair of conditionals of the form $(a_1\mid b_1)$ and $(a_2\mid b_2)$ belonging to the conditional algebra, their conjunction is always an element of the algebra (i.e. the conjunction is a total, and not a partial, operation), but in general it will be not in the form $(c\mid d)$. 
Moreover our definition of conjunction behaves as $\wedge_I$ and $\wedge_Q$ whenever restricted to those conditionals with a common antedecent or context.

\subsection{Conditional Event Algebras}\label{Sub:Comparison2}

In his paper \cite{vanFraassen76}, van Fraassen devises a minimal
logic CE of conditionals whose language is obtained from classical
propositional language by adding a ``conditional symbol''
$\Rightarrow$.  The algebraic counterpart of CE are the so-called {\em
  proposition algebras}. A  proposition algebra is a pair $\langle {\bf F}, \Rightarrow \rangle$ where $\bf F$ is a Boolean algebra (of events) and  $\Rightarrow$ is a partial binary operation on $\bf F$ such that (where defined) satisfying the following requirements:
\begin{itemize}
\item[(I)] $(A \Rightarrow B) \land  (A \Rightarrow C) =  A \Rightarrow (B \land C)$
\item[(II)]  $(A \Rightarrow B) \lor  (A \Rightarrow C) =  A \Rightarrow (B \lor C)$
\item[(III)] $A \land (A \Rightarrow B) = A \land B$
\item[(IV)] $A \Rightarrow A = \top$
\end{itemize}

Propositional algebras lead to probabilistic models by adding a
probability measure $P$ to the algebras and requiring that the following
is satisfied

\begin{itemize}
\item[(m2)] If $P(A)\not= 0$ then $P(A\Rightarrow B)=P(A\wedge B)/P(A)$.
\end{itemize}

In the same paper van Fraassen uses the usual product space
construction to provide models of the above. In particular, starting
from a Boolean algebra of ordinary events $\bf A$ and a probability
measure $P$ on it,  he finds a way of representing conditional events
such that: (1) conditional events $(b \mid a)$ live in a bigger
Boolean algebra $\bf A^*$; (2) ordinary events $a$ are a special kind
of conditional events, i.e. $a$ should be identified with $(a \mid
\top)$;  (3) the rule of modus ponens $a \land (b \mid a) \leq b$
holds true in $\bf A^*$; and (4) the probability measure $P$ on $\bf
A$ can be  extended to probability measure $P^*$ on $\bf A^*$
satisfying (m2) above.

Goodman and Nguyen \cite{GN94} build on this to define a Conditional
Event Algebras (known as {\em Goodman-Nguyen-van Fraassen}
algebra). To compare this to our setting, let us recall this construction in the case where the
initial algebra $\bf A$ is finite. 
Consider the countable Cartesian product $\Omega^\infty =  \atom(\bf A) \times  \atom(\bf A) \times\ldots$, i.e. the set of countably infinite sequences of atoms of $\bf A$. Then  ${\bf A}^*$ is defined as the $\sigma$-algebra of subsets of  $\Omega^\infty$ 
generated by all the cylinder sets of the form 
$$\langle a_1, \ldots,  a_n,  \top, \top,  \ldots \rangle = \{(w_1, w_2, \ldots, w_n, w_{n+1}, \ldots) \in \Omega^\infty \mid w_i \leq a_i, \mbox{ for } i=1,\ldots,n\},$$ 
for $n \in \mathbb{N}$ and  $a_1, \ldots, a_n \in  A$. 
Then, for every pair of events $a,b\in A$, Goodman and Nguyen define the {\em conditional} event $(b\mid a)$ as the element of $A^*$ that is the following countable union of pairwise disjoint cylinder sets: 
 \begin{equation}\label{eq:goodman}
(b \mid a) :=  \bigcup_{k\geq 0} \langle \neg a, \stackrel{k}{\ldots}, \neg a, a\wedge b, \top,  \top\ldots \rangle \; . 
 \end{equation}
Note that the initial algebra of events $\bf A$ is isomorphic to the subalgebra of $\bf A^*$ of conditionals of the form $(a \mid \top) = \langle a, \top, \top,\ldots \rangle$, with $a \in A$. 
Now, any probability measure $P$ on $\bf A$ extends to a suitable probability $P^*$ on $\bf A^*$. Namely, one can define $P^*$ on the cylinder sets of $\bf A^*$ as 
$$P^*(\langle a_1, \ldots,  a_n,  \top, \top,  \ldots \rangle) = P(a_1) \cdot \ldots \cdot P(a_n),$$ 
and using Kolmogorov extension theorem (cf. \cite[\S2.4]{Tao}), $P^*$ can be extended to the whole $\sigma$-algebra $A^*$.

Finally, they show that the probability $P^*$ on conditionals actually coincides with the conditional probability:  if $P(a) > 0$, then one has: 

\begin{equation}\label{eq:goodmanProb}
 \begin{array}{lll} 
P^*((b\mid a)) & = & P^*\left( \displaystyle\bigcup_{k\geq 0} \langle \neg a, \stackrel{k}{\ldots}, \neg a, a\wedge b, \top,  \top\ldots \rangle\right) \\
& = & \displaystyle\sum_{k \geq 0} P^*(\langle \neg a, \stackrel{k}{\ldots}, \neg a, a\wedge b, \top,  \top\ldots \rangle) \\
& = & P(a \land b) \cdot \displaystyle\sum_{k\geq 0} ( 1-P (a))^k\\ 
& = & \displaystyle\frac{P(a\wedge b)}{P(a)},  
 \end{array}
\end{equation}
where the last equality is obtained by using the well-known formula for the sum of a geometric series, in this case the ratio being $1-P(a)$, and  getting $\sum_{k\geq 0}  (1-P (a))^k = 1/P(a)$. 

 Summing up, both our approach and the Goodman-Nguyen-van Frassen  one
 aim at the introduction of a formal structure to study conditional probability as  measures of conditionals. To this end we both require the underlying algebraic structure to be Boolean.   On the other hand, and this marks a first  technical difference, whenever the starting Boolean algebra $\bf A$ is finite, $\mathcal{C}({\bf A})$ is finite as well while ${\bf A}^*$ is always infinite, and moreover each conditional in ${\bf A}^*$ is defined itself as an infinitary joint, see (\ref{eq:goodman}) above.  Note that, for equating $P^*((b \mid a))$ with $P(a \land b) / P(a)$, the derivation \eqref{eq:goodmanProb} necessarily requires $P^*((b \mid a))$ to decompose in infinitely-many summands.  

 A further important difference concerns the definition of conditional
 events. 
 In fact, although in our approach the algebra $\mathcal{C}({\bf A})$ itself defines the conditionals and characterizes their properties at a formal algebraic level,  
 in $\bf A^*$ conditionals are identified with a particular subset of its elements, namely those elements that can be expressed via (\ref{eq:goodman}) above. This difference owes  mainly  to the fact that the two approaches really capture distinct intuitions about conditional probability.

\section{Conclusions and future work}\label{sec:conclusion}
In this paper we have introduced a construction which defines, for every Boolean algebra of events ${\bf A}$, its corresponding Boolean algebra of conditional events $\mathcal{C}(\bf A)$. Our construction preserves the finiteness of ${\bf A}$, in the sense that $\mathcal{C}({\bf A})$ is finite whenever so is ${\bf A}$. 
Actually, 
in the case of finite algebras, we have provided a full characterization of the atomic structure of $\mathcal{C}({\bf A})$ and proved that any positive probability measure $P$ on ${\bf A}$ canonically extends to a (plain) positive probability $\mu_P$ on $\mathcal{C}({\bf A})$ that coincides with the conditional probability induced by $P$, that is, it satisfies the condition: $$\mu_P(a\mid b)=\frac{P(a\wedge b)}{P(b)}, $$ for any events $a, b \in A$. 
Finally we have introduced the logic LBC to reason about conditionals which is sound and complete with respect to a semantics defined in accordance with the notion of Boolean algebra of conditionals. 
Moreover, we have pointed out the tight connections of this logic with preferential nonmonotonic consequence relations.

Several interesting questions remain open and we aim to address them in future work. 
In this work we have not considered algebras with iterated conditionals. It would be interesting to study whether it is possible  to introduce in the algebras of conditionals $\mathcal{C}(\bf A)$ a proper binary operator capturing the notion of iterated conditioning, for instance in the sense of whether an object of the form $((a \mid b) \mid (c \mid d))$ can be defined as another element of $\mathcal{C}(\bf A)$ in a meaningful way, i.e.\ without having to resort to a meta-structure of the kind  $\mathcal{C}(\mathcal{C}(\bf A))$.

As we have shown, given a positive probability $P$ on an algebra of events $\bf A$, we can extend it to a probability $\mu_P$ on the whole algebra of conditionals $\mathcal{C}(\bf A)$, and hence it is in principle possible to compute the probability of any compound conditional. However it would be interesting to investigate whether there are more operational rules for computing the probability of conjunctions and disjunctions of basic conditionals. In particular, one could check whether rules appearing in e.g. \cite{GN94,Kau} apply also in our framework.

 On the measure-theoretical side we  leave open the problem of generalizing Theorem \ref{thm:main1} to the case when $P$ is a not necessarily positive probability on $A$. Preliminary investigations in this direction shows that a result of this kind needs a deeper algebraic analysis of the Boolean algebras of conditionals $\mathcal{C}(\bf A)$, in particular on the relation between the congruences of ${\bf A}$ and those of $\mathcal{C}({\bf A})$.

At the foundational level, a pressing question will be to
 investigate Boolean algebras of conditionals in light of de Finetti's
 coherence criterion for conditional assignments. In particular we
 will investigate if, or up to which extent, the coherence of a
 ``book'' on conditional events $(a_1\mid b_1),\ldots, (a_k\mid b_k)$
 can be characterized in terms of the coherence of an ``unconditional
 book'' on   the $(a_i\mid b_i)$'s viewed as elements of a Boolean
 algebra of conditionals. A satisfactory solution to this problem
 would then motivate the extension of this coherence-based analysis to
 non-probabilisitic measures of uncertainty, along the lines of \cite{Flaminio2015}. 

More generally, a natural question to be addressed in future research
has to do with defining non-probabilistic analogues of the relation
established in the present paper between Boolean algebras of
conditionals and conditional probabilities. Most interesting targets include possibility and
necessity measures, ranking functions, belief and plausibility
functions, and imprecise probabilities.

As to those latter recall that, as a consequence of Corollary
6.6, convexity fails, in general, for sets of separable
probabilities. This  raises the question as to how
separable probabilities relate to the long standing problem (see e.g.
\cite{Cozman2012}) of reconciling various forms of qualified stochastic independence with
convex sets of probabilities.

\subsection*{Acknowledgments} The authors are thankful to the anonymous reviewers for their helpful remarks and suggestions. They would also like to thank Didier Dubois, Marcelo Finger, Henri Prade  and Giuseppe Sanfilippo for fruitful discussions on the arguments of the present paper. 
Flaminio and Godo acknowledge partial support by the Spanish FEDER/MINECO project TIN2015-71799-C2-1-P. Flaminio also acknowledges partial support by the Spanish Ram\'on y Cajal research program RYC-2016-19799.
Hosni's research was funded by the Department of Philosophy ``Piero
Martinetti" of the University of Milan under the Project ``Departments
of Excellence 2018-2022" awarded by the Ministry of Education,
University and Research (MIUR). He also acknowledges funding from the Deutsche Forschungsgemeinschaft (DFG, grant LA 4093/3-1).

%\section*{References}

\appendix

\section{Proofs}

\subsection*{Proofs from Section \ref{sec:BAC}}
\vspace{.2cm}

\noindent{\bf Proposition \ref{prop:ordine}.} 
{\em 
In every  algebra $\mathcal{C}({\bf A})$ the following properties hold for  every $a,c\in A$ and $b\in A'$:
\begin{enumerate}[(i)]
\item\label{p1} $(a\mid b)\geq (b\mid b)$ iff $a\geq b$;
\item\label{p2}  if $a\leq c$, then $(a\mid b)\leq (c\mid b)$; in particular $a\leq c$ iff $(a\mid \top)\leq (c\mid\top)$; 
\item \label{p3} if $a \leq b \leq d$, then $ (a \mid b) \geq (a \mid d)$; in particular $(a \mid b) \geq (a \mid a \lor b)$; 
\item\label{p5}  if $(a\mid b)\neq(c\mid b)$, then $a\land b \neq c \land b$;
\item\label{p6}  $(a\wedge b\mid \top)\leq(a \mid b) \leq (b \to a \mid \top) $;
\item\label{p7} if  $a \land d = \bot$ and $ \bot < a \leq b$, then $(a \mid \top) \sqcap  (d \mid b) = \bot_\mathfrak{C}$; 
\item\label{p8}  $(b \mid \top) \sqcap (a \mid b) \leq (a \mid \top) $.
\end{enumerate}
}

 \begin{proof}
\noindent\eqref{p1}. By definition, $(a\mid b)\geq (b\mid b)$ iff $(a\mid b)\sqcap(b\mid b)=(b\mid b)$ iff, by Proposition \ref{prop2} \eqref{e4},  $(b\mid b)=(a\wedge b\mid b)=(a\mid b)$. Finally, let us prove that if $a\not\geq b$, then $(a\mid b)<\top_\mathfrak{C}$. If $a\not\geq b$, then $a\wedge b<b$ and hence there exists $c\in A$, different from $\bot$,  such that $b=(a\wedge b)\vee c$. Therefore, from Proposition \ref{prop2} (i) and Proposition \ref{prop3} (iii), $\top_\mathfrak{C}=(b\mid b)=((a\wedge b)\vee c\mid b)=(a\wedge b\mid b)\vee (c\mid b)$ and $(c\mid b)\neq \bot_{\mathfrak{C}}$. Thus, $(a\mid b)=(a\wedge b\mid b)<\top_\mathfrak{C}$.
\vspace{.2cm}

\noindent\eqref{p2}. If $a\leq c$, then $a\wedge c=a$, and hence   $(a\wedge c\mid b)=(a\mid b)$. Therefore by Proposition \ref{prop2} \eqref{e2}, $(a\mid c)\sqcap (c\mid b)=(a\mid b)$, and $(a\mid b)\leq (c\mid b)$. Moreover, if $(a\mid \top)\leq(c\mid \top)$, then easily $(a\to c\mid\top)=\top_\mathfrak{C} =(\top\mid\top)$. From Proposition \ref{prop3} \eqref{e5}, $a\to c=\top$, and hence 
 $a\leq c$ and the first part of \eqref{p2} holds.
 
\noindent\eqref{p3}.
 If  $a \leq b \leq d$, then Proposition \ref{prop2} \eqref{e8} implies that $(a \mid d)  = (a \mid b)  \sqcap  (b \mid d)$, and hence  $(a \mid d)  \leq  (a \mid b)$.
 \vspace{.2cm}

\noindent\eqref{p5}. It directly follows from Proposition \ref{prop2} \eqref{e4}.  \vspace{.2cm}

\noindent\eqref{p6}.
First of all, $(a \mid b)=(a\wedge b\mid b)\geq (a\wedge b\mid \top)$ from Proposition \ref{prop2} \eqref{e4} and the last claim of \eqref{p2}. In order to show the second inequality, assume first $a \leq b$. Since $b \lor \neg b = \top$ we have $(a \mid b) = (a \mid b) \sqcap (b \lor \neg b \mid b) = ((a \mid b) \sqcap (b \mid \top)) \sqcup ((a \mid b) \sqcap (\neg b \mid \top))$. Now, from Proposition \ref{prop2} \eqref{e8}, $(a \mid b) \sqcap (b \mid \top) = (a \mid \top)$, and clearly $(a \mid b) \sqcap (\neg b \mid \top) \leq (\neg b \mid \top)$, therefore $(a \mid b) \leq (a \mid \top) \sqcup (\neg b \mid \top) = (\neg b \lor a \mid \top)$. Now, in the general case, $(a \mid b) = (a \land b \mid b) \leq (\neg b \lor (a \land b)  \mid \top) = (\neg b \lor a \mid \top)=(b\to a\mid \top)$. 
\vspace{.2cm}

\noindent \eqref{p7}. We have  $(a \mid \top) \leq (a \mid b)$ from \eqref{p2}. Then, 
 $(a \mid \top) \sqcap  (d \mid b) \leq  (a \mid b) \sqcap  (d \mid b) = (a \land d \mid b) = (\bot \mid b) = \bot_\mathfrak{C}$. 
\vspace{.2cm}

\noindent  \eqref{p8}. From Proposition \ref{prop2} \eqref{e4} and \eqref{p2}, it follows that %and $({\bf o}2)$, 
$(b\mid \top)\sqcap(a\mid b)=(b\mid \top)\sqcap(a\wedge b\mid b)\leq (b\mid \top)\sqcap(a\wedge b\mid \top)=(a\wedge b\mid \top)\leq (a\mid \top)$.
\end{proof}

\noindent{\bf Proposition \ref{prop5}.}
{\em 
In every  algebra $\mathcal{C}({\bf A})$ the following properties hold for all $a, a'\in A$ and $b, b'\in A'$:
\begin{enumerate}[(i)]
\item\label{p12} $(a \mid b) \sqcap (a \mid b') \leq (a \mid b \lor b')$; 
in particular, $(a \mid b) \sqcap (a \mid \neg b) \leq (a \mid \top)$;  

\item \label{p11}  if $a \leq b \land b'$, then $(a \mid b) \sqcap (a \mid b') = (a \mid b \lor b')$; 
\item\label{p13} $(a\mid b)\leq (b\to a\mid b\vee b')$;
\item\label{p14} $(a\mid b)\sqcap (a'\mid b')\leq ((b\to a) \wedge (b'\to a') \mid b\vee b')$.

\end{enumerate}
}

\begin{proof}
\noindent\eqref{p12}. For all $a\in A$ and $b$ and $c$ in $A'$, since $(b \vee c\mid b\vee c)=\top_\mathfrak{C}$, $(a\mid b)\sqcap(a\mid c)=(a\mid b)\sqcap(a\mid c)\sqcap(b\vee c\mid b\vee c)=(a\mid b)\sqcap(a\mid c)\sqcap((b\mid b\vee c)\sqcup(c\mid b\vee c)))=$
\begin{equation}\label{eqProp11}
=((a\mid b)\sqcap(a\mid c)\sqcap(b\mid b\vee c))\sqcup((a\mid b)\sqcap(a\mid c)\sqcap(c\mid b\vee c)).
\end{equation}
Now, $(a\mid b)\sqcap(a\mid c)\sqcap(b\mid b\vee c)\leq (a\mid b)\sqcap(b\mid b\vee c)= (a \wedge b \mid b)\sqcap(b\mid b\vee c)=(a\wedge b\mid b\vee c)$, where the first equality is due to Proposition \ref{prop2} \eqref{e4}  and the second one to Proposition \ref{prop2} \eqref{e8}. Analogously, we have $(a\mid b)\sqcap(a\mid c)\sqcap(c\mid b\vee c)\leq (a\mid c)\sqcap(c\mid b\vee c)=(a\wedge c\mid b\vee c)$. Thus, by (\ref{eqProp11}), we get that $(a\mid b)\sqcap(a\mid c)\leq (a\wedge b\mid b\vee c)\sqcup (a\wedge c\mid b\vee c)=(a\wedge(b\vee c)\mid b\vee c)=(a\mid b\vee c)$, from Proposition \ref{prop2} \eqref{e4}. 

Obviously the particular case follows by taking $b'=\neg b$. 

\noindent\eqref{p11}. Now, assume $a\leq b\wedge b'$. Then, from (\ref{eqProp11}) the claim follows provided that, under this further hypothesis, $(a\mid b)\sqcap(a\mid b')\sqcap(b\mid b\vee b')= (a\mid b)\sqcap(b\mid b\vee b')$ and $(a\mid b)\sqcap(a\mid b')\sqcap(b'\mid b\vee b')= (a\mid b')\sqcap(b'\mid b\vee b')$. Let us prove the first equality, the second being completely analogous. 

First of all $(a\mid b)=(a\wedge b\mid b)$ (recall Proposition \ref{prop2} \eqref{e4}), whence $(a\mid b)\sqcap(a\mid b')\sqcap(b\mid b\vee b'))=(a\wedge b\mid b)\sqcap(a\mid b')\sqcap(b\mid b\vee b')=(a\wedge b\mid b\vee b')\sqcap(a\mid b')$ (the last equality follows from Proposition \ref{prop2} \eqref{e8}). Now, $(a\wedge b\mid b\vee b')\leq (a\mid b\vee b')\leq (a\mid b')$ because by hypothesis $a\leq b'\leq b\vee b'$. Thus, $(a\wedge b\mid b\vee b')\sqcap(a\mid b')=(a\wedge b\mid b\vee b')=(a\mid b\vee b')$ since $a\leq b$. Finally, since again $a\leq b\leq b\vee b'$, from Proposition \ref{prop2} \eqref{e8}, one has  $(a\mid b\vee b')=(a\mid b)\wedge (b\mid b\vee b')$ as desired.

 \vspace{.2cm}
 
\noindent\eqref{p13}. In every Boolean algebra it holds that $b\to a = \neg b \lor a \geq \neg b$. Thus $b\to a\geq \neg b\geq c\wedge \neg b$. Thus, from Proposition \ref{prop:ordine} \eqref{p1}, $(b\to a \mid c\land \neg b)=\top_{\mathfrak{C}}$. Hence, $(a\mid b)\leq (b\to a\mid b)$ and $(a\mid b)\leq (b\to a \mid c\land \neg b)$, whence $(a\mid b)\leq (b\to a\mid b)\sqcap (b\to a \mid c\land \neg b)$. The latter, from \eqref{p12}, is less or equal to $(b\to a\mid (c\wedge \neg b)\vee b)=(b\to a \mid c\vee b)$. This settles the claim.
 \vspace{.2cm}
 
\noindent\eqref{p14}. From \eqref{p13}, $(a\mid b)\leq (b\to a\mid b\vee b')$ and $(a'\mid b')\leq (b' \to a'\mid b\vee b')$. Thus, $(a\mid b)\sqcap (a'\mid b')\leq (b\to a\mid b\vee b')\sqcap(b'\to a'\mid b\vee b')=((b\to a)\wedge (b'\to a')\mid b\vee b')$.
 \end{proof}

 %%%%
 \subsection*{Proofs from Section \ref{sec:atoms}}
 %%%%%
 \vspace{.2cm}

\noindent{\bf Proposition \ref{useful}.} 
{\em 
$Part_i(\mathcal{C}({\bf A}))$ is a  partition of $\mathcal{C}({\bf A})$. }
\vspace{.1cm}

\noindent\begin{proof}  We have to prove the two following conditions: 
\vspace{.2cm}

\noindent(a) $\bigsqcup Part_i(\mathcal{C}({\bf A})) = \top_\mathfrak{C}$, 
\vspace{.2cm}

\noindent(b) for  distinct $t_1, t_2 \in Part_i(\mathcal{C}({\bf A}))$, $t_1 \sqcap t_2 = \bot_\mathfrak{C}$. 
\vspace{.2cm}

\noindent
(a) We shall prove this claim by induction on $i$.  The case $i= 1$ is easy as $Seq_1({\bf A}) = \{ \langle \alpha \rangle \mid \alpha \in \atom({\bf A})\}$ and clearly $\bigsqcup_{\alpha\in \atom({\bf A})} (\alpha \mid \top)  = (\bigvee_\alpha \alpha \mid \top) = \top_\mathfrak{C}$. 

If $1 < i$, let $ 1 < j \leq i$ and suppose the claim is true for $j-1$, that is,  $\bigsqcup Part_{j-1}(\mathcal{C}(A)) = \top_\mathfrak{C}$. We have to prove that $\bigsqcup Part_{j}(\mathcal{C}(A)) = \top_\mathfrak{C}$ as well. 
 For each sequence $\overline{\beta} = \langle \beta_1, \ldots, \beta_{j-1}\rangle \ \in Seq_{j-1}({\bf A})$, consider its corresponding  compound conditional $\omega_{\overline{\beta}} = (\beta_1 \mid \top)  \sqcap \ldots \sqcap (\beta_{j-1} \mid \neg \beta_1 \land \ldots \land \neg \beta_{j-2})$. By inductive hypothesis, we have 
 $$
 \bigsqcup_{\overline{\beta} \in Seq_{j-1}({\bf A})} \omega_{\overline{\beta}} = \bigsqcup Part_{j-1}(\mathcal{C}({\bf A})) = \top_\mathfrak{C}.
 $$

Further, let $D(\overline{\beta}) = \atom({\bf A}) \setminus \{\beta_1, \ldots \beta_{j-1}\}$ be the set of $n-j+1$ atoms of ${\bf A}$ disjoint from  $\{\beta_1, \ldots \beta_{j-1}\}$.  
Then it is clear that 
$$
\bigsqcup_{\alpha \in D(\overline{\beta})} (\alpha \mid \neg\beta_1 \land \ldots \land \neg\beta_{j-1}) = \top_\mathfrak{C},
$$ 
and thus $\omega_{\overline{\beta}} = \bigsqcup_{\alpha \in D(\overline{\beta})}  \omega_{\overline{\beta}} \sqcap (\alpha \mid \neg\beta_1 \land \ldots \land \neg\beta_{j-1})$. 

Therefore, since this holds for every sequence $\overline{\beta} \in Seq_{j-1}$, we finally get: 
\begin{align*}
 \top_\mathfrak{C} & =  \bigsqcup_{\overline{\beta} \in Seq_{j-1}({\bf A})} \omega_{\overline{\beta}}  
=  \bigsqcup_{\overline{\beta} \in Seq_{j-1}({\bf A})} 
\left (  \bigsqcup_{\beta \in D(\overline{\beta})}  \omega_{\overline{\beta}} \sqcap (\beta \mid \neg\beta_1 \land \ldots \land \neg\beta_{j-1}) 
\right ) \\
&= \bigsqcup_{\overline{\delta} \in Seq_{j}({\bf A})} \omega_{\overline{\delta}}  = \bigsqcup Part_{j}(\mathcal{C}({\bf A})). 
 \end{align*}

\noindent (b) Let $ \omega_{\overline{\alpha}},  \omega_{\overline{\beta}} \in Part_{i}(\mathcal{C}({\bf A}))$ with $\overline{\alpha} \neq \overline{\beta}$. To be more precise, if $\overline{\alpha} = \langle \alpha_1, \ldots, \alpha_i \rangle$ and $\overline{\beta} = \langle \beta_1, \ldots, \beta_i \rangle$, let $1\leq k \leq i$ the minimum index such that $\alpha_k \neq \beta_k$. Then it holds that $(\alpha_k \mid \neg \alpha_1 \land \ldots \neg \alpha_{k-1}) \sqcap (\beta_k \mid \neg \beta_1 \land \ldots \neg \beta_{k-1}) = \bot_\mathfrak{C}$ since $\neg \alpha_1 \land \ldots \neg \alpha_{k-1} = \neg \beta_1 \land \ldots \neg \beta_{k-1}$ and $\alpha_k \land \beta_k = \bot$. Then, the claim follows from observing that $\overline{\alpha} \sqcap \overline{\beta} \leq (\alpha_k \mid \neg \alpha_1 \land \ldots \neg \alpha_{k-1}) \sqcap (\beta_k \mid \neg \beta_1 \land \ldots \neg \beta_{k-1})$. 
\end{proof}

\noindent{\bf Corollary \ref{lemma:counting}.}
{\em Let ${\bf A}$ be a Boolean algebra with $|\atom({\bf A})|=n$.
For every basic conditional $(a\mid b)\in \mathcal{C}({\bf A})$ with $a\leq b$, $|\atom_\leq(a\mid b)|=n!\cdot \frac{|\atom_\leq(a)|}{|\atom_\leq(b)|}$. }
\vspace{.1cm}

\noindent\begin{proof}
We start proving the following
\begin{claim}\label{claim:counting}
 If $a=\alpha$ is an atom of ${\bf A}$, then $|\atom_\leq(\alpha\mid b)|=\frac{n!}{|\atom_\leq(b)|}$.
 \end{claim}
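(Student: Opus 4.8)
The plan is to first establish the stated \textbf{Claim} for a single-atom consequent and then to deduce the general formula by additivity over the atoms of ${\bf A}$ below $a$. For the Claim, I would fix an atom $\alpha \leq b$, write $k = |\atom_\leq(b)|$, and translate the problem into a statement about permutations. Recall from Theorem \ref{th:atoms} that the atoms of $\mathcal{C}({\bf A})$ are exactly the $\omega_{\overline{\gamma}}$ with $\overline{\gamma} = \langle \gamma_1, \ldots, \gamma_{n-1}\rangle \in Seq({\bf A})$; appending to each such sequence the unique atom of ${\bf A}$ it omits gives a bijection between $Seq({\bf A})$ and the permutations $\langle \gamma_1, \ldots, \gamma_n\rangle$ of $\atom({\bf A})$, whence $|\atom(\mathcal{C}({\bf A}))| = n!$. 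By Proposition \ref{prop:atomsbelow}(iii), $\omega_{\overline{\gamma}} \leq (\alpha \mid b)$ holds exactly when $\gamma_i \leq \alpha$ for the least index $i$ with $\gamma_i \leq b$; since $\alpha, \gamma_i$ are atoms this forces $\gamma_i = \alpha$. Hence $\omega_{\overline{\gamma}} \leq (\alpha \mid b)$ if and only if, in the associated permutation, $\alpha$ occurs before every other atom of $\atom_\leq(b)$.

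The heart of the Claim is then a short symmetry count: among the $n!$ permutations of $\atom({\bf A})$, I must count those placing the fixed atom $\alpha$ ahead of the remaining $k-1$ atoms of $\atom_\leq(b)$. Grouping the permutations by the relative order of the $k$ atoms of $\atom_\leq(b)$, each of the $k!$ relative orders is realised by exactly $n!/k!$ permutations, and precisely $(k-1)!$ of those orders put $\alpha$ first; so the count is $(k-1)! \cdot (n!/k!) = n!/k = n!/|\atom_\leq(b)|$, which is the Claim. (The degenerate case $k=1$, where $b = \alpha$ and $(\alpha\mid b) = \top_\mathfrak{C}$, checks out, the formula returning all $n!$ atoms.)

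For arbitrary $a \leq b$ I would invoke Proposition \ref{lemma:repres}(ii) to write $(a\mid b) = \bigsqcup_{\alpha \leq a}(\alpha \mid b)$, the join over atoms $\alpha \leq a$. For distinct atoms $\alpha \neq \alpha'$ one has $(\alpha\mid b)\sqcap(\alpha'\mid b) = (\alpha \wedge \alpha'\mid b) = (\bot\mid b) = \bot_\mathfrak{C}$ by Proposition \ref{prop2}, so the sets $\atom_\leq(\alpha\mid b)$ are pairwise disjoint and together exhaust $\atom_\leq(a\mid b)$. Summing the Claim over the $|\atom_\leq(a)|$ atoms below $a$ then gives $|\atom_\leq(a\mid b)| = |\atom_\leq(a)| \cdot n!/|\atom_\leq(b)|$. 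The only genuinely delicate point is the symmetry argument in the Claim; everything else is routine bookkeeping from the atomic characterisation of Proposition \ref{prop:atomsbelow} together with the additivity of the atom-count.
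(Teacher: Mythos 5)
Your proof is correct, but for the key Claim it takes a genuinely different route from the paper. The paper's argument never touches the characterisation of Proposition \ref{prop:atomsbelow}: it writes $b=\bigvee_{i=1}^k\alpha_i$, observes that $\top_\mathfrak{C}=(b\mid b)=\bigsqcup_{i=1}^k(\alpha_i\mid b)$ is a decomposition into $k$ pairwise disjoint basic conditionals, so that $n!=\sum_{i=1}^k|\atom_\leq(\alpha_i\mid b)|$, and then concludes ``by a symmetric argument'' that all $k$ summands are equal, giving $n!/k$ for each. You instead use Proposition \ref{prop:atomsbelow}(iii) to identify $\atom_\leq(\alpha\mid b)$ with the set of permutations of $\atom({\bf A})$ in which $\alpha$ precedes every other atom of $\atom_\leq(b)$, and count these directly as $(k-1)!\cdot n!/k!=n!/k$. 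What each approach buys: the paper's is shorter, but its appeal to symmetry is left implicit (it amounts to noting that any relabelling of the atoms below $b$ induces an automorphism of $\mathcal{C}({\bf A})$ fixing $b$ and hence permuting the sets $\atom_\leq(\alpha_i\mid b)$); your version makes that symmetry fully explicit and self-contained, at the cost of leaning on Proposition \ref{prop:atomsbelow}, and it has the side benefit of exhibiting concretely \emph{which} atoms lie below $(\alpha\mid b)$ --- essentially the order-theoretic picture the paper exploits later when computing $\mu_P$. Your reduction of the general case to the Claim by additivity over the atoms below $a$ coincides with the paper's, and your handling of the degenerate case $b=\alpha$ matches the paper's separate base case.
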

 \begin{proof} (of Claim \ref{claim:counting})
 Obviously, if $b$ is an atom, and since we are assuming $a\leq b$, it must be $a=b=\alpha$. Then $|\atom_\leq(a\mid b)|=|\atom_\leq(\alpha\mid \alpha)|=|\atom(\top_\mathfrak{C})|=n!$. Conversely, if $b$ is not an atom, let $\alpha_1,\ldots, \alpha_k\in \atom({\bf A})$ (with $k>1$) such that $b=\bigvee_{i=1}^k \alpha_i$. Further, since $\alpha\leq b$, there is $i_0\leq k$ such that $\alpha_{i_0}=\alpha$. For simplicity, and without any loss of generality, assume $i_0=1$. Hence, 
 $$
 n!=|\atom_\leq(\alpha_1\vee \alpha_2\vee\ldots\vee\alpha_k\mid b)|=\sum_{i=1}^k |\atom_\leq(\alpha_i\mid b)| 
 $$
 and hence, by a symmetric argument, $|\atom_\leq(\alpha\mid b)|=\frac{n!}{k}=\frac{n!}{|\atom_\leq(b)|}$ and Claim \ref{claim:counting} is settled. 
 \end{proof}
 Coming back to the proof of Corollary \ref{lemma:counting}, and remembering from Proposition \ref{lemma:repres} that $a\mid b=\bigvee_{\alpha\leq x}(\alpha\mid y)$, we have that, thanks to Claim \ref{claim:counting}, $|\atom_\leq(a\mid b)|=|\atom_\leq(a)|\cdot \frac{n!}{|\atom_\leq(b)|}$. 
\end{proof}
%%%%
\subsection*{Proofs from Section \ref{sec:measures0}}
%%%%%
\vspace{.2cm}

\noindent
For the proof of next lemma recall the tree structure $\mathbb{T}$ introduced in Section \ref{firsttree}. 
\vspace{.2cm}

\noindent{\bf Lemma \ref{MuPProb}.} {\em The map $\mu_P$ is  a probability distribution on $\atom(\mathcal{C}({\bf A}))$, that is, 
$$\sum_{\overline{\alpha}  \in Seq({\bf A})} \mu_P(\omega_{\overline{\alpha}}) = 1.$$
}

\begin{proof} 
Let $|\atom({\bf A})|=n$ and
let  $\mathbb{T}$ be as described in Section \ref{sec:atoms}. Attach to each node of $\mathbb{T}$ the following values:

\begin{itemize}
\item Level 0: recall that the root node is $(\top\mid \top)$. Thus attach $P(\top\mid \top)=1$.
\item Level 1: attach to each node $(\alpha_i\mid \top)$ of level 1 the value $P(\alpha_i)$.
By definition of atom and since $P$ is a probability measure, we have that 
$$
\sum_{\alpha \in \atom({\bf A})} P(\alpha) = 1.
$$ 
\item Level $i$: For each node $\beta_{i-1}$ at level $i-1$, let $\langle \beta_1, \ldots, \beta_{i-1}\rangle$ the partial sequence corresponding to the path from $(\top\mid\top)$ to $\beta_{i-1}$. Then attach to the $n-i$ children nodes 
the value $P(\beta \mid \neg \beta_1 \land \ldots \land \neg \beta_{i-1})$.
Note again that 
$$
\sum_{\beta \in \atom({\bf A}) \setminus \{\beta_1, \ldots, \beta_{i-1}\}} P(\beta \mid \neg \beta_1 \land \ldots \land \neg \beta_{i-1}) = 1.
$$ 
\end{itemize}

Recall also that, by construction of $\mathbb{T}$,  $\atom(\mathcal{C}({\bf A}))$ is in 1-1 correspondence with the paths from the root to the  leaf nodes of the tree. 
Furthermore, the above procedure attaches to each node $x$ of $\mathbb{T}$, the value 
$P(x)=\sum_{\omega_{\overline{\alpha}}\leq x}\mu_P(\omega_{\overline{\alpha}})$. Thus, in particular, for every leaf $l$, $P(l)=\mu_P(\omega_{\overline{\alpha_l}})$ where $\overline{\alpha_l}$ is the sequence in $Seq({\bf A})$ which uniquely corresponds to the path from $\top\mid\top$ to $l$. Thus, by the above construction, letting $L_\mathbb{T}$ the set of leafs of $\mathbb{T}$, one has
$$
1=\sum_{l\in L_\mathbb{T}}P(l)=\sum_{l\in L_\mathbb{T}}\mu_P(\omega_{\overline{\alpha_l}})=\sum_{\overline{\alpha}\in Seq({\bf A})}\mu_P(\omega_{\overline{\alpha}}).
$$
Thus, the claim is settled.
\end{proof}
\noindent{\bf Lemma \ref{lemma:prep1}.}
{\em Let $\alpha_{i_1},\ldots,\alpha_{i_t}\in \atom({\bf A})$. Then
\begin{enumerate}[(i)]
\item
$\mu_P(\llb \alpha_{i_1},\ldots,\alpha_{i_t} \rrb)=P(\alpha_{i_1})\cdot \frac{P(\alpha_{i_2})}{P(\neg \alpha_{i_1})}\cdot\ldots\cdot\frac{P(\alpha_{i_t})}{P(\neg\alpha_{i_1}\wedge\neg\alpha_{i_2}\wedge\ldots\wedge\neg\alpha_{i_{t-1}})}$.
\item $\mu_P(\llb \alpha_{i_1},\ldots,\alpha_{i_t} \rrb)=\mu_P(\llb \alpha_{i_1},\ldots,\alpha_{i_{j-1}},\alpha_{i_{j+1}},\ldots,\alpha_{i_t} \rrb)\cdot \frac{P(\alpha_{i_j})}{P(\neg\alpha_{i_1}\wedge\neg\alpha_{i_2}\wedge\ldots\wedge\neg\alpha_{i_{t-1}})}.$
\end{enumerate}}

\begin{proof}
(i). Let $\gamma_1,\ldots, \gamma_l$  be the atoms of ${\bf A}$ different from $ \alpha_{i_1},\ldots,\alpha_{i_t}$. Thus, $\omega_{\overline{\delta}}\in \llb\alpha_{i_1},\ldots,\alpha_{i_t}\rrb$ iff $\overline{\delta}=\langle\alpha_{i_1},\ldots,\alpha_{i_t}, \sigma \rangle$ for $\sigma$ any string which is obtained by permuting $l-1$ elements in $\{\gamma_1,\ldots, \gamma_l\}$. 

Therefore, letting $\Psi=\neg\alpha_{i_1}\wedge\ldots\wedge\neg\alpha_{i_t}$, $K=P(\alpha_{i_1})\cdot \frac{P(\alpha_{i_2})}{P(\neg \alpha_{i_1})}\cdot\ldots\cdot\frac{P(\alpha_{i_t})}{P(\neg\alpha_{i_1}\wedge\neg\alpha_{i_2}\wedge\ldots\wedge\neg\alpha_{i_{t-1}})}$ and $H=\sum_\sigma \frac{P(\sigma_1)}{P(\Psi)}\cdot\frac{P(\sigma_2)}{P(\Psi\wedge\neg\sigma_1)}\cdot\ldots\cdot \frac{P(\sigma_{l-1})}{P(\Psi\wedge\neg\sigma_1\wedge\ldots\wedge\neg\sigma_{n-2})}$, one has
$$
\mu_P(\llb \alpha_{i_1},\ldots,\alpha_{i_t} \rrb)=K\cdot H.
$$ 
We now prove by induction on $l$ that $H=1$.
\vspace{.2cm}

\noindent (Case $0$)
The basic case is for $l=2$. In this case $H=\frac{P(\gamma_1)}{P(\Psi)}+\frac{P(\gamma_2)}{P(\Psi)}=\frac{P(\gamma_1\vee\gamma_2)}{P(\Psi)}$. Thus, the claim trivially follows because $\Psi=\gamma_1\vee\gamma_2$.

\vspace{.2cm}

\noindent (Case $l$)
For any $j=1,\ldots, l$ we focus on the strings $\sigma$ whose first coordinate $\sigma_1=\gamma_j$. Therefore,
$$
\begin{array}{lll}
H&=&\frac{P(\gamma_1)}{P(\Psi)}\cdot\sum_{\sigma : \sigma_1=\gamma_1}(\frac{P(\sigma_2)}{P(\Psi\wedge\neg\gamma_1)}\cdot \ldots\cdot \frac{P(\sigma_{l-1})}{P(\Psi\wedge\neg\sigma_1\wedge\ldots\wedge\neg\sigma_{n-2})})+\ldots\\
&&\ldots+\frac{P(\gamma_l)}{P(\Psi)}\cdot\sum_{\sigma : \sigma_1=\gamma_l}(\frac{P(\sigma_2)}{P(\Psi\wedge\neg\gamma_1)}\cdot \ldots\cdot \frac{P(\sigma_{l-1})}{P(\Psi\wedge\neg\sigma_1\wedge\ldots\wedge\neg\sigma_{n-2})}).
\end{array}
$$
By inductive hypothesis, each term $\sum_{\sigma : \sigma_1=\gamma_j}(\frac{P(\sigma_2)}{P(\Psi\wedge\neg\gamma_1)}\cdot \ldots\cdot \frac{P(\sigma_{l-1})}{P(\Psi\wedge\neg\sigma_1\wedge\ldots\wedge\neg\sigma_{n-2})})$ equals $1$. Thus, 
$H=\sum_{j=1}^l \frac{P(\gamma_j)}{P(\Psi)}=1$ since $\Psi=\bigvee_{j=1}^l \gamma_j$.

\vspace{.2cm}

\noindent(ii). The claim follows from (i) and direct computation.
\end{proof}
%%%
\noindent{\bf Lemma \ref{lemma:cases0}.}
{\em Let $\alpha_1\mid b$ be a basic conditional and let  
$\neg b=\beta_1\vee\ldots,\vee \beta_k$ with $\beta_k=\alpha_n$. Then the following hold.
 For all $t\in \{1,\ldots, k-1\}$, 
$$
\mu_P(\mathbb{B}\llb\alpha_n,\beta_1,\ldots, \beta_t,\alpha_1\rrb)=\mu_P(\llb\alpha_n,\beta_1,\ldots, \beta_{t-1},\alpha_1\rrb)\cdot \frac{P(\beta_t)}{P(b)}.
$$}
\begin{proof}
We prove the claim by reverse induction on $t$. 
The basic case is hence for $t=k-1$. In that case, by construction of the set $\llb\alpha_n,\beta_{\pi(1)},\ldots, \beta_{\pi(t)},\alpha_1\rrb$, one has that $\llb\alpha_n,\beta_1,\ldots, \beta_{k-1},\alpha_1\rrb$  is a leaf of $\mathbb{B}$ (recall Definition \ref{def:treeT}). Thus,  
$$
\mathbb{B}\llb\alpha_n,\beta_1,\ldots, \beta_{k-1},\alpha_1\rrb=
\llb\alpha_n,\beta_1,\ldots,\beta_{k-1}\alpha_1\rrb.
$$ 
Therefore, $\mu_P(\mathbb{B}\llb\alpha_n,\beta_1,\ldots, \beta_{k-1},\alpha_1\rrb)=\mu_P(\llb\alpha_n,\beta_1,\ldots, \beta_{k-1},\alpha_1\rrb)$ and  by Lemma \ref{lemma:prep1} (ii), the latter equals 
$$
\mu_P(\llb\alpha_n,\beta_1,\ldots, \beta_{k-2},\alpha_1\rrb)\cdot \frac{P(\beta_{k-1})}{P(\neg \alpha_n\wedge\neg\beta_1\wedge\ldots\wedge\neg \beta_{k-1})}.
$$ 
Therefore, the claim follows by noticing that   $b=\neg \alpha_n\wedge\neg\beta_1\wedge\ldots\wedge\neg \beta_{k-1}$ as we put $\alpha_n=\beta_k$.

Now, let $t<k-1$. Thus, by construction of $\mathbb{B}$, $\mu_P(T\llb\alpha_n,\beta_1,\ldots, \beta_t,\alpha_1\rrb)=$
$$
\mu_P(\llb\alpha_n,\beta_1,\ldots, \beta_{t},\alpha_1\rrb)+\sum_{ \beta_l\not\in\{\beta_1,\ldots, \beta_{t}\}}\mu_P(\mathbb{B}\llb\alpha_n,\beta_1,\ldots,\beta_t,\beta_l,\alpha_1\rrb)
$$
and the inductive hypothesis  ensures that  
$$
\mu_P(\mathbb{B}\llb\alpha_n,\beta_1,\ldots,\beta_t,\beta_l,\alpha_1\rrb)=\mu_P(\llb\alpha_n,\beta_1,\ldots, \beta_t,\alpha_1\rrb)\cdot\frac{P(\beta_l)}{P(b)}.
$$
Thus,
$$
\mu_P(\llb\alpha_n,\beta_1,\ldots, \beta_{t},\alpha_1\rrb)+\sum_{\beta_l\not\in\{\beta_1,\ldots, \beta_{t}\}}\mu_P(\mathbb{B}\llb\alpha_n,\beta_1,\ldots,\beta_t,\beta_l,\alpha_1\rrb)=$$
$$=\mu_P(\llb\alpha_n,\beta_1,\ldots, \beta_t,\alpha_1\rrb)\cdot \left(1+\frac{P(d)}{P(b)}\right),
$$
where $d=\bigvee_{ \beta_l\not\in\{\beta_1,\ldots, \beta_{t}\}}\beta_l$. Again, Lemma \ref{lemma:prep1} (2) gives,
$$
\mu_P(\llb\alpha_n,\beta_1,\ldots, \beta_t,\alpha_1\rrb)=\mu_P(\llb\alpha_n,\beta_1,\ldots, \beta_{t-1},\alpha_1\rrb)\cdot \frac{P(\beta_t)}{P(\neg\alpha_n\wedge\neg\beta_1\ldots\wedge\neg \beta_t)}.
$$ 

Notice that $1+\frac{P(d)}{P(b)}=\frac{P(b\vee d)}{P(b)}$ and $b\vee d=\neg\alpha_n\wedge\neg\beta_1\ldots\wedge\neg \beta_t$. Thus,
\begin{equation}\label{eqmain1}
P(b\vee d)=P(\neg\alpha_n\wedge\neg\beta_1\ldots\wedge\neg \beta_t).
\end{equation}
 Therefore, 
$$
\begin{array}{lll}
\mu_P(\mathbb{B}\llb\alpha_n,\beta_1,\ldots, \beta_t,\alpha_1\rrb)&=&\mu_P(\llb\alpha_n,\beta_1,\ldots, \beta_{t},\alpha_1\rrb)+\displaystyle\sum_{ \beta_l\not\in\{\beta_1,\ldots, \beta_{t}\}}\mu_P(\mathbb{B}\llb\alpha_n,\beta_1,\ldots,\beta_t,\beta_l,\alpha_1\rrb).\\
&=&\mu_P(\llb\alpha_n,\beta_1,\ldots, \beta_t,\alpha_1\rrb)\cdot \left(1+\frac{P(d)}{P(b)}\right)\\
&=&\mu_P(\llb\alpha_n,\beta_1,\ldots, \beta_{t-1},\alpha_1\rrb)\cdot \frac{P(\beta_t)}{P(\neg\alpha_n\wedge\neg\beta_1\ldots\wedge\neg \beta_t)}\cdot \left(\frac{P(b\vee d)}{P(b)}\right)\\
&=&\mu_P(\llb\alpha_n,\beta_1,\ldots, \beta_{t-1},\alpha_1\rrb)\cdot \frac{P(\beta_t)}{P(b)}
\end{array}
$$
where the last equality follows from (\ref{eqmain1}).
\end{proof}
\noindent{\bf Lemma \ref{lemma:cases}.}
{\em Let $\alpha_1\mid b$ be a basic conditional and let $b\geq \alpha_1$. Then, 
\begin{enumerate}[(i)]
\item $\mu_P(\mathbb{S}_1)=P(\alpha_1)$.
\item For all $2\leq j\leq n$, $\mu_P(\mathbb{S}_j)=\displaystyle{\frac{P(\alpha_1)\cdot P(\alpha_j)}{P(b)}}$.
\end{enumerate}}

\begin{proof}
(i) immediately follows from Lemma \ref{lemma:cases0} (i) noticing that, indeed, $\mathbb{S}_1=\llb\alpha_1\rrb$ by Lemma \ref{lemma:property1Sj} (i). Let hence prove (ii). In particular, and without loss of generality, let us prove the claim for $\alpha_j=\alpha_n$. As in the statement, let us call $\beta_1,\ldots, \beta_k$ those atoms of ${\bf A}$ such that $\neg b=\bigvee_{i=1}^k\beta_i$. In particular,  without loss of generality due to Lemma \ref{lemma:property1Sj} (ii), let us put $\beta_k=\alpha_n$.

Notice that $\mu_P(\mathbb{S}_n)=\mu_P(\mathbb{B}\llb\alpha_n,\alpha_1\rrb)$ which, by construction and thanks to Lemma \ref{lemma:cases0}, equals 
$$
\mu_P(\llb\alpha_n,\alpha_1\rrb)+\sum_{j=1}^{k-1}\mu_P(\mathbb{B}\llb\alpha_n,\beta_j,\alpha_1\rrb)=P(\alpha_n)\cdot\frac{P(\alpha_1)}{P(\neg \alpha_n)}+\sum_{j=1}^{k-1}\mu_P(\llb\alpha_n,\alpha_1\rrb)\cdot \frac{P(\beta_j)}{P(b)},
$$ 

Therefore, since by Lemma \ref{lemma:prep1} (i), $\mu_P(\llb\alpha_n,\alpha_1\rrb)=P(\alpha_n)\cdot \frac{P(\alpha_1)}{P(\neg\alpha_n)}$, we get 
$$
\mu_P(\mathbb{S}_n)=P(\alpha_n)\cdot \frac{P(\alpha_1)}{P(\neg\alpha_n)}\left(1+\sum_{j=1}^{k-1}\frac{P(\beta_j)}{P(b)}\right)=P(\alpha_n)\cdot \frac{P(\alpha_1)}{P(\neg\alpha_n)}\left(\frac{P\left(b\vee\bigvee_{j=1}^{k-1}\beta_j\right)}{P(b)}\right).
$$
Finally, notice that  $b\vee\bigvee_{j=1}^{k-1}\beta_j=\neg\alpha_n$. Thus, $P\left(b\vee\bigvee_{j=1}^{k-1}\beta_j\right)=P(\neg\alpha_n)$ and
$$
\mu_P(\mathbb{S}_n)=P(\alpha_n)\cdot P(\alpha_1)\cdot\left(\frac{1}{P(b)}\right)
$$
which settles the claim.
\end{proof}

\subsection*{Proofs from Section \ref{sec:logic}}

\vspace{.2cm}

\noindent
{\bf Theorem \ref{thm:LogIso}.}
$\mathsf{CL}/_\equiv \cong\mathcal{C}({\bf L})$. \\

\noindent The proof needs some previous elaboration. For each satisfiable proposition $\varphi \in \mathsf{L}$, let $\varphi^*$ be its expression in full DNF. We assume that there is a unique such full DNF expression for each formula, i.e. $\varphi^*$ is the unique representative of the class $[\varphi]$ of CPL  formulas equivalent to $\varphi$  in full DNF. Thus, let  

\begin{itemize}
\item $\mathsf{L}^*$ be the set of satisfiable propositional formulas from $\mathsf{L}$ in full DNF,
\item  $V^* = \{(\varphi^* \mid \psi^*) :  \varphi^*  \in \mathsf{L}^*\cup\{\bot\}, \psi^* \in \mathsf{L}^*\}$,\footnote{It is well-known that contradictory formulas like $\bot=\varphi\wedge\neg\varphi$ cannot be expressed in full DNF. For this reason we need to add that symbol to $\mathsf{L}^*$ to be  a possible consequent of a basic conditional.} 
\item  ${\bf Free}(V^*)$ be the freely generated Boolean algebra with generators $V^*$ 

\end{itemize}

Let us define an equivalence relation on ${\bf Free}(V^*)$ as follows: $\Phi^* \equiv \Psi^*$ iff  $LBC \vdash \Phi^* \leftrightarrow \Psi^*$. (the latter due to rules (R1) and (R2) and CPL reasoning). 

Actually, $\equiv$ is indeed a congruence relation on  ${\bf Free}(V^*)$, that satisfies the five properties:

- $(\varphi^* \mid \varphi^*) \equiv \top$

- $(\varphi^* \mid \psi^*) \land (\gamma^* \mid \psi^*) \equiv ((\varphi \land \gamma)^* \mid \psi^*)$

- $\neg(\varphi^* \mid \psi^*) \equiv ((\neg \varphi)^* \mid \psi^*) $

- $(\varphi^* \mid \psi^*) \equiv ((\varphi \land \psi)^* \mid \psi^*)$

-  $(\varphi^* \mid \psi^*) \land (\psi^* \mid \gamma^*) \equiv (\varphi^* \mid \gamma^*)$, when $\vdash_{CPL }\varphi \to \psi \to \gamma$

Therefore, $\equiv$ and $\equiv_\mathfrak{C}$ as defined in Section \ref{sec:BAC} are the same congruence on ${\bf Free}(V^*)$  and, consequently, 
$$
{\bf Free}(V^*)/_{\equiv}\cong{\bf Free}(V^*)/_{\equiv_{\mathfrak{C}}}.
$$

Now, let ${\bf L}^*$ be the Lindenbaum algebra of the language $\mathsf{L}^*$. Notice that, as boolean algebra, ${\bf L}^*$ contains a bottom element, although the symbol $\bot$ does not belong to the language $\mathsf{L}^*$ as we previously remarked. Thus, $V^*=\mathbf{L}^*\times (\mathbf{L}^*)'$, whence they generate isomorphic free algebras, that is to say, ${\bf Free}(V^*)\cong {\bf Free}(\mathbf{L}^*\times (\mathbf{L}^*)')$. Therefore, passing at the quotients, one has ${\bf Free}(V^*)/_{\equiv_{\mathfrak{C}}}\cong {\bf Free}(\mathbf{L}^*\times (\mathbf{L}^*)')/_{\equiv_{\mathfrak{C}}}$ and the latter, by definition, equals $\mathcal{C}({\bf L}^*)$. Thus, we conclude that
$$
{\bf Free}(V^*)/_{\equiv_{\mathfrak{C}}}\cong\mathcal{C}({\bf L}^*) 
$$ 
Finally notice that, since ${\bf L}^*$ is actually isomorphic to the Lindenbaum algebra  ${\bf L}$ of $\mathsf{L}$, one finally has the following
\begin{lemma}\label{lemmaLog1}
$\mathcal{C}({\bf L})\cong{\bf Free}(V^*)/_{\equiv}$.
\end{lemma}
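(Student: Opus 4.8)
The plan is to obtain the lemma by composing the chain of isomorphisms already assembled in the elaboration preceding the statement with one further functoriality step for the construction $\mathcal{C}(-)$. Recall that that elaboration establishes
\[
{\bf Free}(V^*)/_{\equiv} \;=\; {\bf Free}(V^*)/_{\equiv_{\mathfrak{C}}} \;\cong\; {\bf Free}(\mathbf{L}^*\times(\mathbf{L}^*)')/_{\equiv_{\mathfrak{C}}} \;=\; \mathcal{C}({\bf L}^*),
\]
where the first equality uses that $\equiv$ and $\equiv_{\mathfrak{C}}$ coincide on ${\bf Free}(V^*)$, the isomorphism uses the identification $V^*\cong \mathbf{L}^*\times(\mathbf{L}^*)'$ of generator sets, and the last equality is Definition \ref{def:BAC}. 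Thus it suffices to prove $\mathcal{C}({\bf L}^*)\cong\mathcal{C}({\bf L})$ and compose.

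To this end I would first record the Boolean isomorphism $j:{\bf L}^*\to{\bf L}$: both algebras are Lindenbaum algebras of CPL over the same finite variable set $p_1,\dots,p_m$, and passing to full disjunctive normal form merely selects a canonical representative within each equi-provability class, so $[\varphi^*]\mapsto[\varphi]$ is a bijective homomorphism preserving $\bot$ and $\top$. I would then lift $j$ to the conditional algebras. Since $j$ is a bijection with $j(\bot)=\bot$, the assignment $(a\mid b)\mapsto(j(a)\mid j(b))$ is a bijection of generator sets $L^*\mid L^*\to L\mid L$ (well defined as $b\neq\bot$ iff $j(b)\neq\bot$), and hence extends uniquely to a Boolean isomorphism $\hat{j}:{\bf Free}(L^*\mid L^*)\to{\bf Free}(L\mid L)$. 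The crucial observation is that $\hat{j}$ maps the generating set $\mathfrak{C}_{{\bf L}^*}$ of the relations (C1)--(C5) bijectively onto $\mathfrak{C}_{{\bf L}}$: each defining pair is an identity written in the signature of the underlying algebra (for example $((b\mid b),\top^{*})$, $((a_1\mid b)\sqcap(a_2\mid b),(a_1\wedge a_2\mid b))$, and the chaining pair of (C5)), and $j$, being a homomorphism, preserves $\wedge$, $\neg$ and the order $\leq$ occurring in the side condition of (C5).

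Because an algebra isomorphism maps the smallest congruence containing a set $X$ onto the smallest congruence containing its image, $\hat{j}$ carries $\equiv_{\mathfrak{C}}$ on ${\bf Free}(L^*\mid L^*)$ onto $\equiv_{\mathfrak{C}}$ on ${\bf Free}(L\mid L)$, and therefore descends to an isomorphism of quotients $\mathcal{C}({\bf L}^*)\cong\mathcal{C}({\bf L})$. Composing with the chain above yields $\mathcal{C}({\bf L})\cong{\bf Free}(V^*)/_{\equiv}$, as required. I expect the only delicate point to be precisely this passage from generators to the generated congruence---one must use that isomorphisms commute with the ``smallest congruence containing $X$'' operator, so that preserving $\mathfrak{C}$ automatically preserves $\equiv_{\mathfrak{C}}$---together with the fact, supplied by the elaboration via the deduction theorem (Lemma \ref{lemma:DedThm}), that LBC-provable equivalence is exactly $\equiv_{\mathfrak{C}}$, which is what underwrites the very first equality in the displayed chain.
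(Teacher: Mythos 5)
Your proof is correct and takes essentially the same route as the paper: it reproduces the chain ${\bf Free}(V^*)/_{\equiv}={\bf Free}(V^*)/_{\equiv_{\mathfrak{C}}}\cong{\bf Free}(\mathbf{L}^*\times(\mathbf{L}^*)')/_{\equiv_{\mathfrak{C}}}=\mathcal{C}({\bf L}^*)$ from the elaboration and then concludes via ${\bf L}^*\cong{\bf L}$. The only difference is that you make fully explicit the final step $\mathcal{C}({\bf L}^*)\cong\mathcal{C}({\bf L})$ --- lifting the base isomorphism $j$ to the free algebras over the generator sets and using that an isomorphism carries the congruence generated by the pairs (C1)--(C5) onto the congruence generated by their images --- whereas the paper leaves this functoriality of $\mathcal{C}(-)$ under isomorphisms implicit in the remark that ${\bf L}^*$ is isomorphic to ${\bf L}$.
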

Now  consider the sublanguage $\mathsf{CL}^*$ of $\mathsf{CL}$ built from basic conditionals  $(\varphi^* \mid \psi^*)$. If $\Phi$ is an LBC-formula, we will denote by $\Phi^*$ the formula from  $\mathsf{CL}^*$ obtained by replacing all propositional formulas $\varphi$ in basic conditionals in $\Phi$ by their DNF expressions.

The relation $\equiv$ which is a congruence of ${\bf Free}(V^*)$ clearly also is an equivalence relation on $\mathsf{CL}^*$ and hence, since $\equiv$ is compatible with Boolean operations, the structure $\mathbf{CL}^*=(\mathsf{CL}^*/_\equiv, \wedge,\vee,\neg, \bot, \top)$ is a Boolean algebra.
\begin{lemma}\label{lemmaLog2}
${\bf CL}^*\cong {\bf Free}(V^*)/_{\equiv}$.
\end{lemma}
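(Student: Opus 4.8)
The plan is to build the isomorphism directly from the universal property of the free Boolean algebra ${\bf Free}(V^*)$ together with the first isomorphism theorem. Recall from Section~\ref{sec:algpre} that the generators of ${\bf Free}(V^*)$ are exactly the basic conditionals $(\varphi^*\mid\psi^*)\in V^*$, and that by construction every element of $\mathsf{CL}^*$ is a Boolean combination of such basic conditionals; thus the well-formed formulas of $\mathsf{CL}^*$ are precisely the Boolean terms over $V^*$. Writing $[\Phi^*]_\equiv$ for the class of $\Phi^*$ in ${\bf CL}^*=\mathsf{CL}^*/_\equiv$, I would assign to each generator $(\varphi^*\mid\psi^*)$ its class $[(\varphi^*\mid\psi^*)]_\equiv\in{\bf CL}^*$. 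Since ${\bf Free}(V^*)$ is free on $V^*$ and ${\bf CL}^*$ is a Boolean algebra, this assignment extends uniquely to a Boolean homomorphism $h\colon{\bf Free}(V^*)\to{\bf CL}^*$; identifying a term $\Phi^*$ with the element of ${\bf Free}(V^*)$ it represents, one has $h(\Phi^*)=[\Phi^*]_\equiv$.

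Next I would verify surjectivity and identify the kernel. Surjectivity is immediate, since the classes $[(\varphi^*\mid\psi^*)]_\equiv$ generate ${\bf CL}^*$ and all lie in the image of the homomorphism $h$. For the kernel, given terms $\Phi^*,\Psi^*$, we have $h(\Phi^*)=h(\Psi^*)$ iff $[\Phi^*]_\equiv=[\Psi^*]_\equiv$, i.e.\ iff $\vdash_{LBC}\Phi^*\leftrightarrow\Psi^*$; but this is precisely the relation $\equiv$ defining the congruence on ${\bf Free}(V^*)$. Hence $\ker h$ coincides with $\equiv$, and the first isomorphism theorem for Boolean algebras yields ${\bf Free}(V^*)/_\equiv\cong h({\bf Free}(V^*))={\bf CL}^*$, which is the claim.

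The argument is essentially bookkeeping, and the points needing care concern the well-definedness of the two congruences rather than any genuine obstruction. The homomorphism $h$ is unambiguous by the universal property, but one must check that $\equiv$, defined on ${\bf Free}(V^*)$ through $LBC$-provable equivalence, really is well-defined on the classes of Boolean-algebra-equivalent terms (the classes making up ${\bf Free}(V^*)$): this holds because LBC extends CPL, so whenever $\vdash_{CPL}\Phi^*\leftrightarrow\Psi^*$ (equivalently, $\Phi^*$ and $\Psi^*$ represent the same element of ${\bf Free}(V^*)$) one also has $\vdash_{LBC}\Phi^*\leftrightarrow\Psi^*$ via the axiom schema (CPL) and the rule (MP). Combined with the already-noted compatibility of $\equiv$ with the Boolean operations, this ensures both that ${\bf Free}(V^*)/_\equiv$ is a genuine Boolean algebra and that $\ker h$ equals $\equiv$ on the nose, completing the proof.
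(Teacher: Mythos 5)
Your proof is correct, but it is packaged differently from the paper's. The paper argues by direct identification: it observes that the elements of ${\bf CL}^*$ and of ${\bf Free}(V^*)/_{\equiv}$ are literally the same equivalence classes of the same Boolean terms over $V^*$ under the same relation $\equiv$, and (invoking finiteness to reduce the claim to a bijection of supports) concludes the two algebras coincide. You instead route the argument through the universal property of ${\bf Free}(V^*)$ and the first isomorphism theorem: send each generator $(\varphi^*\mid\psi^*)$ to its class $[(\varphi^*\mid\psi^*)]_\equiv$, extend to a homomorphism $h$ onto ${\bf CL}^*$, and compute that $\ker h$ is exactly $\equiv$. Both arguments hinge on the same underlying observation — that the well-formed formulas of $\mathsf{CL}^*$ are precisely the Boolean terms over $V^*$ and that provable equivalence in LBC is the relation defining both quotients — but your version buys two things: it does not use finiteness at all, and it makes explicit the well-definedness check (that CPL-equivalent terms, i.e.\ equal elements of ${\bf Free}(V^*)$, are already LBC-equivalent via the axiom schema (CPL) and (MP)) which the paper leaves implicit in the phrase ``the same elements.'' The paper's version is shorter and arguably more transparent about why the result is almost a tautology, but yours is the more robust and self-contained derivation.
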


\begin{proof}
Since ${\bf CL}^*$ and $ {\bf Free}(V^*)/_{\equiv}$ are finite algebras we only need to exhibit a bijection between their supports. Notice that the elements of ${\bf CL}^*$ and those of $ {\bf Free}(V^*)/_{\equiv}$ are indeed the same elements, i.e., equivalence classes of Boolean combinations of basic conditionals whose antecedent and consequent are in DNF. Further the equivalence classes are determined by the same equivalence relation. Thus, clearly, the sets $\mathsf{CL}^*/_{\equiv}$ and $Free(V^*)/_{\equiv}$ are actually the same set and hence the claim follows.
\end{proof}
Now we can finally prove the desired isomorphism claimed in Theorem \ref{thm:LogIso}: $\mathsf{CL}/_\equiv \cong\mathcal{C}({\bf L})$

\begin{proof}
Due to Lemmas \ref{lemmaLog1} and \ref{lemmaLog2} it is enough to prove that ${\bf CL}\cong \mathbf{CL}^*$, that is, for each formula $\Phi$ of $\mathsf{CL}$, $\Phi$ is a theorem of LBC iff $\Phi^*$ can be proved from the DNF-instances $(A1)^*-(A5)^*$ of the axioms of LBC, axioms and rules of CPL, but without the use of (R1) and (R2). 

Every formula $\Phi^*$ is logically equivalent, in CPL, to $\Phi$ and also every $(Ai)$ is clearly logically equivalent to its DNF. Thus if $\Phi^*$ can be proved from $(A1)^*-(A5)^*$ without (R1) and (R2), then $\Phi^*$ is a theorem of LBC and hence so is $\Phi$.  

Conversely, assume that $\pi=\Psi_1,\ldots, \Psi_k$ (where $\Psi_k=\Phi$) is a proof of $\Phi$ in LBC. First of all, we can safely replace each $\Psi_i$ by its DNF $\Psi_i^*$ without loss of generality so defining a list of formulas $\pi^*$ all in DNF. Further notice that, in replacing $\Psi_i$ by $\Psi^*$, any occurrence of the rule $(R1)$ becomes 
\begin{itemize}
\item[(R1)$^*$] from $\vdash_{CPL}\varphi^*\to \psi^*$, derive $(\varphi^*\mid \chi^*)\to (\psi^*\mid \chi^*)$.
\end{itemize}
Then, since in DNF we have $\vdash_{CPL}\varphi^*\leftrightarrow \psi^*$ iff $\varphi^*= \psi^*$, (R1)$^*$ yields the following derived rule: if $\varphi^*= \psi^*$, derive $(\varphi^*\mid \chi^*)\leftrightarrow(\psi^*\mid \chi^*)$, i.e., essentially, for all $\varphi$, derive $(\varphi^*\mid \chi^*)\leftrightarrow(\varphi^*\mid \chi^*)$, which is trivial and hence can be omitted in $\pi^*$. The same obviously applies to (R2). 

Thus, summing up, $\pi^*$ is a proof of $\Phi^*$ made of formulas in $\mathsf{CL}^*$ and which does not use any occurrence of (R1) and (R2). Thus, the claim is settled. 
\end{proof}


\begin{thebibliography}{99}

\bibitem{Adamas1965}  E.W. Adams. The logic of conditionals. {\em Inquiry} 8, 166--197, 1965. 

 \bibitem{Adams1975}
E.W. Adams. {\em The Logic of Conditionals}. Dordrecht: Reidel 1975.

\bibitem{Arlo-Costa} H. Arl\'{o}-Costa and P. Egr\'e.  The Logic of Conditionals, The Stanford Encyclopedia of Philosophy (Winter 2016 Edition), Edward N. Zalta (ed.), \url{https://plato.stanford.edu/archives/win2016/entries/logic-conditionals/}. 

\bibitem{Bez}
G. Bezhanishvili, Locally finite varieties. {\em Algebra Universalis}
46: 531--548, 2001.

\bibitem{Bezz}
H. Bezzazi, D. Makinson and R.P. P\'erez. Beyond rational
monotony: Some strong non-horn rules for nonmonotonic inference
relations. Journal of Logic and Computation, 7(5),
605–631 (1997)

 \bibitem{BP}
 W. Blok, D. Pigozzi. {\em Algebraizable Logics}.
 Mem. Amer. Math. Soc, 396(77). Amer. Math
 Soc. Providence, 1989.

 \bibitem{BS}
S. Burris, H.P. Sankappanavar. {\em A course in Universal Algebra}, Springer-Velag, New York, 1981.

 \bibitem{Cal}
P. Calabrese. {An algebraic synthesis of the foundations of logic and probability}, {\em Information Sciences} 42: 187--237, 1987.

\bibitem{CS02}
G. Coletti, R. Scozzafava, {\em Probabilistic Logic in a Coherent Setting}. Trends in Logic Vol 15, Kluwer, 2002.


\bibitem{CoriLascar}
R. Cori, D. Lascar, {\em Mathematical Logic: Part 1-- Propositional Calculus, Boolean Algebras, Predicate Calculus, Completeness Theorems}. Oxford University Press, 2000.

\bibitem{Cozman2012}
F. G. Cozman, Convexity, Decision-making, Independence, Preferences, Sets
of probability distributions.
\newblock {\em Synthese}, 186(2), 577-600, 2012.


\bibitem{DeFinetti1935} 
B.~de~Finetti.
\newblock The Logic of Probability (1935).
\newblock \emph{Philosophical Studies} 77: 181--90, 1995.


\bibitem{DeFinetti1937} 
B.~de~Finetti. 
\newblock La Pr\'evision : Ses Lois Logiques, Ses Sources
Subjectives.
\newblock \emph{Annales de l'Institut Henri Poincar\'e }
7 (1), 1937.




\bibitem{DeFinetti2008} 
B.~de~Finetti.
\newblock \emph{Philosophical lectures on Probability}
\newblock Springer, 2008.


 \bibitem{DP89}
D. Dubois and H. Prade. Measure-free conditioning, probability and non-monotonic reasoning.
\emph{Proceedings of IJCAI'89}. Vol 2. pp.  1110-1114, 1989.

 \bibitem{DP91}
D. Dubois and H. Prade. Conditioning, non-monotonic logics and non-standard uncertainty models. In I.R. Goodman et al. (eds.), {\em Conditional Logic in Expert Systems}, North-Holland, pp. 115-158, 1991. 


\bibitem{DP94}
D. Dubois and H. Prade. Conditional Objects as Nonmonotonic Consequence Relationships, {\em IEEE Transaction on Systems, Man and Cybernetics 24(12): 1724--1740, 1994}


 \bibitem{Flaminio2014}
T. Flaminio, L. Godo and  H. Hosni. 
\newblock{On the logical structure of de Finetti's notion of event.}
 \emph{Journal of Applied Logic}, 12(3), 279-301, 2014. 

 \bibitem{Flaminio2015}
T. Flaminio, L. Godo and H. Hosni.
\newblock{Coherence in the aggregate: A betting method for belief functions on many-valued events}. 
\emph{International Journal of Approximate Reasoning}, 58, 71--86, 2015.


\bibitem{FGH15}
T. Flaminio, L. Godo and H. Hosni.
On the algebraic structure of conditional events,  in S. Destercke and T. Denoeux  (eds.) \emph{Symbolic and Quantitative Approaches to Reasoning with Uncertainty} ECSQARU 2015, Lecture Notes in Computer Science
Volume 9161, 106-116, Springer, 2015.

 \bibitem{Flaminio2017}
T. Flaminio, L. Godo and  H. Hosni. 
\newblock{On Boolean Algebras of Conditionals and Their Logical Counterpart}. 
\newblock in A. Antonucci et al. (Eds.) \emph{Symbolic and Quantitative Approaches to Reasoning with Uncertainty} ECSQARU 2017 Lecture Notes in Artificial Intelligence 10369: 246–56.

\bibitem{Gaifman1964}
H. Gaifman. Concerning Measures on Boolean Algebras \emph{The Pacific Journal of
  Mathematics }, 14(1), 61-73. (1964)

\bibitem{Gaifman1982}
H. Gaifman, M. Snir. Probabilities Over Rich Languages,
Testing and Randomness. \emph{The Journal of Symbolic Logic}, 47(3), 495–548. (1982)


\bibitem{GS2}
A. Gilio and G. Sanfilippo. 
Quasi conjunction, quasi disjunction, t-norms and t-conorms: Probabilistic aspects. {\em Inf. Sci.} 245: 146--167, 2013.


\bibitem{GH09}
S. Givant and P. Halmos, {\em Introduction to Boolean Algebras}, Springer, 2009.

 \bibitem{GN94}
I. R. Goodman and  H. T. Nguyen.  A Theory of Conditional Information For Probabilistic Inference in Intelligent Systems: II. Product Space Approach. {\em Information Sciences} {\bf 76}, 13-42, 1994.

 \bibitem{GoodNgu}
I. R. Goodman and H. T. Nguyen. Conditional objects and the modeling of uncertainty. In {\em Fuzzy Computing. Theory, Hardware and Applications} (M. M. Gupta and T. Yamakawa, Eds.), North-Holland, Amsterdam, 119--138, 1998.

\bibitem{GNW}
I.R. Goodman, H.T. Nguyen and E.A. Walker. \emph{Conditional Inference
and Logic for Intelligent Systems: A Theory of MeasureFree
Conditioning}, North-Holland Publishing Co., Amsterdam,
1991.

\bibitem{Halmos}
P. R. Halmos, {\em Lectures on Boolean Algebras}, Springer-Verlag New York - Heidelberg - Berlin, 1974.

\bibitem{Halpern2003}
J. Y. Halpern. \emph{Reasoning about Uncertainty} . MIT Press, 2003

\bibitem{Tarski1948}
A. Horn and A. Tarski. Measures in Boolean
algebras. \emph{Transactions of the American Mathematical Society}, 64:467--
497, (1948).

\bibitem{Jeffreys1961}
H. Jeffreys. 
\emph{Theory of Probability}, Clarendon Press, Oxford, (1961, 3rd edition)

\bibitem{Kau}
S. Kaufmann, Conditionals right and left: Probabilities for the whole family. {\em J. Philos. Logic.} 38, 1--53, 2009.

\bibitem{Kern2018}
C. Beierle, C. Eichhorn, G. Kern-Isberner, and S. Kutsch. Properties
of skeptical c-inference for conditional knowledge bases and its
realization as a constraint satisfaction problem. \emph{Annals of
  Mathematics and Artificial Intelligence}, 83(3–4), 247–275. 2018.

\bibitem{Kern2014}
G. Kern-Isberner and C. Eichhorn.  Structural inference from
conditional knowledge bases. \emph{Studia Logica}, 102(4), 751–769, 2014.


\bibitem{Kern}
G. Kern-Isberner. 
\emph{Conditionals in Nonmonotonic Reasoning and Belief Revision -- Considering Conditionals as Agents}, 
Lecture Notes in Artificial Intelligence 2087, Springer.


\bibitem{KLM}
S. Kraus, D. Lehmann and  M. Magidor. 
\newblock Nonmonotonic reasoning, preferential models and cumulative logics.
\newblock \emph{Artificial Intelligence} 44:167–207, 1990.

\bibitem{LM92}
D. Lehmann and  M. Magidor. 
\newblock What does a conditional knowledge
  base entail? {\em Artificial Intelligence} 55(1) 1-60, 1992.
  
 \bibitem{Lewis1976}
D. Lewis.
\newblock {Probabilities of Conditionals and Conditional Probabilities I-II}.
\newblock \emph{The Philosophical Review}, 85\penalty0 (3):\penalty0 297, 1976, and  54\penalty0 (4):\penalty0 581--589,
  1986.
%

 \bibitem{Makinson1993}
D. Makinson.
\newblock Five Faces of Minimality.
\newblock \emph{ Studia Logica} 53:339--79, 1993.


 \bibitem{Makinson1994}
D. Makinson.
\newblock General Patterns in Nonmonotonic Reasoning
\newblock in Gabbay, Dov, C. Hogger and J. Robinson. eds
\emph{Handbook of Logic in Artificial Intelligence and Logic
  Programming. Volume 3: Nonmonotonic Reasoning and Uncertain
  Reasoning}. Clarendon Press, 1994.

 \bibitem{Makinson2005}
D. Makinson.
\emph{Bridges From Classical to Non-monotonic Logic}. College Publications, London, 2005


\bibitem{mundici}
D. Mundici. Tensor Products and the Loomis-Sikorski theorem for MV-algebras,
\emph{Advances in Applied Mathematics} 22, 227-248, 1999. 

\bibitem{Popper}
K. Popper, {\em The Logic of Scientific Discovery} (2nd ed.). London: Hutchison.
The first version of this book appeared as Logik der Forschung, 1934.
%The Logic of Scientific Discovery, (Basic Books, New York), 1959

\bibitem{Renyi}
A. R\'enyi. 
\newblock{On a New Axiomatic Theory of Probability}
\newblock\emph{Acta Mathematica Academiae Scientiarum Hungaricae} 6
(3–4): 285--335, 1955


\bibitem{Rott2001}
H. Rott.
\newblock \emph{Change, Choice and Inference: A Study of Belief
Revision and Nonmonotonic Reasoning}. Oxford University Press, 2001.

 \bibitem{Sh68} 
G. Schay. An algebra of conditional events. \emph{Journal of Mathematical Analysis and Applications}. 24 :
334-344, 1968.

\bibitem{Shoham1987}
Y. Shoham.
\newblock \emph{Reasoning About Change}. MIT Press, 1988.


\bibitem{Stalnaker1970} 
R. Stalnaker. Probability and conditionals. {\em Philosophy of Science} 37, 64-80, 1970 (reprinted in Harper
et al.)

\bibitem{Tao}
T. Tao, {\em An introduction to measure theory}. Graduate Studies in Mathematics 126. Providence: American Mathematical Society.

\bibitem{vanFraassen76} B. C. van Fraassen. Probabilities of conditionals. In W. L. Harper, R. Stalnaker and  G. Pearce (Eds.), {\em Foundations of probability theory, statistical inference, and statistical theories of science}. The University of Western Ontario series in philosophy of science (Vol. 1, pp. 261-308). Dordrecht: D. Reidel, 1976. 


 \bibitem{Wal94}
E. A Walker. Stone Algebras, Conditional Events and Three Valued Logics. \emph{IEEE Transactions on Systems, Man, and Cybernetics}, 24(12), pp. 1699-1707, 1994.

\end{thebibliography}
\end{document}